\newcommand{\mb}{\mathbb}
\newcommand{\mc}{\mathcal}
\def \a{\alpha} \def \b{\beta} \def \g{\gamma} \def \d{\delta}
\def \t{\theta}   \def \e{\epsilon}
\def \s{\sigma} \def \l{\lambda}  
 \def \D{\Delta}  
\def \k{\kappa}  \def \G{\Gamma}
\def \di{\mathrm{dist}} \def \spa{\mathrm{span}}
 \def\an{\measuredangle}
\def \L{\Lambda}
\newtheorem{theorem}{Theorem}[section]
\newtheorem{remark}{Remark}[section]
\newtheorem{lemma}[theorem]{Lemma}
\newtheorem{prop}[theorem]{Proposition}
\newtheorem{cor}[theorem]{Corollary}
\newtheorem{definition}[theorem]{Definition} 
\numberwithin{equation}{section}
\begin{document}

\subjclass[2000]{Primary: 37D45, 37C40} \keywords{SRB measure, infinite dimensional dynamical systems, partial hyperbolicity, absolute continuity, stable foliation, observable sets.  \\  This work is partially supported by grants from CNSF and NSF}


 \author{Zeng Lian} \address[Zeng Lian] {Department of Mathematical Sciences\\ Loughborough University\\
    Loughborough, Leicestershire, LE11 3TU, UK} \email[Z.~Lian]{Z.Lian@lboro.ac.uk}

\author{Peidong Liu} \address[Peidong Liu] {School of Mathematical Sciences\\ Peking University\\
    Beijing, 100871, P. R. China} \email[P.~Liu]{lpd@pku.edu.cn}

\author{Kening Lu} \address[Kening Lu] {  Department of Mathematics\\
 Brigham Young University\\
 Provo, Utah 84602, USA}
\email[k.~Lu]{klu@math.byu.edu}

\title[SRB Measures for  A Class of Partially
 Hyperbolic Attractors in Hilbert spaces]{SRB Measures for  A Class of Partially
 Hyperbolic Attractors in Hilbert spaces}

\pagestyle{plain}

\begin{abstract} { In this paper, we study the existence of SRB measures and their properties for infinite dimensional dynamical systems in a Hilbert space. We show several results including (i) if the system has a partially hyperbolic attractor with nontrivial finite dimensional unstable directions, then it has at least one SRB measure; (ii) if
 the attractor is uniformly hyperbolic and the system is topological mixing and the splitting is H\"older continuous, then there exists a unique SRB measure which is mixing; (iii)  if
 the attractor is uniformly hyperbolic and the system is non-wondering and and the splitting is H\"older continuous,  then there exists at most finitely many SRB measures;
 (iv) for a given hyperbolic measure, there exist at most countably many ergodic components whose  basin contains an observable set.
} \end{abstract}

\maketitle

\section{Introduction}
In this  paper, we study the existence of SRB measures and their properties for infinite dimensional dynamical systems that are usually generated by dissipative partial differential equations. This research is a continuation of the study initiated by Ruelle \cite{R} and Ma$\tilde n\acute e$ \cite{M} in 80's, when Oseledets' Multiplicative Ergodic Theorem and Pesin's stable and unstable manifold theorems were established. The problems we study here were posed by Eckmann and Ruelle \cite{ER} in 1985.  It is also one of topics on dynamical systems proposed by Young \cite{Young} in a recent survey article.

\vskip0.05in
SRB measures are the invariant measures introduced by Sinai, Ruelle and Bowen in the 70s to describe dynamical systems in terms of the average or statistical properties of their``typical'' orbits. The``typical'' here means for a set of points of full measure. SRB measures can also be ``observed'' in the sense that the set of points, whose orbits have a common asymptotic distribution or their distributions converge to the measure, has positive volume ( Lebesgue measure).

\vskip0.05in
Let $f$ be a diffeomorphism on a compact manifold $M$ and $\mu$ be an $f$-invariant ergodic probability measure. By Birkhoff's ergodic theorem, for each continuous  function $\phi: M \to \mathbb{R}$, the following limit holds on a full $\mu$-measure subset of  $M$,
\begin{equation}\label{limit}
\lim_{n\to\infty} \frac{1}{n}\sum_{i=0}^{n-1} \phi\big(f^i(x)\big)=\int_M\phi \;d\mu.
\end{equation} However, this full $\mu$-measure set may not contain any subset with positive Lebesque measure. When there exists a positive Lebesque measure set $B\subset M$ such that the above limit (\ref{limit}) holds for each  $x\in B$ and any continuous  function $\phi: M\to\mathbb{R}$, the invariant measure $\mu$ is called a Sinai-Ruelle-Bowen (SRB) measure.
Such a measure was first studied by Sinai for Anosov diffeomorphisms \cite{Sinai}, and later by Ruelle
and Bowen \cite{Ruelle, Bowen, BR} for Axiom A diffeomorphisms and flows.

\vskip0.05in
It is possible that an SRB measure is supported on a periodic sink. To exclude such cases, one often imposes the condition on the existence of  positive Lyapunov exponents of $f$ with respect to $\mu$.

\vskip0.05in
An alternative definition of SRB measures was given by Eckmann and Ruelle \cite{ER},
also see Young \cite{Young2}.
Consider a $C^2$ diffeomorphism $f$ on a finite dimensional manifold $M$.  A subset $\Lambda \subset M$ is called an {\it attractor} of $f$ with basin $U$,  $U$ being an open neighborhood of $\Lambda$, if  $\Lambda$ is compact, $f \Lambda =\Lambda$, and
 \[\cap_{n\ge0}f^n(U)=\Lambda.
 \] Let $\mu$ be an $f$-invariant probability measure supported on $\Lambda$. Then, by Oseledets' multiplicative ergodic theorem, we have that for $\mu$ almost every $x\in \Lambda$ there are Lyapunov exponents $\lambda_1(x)> \lambda_2(x)> \cdots >\lambda_{p(x)}(x)$ and an invariant splitting of tangent space
 \[
 T_x M=\oplus E_i(x),
 \] where $E_i$ is the Oseledets space associated with the Lyapunov exponent $\lambda_i$.

\vskip0.05in
 Suppose that $f$ has a positive Lyapunov exponent. Then, from Pesin's invariant manifold theorem,   $f$ has an unstable manifold $W^u(x)$ defined for $\mu$ almost all $x\in\Lambda$. The invariant measure $\mu$ is called an {\bf SRB measure} if $\mu$ has absolutely continuous conditional measures on unstable manifolds with respect to  the Lebesgue measures on these submanifolds induced by the inherited Riemannian structure. In this case, one also likes to know whether it is a physical measure, i.e., there is a subset
 $B\subset U$ with positive Lebesgue measure of the whole manifold such that
\begin{equation}\label{limit2}
\lim_{n\to\infty} \frac{1}{n}\sum_{i=0}^{n-1} \phi\big(f^i(x)\big)=\int_M\phi \;d\mu,\quad\text{for } x\in B
\end{equation} for any continuous function $\phi: M \to \mathbb{R}$. This is true when $f$ has both positive Lyapunov exponents and negative Lyapunov exponents, but no zero Lyapunov exponents, following from the absolute continuity of the stable foliation,  see \cite{P}, \cite{KS}, \cite{Pu}. Note that the property (\ref{limit2}) does not follow from Birkhoff's ergodic theorem since $\mu$ is supported on $\Lambda$ and its support may have zero Lebesgue measure.

\vskip0.05in
In the recent decades, much progress has been made on the existence of SRB measures for finite dimensional dynamical systems beyond uniform hyperbolicity. Here we mention some of the works in the directions of non-uniformly hyperbolic systems and partially hyperbolic systems.

\vskip0.05in
For non-uniformly hyperbolic systems,  the results on SRB measures were obtained by Pesin\cite{P} for diffeomorphisms preserving smooth measures, Jakobson\cite{J} for the quadratic family $f_a(x) = 1-ax^2$,  Benedicks and Young \cite{BY} for H\'enon attractors, and Wang and Young for  rank-one attractors with applications to  slow-fast differential equations \cite{GY} and two dimensional differential equation with dissipative homoclinic loops \cite{WO}.

\vskip0.05in
For partially hyperbolic systems, Pesin and Sinai \cite{PS} considered the diffeomorphism $f$ with an attractor having a dominated splitting into  strong-unstable subbundle (uniformly expanding) and  a center-stable subbundle. They proved that if the center direction is nonuniformly contracting, then $f$ has an SRB measure.
Bonatti and Viana \cite{BV} studied the partially hyperbolic diffeomorphism $f$  having an attractor with an invariant splitting
of the tangent bundle into a strong-unstable subbundle (uniformly expanding) and a center subbundle, dominated by the unstable bundle, and proved that if the central direction  is mostly contracting, then $f$ has SRB measures. In \cite{ABV}, Alves, Bonatti, and Viana considered the diffeomorphism $f$ having a compact invariant subset $K$ which has a dominated splitting into a strong-stable subbundle (uniformly contracting) and a center-unstable subbundle. They proved that if the diffeormophism $f$  has positive central exponents on a subset of $K$ with positive Lebesgue measure, then $f$ has SRB measures. Cowieson and Young \cite{CY} studied the diffeomorphism $f$ having an attractor with a dominated splitting  into a strong-stable subbudle (uniformly contracting), a center subbundle, and a strong unstable subbundle(uniformly expanding) and proved that if the center bundle is one dimensional, then $f$ has SRB measures. In \cite{LiuL}, Liu and Lu considered the diffeomorphism $f$ with an attractor having an invariant splitting into a strong unstable subbundle (uniformly expanding) and a center-stable subbundle and proved that if the center direction is not expanding, then $f$ has an SRB measure.

\vskip0.05in
A natural question is what one can say about SRB measures for infinite dimensional dynamical systems. Due to the absence of a notion of Lebesgue measure in infinite dimensional spaces, there is no direct generalization of SRB measures to infinite dimensional systems. However, the definition of SRB measures given by Eckmann and Ruelle \cite{ER} can be extended to infinite dimensional systems when the relevant unstable manifolds are defined and finite dimensional. If there is such a measure, what is the analog of the observability result? Other challenges are: (i) the infinite dimensional dynamical systems generated, for example, by parabolic PDEs are not invertible and (ii) the phase space is not locally compact.

\vskip0.05in
The only result that we are aware of for infinite dimensional systems is the recent work by Lu, Wang, and Young \cite{LWY} for parabolic PDEs undergoing a Hopf bifurcation driven by a periodic forcing.  In the absence of forcing, a limit cycle emerges from generic supercritical Hopf bifurcations. The periodic forcing turns this limit cycle into an attractor lying on a two-dimensional center manifold $W^c$. They showed that under certain conditions this attractor has an SRB measure  and a positive Lyapunov exponent almost everywhere. They also proved the absolute continuity of a codimension-$2$ strong stable foliation $W^{ss}$. As a result, for Leb-a.e. initial points from a surface transversal to the leaves of the $W^{ss}$-foliation, roughly parallel to $W^c$, the long term statistic behaviors of the solutions  are described by the SRB measure.

\vskip0.05in

In this  paper, we study the existence of SRB measures and their properties for infinite dimensional dynamical systems. To be more precise, we consider a $C^2$ map $f$ on a separable Hilbert space $\mb H$. Suppose that $f$ has an attractor $\Lambda$ with basin  $U$ which is an open neighborhood of $\Lambda$ in  $\mb H$. We assume that  $f$ and its Fr\'{e}chet derivative $Df_x$ are injective for all $x\in \Lambda$ and the following holds
\begin{equation}\label{compactness}
\kappa(x):=\limsup_{n\to\infty}\frac1n\log\|Df^n(x)\|_{\kappa}<0,
 \end{equation} where $\|\cdot\|_{\k}$ is the Kuratowski measure of noncompactness of an operator. The typical such maps $f$ are the solution operators of dissipative parabolic PDEs or damped wave equations on bounded domains.

\vskip0.05in
We call  $f|_{\Lambda}$  to be  {\bf partially hyperbolic} if the following holds:
for every $x \in \Lambda$ there is a splitting
\[
H=E_x^u \oplus E_x^{cs}
\]
which depends continuously on $x \in \Lambda$  with $\dim E_x^u >0$ and satisfies that for every $x \in \Lambda$
\[
Df_{x} E_x^u=E_{fx}^u, \ \ \ Df_{x} E_x^{cs} \subset E_{fx}^{cs}
\]
and
\begin{equation}\label{E:PartialHyperbolic}
\left\{\begin{array}{ll}
|Df_{x} \xi| \geq e^{\lambda_0} |\xi|, \ \ \ &\forall\ \xi \in  E_x^u, \\
|Df_{x}\eta| \leq  |\eta|, \ \ \ &\forall\ \eta \in  E_x^{cs},
\end{array}
\right.
\end{equation}
where $\lambda_0>0$ is a constant.

\vskip0.05in
We say $f|_{\L}$ is {\bf uniformly hyperbolic} if $f|_{\L}$ is partially hyperbolic and it satisfies that
\begin{equation}\label{E:PartialHyperbolic}
\left\{\begin{array}{ll}
|Df_{x} \xi| \geq e^{\lambda_0} |\xi|, \ \ \ &\forall\ \xi \in  E_x^u, \\
|Df_{x} \eta| \leq  e^{-\l_0}|\eta|, \ \ \ &\forall\ \eta \in  E_x^{cs},
\end{array}
\right.
\end{equation}
for some $\l_0>0$.

\vskip0.05in
Our main results can be summarized as follows. The precise statement will be given in next section.

\vskip0.1in
\noindent{\bf Theorem}. {\it Let $f$ be a $C^2$ map on a separable Hilbert space $\mb H $. Suppose that $f$ has an attractor $\Lambda$  with basin $U$. We assume that  $f$ and its Fr\'{e}chet derivative $Df_x$ are injective for all $x\in \Lambda$ and \ref{compactness} holds. Then, we have the following
\begin{itemize}
\item[(i)] {\bf Existence:} If $f|_\L$ is partially hyperbolic, then there exists at least one SRB measure.
\item[(ii)] {\bf Countability:}
    There are at most countably many  {\it ergodic hyperbolic SRB measures} whose supports are contained in $\Lambda$.
\item[(iii)] {\bf Uniqueness and Mixing:}
If $f|_\L$ is uniformly hyperbolic, topologically mixing and the splitting is H\"older continuous on $x\in \Lambda$, then there exists a unique SRB measure which is mixing.
\item[(iv)] {\bf Finiteness:} If $f|_{\L}$ is uniformly hyperbolic, non-wandering, and the splitting is H\"older continuous on $x\in \Lambda$, then there are finitely many ergodic SRB measures.
\item[(v)] {\bf Observability:}
 For a given hyperbolic SRB measure $\mu$, up to a $\mu$-null set, there exist at most countably many ergodic attractors, $(K_i,\nu_i)$, whose basin $U'_i$ contains an observable set and $\nu_i$ is the normalization of $\mu|_{K_i}$.
\end{itemize}
}

\vskip0.1in

\noindent{\bf Remark:} { When $f$ is a diffeomorphism on a compact manifold, property (i)
was proved in \cite{LiuL}, property (ii) and (v) was proved in \cite{Pu}, and property (iii) and (iv) was proved in \cite{Bowen74}. A sufficient condition to insure that the hyperbolic splitting is H\"older continuous on $x\in \Lambda$ in infinite dimensional space is that $(f|_\L)^{-1}$ is Lipschitz continuous. Note that the solution operators generated by dissipative wave equations satisfy this Lipschitz continuity.
Property (v), the observability, follows from the absolute continuity of stable foliations. Once one has an SRB measure $\mu$, using a recent result by Li and Shu \cite{LSh}, Pesin's entropy formula holds for this infinite dimensional system
\[h_{\mu}(f)=\int \sum m_i(x)\l_i^+(x) du (x), \]
where $h_\mu(f)$ is the metric entropy of $(f, \mu)$ and $m_i(x)$ is the multiplicity of the positive Lyapunov exponent $\l_i(x)$. Thus, with a positive entropy, Lian and Young's result implies that the hyperbolic system has a horseshoe.
The existence of SRB measures also yields that the partially hyperbolic attractor is chaotic and contains a full weak horseshoe following from the recent result by Huang and Lu \cite{HuangLu} since the entropy is positive.

By a {\it full horseshoe of two symbols} we mean that there exist  subsets $U_1,U_2$ of Hilbert space $\mb H $  such that the following
properties hold
\begin{enumerate}
\item $U_1$ and $U_2$ are non-empty bounded closed subsets of $\mb H $ and
$d(U_1,U_2)>0$.

\item  there exists constant $b>0$ and $J\subset  \mathbb{N}_0$ such that the limit
\[\lim_{m\rightarrow +\infty}\frac{1}{m}|J\cap \{0,1,2,\cdots, m-1\}|\] exists and is larger than or equal to $b$
(positive density), and for any $s\in
\{1,2\}^{J}$, there exists $x_s\in \Lambda$ with
$f^j(x_s)\in U_{s(j)}\cap \Lambda$ for any
$j\in J$.
\end{enumerate}

Finally, the unstable dimensions for dissipative parabolic equations and damped wave equations on a bounded domain are finite.

\vskip0.05in
 We organize this paper as follows. In Section \ref{S:Setting}, we first give basic concepts and notations. Then, we state our main results. The proof of the existence of SRB measure is given in Section \ref{S:ExistSRB}. In Section \ref{S:ErgodicAttractors}, we formulate and prove the absolute continuity of stable foliations, which is the main technical result used in the proof of Theorem \ref{T:SRBCont} and \ref{T:ErgodicAttractor}. In Section \ref{S:UniFiniSRB}, we prove the uniqueness of SRB measure when the system is topologically mixing and the finiteness of SRB measures when the system is non-wandering. Lyapunov charts, symbolic dynamics and thermodynamical formalism, equilibrium states, and Ruelle inequality and entropy formula are given in the appendices.

\section*{Acknowledgment} We thank Professor Lai-Sang Young for her valuable comments and suggestions on this manuscript.

 \section{Settings and Main Results}  \label{S:Setting}
 Let $(\mb H,<\cdot,\cdot>)$ be a separable Hilbert space, $f:\mb H \to \mb H$ be a $C^2$ map. And let $Df_x$ be the Fr\'{e}chet derivative of $f$ at point $x\in \mb H$.
The conditions below are assumed throughout:
 \begin{itemize}
 \item[C1)] $f$ is injective;
 \item[C2)] There exist an $f$-invariant compact set $\Lambda$, on which $Df_x$ is (i) injective,\\
  and (ii) for all $x\in \Lambda$ $$\kappa(x):=\limsup_{n\to\infty}\frac1n\log\|Df^n(x)\|_{\kappa}<0,$$ where $\|\cdot\|_{\k}$ is the Kuratowski measure of noncompactness of an operator;
 \item[C3)] $\Lambda$ is an attractor with basin $U$ i.e. $U$ is an open neighborhood of $\Lambda$ and $$\cap_{n\ge0}f^n(U)=\Lambda.$$
 \end{itemize}

 Recall that, for a linear operator $T$, $\|T\|_\k$ is defined to be the infimum of the set of numbers $r>0$ where $T(B)$, $B$ being the unit ball, can be covered by a finite number of balls of radius $r$. Since $\|T_2\circ T_1\|_\k\le\|T_2\|_\k\|T_1\|_\k$ and $\|Df\|_{\k}\le \|Df\|$ is uniformly bounded on $\L$, the limit in the definition of $\k(x)$ exists for any $x\in \L$ and is a measurable function. And by definition, $\|T\|_{\k}=-\infty$ if $T$ is compact. The typical such maps $f$ are the solution operators of dissipative parabolic PDEs or damped wave equations on bounded domains.

 \begin{remark}\label{R:WeakerCondition}
 Note that some well-known results follow from the assumptions above immediately:
 \begin{itemize}
 \item[(i)] By the compactness of $\L$, there is always an $f$-invariant probability measure supported on $\L$,  which we denote by $\mu$.
  \item[(ii)] By applying the Multiplicative Ergodic Theorem, there is a full measure set $\L'\subset \L$, on which the notion of Lyapunov exponents can be introduced, we refer the reader to \cite{R} and \cite{LL} for details and will state a simplified version in a later section for completeness.
\item[(iii)] Both conditions C2) (ii) imply that, for all $x\in\L'$, there are at most finitely many non-negative Lyapunov exponents.
\item[(iv)] Attractors are important because they capture the asymptotic behavior of large
sets of orbits. In general, $\Lambda$ itself tends to be relatively small (compact and of finite
Hausdorff dimension) while its attraction basin, which by definition contains an open set, is
quite visible in the phase space. Notice that our attractors are not necessarily
global attractors in the sense of \cite{Hale} and \cite{Te}.
 \end{itemize}
 \end{remark}

 With the Lyapunov exponents, we define an $f$-invariant measure $\mu$ to be {\bf hyperbolic} if for $\mu$-a.e. $x\in \L$, there is no zero Lyapunov exponent and there is at least one positive Lyapunov exponent.
 For given $x\in \Lambda$, we define the stable and unstable set of $x$ as the following:
 \begin{align*}
 W^s(x)&=\{y\in \mb H | |f^n(y)-f^n(x)|\to 0  \text{ exponentially  as } n\to +\infty\},\\
  W^u(x)&=\{y\in \mb H | f^{-n}(y) \text{ exists }, \forall n\in \mb N,  |f^{-n}(y)-f^{-n}(x)|\to 0  \text{ exponentially  as } n\to +\infty\}.
 \end{align*}
 By C3), we have $ W^u(x) \subset \Lambda$ for all $x\in\Lambda$.

 In general, $ W^s(x)$ may not be a manifold, and $W^u(x)$ may be an immersed manifold rather than an embedded one. To avoid these disadvantages, one can study the local invariant manifolds defined below:
 \begin{align*}
 & W^{s}_{r(x)}(x)=\{y\in B(x,r(x))|\  |f^n(y)-f^n(x)|e^{-n\l(x)}\le C(x), n\ge 0\},\\
 & W^{u}_{r(x)}(x)=\{y\in B(x,r(x))|\  f^{-n}(y) \text{ exists for all } n\in \mb N, |f^{-n}(y)-f^{-n}(x)|e^{-n\l(x)}\le C(x), n\ge 0\},
 \end{align*}
   where $B(x,r)$ is the $r$-ball centered at $x$, $r$ and $C$ are positive tempered functions and $\l$ is an $f$-invariant function. In particular, given a hyperbolic measure $\mu$, there are measurable tempered functions $r,C: \L\to \mb R^+$ such that for $\mu$-a.e. $x\in \L$, $ W^{u}_{r(x)}$ and $ W^{s}_{r(x)}$ are embedded discs with well controlled distortions. Moreover, $ W^{u}_{r(x)}$ has finite dimension, and $ W^{s}_{r(x)}$ has finite co-dimension. These results are well established in finite dimension case, for which we refer to \cite{P} and \cite{Pu} for details. The results used in this paper are mainly due to \cite{LY} and \cite{LL}. \\

   Under the current setting, one quick observation is that, for $\mu$-a.e. $x$
   $$ W^u(x)=\bigcup_{n=0}^{+\infty}f^n_{f^{-n}(x)}( W^{u}_{r(f^{-n}(x))}(f^{-n}(x))),$$
    which implies that $ W^u(x)$ is an immersed manifold, since $ W^u_{r(x)}$ is finitely dimensional and $f$ is injective and differentiable.

    By the definition of $ W^u(x)$, it is obvious that $\{ W^u(x)\}_{x\in \L}$ form a foliation,  i.e. $ W^u(x)\cap  W^u(y)\neq \emptyset$ if and only if $ W^u(x)= W^u(y)$. So, up to a $\mu$-null set, $\bigcup_{x\in \L} W^u(x)$ form a partition of $\Lambda$. Unfortunately, this partition may be not measurable. We need to introduce the following concepts:

 \begin{definition}\label{D:Subordinate}
 Let $\mu$ be an $f$-invariant  Borel probability measure on $\Lambda$. A measurable partition ${\mathcal P}$
of $\Lambda$ is said to be {\it subordinate to the unstable manifolds} with respect to $\mu$ if, for $\mu$-a.e. $x \in \Lambda$, one has that ${\mathcal P}(x) \subset W^u(x)$ (here ${\mathcal P}(x)$ denotes the element of ${\mathcal P}$ which contains $x$) and it contains an open neighborhood of $x$ in $ W^u(x)$ (endowed with the submanifold topology).
\end{definition}

\begin{definition}\label{D:SRB}
 An $f$-invariant  Borel  probability measure $\mu$ on $\Lambda$ is called an {\bf SRB measure} if for every measurable partition ${\mathcal P}$
of $\Lambda$  subordinate to the unstable manifolds with respect to $\mu$ one has
$$
\mu_x^{\mathcal P} \ll {\rm Leb}_x
$$
for $\mu$-a.e.  $x \in \Lambda$, where $\mu_x^{\mathcal P}$ denotes the conditional measure of $\mu$ on ${\mathcal P}(x)$ and Leb$_x$ denotes the Lebesgue measure on $W^u(x)$ induced by the inner product of $H$.
\end{definition}

    \begin{theorem}\label{T:SRBExist}
    If $f|_\L$ is partially hyperbolic, then there exists at least one SRB measure of $f$ with support in $\L$.
    \end{theorem}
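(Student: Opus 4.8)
\emph{Strategy.} I would build the SRB measure by the classical recipe --- push forward a finite-dimensional Lebesgue measure along an admissible unstable disc, Cesàro-average, pass to a weak-$*$ limit --- as in Pesin--Sinai, Bonatti--Viana, and Liu--Lu \cite{LiuL}, while coping with the two infinite-dimensional obstructions: $\mb H$ is not locally compact, and the unstable leaves, though finite dimensional (here $E^u$ is uniformly expanding, and $\k(x)<0$ rules out an infinite-dimensional expanding subspace), sit inside the non-compact space $\mb H$.

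\emph{Cone field and admissible discs.} Using continuity of the splitting $H=E^u_x\oplus E^{cs}_x$ on the compact set $\L$, the local constancy and finiteness of $d(x):=\dim E^u_x$, and uniform continuity of $x\mapsto Df_x$ near $\L$, I would first fix small $\a,L_0>0$ and an open neighborhood $V\supset\L$ with $f(V)\subset U$, carrying a continuous field of $d$-dimensional cones $\mc C^u_\a(x)\supset E^u_x$ with $Df_x\,\mc C^u_\a(x)\subset\mc C^u_\a(fx)$ and $|Df_x\xi|\ge e^{\l_0/2}|\xi|$ for $\xi\in\mc C^u_\a(x)$ and $x\in V\cap f^{-1}V$. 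Call a $C^{1+{\rm Lip}}$ embedded $d$-disc $D\subset V$ \emph{admissible} if $T_yD\subset\mc C^u_\a(y)$ for all $y\in D$ and its second fundamental form is bounded by $L_0$; a small piece of the local strong-unstable manifold $W^u_{r(p)}(p)$ at any $p\in\L$ (which exists by partial hyperbolicity, see \cite{LY},\cite{LL}) is admissible and lies in $\L$. I would then fix such a disc $D_0$ and let $m$ be its normalized $d$-dimensional Riemannian volume.

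\emph{Persistence, distortion, and the limit measure.} Since $f$ is $C^2$ with $Df,D^2f$ bounded on $V$, $Df_x$ injective on $\L$, and the cone uniformly expanded, a graph-transform argument shows $D_n:=f^n(D_0)$ is admissible for all $n$ and $f^n|_{D_0}$ is a diffeomorphism onto $D_n$; let $J_n(y):=|\det(Df^n_y|_{T_yD_0})|>0$ be the unstable Jacobian. Summing the one-step bounds $|\log J_1(f^ky)-\log J_1(f^kz)|\le\mathrm{const}\cdot d_{D_{k+1}}(f^{k+1}y,f^{k+1}z)$ against $\sum_k e^{-k\l_0/2}$ yields $\d_0>0$ and $C_0\ge1$, independent of $n$, with $C_0^{-1}\le J_n(y)/J_n(z)\le C_0$ whenever $d_{D_n}(f^ny,f^nz)\le\d_0$. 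I would set $\mu_n:=\frac1n\sum_{k=0}^{n-1}f^k_*m$; since $\L$ is an attractor and $\k(x)<0$ forces asymptotic compactness, $K:=\overline{\bigcup_{k\ge0}f^k(\overline{D_0})}$ is compact, so some subsequence $\mu_{n_j}\rightharpoonup\mu$ weakly-$*$ on $K$, with $f_*\mu=\mu$ and $\mathrm{supp}\,\mu\subset\L$.

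\emph{Absolute continuity of the conditionals, and the main obstacle.} To show $\mu$ is SRB, I would observe that the distortion bound says $f^n_*m$ disintegrates over the $\d_0$-plaques of $D_n$ with conditional densities (relative to normalized plaque volume) in $[C_0^{-1},C_0]$, while forward iteration contracts $\mc C^u_\a$ onto $E^u$ exponentially and $D_n\to\L$, so on $\d_0$-plaques $D_{n_j}$ converges in the $C^1$ sense to pieces of genuine unstable manifolds. Taking a measurable partition $\mc P$ subordinate to the unstable manifolds of $\mu$ with plaques of size $\le\d_0/2$, one checks that for $\mu$-a.e.\ $x$ the element $\mc P(x)$ is shadowed by a plaque of $D_{n_j}$, and that the uniform density bounds, being preserved under the weak-$*$ limit once the plaque geometry converges, give $\mu^{\mc P}_x\ll\mathrm{Leb}_x$ with density $\le C_0$; finally, absolute continuity of the conditionals for one such partition implies it for every partition subordinate to the unstable manifolds, giving Definition \ref{D:SRB}. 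The main obstacle is precisely this last step: making rigorous that plaque-wise absolute continuity with uniformly bounded densities survives the weak-$*$ limit when the finite-dimensional plaques live in the non-locally-compact $\mb H$. This forces one to control simultaneously the intrinsic geometry of $D_n$ (curvature, injectivity radius, Jacobian distortion) and its transverse position, proving $C^1$-convergence on plaques to genuine strong-unstable leaves so the Rokhlin disintegrations of $\mu$ and of $f^{n_j}_*m$ can be matched leaf by leaf; the hypothesis $\k(x)<0$ restores the needed compactness along forward orbits, and injectivity of $f$ and of $Df_x$ on $\L$ keeps $f^n|_{D_0}$ a diffeomorphism with positive unstable Jacobians.
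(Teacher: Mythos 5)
Your overall strategy coincides with the paper's: push forward the normalized Lebesgue measure of a local unstable leaf, establish bounded distortion of the unstable Jacobian along leaves, take Cesàro averages and a weak-$*$ limit, and then verify absolute continuity of the conditionals on unstable plaques. Your distortion estimate (one-step Lipschitz bounds summed against a geometric series) is exactly the content of Lemma \ref{L:DeterminateLip} and Corollary \ref{C:DetUniform}, and your choice of $D_0$ as a piece of a local unstable manifold of a point of $\L$ already puts the whole construction inside the compact set $\L$ (by C3, $W^u(x)\subset\L$), so the appeal to $\k<0$ and asymptotic compactness for compactness of $K$ is superfluous, as is most of the cone-field/admissible-disc machinery.

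The genuine gap is the step you yourself flag as ``the main obstacle'': you assert that plaque-wise absolute continuity with uniformly bounded densities ``survives the weak-$*$ limit once the plaque geometry converges,'' but you give no argument, and none is automatic --- Rokhlin disintegrations are not weak-$*$ continuous, so uniform density bounds for $f^{n}_*m$ do not by themselves transfer to the conditionals of $\mu$. The paper closes exactly this gap, and does so without any ``plaques converge to leaves'' argument, because the initial leaf $L=W^u_\delta(\hat x)$ lies in $\L$ and its iterates are (up to the exponentially small boundary set $L_n$) unions of plaques $W^u_\rho(y)$ belonging to the fixed continuous family of Lemma \ref{L:UnstableManifold}. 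The time-$n$ conditional densities $h_n$ are then explicit ratios of Jacobian products which converge uniformly to a continuous function $h$ (Corollary \ref{C:DetUniform}); for a continuous test function $g$ the plaque average $\overline g(z)=\int_{W^u_\rho(y)}g\,h\,d\l^u_y$ is continuous on the flow box $V$ by continuity of $y\mapsto W^u_\rho(y)$, and the computation $\int_V g\,d(\mu|_V)=\int_V\overline g\,d(\mu|_V)=\int_V g\,d\nu$, using $\mu(\partial V)=0$ and $\l_L(L_n)\to0$, identifies $\mu|_V$ with the measure $\nu$ whose conditionals have density $h$ with respect to Lebesgue on each plaque. To make your proposal complete you would either have to reproduce this test-function argument (which requires the continuity in $x$ of the family $W^u_\delta(x)$, itself proved in Lemma \ref{L:UnstableManifold} by a graph-transform estimate) or supply an independent proof that your plaque-wise density bounds pass to the disintegration of the limit measure; as written, the conclusion $\mu^{\mc P}_x\ll{\rm Leb}_x$ is not established.
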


    \begin{theorem}\label{T:SRBCont}
    There are at most countably many  {\it ergodic hyperbolic SRB measures} of $f$ with support in $\L$.
    \end{theorem}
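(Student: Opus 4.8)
The plan is to attach to every ergodic hyperbolic SRB measure $\mu$ a finite-dimensional transversal disc on which the ergodic basin of $\mu$ has positive induced Lebesgue measure, then to transport all of these pieces onto a single \emph{countable} family of reference discs using the absolute continuity of the stable foliation, and finally to conclude from the finiteness of Lebesgue measure on a bounded finite-dimensional disc together with the disjointness of basins.

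First I would fix, for an ergodic hyperbolic SRB measure $\mu$, its \emph{ergodic basin} $B(\mu)=\{z\in U:\ \frac1n\sum_{k=0}^{n-1}\delta_{f^k z}\to\mu \text{ weak}^*\}$. Since $\L$ is compact, weak$^*$ convergence is detected by a countable dense family of continuous functions, so $B(\mu)$ is Borel, and $\mu(B(\mu))=1$ by Birkhoff's ergodic theorem. Two elementary facts are used throughout: (a) distinct ergodic invariant measures have disjoint ergodic basins, by uniqueness of weak$^*$ limits; and (b) $W^s(y)\subset B(\mu)$ whenever $y\in B(\mu)$, since points on a common stable set share the same Birkhoff averages. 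Next, fix a measurable partition $\mc P$ subordinate to the unstable manifolds and a Pesin block $\G$ with $\mu(\G)>0$ on which local stable and unstable manifolds have uniform size and $C^1$ bounds and $E^s\oplus E^u=\mb H$ uniformly. For $\mu$-a.e.\ $x$ the conditional $\mu_x^{\mc P}$ is a probability on $\mc P(x)\subset W^u(x)$ with $\mu_x^{\mc P}\ll\mathrm{Leb}_x$ and $\mu_x^{\mc P}(B(\mu))=1$; integrating $\mu_x^{\mc P}(\G)$ against $\mu$ then yields $\mu_x^{\mc P}(\G\cap B(\mu))=\mu_x^{\mc P}(\G)>0$ on a set of positive $\mu$-measure, and since $\mu_x^{\mc P}\ll\mathrm{Leb}_x$ this forces $\mathrm{Leb}_x(W^u(x)\cap\G\cap B(\mu))>0$. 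As $W^u(x)$ is an immersed manifold, hence a countable union of embedded finite-dimensional discs, some embedded disc $D=D_\mu\subset W^u(x)$ satisfies $\mathrm{Leb}_D(D\cap\G\cap B(\mu))>0$, and along $D\cap\G$ this disc is uniformly transversal to the local stable manifolds.

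Then I would fix once and for all a countable family $\{W_j\}_{j\in\mb N}$ of bounded embedded $C^1$ discs in $\mb H$ — for instance small pieces of finite-dimensional affine subspaces spanned by vectors from a fixed countable dense subset of $\mb H$, with rational centres and radii — chosen, working one unstable dimension at a time, to be dense among all finite-dimensional affine discs. Given $\mu$ and $D_\mu$ as above, choose a Lebesgue density point $y_0$ of $D_\mu\cap\G\cap B(\mu)$ inside $D_\mu$. By the uniform bounds on $\G$, for small $\delta$ a disc through a point near $y_0$ roughly parallel to $E^u(y_0)=T_{y_0}D_\mu$ meets each local stable manifold $W^s_{\mathrm{loc}}(y)$, $y\in D_\mu\cap\G\cap B(y_0,\delta)$, transversally in one point; taking $W_j$ from the countable family close enough to such a disc, the stable holonomy $h$ from $D_\mu\cap\G\cap B(y_0,\delta)$ to $W_j$ is well defined. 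By (b), $h$ maps $A:=D_\mu\cap\G\cap B(\mu)\cap B(y_0,\delta)$ into $W_j\cap B(\mu)$; by the absolute continuity of the stable foliation (Section~\ref{S:ErgodicAttractors}), the stable holonomy between transversals has Jacobian bounded below by a positive constant, so $\mathrm{Leb}_{W_j}(W_j\cap B(\mu))\ge\mathrm{Leb}_{W_j}(h(A))>0$ since $\mathrm{Leb}_D(A)>0$ at the density point. Thus every ergodic hyperbolic SRB measure ``marks'' at least one index $j$; and for each fixed $j$ the sets $W_j\cap B(\mu)$, taken over the marking measures $\mu$, are pairwise disjoint Borel subsets of the bounded disc $W_j$ of positive $\mathrm{Leb}_{W_j}$-measure, so only countably many such $\mu$ occur. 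Since $\mb N$ is countable, there are at most countably many ergodic hyperbolic SRB measures with support in $\L$, as claimed.

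The hardest part will be the consolidation in the third paragraph: the local stable and unstable manifolds, and the scales on which they are controlled, depend on $\mu$ and are uniform only on Pesin blocks, so the reference family $\{W_j\}$ must be chosen with care (this is exactly where separability of $\mb H$ is used, dimension by dimension), one must check that the chosen holonomy is genuinely defined on a $\mathrm{Leb}_{D_\mu}$-positive subset of $D_\mu\cap B(\mu)$, and — the crucial point — one needs that this holonomy does not collapse Lebesgue measure, which is precisely the two-sided Jacobian estimate furnished by the absolute continuity result of Section~\ref{S:ErgodicAttractors}. The remaining ingredients (Borel measurability of $B(\mu)$, $\mu(B(\mu))=1$, disjointness of basins, finiteness of $\mathrm{Leb}_{W_j}$ on a bounded disc) are routine.
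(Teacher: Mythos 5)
Your argument is correct and rests on the same two pillars as the paper's proof: absolute continuity of the stable foliation (Proposition \ref{P:AbsoluteContinuity}, via the holonomy maps of Lemma \ref{L:HolonomyMap}) and a separability-plus-$\sigma$-finiteness pigeonhole on finite-dimensional transversals; but the packaging is genuinely different. The paper deduces Theorem \ref{T:SRBCont} from Theorem \ref{T:ErgodicAttractor}: each ergodic hyperbolic SRB measure produces an ergodic attractor whose basin contains an observable set in the sense of Definition \ref{D:Observable}, these basins are pairwise disjoint, and Lemma \ref{L:ContObsSet} (a countable dense family of finite-dimensional subspaces in the Kato metric plus $\sigma$-finiteness of Lebesgue measure) yields countability. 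You bypass the ergodic-attractor construction and the observable-set formalism altogether: starting from the ergodic basin $B(\mu)$, the SRB property together with a Pesin block gives a positive Lebesgue trace of $B(\mu)$ on an unstable disc, the stable holonomy transports it onto one member of a countable, a priori chosen family of reference discs, and disjointness of basins plus finiteness of Lebesgue measure on a bounded finite-dimensional disc finish the count --- in effect you re-prove the content of Lemma \ref{L:ContObsSet} inline. What your route buys is a shorter, more self-contained proof of countability (it needs neither the equivalence-class decomposition carried out in the proof of Theorem \ref{T:ErgodicAttractor} nor the observability notion); what the paper's route buys is that the same machinery simultaneously yields the stronger conclusion about ergodic attractors and physical observability. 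Two points you should make explicit when writing this up: (i) you need the holonomy to send positive-measure sets to positive-measure sets, which follows from Proposition \ref{P:AbsoluteContinuity} applied with the roles of the two transversals interchanged (null sets pull back to null sets), exactly the way the paper itself uses it in Section \ref{S:ExiErgodicAttractor}; (ii) the chosen reference disc $W_j$ must be realizable as a graph in the Lyapunov chart at a nearby block point with Lipschitz constant within the tolerance of Lemma \ref{L:HolonomyMap}, which your density-in-the-gap-metric selection does provide but which is where the choice of $\{W_j\}$ actually enters.
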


\begin{theorem}\label{T:SRBUnique}
If $f|_\L$ is uniformly hyperbolic, topologically mixing and the splitting is H\"older continuous on $x\in \Lambda$, then there exists a unique SRB measure of $f$ with support in $\L$, which is mixing.
\end{theorem}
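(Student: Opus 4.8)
The plan is to identify SRB measures with the equilibrium states of a Hölder-continuous geometric potential and then to invoke the thermodynamic formalism for uniformly hyperbolic systems developed in the appendices. Write $J^u(x)=\bigl|\det\bigl(Df_x|_{E^u_x}\bigr)\bigr|$ for the finite-dimensional unstable Jacobian and set $\psi=-\log J^u$. Since $f|_{\Lambda}$ is topologically mixing, $\Lambda$ is connected, so the continuous integer-valued function $x\mapsto\dim E^u_x$ is constant; and because $f\in C^2$ (hence $x\mapsto Df_x$ is locally Lipschitz) and the splitting $x\mapsto E^u_x$ is Hölder, the potential $\psi:\Lambda\to\mathbb R$ is Hölder continuous. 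Note also that $f|_{\Lambda}$ is a homeomorphism of the compact set $\Lambda$ (a continuous injection of a compact metric space), so the two-sided thermodynamic formalism is available, and it is a locally maximal uniformly hyperbolic set, hence expansive with the shadowing property.

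\emph{SRB measures are equilibrium states.} Under uniform hyperbolicity every $f$-invariant measure is hyperbolic, and for $\mu$-a.e.\ $x$ one has $\log J^u(x)=\sum_i m_i(x)\lambda_i^+(x)$, so Ruelle's inequality (Appendix) reads $h_\mu(f)\le\int\log J^u\,d\mu$, i.e.\ $h_\mu(f)+\int\psi\,d\mu\le 0$ for every invariant $\mu$; hence the topological pressure satisfies $P(f|_{\Lambda},\psi)\le 0$. By Theorem \ref{T:SRBExist} there is an SRB measure $\mu_0$ supported in $\Lambda$ (uniform hyperbolicity implies partial hyperbolicity), and by the entropy formula of Li--Shu \cite{LSh} it satisfies $h_{\mu_0}(f)=\int\log J^u\,d\mu_0$, so $h_{\mu_0}(f)+\int\psi\,d\mu_0=0$. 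Therefore $P(f|_{\Lambda},\psi)=0$, and an invariant measure is an equilibrium state for $\psi$ exactly when it attains equality in Ruelle's inequality; in particular \emph{every} SRB measure supported in $\Lambda$ is an equilibrium state for $\psi$.

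\emph{Uniqueness and mixing of the equilibrium state.} Topological mixing together with expansiveness and shadowing yields the specification property; equivalently, one codes $(\Lambda,f|_{\Lambda})$ by a topologically mixing subshift of finite type $(\Sigma,\sigma)$ through a bounded-to-one Hölder semiconjugacy $\pi$, as in the appendix on symbolic dynamics. The pulled-back potential $\psi\circ\pi$ is Hölder on $\Sigma$, so the Ruelle--Perron--Frobenius / Bowen thermodynamic formalism gives a unique equilibrium state $\tilde\mu$ for $\psi\circ\pi$, which is mixing (the SFT being topologically mixing); pushing down, $\mu:=\pi_*\tilde\mu$ is the unique equilibrium state for $\psi$ and is mixing (a factor of a mixing system). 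Combining with the previous step: any SRB measure supported in $\Lambda$ is an equilibrium state for $\psi$, hence equals $\mu$; conversely $\mu_0$ from Theorem \ref{T:SRBExist} is an equilibrium state, so $\mu=\mu_0$ is genuinely SRB. This establishes existence of a unique SRB measure and its mixing.

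The main obstacle is carrying out the thermodynamic formalism in the infinite-dimensional, non-locally-compact, non-invertible setting: one must produce the symbolic coding (or, equivalently, verify expansiveness, shadowing and specification for the uniformly hyperbolic attractor) and develop the transfer-operator theory for Hölder potentials without local compactness of $\mb H$ — this is precisely the content of the appendices, and the $C^2$-plus-Hölder-splitting hypotheses are exactly what guarantees the geometric potential $\psi$ has the regularity (``Bowen property'') that theory requires. A secondary technical point is checking that $\pi$ is bounded-to-one, so that it preserves entropy and pressure, which is what lets one transfer the uniqueness from $\Sigma$ down to $\Lambda$ and lift an arbitrary SRB measure on $\Lambda$ to an equilibrium state on $\Sigma$.
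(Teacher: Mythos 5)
Your proposal follows essentially the same route as the paper: you take the geometric potential $\psi=-\log\bigl|\det\bigl(Df|_{E^u}\bigr)\bigr|$, which is H\"older by the $C^2$ and H\"older-splitting hypotheses, use Ruelle's inequality together with the Li--Shu entropy formula and Theorem \ref{T:SRBExist} to get $P_f(\psi)=0$ and to identify SRB measures with equilibrium states, and then obtain uniqueness and mixing from the Markov-partition/symbolic-dynamics construction of a unique mixing Gibbs state for a H\"older potential, which is exactly the content of Proposition \ref{P:GibState0} and the appendix. One minor slip worth noting: topological mixing does not imply that $\Lambda$ is connected (a mixing subshift of finite type is totally disconnected); the constancy of $\dim E^u$ you need follows instead from the $Df$-invariance of the splitting combined with its continuity (local constancy of the dimension) and topological transitivity, or one simply restricts to components as the paper does.
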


If we assume $f|_{\L}$ is non-wondering which is weaker than the  topologically mixing, there are only finitely many  ergodic SRB measures of $f$ with support in $\L$. Recall that

\begin{definition}\label{D:NonWandering}
A point $x\in\L$ is $f|_{\L}$- {\em non-wandering} if for any open neighborhood $U$ of $x$
$$(U\cap \L)\cap \cup_{n>0}f^{n}(U\cap \L)\neq \emptyset.$$
And $f|_{\L}$ is {\em non-wandering} if every $x\in \L$ is $f|_{\L}$- {\em non-wandering}.
\end{definition}

We have the following finiteness on number of  SRB measures.
\begin{theorem}\label{T:SRBFinite}
If $f|_{\L}$ is uniformly hyperbolic, non-wandering, and the splitting is H\"older continuous on $x\in \Lambda$, then there are finitely many ergodic SRB measures of $f$ with support in $\L$.
\end{theorem}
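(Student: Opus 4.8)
The plan is to reduce Theorem~\ref{T:SRBFinite} to the uniqueness statement of Theorem~\ref{T:SRBUnique} by means of the spectral decomposition of the non-wandering uniformly hyperbolic set $\Lambda$. First I would record the uniformly hyperbolic structure on $\Lambda$: since $f|_\Lambda$ is uniformly hyperbolic and $\Lambda$ is compact, a simplified (uniform) version of the Lyapunov-chart constructions of the appendix produces local stable and unstable manifolds $W^s_\varepsilon(x)$, $W^u_\varepsilon(x)$, $x\in\Lambda$, of uniform size, continuous in $x$, with $\dim W^u_\varepsilon(x)<\infty$ and $\operatorname{codim} W^s_\varepsilon(x)<\infty$, satisfying the usual contraction/expansion estimates. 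Since $f$ and $Df_x$ are injective along $\Lambda$, the map $f|_\Lambda$ is a homeomorphism of $\Lambda$, so the non-invertibility of $f$ on $\mathbb H$ causes no trouble, and the Kuratowski condition C2(ii) passes to any power $f^{m}$. From these ingredients I would obtain local product structure on $\Lambda$, namely $[x,y]:=W^s_\varepsilon(x)\cap W^u_\varepsilon(y)\in\Lambda$ whenever $d(x,y)$ is small, together with the shadowing lemma on $\Lambda$; equivalently, the finite-symbol symbolic dynamics built in the appendix applies.

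Next I would invoke the spectral decomposition. Because $f|_\Lambda$ is non-wandering, $\Lambda$ coincides with its own non-wandering set under $f|_\Lambda$, so Smale's spectral decomposition theorem --- which uses only compactness of $\Lambda$, local product structure and shadowing, all from the previous step, and is reflected on the symbolic side by the decomposition of a subshift of finite type into its irreducible components --- gives $\Lambda=\Omega_1\sqcup\cdots\sqcup\Omega_N$, a finite disjoint union of compact $f$-invariant sets, each relatively open and closed in $\Lambda$ and topologically transitive; finiteness of $N$ follows from compactness of $\Lambda$ and relative openness of the pieces. I would then observe that each $\Omega_i$ is itself an attractor: if $x\in\Omega_i$ and $y\in W^u(x)$, then $y\in\Lambda$ by C3, hence $y\in\Omega_k$ for some $k$; if $k\ne i$ then $y\in W^u(\Omega_i)\cap W^s(\Omega_k)$, which is a wandering point, contradicting the hypothesis. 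Thus $W^u(\Omega_i)=\Omega_i$, and a trapping neighborhood $U_i$ with $\bigcap_{n\ge0}f^n(U_i)=\Omega_i$ is constructed in the standard way. Finally, splitting $\Omega_i=X_{i,0}\sqcup\cdots\sqcup X_{i,m_i-1}$ into the cyclically permuted pieces on which $g_i:=f^{m_i}$ is topologically mixing, I note that $g_i|_{X_{i,0}}$ satisfies C1--C3 and is a uniformly hyperbolic (rate $m_i\lambda_0$), topologically mixing attractor whose splitting $E^u\oplus E^{cs}$ is still H\"older continuous.

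The reduction is then as follows. Let $\mu$ be any ergodic SRB measure of $f$ with support in $\Lambda$ (by uniform hyperbolicity it is automatically hyperbolic). Since the $\Omega_i$ are disjoint, closed, $f$-invariant and cover $\operatorname{supp}\mu$, ergodicity forces $\operatorname{supp}\mu\subseteq\Omega_i$ for a unique $i$; $f$-invariance gives $\mu(X_{i,j})=1/m_i$ for all $j$, and $\mu_i:=m_i\,\mu|_{X_{i,0}}$ is $g_i$-invariant and, by the standard argument, $g_i$-ergodic. Because $E^u$ is unchanged under passing to $f^{m_i}$, the (local) unstable manifolds --- and hence the notion of SRB measure --- are the same for $g_i$ as for $f$; restricting a measurable partition subordinate to the unstable manifolds to $X_{i,0}$ and normalizing exhibits $\mu_i$ as an SRB measure of $g_i|_{X_{i,0}}$. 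By Theorem~\ref{T:SRBUnique} applied to $g_i|_{X_{i,0}}$, this $\mu_i$ is the unique SRB measure there, whence $\mu=\tfrac1{m_i}\sum_{j=0}^{m_i-1}(f^j)_*\mu_i$ is completely determined by the index $i$. Therefore there are at most $N$ ergodic SRB measures of $f$ with support in $\Lambda$, which proves the theorem.

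The hard part will be Step~2: making the spectral decomposition --- local product structure and the shadowing lemma --- rigorous in the Hilbert-space setting, where the ambient space is not locally compact and $f$ is only injective off $\Lambda$. One must check that the infinite-codimensional local stable manifolds still meet the finite-dimensional unstable manifolds transversally, that the bracket $[\cdot,\cdot]$ lands back in $\Lambda$ (local maximality of the pieces), and that shadowing goes through with the uniform estimates. It is precisely for the concluding appeal to Theorem~\ref{T:SRBUnique} on each mixing piece that H\"older continuity of the splitting is needed, since it is what makes the geometric potential $-\log|\det(Df|_{E^u})|$ H\"older and hence the equilibrium-state/uniqueness machinery of the appendix applicable.
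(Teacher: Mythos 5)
Your proposal is correct and shares the paper's overall skeleton --- spectral decomposition of the non-wandering hyperbolic attractor into finitely many transitive basic sets, each fibred into cyclically permuted pieces on which a power of $f$ is topologically mixing, followed by an appeal to Theorem \ref{T:SRBUnique} on those pieces; the ``hard part'' you flag (local product structure, expansivity, shadowing, dense periodic points, and then Bowen's decomposition argument) is exactly what the paper carries out in Section \ref{S:AxiomA}, Lemmas \ref{L:SUManifolds}--\ref{L:SpectralDecomposition}. Where you genuinely diverge is the identification step. You restrict the ergodic SRB measure $\mu$ to a mixing piece $X_{i,0}$, argue that unstable manifolds (hence subordinate partitions and the SRB property) are unchanged under passing to $g_i=f^{m_i}$, and invoke the uniqueness in Theorem \ref{T:SRBUnique} directly; this is more elementary at that step, but it obliges you to verify that each basic set, and each mixing piece for the power map, satisfies C1)--C3) --- in particular the attractor condition --- which you address via $W^u(\Omega_i)=\Omega_i$ (note this follows even more simply than your wandering-point argument: if $y\in W^u(x)\cap\Omega_k$ with $x\in\Omega_i$, the backward orbit of $y$ stays in the compact invariant set $\Omega_k$, so it cannot converge to the disjoint set $\Omega_i$; hence $W^u(x)\subset\Lambda$ from C3 forces $W^u(x)\subset\Omega_i$). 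The paper instead proves Proposition \ref{P:SRBUniqueTransitive} by a thermodynamic detour: it never shows the restricted measure is SRB for $f^{n}$, but uses Pesin's entropy formula for SRB measures (Theorem \ref{T:EntropyFormula}, via \cite{LSh}), the relation $h_{\mu}(f)=\tfrac1n h_{\mu'_i}(f^{n})$ together with $\l_{j,n}=n\l_{j,1}$, and the uniqueness of the equilibrium state for the geometric potential (\ref{E:UniqueEntropyFormula}) to conclude $\mu|_{\L_i}=\tfrac1n\mu_i$. Your route buys a more transparent measure-theoretic argument and makes explicit a hypothesis-verification the paper leaves implicit; the paper's route buys a shorter identification that sidesteps checking the SRB property for powers and restrictions, at the cost of invoking the entropy-formula machinery. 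Both correctly conclude that the number of ergodic SRB measures is bounded by the number of basic sets.
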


In finite dimensional spaces, the SRB measure is a physical measure, that is, it  can  be ※observed§
in the sense that the set of points, whose orbits have a common asymptotic distribution or their distributions converge to the measure, has positive volume ( Lebesgue measure). Positive Lebesgue measure sets are used to characterize observable  events.

Due to the absence of a notion of Lebesgue measure in infinite dimensional spaces, we take a notion which was first used by Lu, Wang, and Young \cite{LWY}. The challenges are: (i) the infinite dimensional dynamical systems generated, for example, by parabolic PDEs are not invertible and (ii) the phase space is not locally compact.We introduce the following concept of observable set in infinite dimensional space.

 \begin{definition}\label{D:Observable}
 We call a subset $K$ of a separable Banach space $X$ being $observable$ if the following are satisfied:
\begin{itemize}
\item[1)] $K$ is a Borel set;
\item[2)] There exist an $x_0\in X$, a splitting $X=F\oplus E$ with $\ dim E<\infty$, and an open set $U\subset F$, and $\d>0$ such that for any $x\in U$ and $E'$ with $dim\ E'= dim\ E$ and $dist(E',E)<\d$, $m_{Exp_{x_0}(\{x\}+E')}(Exp_{x_0}(\{x\} +
    E')\cap K)>0$.
\end{itemize}
Here $Exp_{x_0}(\cdot)=x_0+\cdot$ is an affine map and $m_{Exp_{x_0}(\{x\}+E')}$ is a Lebesgue measure on finitely dimensional hyperplane $Exp_{x_0}(\{x\}+E')$, and
$$dist(E',E)=\max\left\{\sup_{v\in E'\cap S}\inf_{w\in E\cap S}|v-w|,\sup_{v\in E\cap S}\inf_{w\in E'\cap S}|v-w|\right\}$$ is the Kato's gap metric of subspaces, where $S$ is the unit sphere in $X$.
\end{definition}
\begin{definition}\label{D:ErgodicAttractor}
An ergodic attractor for $f$ is an $f$-invariant Borel set $K\subset H$ together with an $f$-invariant probability measure $\nu$ on $K$ such that for some observable set $K'\subset H$ the following holds
\begin{itemize}
\item[i)] For any $x\in K'$, $dist(f^n(x),K)\to 0$ as $n\to \infty$, i.e. $K'\subset Basin(K)$;
\item[ii)] $\nu$ is $f$-ergodic;
\item[iii)] For any continuous function $\phi:H\to \mb R$ and for every point $x\in K'$,
$$\lim_{n\to\infty}\frac1n\sum_{i=0}^{n-1}\phi(f^i(x))=\int_{K}\phi(x)\nu(dx)$$.
\end{itemize}
\end{definition}

\begin{theorem}\label{T:ErgodicAttractor}
 For a given hyperbolic SRB measure $\mu$ with support in $\L$, up to a $\mu$-null set, there exist at most countably many ergodic attractors, $(K_i,\nu_i)$, whose basin $K'_i$ contains an observable set and $\nu_i$ is the normalization of $\mu|_{K_i}$.
\end{theorem}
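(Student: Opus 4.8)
The plan is to carry over the ergodic-attractor argument of Pugh and Shub \cite{Pu} to the present non-invertible, infinite-dimensional setting, with Lebesgue measure on the ambient space replaced by the notion of observable set (Definition \ref{D:Observable}) and with the absolute continuity of the stable foliations proved in Section \ref{S:ErgodicAttractors} playing the role of Pesin's absolute continuity theorem. First I would take the ergodic decomposition $\mu=\int\nu\,d\hat\mu(\nu)$ of the given hyperbolic SRB measure. Since hyperbolicity passes to $\hat\mu$-a.e. component and absolute continuity of the conditional measures on unstable leaves (Definition \ref{D:SRB}) is inherited by ergodic components, $\hat\mu$-a.e. $\nu$ is an ergodic hyperbolic SRB measure; by Theorem \ref{T:SRBCont} there are only countably many such, so the decomposition is a countable sum $\mu=\sum_{i\ge 1}a_i\nu_i$ with $a_i>0$, $\sum_i a_i=1$. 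For each $i$ I would then fix an $f$-invariant Borel set $K_i$ with $\nu_i(K_i)=1$ and $\nu_j(K_i)=0$ for $j\neq i$, so that $\mu|_{K_i}=a_i\nu_i$ and $\nu_i$ is the normalization of $\mu|_{K_i}$, chosen moreover so that $K_i$ contains $W^u(x_0)$ for a point $x_0$ to be selected below; this is possible because, $\nu_i$ being an ergodic hyperbolic SRB measure, $W^u(x)\subset\mathrm{supp}\,\nu_i$ for $\nu_i$-a.e. $x$, so the (null, finite-dimensional) leaf $W^u(x_0)$ may be absorbed into the invariant set carrying the $\nu_i$-component. Since $\mu(\bigcup_i K_i)=1$, the countable family $\{(K_i,\nu_i)\}$ will be the claimed list of ergodic attractors ``up to a $\mu$-null set''.

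Next, fixing $i$ and writing $\nu=\nu_i$, $K=K_i$, I would let $\Gamma$ be the set of points generic for $\nu$, i.e. at which the Birkhoff averages of each $\phi$ in a fixed countable dense subset of $C(\L)$ converge to $\int\phi\,d\nu$; then $\nu(\Gamma)=1$, and since forward orbits stay in a bounded set every $x\in\Gamma$ is generic for all $\phi\in C(\mb H)$. I would pass to a Pesin block $\L_\ell$ with $\nu(\L_\ell)>0$ on which the local unstable discs $W^u_{\mathrm{loc}}(x)$ and the local stable discs $W^s_{\mathrm{loc}}(x)$ have uniform size and distortion bounds and depend continuously on $x$. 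Because $\nu$ is SRB, its conditional measures on unstable leaves are absolutely continuous, so for a suitable base point $x_0\in\L_\ell$ the set $A:=W^u_{\mathrm{loc}}(x_0)\cap\L_\ell\cap\Gamma\cap\mathrm{supp}\,\nu$ has positive $\mathrm{Leb}_{W^u(x_0)}$-measure. For every $y\in A$ and every $z\in W^s_{\mathrm{loc}}(y)$ one has $|f^n z-f^n y|\to 0$ exponentially, so $z$ is again $\nu$-generic, and $\di(f^n z,K)\le|f^n z-f^n y|\to 0$ since $f^n y\in\mathrm{supp}\,\nu\subset K$. Hence the thickened set $S:=\bigcup_{y\in A}W^s_{\mathrm{loc}}(y)$ is contained in $\mathrm{Basin}(K)$ and every point of $S$ satisfies condition (iii) of Definition \ref{D:ErgodicAttractor}.

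It then remains to see that $S$ is observable, and this is where I would invoke the absolute continuity of the stable foliation from Section \ref{S:ErgodicAttractors}: the leaves $\{W^s_{\mathrm{loc}}(y)\}_{y\in\L_\ell}$ have finite codimension, and the stable holonomy between two transversals roughly parallel to the unstable directions is absolutely continuous with Radon--Nikodym derivative bounded above and below on the block. Taking $x_0$ as the reference point, $E$ a finite-dimensional subspace close to $E^u_{x_0}$, $F$ a topological complement, a small open set $U\subset F$ and a small $\d>0$, each affine transversal $Exp_{x_0}(\{x\}+E')$ with $x\in U$ and $\di(E',E)<\d$ remains uniformly transverse to the leaves $W^s_{\mathrm{loc}}(y)$, $y\in\L_\ell$, and the stable holonomy from $W^u_{\mathrm{loc}}(x_0)$ onto it carries the positive-measure set $A$ to a set of positive $(\dim E)$-dimensional Lebesgue measure in that transversal. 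This is precisely Definition \ref{D:Observable}, so $S$ (or the Borel subset of its $\nu$-generic points) is an observable set contained in $\mathrm{Basin}(K)$; together with ergodicity of $\nu$ (condition (ii)) this shows $(K,\nu)$ is an ergodic attractor with the stated normalization property.

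The step I expect to be the main obstacle is exactly the observability argument of the preceding paragraph. Because $\mb H$ is not locally compact and $f$ is not invertible, ``positive volume'' must be read through finitely many finite-dimensional transversals, and one must show that the thickened set $S$ meets \emph{every} transversal in an entire Kato-gap neighborhood of the reference one in positive measure; this robustness is the content of the absolute continuity of the finite-codimension stable foliation, uniform over a Pesin block and stable under perturbation of the transversal, and it is the substantive technical input proved in Section \ref{S:ErgodicAttractors} (compare the codimension-$2$ case of \cite{LWY}). Granting it, the rest is the routine ergodic-theoretic bookkeeping of the Pugh--Shub argument, and since the index $i$ ranges over a countable set with $\mu(\bigcup_i K_i)=1$, we obtain at most countably many ergodic attractors $(K_i,\nu_i)$, which is the assertion of the theorem.
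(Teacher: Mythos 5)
Your per-component construction of an observable set in the basin (Pesin block, a positive $\mathrm{Leb}_{W^u}$-measure set $A$ of generic points on one unstable leaf, stable saturation, and then Proposition \ref{P:AbsoluteContinuity} to push $A$ onto every nearby affine transversal $Exp_{x_0}(\{x\}+E')$, which in the chart at $x_0$ is a graph in the class $\tilde{\mc V}$) is essentially the same mechanism the paper uses, and the step you single out as the main obstacle is handled there in exactly this way. The structural difference, and the genuine gap, is how you obtain the countable list of pieces. You invoke Theorem \ref{T:SRBCont} to make the ergodic decomposition of $\mu$ a countable sum; but in this paper Theorem \ref{T:SRBCont} is \emph{deduced from} the proof of Theorem \ref{T:ErgodicAttractor} (Section \ref{S:ExiErgodicAttractor} together with Lemma \ref{L:ContObsSet}: each ergodic hyperbolic SRB measure yields an ergodic attractor with observable basin, basins are disjoint, and disjoint observable sets are at most countable). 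So as written your argument is circular within the paper's logical order. It is repairable without \ref{T:SRBCont} — your own construction gives each ergodic component a basin containing an observable set, these basins are pairwise disjoint, and Lemma \ref{L:ContObsSet} then forces the decomposition to be atomic with countably many atoms — but that repair has to be made explicit. A second unproven ingredient is your assertion that $\hat\mu$-a.e. ergodic component of a hyperbolic SRB measure is again an SRB measure: in this infinite-dimensional setting the compatibility of the ergodic decomposition with disintegration along partitions subordinate to $W^u$ is not established anywhere in the paper and is not automatic; you rely on it to get the sets $A$ of positive leaf-Lebesgue measure for each component.

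The paper avoids both issues by never taking the ergodic decomposition. It works inside a single hyperbolic block $P$ of positive $\mu$-measure, partitions $\overline P$ into unstable plaques, discards the part where the conditional densities vanish (using only that $\mu$ itself is SRB), and declares two plaques equivalent when they are joined by a finite chain of common local stable leaves through $P$. Absolute continuity of the stable holonomy projects distinct positive-measure equivalence classes to disjoint positive-Lebesgue subsets of one unstable leaf, giving countability directly, and ergodicity of the normalized restriction to each saturated class $\L_n=\cup_{i\in\mb Z}f^i(\D_n)$ is proved by hand, by showing the forward, backward and two-sided Birkhoff averages of continuous functions are constant along unstable plaques and along stable leaves, hence constant on each class. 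So where you buy ergodicity for free from the decomposition at the cost of two unproved (and, in one case, circular) inputs, the paper pays for it with the equivalence-class and Birkhoff-average argument and keeps the proof self-contained. Finally, a small fixable slip: the reduction $\di(f^nz,K)\to 0$ should come from $y\in K$ (intersect $A$ with the invariant full-$\nu$-measure set $K$), not from $\mathrm{supp}\,\nu\subset K$, and the device of absorbing the leaf $W^u(x_0)$ into $K_i$ is unnecessary and risks spoiling $\nu_j(K_i)=0$.
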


A consequence of Theorem \ref{T:ErgodicAttractor} is the following:
Let $h_{\nu_i}(f)$ be the metric entropy of $f$ with respect to $\nu_i$.
Then, by applying results from \cite{LSh} and \cite{LY}, one obtains the following corollary:
\begin{cor}\label{C:EntropyAndHorseshoe}
For each ergodic attractor $(K_i,\nu_i)$, letting $(\lambda_k, m_k)$ be its Lyapunov spectrum, one has
$$h_{\nu_i}(f)=\sum m_k\l_k^+ >0, $$
and for any $\e>0$, there exists a horseshoe $\tilde K_i\subset K_i$ such that
$$h_{top}(f|_{\tilde K_i})>h_{\nu_i}(f)-\e,$$
where $h_{top}(f)$ is the topological entropy of map $f$.
\end{cor}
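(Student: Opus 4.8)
\textbf{Proof proposal for Corollary \ref{C:EntropyAndHorseshoe}.}

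The plan is to combine three ingredients: (a) each $\nu_i$ is a hyperbolic SRB measure, (b) the Pesin entropy formula for infinite-dimensional $C^2$ maps established by Li and Shu \cite{LSh}, which gives $h_{\nu}(f)=\int\sum m_k(x)\l_k^+(x)\,d\nu$ whenever $\nu$ has absolutely continuous conditional measures on unstable manifolds, and (c) the variational/horseshoe construction of Lian and Young \cite{LY} for systems with negative Kuratowski exponent. First I would verify that the hypotheses of \cite{LSh} apply to $(f,\nu_i)$: by Theorem \ref{T:ErgodicAttractor}, $\nu_i$ is the normalization of $\mu|_{K_i}$, and since $\mu$ is a hyperbolic SRB measure, so is each ergodic component $\nu_i$ — in particular $\nu_i$ has absolutely continuous conditional measures on the unstable manifolds $W^u(x)$, which are finite-dimensional by Remark \ref{R:WeakerCondition}(iii) and the discussion following Definition \ref{D:SRB}. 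The entropy formula then yields $h_{\nu_i}(f)=\int\sum_k m_k(x)\l_k^+(x)\,d\nu_i(x)$; by ergodicity of $\nu_i$ the integrand is $\nu_i$-a.e.\ equal to the constant $\sum_k m_k\l_k^+$ determined by the Lyapunov spectrum $(\l_k,m_k)$ of $(f,\nu_i)$, giving the displayed equality. Strict positivity $h_{\nu_i}(f)>0$ follows because $\nu_i$ is hyperbolic, hence has at least one positive Lyapunov exponent, so at least one term $m_k\l_k^+$ is strictly positive.

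For the horseshoe statement, I would invoke the main result of Lian and Young \cite{LY}: for a $C^2$ map $f$ on a Banach (Hilbert) space satisfying the standing compactness hypothesis \eqref{compactness} and admitting an ergodic hyperbolic invariant measure $\nu$ of positive entropy, for every $\e>0$ there is a compact, locally maximal, topologically transitive hyperbolic set (a ``horseshoe'') $\tilde K\subset \overline{\mathrm{supp}\,\nu}$ on which $f$ is conjugate to a subshift of finite type and with $h_{top}(f|_{\tilde K})>h_{\nu}(f)-\e$. Applying this to $\nu=\nu_i$ — whose support lies in $K_i\subset\L$ — produces the desired $\tilde K_i$; one needs only to note that the horseshoe produced by \cite{LY} can be taken inside any neighborhood of $\mathrm{supp}\,\nu_i$, and since $K_i$ is $f$-invariant and contains $\mathrm{supp}\,\nu_i$, after possibly intersecting with $K_i$ (or observing that the Pesin blocks used in the construction already sit inside $K_i$ up to a $\nu_i$-null set) we obtain $\tilde K_i\subset K_i$. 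Here one uses that the unstable manifolds of points in $\mathrm{supp}\,\nu_i$ are contained in $\L$ by C3), so the shadowing orbits in the horseshoe construction remain in $\L$, and in fact in $K_i$.

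The main obstacle is packaging the two cited theorems so that their hypotheses are literally met in our setting and so that the horseshoe lands inside $K_i$ rather than merely inside $\L$. For \cite{LSh} the issue is ensuring the absolute continuity hypothesis for each ergodic component $\nu_i$; this is immediate from the definition of SRB measure (Definition \ref{D:SRB}) being preserved under ergodic decomposition, but it is worth a sentence since the partition subordinate to unstable manifolds must be chosen compatibly. For \cite{LY} the subtle point is the localization of the horseshoe: the Lian–Young construction builds the horseshoe from a positive-measure Pesin block for $\nu_i$ together with a density/recurrence argument, and all the relevant pieces (the approximating periodic orbits, their stable/unstable manifolds, and the shadowing pseudo-orbits) are contained in an arbitrarily small neighborhood of $\mathrm{supp}\,\nu_i\subset K_i$; combined with $f$-invariance and closedness properties of $K_i$ one concludes $\tilde K_i\subset K_i$. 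No genuinely new estimate is required beyond these citations; the content of the corollary is the observation that Theorem \ref{T:ErgodicAttractor} supplies exactly the hyperbolic-SRB ergodic measures to which \cite{LSh} and \cite{LY} apply.
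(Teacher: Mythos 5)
Your proposal matches the paper's treatment: the paper states this corollary as a direct consequence of the Li--Shu entropy formula (Theorem \ref{T:EntropyFormula}) together with Lian--Young's horseshoe theorem, offering no further argument, which is exactly the route you take. Your extra remarks (that each ergodic component $\nu_i$ of the hyperbolic SRB measure is itself hyperbolic SRB, and that the horseshoe construction localizes inside $K_i$) merely flesh out details the paper leaves implicit.
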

 And the next result follows from \cite{HuangLu}.
\begin{cor}\label{T:WeakHorseshoe}
If $f|_\L$ is partially hyperbolic, then $f$ is chaotic on $\L$ and has a full horseshoe of two symbols.
\end{cor}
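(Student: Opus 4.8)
The plan is to reduce the statement to Theorem~\ref{T:SRBExist} together with the fact that an SRB measure of a partially hyperbolic system must have positive metric entropy, and then to apply the positive-entropy criterion of Huang and Lu \cite{HuangLu}. First, since $f|_\L$ is partially hyperbolic, Theorem~\ref{T:SRBExist} provides an SRB measure $\mu$ with support in $\L$. The unstable bundle satisfies $\dim E^u_x>0$ for every $x\in\L$ and $|Df_x\xi|\ge e^{\l_0}|\xi|$ for $\xi\in E^u_x$, while the complementary bundle is nonexpanding; hence by the cocycle property every nonzero vector of $E^u_x$ has Lyapunov exponent $\ge\l_0$, and every positive Lyapunov exponent of $\mu$ occurs inside $E^u$. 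In particular, for $\mu$-a.e.\ $x\in\L$,
\[
\sum_i m_i(x)\,\l_i^+(x)\ \ge\ (\dim E^u_x)\,\l_0\ \ge\ \l_0\ >\ 0 .
\]

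Next, I would upgrade this to positive entropy. Since $\mu$ is an SRB measure --- its conditional measures on the finite-dimensional unstable manifolds are absolutely continuous with respect to Lebesgue --- the Ledrappier--Young / Pesin entropy identity is available in the present infinite-dimensional setting by \cite{LSh} (see also \cite{LY} and \cite{LL}):
\[
h_\mu(f)=\int_\L \sum_i m_i(x)\,\l_i^+(x)\, d\mu(x).
\]
Combining with the inequality above gives $h_\mu(f)\ge\l_0>0$, and therefore, by the variational principle, $h_{top}(f|_\L)\ge h_\mu(f)>0$. (This is consistent with the fact that such a $\mu$ cannot be supported on a periodic orbit, on which the unstable conditional measure would be a Dirac mass rather than absolutely continuous.)

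Finally, $f|_\L$ is a continuous self-map of the compact metric space $\L$ with positive topological entropy, so by the result of Huang and Lu \cite{HuangLu} it is (Li--Yorke) chaotic and contains a full weak horseshoe of two symbols; that is, there exist disjoint bounded closed sets $U_1,U_2\subset\mb H$ with $d(U_1,U_2)>0$, a constant $b>0$, and a set $J\subset\mb N_0$ whose density exists and is $\ge b$, such that for every $s\in\{1,2\}^J$ there is $x_s\in\L$ with $f^j(x_s)\in U_{s(j)}\cap\L$ for all $j\in J$ --- which is precisely the asserted conclusion.

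Each step quotes a result already stated above, so the corollary is a short deduction. The one step where I expect genuine care is the second: one must check that the Ledrappier--Young entropy formula --- in particular the lower bound $h_\mu(f)\ge\sum_i m_i\l_i^+$, whose infinite-dimensional justification is the content of \cite{LSh} and \cite{LL} --- applies to the SRB measure produced by Theorem~\ref{T:SRBExist}, in a setting where $\mb H$ is not locally compact and $f$ need not be invertible. Granting that, the remaining steps go through at once.
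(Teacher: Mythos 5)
Your proposal is correct and takes essentially the same route as the paper: obtain an SRB measure from Theorem~\ref{T:SRBExist}, note that all Lyapunov exponents on $E^u$ are at least $\lambda_0>0$, invoke the entropy formula of \cite{LSh} (Theorem~\ref{T:EntropyFormula}) and the variational principle to get positive topological entropy, and then apply the Huang--Lu criterion \cite{HuangLu} to obtain chaos and the full horseshoe of two symbols. The paper states this only as an immediate consequence (see the remark following the corollary), so your write-up simply supplies the details of the same argument.
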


\noindent{\bf Remark:} With a positive entropy, Lian and Young's result implies that the hyperbolic system has a horseshoe.
The existence of SRB measures also yields that the partially hyperbolic attractor is chaotic and contains a full weak horseshoe following from the recent result by Huang and Lu \cite{HuangLu} since the entropy is positive.

By a {\it full horseshoe of two symbols} we mean that there exist  subsets $U_1,U_2$ of Hilbert space $\mb H $  such that the following
properties hold
\begin{enumerate}
\item $U_1$ and $U_2$ are non-empty bounded closed subsets of $\mb H $ and
$d(U_1,U_2)>0$.

\item  there exists constant $b>0$ and $J\subset  \mathbb{N}_0$ such that the limit
\[\lim_{m\rightarrow +\infty}\frac{1}{m}|J\cap \{0,1,2,\cdots, m-1\}|\] exists and is larger than or equal to $b$
(positive density), and for any $s\in
\{1,2\}^{J}$, there exists $x_s\in \Lambda$ with
$f^j(x_s)\in U_{s(j)}\cap \Lambda$ for any
$j\in J$.
\end{enumerate}

Finally, the unstable dimensions for dissipative parabolic equations and damped wave equations on a bounded domain are finite.

\section{Proof of Theorem \ref{T:SRBExist}.} \label{S:ExistSRB}

In this section, we assume that $f|_\L$ is uniformly partially hyperbolic and prove Theorem  \ref{T:SRBExist}, i.e., the existence of SRB measure.
\subsection{ Unstable manifold theorem for partially hyperbolic system}\label{S:SUManifoldUPH}
In this subsection, we state and prove a version of local unstable manifolds theorem for partially hyperbolic system,  part of which can be derived from the result of Section 9 in \cite{LL}.

\begin{lemma}\label{L:UnstableManifold}
  There exists a continuous family of $C^2$ embedded $k$-dimensional discs $\{W_\delta^u(x)\}_{x \in \Lambda}$ such that the following holds for each $x \in \Lambda$:
\begin{itemize}
\item[(1)] $W_\delta^u(x)=\exp_x\left({\rm Graph} (h_x)\right)$ where
$$
h_x:  E_x^u (\delta)  \to E_x^{cs}
$$
is a $C^{2}$ map with $h_x(0)=0$, $Dh_x(0)=0$, $\|Dh_x\| \leq \frac{1}{3}$, $\|D^2h_x\|$ being uniformly bounded in $x$ and $E_x^u (\delta)=   \{ \xi \in E_x^u: |\xi|<\delta\}$;

\item[(2)] $f W_\delta^u(x) \supset W_\delta^u(f(x))$ and $W^u(x)=\bigcup_{n \geq 1} f^n W_\delta^u(x_{-n})$
where $x_{-n}$ is the unique point in $\Lambda$ such that $f^nx_{-n}=x$;

\item[(3)] $d^u(y_{-n}, z_{-n}) \leq \gamma_0 e^{-n(\lambda_0-\varepsilon_0)} d^u (y, z)$ for any $y,\ z \in  W_\delta^u(x)$, where $d^u$ denotes the distance along the unstable discs, $y_{-k}$ is the unique point in $\Lambda$ such that $f^ky_{-k}=y$, $z_{-k}$ is defined similarly and $\gamma_0>0$, $0<\varepsilon_0 <<\lambda_0$ are some constants;

\item[(4)] there is $0<\rho <\delta$ such that, if  $W_\rho^u(x):=\exp_x\left({\rm Graph} (h_x|_{E_x^u (\rho)})\right)$ intersects $W_\rho^u(\bar{x})$ for $\bar{x} \in \Lambda$, then
$W_\rho^u(x) \subset W_\delta^u(\bar{x})$.
\end{itemize}
\end{lemma}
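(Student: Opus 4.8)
\emph{Proof strategy.} The plan is to construct $\{W^u_\d(x)\}_{x\in\L}$ by a Hadamard--Perron graph-transform argument run along inverse orbits, adapted to the uniformly partially hyperbolic, infinite-dimensional, non-invertible setting; in the non-uniform (tempered) form this is essentially Section~9 of \cite{LL}, and the only extra point here is that uniformity of the splitting plus compactness of $\L$ turns the tempered bounds into honest constants and hence yields a \emph{continuous} family. Two observations defuse the non-invertibility. First, by C1) $f$ is injective and $f\L=\L$, so $f|_\L$ is a continuous bijection of a compact metric space, hence a homeomorphism, and every $x\in\L$ has a unique inverse orbit $x_0=x,\ x_{-1},\ x_{-2},\dots$ depending continuously on $x$. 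Second, building \emph{unstable} discs only requires inverting $Df$ on the finite-dimensional, $Df$-invariant bundle $E^u$, where $A_x:=Df_x|_{E^u_x}\colon E^u_x\to E^u_{fx}$ is invertible with $\|A_x^{-1}\|\le e^{-\l_0}$; the (possibly non-surjective) operator $Df_x$ is never inverted on all of $H$.

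I would first pass to charts: since $H$ is Hilbert, $\exp_x(v)=x+v$, and $\tilde f_p:=\exp_{fp}^{-1}\circ f\circ\exp_p=(A_p\oplus B_p)+R_p$ (the splitting being $Df$-invariant), where $B_p:=Df_p|_{E^{cs}_p}$ has $\|B_p\|\le1$ and, by $f\in C^2$ and compactness of $\L$, $R_p(0)=0$, $DR_p(0)=0$, $\|D^2R_p\|\le M$ uniformly, so $\mathrm{Lip}(R_p|_{B(0,\d)})\le M\d=:\varepsilon\downarrow0$ as $\d\downarrow0$. For $\d$ small the unstable cone field $\mc C^u_p=\{\xi+\eta:\xi\in E^u_p,\ \eta\in E^{cs}_p,\ |\eta|\le\tfrac13|\xi|\}$ is $D\tilde f_p$-invariant and $\xi\mapsto A_p\xi+R^u_p(\xi,h(\xi))$ expands $E^u_p(\d)$ by a factor $\ge e^{\l_0}-C\varepsilon>1$ whenever $\mathrm{Lip}(h)\le\tfrac13$. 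Fixing $x$ and letting $\mc G_p$ be the complete metric space (sup distance) of maps $h\colon E^u_p(\d)\to E^{cs}_p$ with $h(0)=0$ and $\mathrm{Lip}(h)\le\tfrac13$, the image $\tilde f_p(\mathrm{Graph}\,h)$ is a $\tfrac13$-Lipschitz graph over a ball in $E^u_{fp}$ of radius $>\d$, and restricting over $E^u_{fp}(\d)$ defines $\mc F_p\colon\mc G_p\to\mc G_{fp}$. Then I would show $h_n:=\mc F_{x_{-1}}\circ\cdots\circ\mc F_{x_{-n}}(0)$ is Cauchy in $\mc G_x$: even though $\|B_p\|\le1$ need not be strictly contracting, the preimage of a base point $\zeta\in E^u_x(\d)$ after $n$ pullbacks has size $O(e^{-n\l_0}\d)$, on which all graphs in $\mc G_{x_{-n}}$ agree to $O(e^{-n\l_0}\d)$, while the $n$-fold forward push expands the $E^{cs}$-component by at most $(1+C\varepsilon)^n\le e^{n\l_0/2}$, giving $\|h_{n+1}-h_n\|_\infty\lesssim\d\,e^{-n\l_0/2}$. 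With $h_x$ the limit, set $W^u_\d(x):=\exp_x(\mathrm{Graph}\,h_x)=\{x+\xi+h_x(\xi):\xi\in E^u_x(\d)\}$, a genuine embedded disc; uniformity of the estimates and continuity of $p\mapsto(Df_p,E^u_p,E^{cs}_p)$ give continuity of $x\mapsto h_x$, and $h_x(0)=0$, $\|Dh_x\|\le\tfrac13$ follow from $\mc F_p(0)(0)=0$ and cone invariance.

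To obtain the $C^2$ statement in (1), I would run the same graph transform on the bundles of $1$- and $2$-jets of graphs over the inverse orbit of $x$; the induced recursions for $Dh_n$ and $D^2h_n$ contract with factors $\le e^{-\l_0}$ and $\le e^{-2\l_0}$ respectively (one extra power of $\|A^{-1}\|$ per derivative from the chain rule in the base coordinate), with bounded forcing terms controlled by $\|D^2f\|$, so they converge uniformly to continuous limits with $Dh_x(0)=0$ and $\|D^2h_x\|\le M/(1-e^{-2\l_0})$ uniformly in $x$; equivalently this is the $C^r$-section theorem with base map $(f|_{W^u_\d})^{-1}$ (contraction rate $e^{-\l_0}$) and fibre-contraction rate $\le\|B\|\,\|A^{-1}\|^{j}<1$, so only $\|A^{-1}\|<1$, $\|B\|\le1$ is used and no bunching is needed. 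This $C^2$ bootstrap with uniform bounds in the Hilbert-space setting is the step I expect to demand the most care; everything else is soft.

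Finally I would read off (2)--(4). The identity $\mc F_xh_x=h_{fx}$ (immediate from the definition of $h$) gives $fW^u_\d(x)\supset W^u_\d(fx)$, hence $f^nW^u_\d(x_{-n})\supset W^u_\d(x)$ with these sets increasing in $n$; pulling back along $(f|_{W^u_\d})^{-1}$, which acts on tangents as $A^{-1}$ up to $O(\varepsilon)$ and $\tfrac13$-slope corrections, yields $d^u(y_{-n},z_{-n})\le\g_0\,e^{-n(\l_0-\varepsilon_0)}d^u(y,z)$ for $y,z\in W^u_\d(x)$, which is (3). By (3), $\bigcup_{n\ge1}f^nW^u_\d(x_{-n})\subset W^u(x)$; conversely, for $y\in W^u(x)$ one has $|f^{-n}y-x_{-n}|\to0$, and for $n$ large a cone argument (the displacements $\exp_{x_{-m}}^{-1}(f^{-m}y)$ stay $\d$-small and in $\mc C^u_{x_{-m}}$ for all $m\ge n$) forces $f^{-n}y\in W^u_\d(x_{-n})$, i.e.\ $y\in f^nW^u_\d(x_{-n})$, which gives (2). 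For (4), I would fix $\rho$ with, say, $6\rho<\d$: if $W^u_\rho(x)$ and $W^u_\rho(\bar x)$ meet at a point $w$, then $|\bar x-x|<4\rho$ and, since $w\in W^u(x)\cap W^u(\bar x)$ and unstable sets are pairwise equal or disjoint, $W^u(x)=W^u(\bar x)$; continuity of the splitting makes $E^u_{\bar x}$ nearly parallel to $E^u_x$, so $\exp_{\bar x}^{-1}(W^u_\rho(x))$ lies in the $\d$-ball (its points are within $4\rho+\rho+\tfrac\rho3<\d$ of the origin) and is a $(\tfrac13+o(1))$-Lipschitz graph over its projection to $E^u_{\bar x}$, whence plaque-uniqueness inside the immersed manifold $W^u(x)=W^u(\bar x)$ gives $W^u_\rho(x)\subset W^u_\d(\bar x)$.
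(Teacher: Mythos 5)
Your proposal is correct in substance but follows a genuinely different route from the paper. For part (1) the paper does not rebuild the invariant graphs at all: it quotes Theorem 9.14 of \cite{LL} (the tempered, nonuniform unstable-manifold theorem) and observes that uniform partial hyperbolicity plus compactness of $\Lambda$ turns the tempered radii and constants into uniform ones, whereas you re-derive existence from scratch by a Hadamard--Perron graph transform along inverse orbits and get the $C^2$ bounds from a jet/$C^r$-section argument with fibre contraction $\|B\|\,\|A^{-1}\|^{j}<1$; this is more self-contained and makes explicit that only $\|Df|_{E^{cs}}\|\le 1$ and $\|(Df|_{E^u})^{-1}\|\le e^{-\lambda_0}$ are used, at the cost of redoing work the paper outsources. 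Conversely, the bulk of the paper's proof is the continuity of $x\mapsto W^u_\delta(x)$, done by an explicit quantitative estimate: approximate the graph over $y$ by its $N$-fold backward pullback, use uniform continuity of $(f^n|_\Lambda)^{-1}$ and of the splitting to switch charts, and propagate the error forward with the factor $\tfrac83 M'M\delta+2M\delta+1\le e^{\lambda_0/2}$. Your continuity argument (``finite-stage graphs depend continuously on $x$, convergence is uniform in $x$'') is exactly the soft version of this estimate and is legitimate, since $f|_\Lambda$ is a homeomorphism and the splitting is uniformly continuous on the compact $\Lambda$; the paper simply chooses to write out the $\varepsilon$--$\delta$ bookkeeping. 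On the remaining points you are at the same level of detail as the paper: both treat $fW^u_\delta(x)\supset W^u_\delta(fx)$, the backward contraction (3), and the plaque property (4) briskly, and both gloss the reverse inclusion in $W^u(x)=\bigcup_{n\ge1}f^nW^u_\delta(x_{-n})$. If you wanted to tighten that last step, note that your ``cone argument'' is not quite the right mechanism (the displacement of a point of $W^u(x)$ need not lie in the unstable cone a priori); the cleaner argument compares the vertical deviation from the invariant graph, which is at worst $(1+C M\delta)$-expanded per forward step while the total backward displacement decays exponentially, forcing the deviation to vanish once $\delta$ is small relative to the exponential rate -- a point the paper itself leaves implicit.
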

\begin{proof}
By the compactness of $\L$ and $C^2$-ness of $f$,  there exist $r_0>0$ and $M>0$ such that
\begin{equation}\label{E:C2BoundLambda}
\max\left\{\sup_{x\in B(\L,r_0)}\left\{\|Df_x\|\right\},\sup_{x\in B(\L,r_0)}\left\{\|D^2f_x\|\right\}\right\}\le M,
\end{equation}
where $B(\L,r_0)=\{x\in \mb H|\ \di(x,\L)<r_0\}$ is the $r_0$-neighbourhood of $\L$.  Recall the definition of local unstable manifold (with a modification purely for sake of convenience)
\begin{align*}
 & W^{u}_{r(x)}(x)=\{y\in B^u(x,r(x))\oplus B^{cs}(x,r(x))|\  f^{-n}(y) \text{ exists for all } n\in \mb N,\\
 &\quad\quad\quad\quad \quad |f^{-n}(y)-f^{-n}(x)|\le C(x)e^{-n\l(x)}, \text{ as } n\to +\infty\},
 \end{align*} where $\l\in(0,\l_0)$, $r$ and $C$ are tempered functions. By the Theorem 9.14 and its proof in \cite{LL}, for the system defined in this section and $\l=\l_0-\e_0$ for some $0<\e_0<<\l_0$, there exist constants $\d'\in(0,\frac12r_0)$ and $C$ which are only depending on $r_0,M,\l_0, \l, M'(:=\sup_{x\in\L}\left\{\max\{\pi^u_x,\pi^{cs}_x\}\right\})$  such that $\{W^u_{\d'}(x)\}_{x\in \L}$ are $C^2$-embedded $k$-dimensional discs satisfying property (1) in the lemma. Then, with the uniform $C^2$ bound of $\|D^2h\|$, we have that for any given small $\e>0$, there exists $\d>0$ such that for all $x\in\L$ $$\|Dh_x|_{E^u_x(\d)}\|<\e.$$
By (\ref{E:C2BoundLambda}) and $f|_\L$ being partially hyperbolic, we have that for small enough $\e$ and $\d$,
$$\pi^{u}_{fx}\circ\exp_{fx}^{-1}\circ f\circ \exp_{x}\circ (I|_{E^{u}_{x}}+h_{x})\approx Df_{x}|_{E^{u}_{x}}\text{ thus is expanding},$$
therefore  property (2) in this lemma is satisfied and $\g_{0}$ can be taken arbitrarily close to $1$ by shrinking $\d$. It is easy to see that property (3) follows  properties (1) and (2)  immediately. For $\e<<1$, it is easy to check that if $\rho<\frac14\d$, then property (4) is satisfied.

It remains to show that this family is continuous. The proof will follow the strategy of graph transform. For sake of simplicity, we will identify $\exp_{x}(\cdot)$ with $x+\cdot$ in the rest of the proof. We view $W^{u}_{\d}(\cdot)$ as subsets of $\mb H$ and show that for any $\e'>0$, there exists $\d'>0$ such that for any $x,y\in \L$ with $|x-y|<\d'$ $d_{H}(W^{u}_{\d}(x),W^{u}_{\d}(y))<\e'$ where $d_{H}$ is the Hausdorff distance. To prove this, we may need to shrink $\d$ properly to satisfy
\begin{equation}\label{E:SmallDelta}
\frac83M'M\d+2M\d+1<e^{\frac12\l_{0}}.
\end{equation}
For any $x\in\L$, define
$$G_{x}(\D x)=f(x+\D x)-f(x)-Df_{x}(\D x).$$
Then,  $f(y)=f(x)+Df_{x}(y-x)+G_{x}(y-x)$, $G_{x}(0)=0$, $(DG_{x})_0=0$, and if $|y-x|<2\d$ then $\|(DG_{x})_{(y-x)}\|\le 2M\d$.

By the compactness of $\L$ and continuity of $f$, $(f|_{\L})^{-1}$ is continuous, thus is uniformly continuous, and so does $(f^{n}|_{\L})^{-1}$ for $n\ge 1$. For any small $\e'>0$, since $E^{u}$ and $E^{cs}$ are uniformly continuous on $\L$, there exists $\d_{1}>0$ such that if $x,y\in\L$ with$|x-y|<\d_{1}$ then
\begin{equation}\label{E:DhEst1}
d_{H}\left(W^{u}_{\d}(y), W^{u}_{\d}(y)\cap\left(B^u(x,\d)\oplus B^{cs}(x,\d)\right)\right)<\frac12\e'.
\end{equation}
Let $N>0$ be the smallest positive integer such that
$$2\g_{0}\d e^{N(-\l_{0}+\e_{0})}<\frac 3{16M'}e^{-\frac12N\l_{0}}\e'.$$
By property (2) and the uniform continuity of $(f^{n}|_{\L})^{-1}$, $E^{u}$ and $E^{cs}$, there exists $\d_{2}>0$ such that if $x,y\in\L$ with$|x-y|<\d_{2}$ then for any $n\in [1,N]$
\begin{equation}\label{E:DhEst2'}
f^{-n} \left(W^{u}_{\d}(y)\cap\left(B^u(x,\d)\oplus B^{cs}(x,\d)\right)\right)\subset \left(B^u(f^{-n}x,\d)\oplus B^{cs}(f^{-n}x,\d)\right).
\end{equation}
By the choice of $N$ and property (3), we have that for any $z\in W^{u}_{\d}(y)$
\begin{equation}\label{E:DhEst2}
|f^{-N}z-f^{-N}y|<\frac 3{16M'}e^{-\frac12N\l_{0}}\e'.
\end{equation}
Since $(f^{N}|_{\L})^{-1}$ is uniformly continuous, there exists $\d_{3}>0$ such that if $x,y\in\L$ with$|x-y|<\d_{3}$ then
\begin{equation}\label{E:DhEst3}
|f^{-N}x-f^{-N}y|<\frac 3{16M'}e^{-\frac12N\l_{0}}\e'.
\end{equation}
We will show that $\d':=\min\{\d_{1},\d_{2},\d_{3}\}$ is the desired number. For any $x,y\in \L$ with $|x-y|<\d'$, and for any $z\in W^{u}_{\d}(y)\cap\left(B^u(x,\d)\oplus B^{cs}(x,\d)\right)$, by (\ref{E:DhEst2'}), we have that
$$f^{-n}z\in  \left(B^u(f^{-n}x,\d)\oplus B^{cs}(f^{-n}x,\d)\right)\text{ for all integers }n\in[1,N].$$
Denote that $\xi_{n}=\pi^{cs}_{f^{-n}x}(f^{-n}z-f^{-n}x)$ and $\eta_{n}=\pi^{u}_{f^{-n}x}(f^{-n}z-f^{-n}x)$ for integers $n\in [0,N]$. By the standard argument of graph transform, we obtain the following inductive formula: for $n\in[1,N]$
\begin{align*}
|h_{f^{-n+1}x}(\eta_{n-1})-\xi_{n-1}|&\le \left(\frac83M'M\d+2M\d+1\right)|h_{f^{-n}x}(\eta_{n})-\xi_{n}|\\
&\le e^{\frac12\l_{0}}|h_{f^{-n}x}(\eta_{n})-\xi_{n}|\quad (\text{by }(\ref{E:SmallDelta})).
\end{align*}
By using property (1) and inequalities (\ref{E:DhEst2}), (\ref{E:DhEst3}), we have that
$$|h_{f^{-N}x}(\eta_{N})-\xi_{N}|\le \frac43 M'|f^{-N}z-f^{-N}x|\le \frac12e^{-\frac12N\l_{0}}\e'.$$
Therefore, by applying the inductive formula $N$ times, we obtain that
$$|z-(x+h_{x}(\eta_{0}))|\le \frac12\e',$$
which together with (\ref{E:DhEst1}) implies that $d_{H}(W^{u}_{\d}(x),W^{u}_{\d}(y))<\e'$ . The proof is complete.
\end{proof}

\begin{remark}\label{R:ContinuousInC2}
Since the local unstable manifolds in Lemma \ref{L:UnstableManifold} are all stay in $\L$ and the tangential space of $W^u_\d(x)$ on $y\in W^u_\d(x)$ is $E^u_y$, the family of the local unstable manifolds is continuous in $C^1$ topology which is sufficient for the rest of this paper. Actually, the continuity can be improved to $C^{2}$ topology. Focusing on the main purpose of this paper, we do not give the proof of such continuity result here.
\end{remark}

\medskip

\subsection{Proof of Theorem \ref{T:SRBExist}.}\label{SS:SRBExist}
 We construct a SRB measure $\mu$ by taking a weak* limit of the average of pushed forward Lebesgue measure on a local unstable manifold. \\

  Let $J^u(x)= |\det (Df_x|_{E^u_x})|$ for $x \in \Lambda$, and we will show that this function is Lipchitz continuous on local unstable manifold with a uniform Lipchitz constant.

\begin{lemma}\label{L:DeterminateLip}
There is a constant $C>0$ such that for any $x \in \Lambda$ and $y,\ z \in W_\delta^u(x)$ one has
$$
|J^u(y)-J^u(z)| \leq C d^u(y, z)
$$
where $d^u(\ ,\ )$ is the distance along $W_\delta^u(x)$.
\end{lemma}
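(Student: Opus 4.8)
The plan is to restrict attention to a single leaf $W^u_\delta(x)$, to represent $J^u$ there as a quotient of two Gram determinants built from a frame of the unstable bundle that depends on the base point in a uniformly Lipschitz way, and then to read off the desired estimate from the uniform $C^2$ bounds already established. Throughout, all constants must be traced to be independent of $x$, since that is what the statement demands.

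First I would fix $x\in\Lambda$ and an orthonormal basis $u_1,\dots,u_k$ of $E^u_x$. Identifying $\exp_x$ with $y\mapsto x+y$ as in the proof of Lemma~\ref{L:UnstableManifold}, every $y\in W^u_\delta(x)$ has the form $y=x+\xi+h_x(\xi)$ for a unique $\xi\in E^u_x(\delta)$, and by Remark~\ref{R:ContinuousInC2} the unstable space $E^u_y=T_yW^u_\delta(x)$ is spanned by $v_i(\xi):=u_i+Dh_x(\xi)u_i$, $i=1,\dots,k$. Since $E^u_x\perp E^{cs}_x$ and $\|Dh_x\|\le\tfrac13$, the parametrization $\xi\mapsto x+\xi+h_x(\xi)$ satisfies $|\xi-\zeta|\le d^u(y,z)\le\tfrac{\sqrt{10}}{3}|\xi-\zeta|$, so it is enough to prove a Lipschitz estimate in the variable $\xi$. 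Writing $A_y:=Df_y|_{E^u_y}\colon E^u_y\to E^u_{fy}$ and using that the square of the $k$-dimensional volume of a parallelepiped is the Gram determinant of its edges (together with $\mathrm{vol}(A_y v_1,\dots,A_y v_k)=|\det A_y|\,\mathrm{vol}(v_1,\dots,v_k)$), we get the identity
\[ J^u(y)^2=|\det A_y|^2=\frac{\det\bigl(\langle Df_y v_i(\xi),\,Df_y v_j(\xi)\rangle\bigr)_{1\le i,j\le k}}{\det\bigl(\langle v_i(\xi),\,v_j(\xi)\rangle\bigr)_{1\le i,j\le k}}=:\frac{N(\xi)}{D(\xi)}. \]

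Next I would estimate $N$ and $D$. By Lemma~\ref{L:UnstableManifold}(1), $\|D^2h_x\|$ is bounded uniformly in $x$, so $\xi\mapsto Dh_x(\xi)$, hence $\xi\mapsto v_i(\xi)$, is Lipschitz with a constant independent of $x$; moreover $1\le|v_i(\xi)|\le\tfrac{\sqrt{10}}{3}$ and the Gram matrix $(\langle v_i(\xi),v_j(\xi)\rangle)$ stays within $\tfrac19$ of the identity, so $D(\xi)$ is uniformly Lipschitz and bounded above and away from $0$. Since $f\in C^2$, the map $y\mapsto Df_y$ is Lipschitz on a bounded neighbourhood of the compact set $\Lambda$, with $\|Df_y\|\le M$ on $\Lambda$ by \eqref{E:C2BoundLambda}; combining this with the previous point, $\xi\mapsto Df_y v_i(\xi)$ (where $y=x+\xi+h_x(\xi)$) is uniformly Lipschitz with norm $\le\tfrac{\sqrt{10}}{3}M$, so $N(\xi)$, a polynomial in the bounded inner products $\langle Df_y v_i,Df_y v_j\rangle$, is uniformly Lipschitz. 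Finally, partial hyperbolicity forces $J^u(y)\ge e^{k\lambda_0}$, hence $N/D\ge e^{2k\lambda_0}>0$; since $t\mapsto\sqrt t$ has bounded derivative on $[e^{2k\lambda_0},\infty)$ and $N/D$ is uniformly Lipschitz and uniformly positive, $J^u=\sqrt{N/D}$ is uniformly Lipschitz in $\xi$, which together with $|\xi-\zeta|\le d^u(y,z)$ yields $|J^u(y)-J^u(z)|\le C\,d^u(y,z)$ with $C$ independent of $x$.

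The only step that is not purely formal is the uniform Lipschitz dependence of the unstable tangent spaces $E^u_y$ on $y$ along a single leaf, which enters through the Lipschitz control of $\xi\mapsto v_i(\xi)$: the standing hypotheses give merely continuity of $x\mapsto E^u_x$ on $\Lambda$, and the extra regularity along unstable leaves is exactly what the $C^2$ graphs with uniform second‑order bounds in Lemma~\ref{L:UnstableManifold}(1) provide. Everything else is a routine combination of the product rule, of the uniform upper bounds coming from \eqref{E:C2BoundLambda}, and of the uniform lower bound $J^u\ge e^{k\lambda_0}$ from hyperbolicity.
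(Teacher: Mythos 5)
Your argument is correct in substance, and its bookkeeping is genuinely different from the paper's. The paper fixes the centre $x$ of the disc, restricts to a smaller radius so that $fy\in W^u_\delta(fx)$, and compares two operators from $E_x^u$ to $E_{fx}^u$, namely $L_x=Df_x|_{E^u_x}$ and $L_y=\pi^u_{fx}Df_y|_{E^u_y}(I+(Dh_x)_{\xi_y})$; it then recovers $J^u(y)$ from $\det L_y$ by multiplying with $\det(\pi^u_{fx}|_{E^u_{fy}})$ and $\det(I+(Dh_x)_{\xi_y})$, each of which is $1+O(|\xi_y|)$, and it needs the preliminary reduction (via property (4) of Lemma~\ref{L:UnstableManifold}) to points $y$ close to the centre. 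You instead write $J^u(y)^2$ at every point of the leaf as a quotient of Gram determinants in the frame $v_i(\xi)=u_i+Dh_x(\xi)u_i$, and get the Lipschitz estimate globally on the chart $E^u_x(\delta)$ from the uniform bound on $\|D^2h_x\|$, the $C^2$ bound \eqref{E:C2BoundLambda} near $\Lambda$, the uniform two-sided bounds on the Gram denominator, and the lower bound $J^u\ge e^{k\lambda_0}$ needed for the square root. Your route avoids the centre-point reduction and the explicit correction determinants; the paper's route avoids the square root by working directly with determinants of maps into $E^u_{fx}$. Both rest on the same two ingredients: the uniform $C^{1+\mathrm{Lip}}$ control of the graphs $h_x$ and the $C^2$ bound on $f$.

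One inaccuracy you should repair: you invoke $E^u_x\perp E^{cs}_x$, but the partially hyperbolic splitting is only assumed continuous, not orthogonal. This affects only constants, not the structure. Since $h_x(\xi)\in E^{cs}_x$, one has $\pi^u_x\bigl((y-x)-(z-x)\bigr)=\xi-\zeta$, hence $|\xi-\zeta|\le\|\pi^u_x\|\,|y-z|\le M\,d^u(y,z)$, and $|(I+Dh_x(\xi))u|\ge |u|/\|\pi^u_x\|$ gives the uniform lower bound on the Gram determinant in place of your ``within $\tfrac19$ of the identity'' claim; the upper bounds follow from $\|Dh_x\|\le\tfrac13$ as before. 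The uniform bound $\sup_{x\in\Lambda}\max\{\|\pi^u_x\|,\|\pi^{cs}_x\|\}\le M$, which the paper also uses, is what replaces orthogonality, and with that substitution all your constants remain independent of $x$ and the proof stands.
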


\begin{proof}
The proof is based on the Lipchitz continuity of $Df$ and of the subspaces $E^u$ restricted on unstable manifold. Note that, since $\Lambda$ is compact, $f$ is $C^2$, and the splitting $E^u\oplus E^{cs}$ is continuous, $\|Df\|,\|\pi^u\|,\|\pi^{cs}\|$ are uniformly bounded on $\Lambda$. For the sake of convenience, we assume $0<\d\le 1$. By Applying Lemma \ref{L:UnstableManifold}, there exist $M\ge 1$ such that
$$\max\left\{\sup_{x\in\Lambda}\{\|Df_x\|\}, \sup_{x\in\L}\{\|D^{2}f_{x}\|\},\sup_{x\in\Lambda}\{\|\pi^u_x\|,\|\pi^{cs}_x\|\}, \sup_{x\in\Lambda}\{Lip\ Dh_x\} \right\}\le M.$$

By Lemma \ref{L:UnstableManifold}, it is easy to see that for any $z,y\in W^u_\d(x)$, if $d^u(z,y)<\frac{3\d}{4M^3}$, then $z\in W^u_\d(y)$ and $fz\in W^u_\d(fy)$. So it is sufficient to prove that there exists a constant $C>0$ which does not depend on $x$ such that for any $y\in W^u_{\frac{3\d}{8M^4}}(x)$ (thus $fy\in W^u_\d(fx)$),
$$|J^u(y)-J^u(x)|\le Cd^u(y,x).$$

First, by Lemma \ref{L:UnstableManifold}, there exist $\xi_y\in E^u_x(\frac{3\d}{8M^3})$ and $\xi_{fy}\in E^u_{fx}(\frac\d{2M})$ such that $y=x+\xi_y+h_x(\xi_y)$, $E^u_y=graph((Dh_x)_{\xi_y})$, and $fy=fx+\xi_{fy}+h_{fx}(\xi_{fy})$, $E^u_{fy}=graph((Dh_{fx})_{\xi_{fy}})$. Then,  we define linear mappings $L_x,L_y:E^u_x\to E^u_{fx}$ by
$$L_x=Df_x|_{E^u_x},\ L_y=\pi^u_{fx}Df_y|_{E^u_y}(I+(Dh_x)_{\xi_y}) \text{ with } \|L_y\|,\|L_x\|\le \frac43M^2.$$
Noting that $\|L_x-L_y\|\le (\frac{16}9+M)M^2|\xi_y|$, then by the properties of determinate, we have that
\begin{equation}\label{E:DeterminateLip1}
|\det(L_x)-\det(L_y)|\le C_1|\xi_y|,
\end{equation}
where $C_1$ is a polynomial of $M$ and $\dim E^u_x$.

Also note that for any $\xi_y\in E^u_x(\frac{3\d}{8M^3})$
$$\|\pi^u_{fx}|_{E^u_{fy}}-I\|\le \frac{\|(Dh_{fx})_{\xi_{fy}}\|}{1-\|(Dh_{fx})_{\xi_{fy}}\|}\le \frac{M|\xi_{fy}|}{1-M|\xi_{fy}|}\le \frac{\frac43M^3|\xi_y|}{1-\frac43M^3|\xi_y|}\le \frac83M^3|\xi_y|,$$
and
$$ \|(Dh_x)_{\xi_y}\|\le M|\xi_y|.$$
Then,  we have
\begin{equation}\label{E:DeterminateLip2}
\left(1-\frac83M^3|\xi_y|\right)^{\dim E^u_x}\le |\det (\pi^u_{fx}|_{E^u_{fy}})|\le \left(1+\frac83M^3|\xi_y|\right)^{\dim E^u_x},
\end{equation}
and
\begin{equation}\label{E:DeterminateLip3}
(1-M|\xi_y|)^{\dim E^u_x}\le |\det(I+(Dh_x)_{\xi_y})|\le (1+M|\xi_y|)^{\dim E^u_x}.
\end{equation}

Finally, it is easy to see that (\ref{E:DeterminateLip1}),(\ref{E:DeterminateLip2}) and (\ref{E:DeterminateLip3}) imply the desired result. This completes the proof of the lemma.
\end{proof}

By (3) of Lemma \ref{L:UnstableManifold} and Lemma \ref{L:DeterminateLip}, and also noting that $J^{u}(\cdot)>1$, we derive the following corollary immediately:
\begin{cor}\label{C:DetUniform}
There is a constant $C>0$ such that for any $x\in\Lambda$, $y,\ z \in W_\delta^u(x)$ and $n \geq 1$
$$
\frac{1}{C} \leq \prod\limits_{k=1}^n
\frac{J^u(y_{-k})}{J^u(z_{-k})} \leq C,
$$
where $y_{-k}$ is the unique point in $\Lambda$ such that $f^ky_{-k}=y$ and $z_{-k}$ is defined similarly, and moreover,
$$
\Delta (x, y):= \lim_{n\to\infty}\prod\limits_{k=1}^{n}
\frac{J^u(x_{-k})}{J^u(y_{-k})}
$$
is a well-defined function of $y \in W_\delta^u(x)$ and the limit above converges uniformly on both $y$ and $x$.
\end{cor}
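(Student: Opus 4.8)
The plan is to pass to logarithms and bound the resulting sum by a geometric series, which is precisely what the two preceding results were set up to allow. The first ingredient is the uniform lower bound on $J^u$: since $f|_\L$ is (uniformly) partially hyperbolic, $|Df_x\xi|\ge e^{\lambda_0}|\xi|$ for all $\xi\in E^u_x$, so every singular value of $Df_x|_{E^u_x}$ is at least $e^{\lambda_0}$, whence $J^u(x)=|\det(Df_x|_{E^u_x})|\ge e^{k\lambda_0}>1$ for every $x\in\L$, where $k=\dim E^u_x$. Thus $J^u$ takes values in $[c_0,\infty)$ for some $c_0>1$, and $\log$ is $\tfrac{1}{c_0}$-Lipschitz there; combining this with Lemma \ref{L:DeterminateLip} yields a constant $C_1>0$, independent of everything, such that $|\log J^u(y)-\log J^u(z)|\le C_1\,d^u(y,z)$ whenever $y$ and $z$ lie in a common disc $W^u_\delta(w)$.

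Next I would check that the backward orbits of $y,z\in W^u_\delta(x)$ stay in common unstable discs. Property (2) of Lemma \ref{L:UnstableManifold} gives $fW^u_\delta(w)\supset W^u_\delta(fw)$; since $f$ is injective by C1), this forces $f^{-1}\big(W^u_\delta(fw)\big)\subset W^u_\delta(w)$, and iterating with $w=x_{-k}$ shows $f^{-k}\big(W^u_\delta(x)\big)\subset W^u_\delta(x_{-k})$. Hence $y_{-k}$ and $z_{-k}$ are well defined (the relevant preimages exist in $\L$ because $f\L=\L$, and are unique by C1)) and both lie in $W^u_\delta(x_{-k})$, so that Lemma \ref{L:DeterminateLip} and estimate (3) of Lemma \ref{L:UnstableManifold} may be applied along that disc. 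Then, with $D_0$ a uniform bound for the diameters of the discs $W^u_\delta(\cdot)$ (finite by Lemma \ref{L:UnstableManifold}(1)), estimate (3) gives $d^u(y_{-k},z_{-k})\le\gamma_0 e^{-k(\lambda_0-\varepsilon_0)}d^u(y,z)\le\gamma_0 D_0 e^{-k(\lambda_0-\varepsilon_0)}$, so that
$$\sum_{k=1}^{n}\big|\log J^u(y_{-k})-\log J^u(z_{-k})\big|\ \le\ C_1\gamma_0 D_0\sum_{k\ge1}e^{-k(\lambda_0-\varepsilon_0)}\ =:\ \log C,$$
a bound independent of $x,y,z$ and $n$; exponentiating gives $\tfrac{1}{C}\le\prod_{k=1}^{n}J^u(y_{-k})/J^u(z_{-k})\le C$.

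The same computation settles the second assertion: the tails $\sum_{k>n}\big|\log J^u(x_{-k})-\log J^u(y_{-k})\big|\le C_1\gamma_0 D_0\sum_{k>n}e^{-k(\lambda_0-\varepsilon_0)}$ tend to $0$ uniformly in $x$ and $y$, so the partial sums of $\sum_k\big(\log J^u(x_{-k})-\log J^u(y_{-k})\big)$ are uniformly Cauchy; therefore $\Delta(x,y)=\exp\!\big(\lim_n\sum_{k=1}^n(\log J^u(x_{-k})-\log J^u(y_{-k}))\big)$ is well defined for each $y\in W^u_\delta(x)$, depends continuously on $y$ (a uniform limit of continuous functions), and the convergence is uniform in both $x$ and $y$. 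There is no genuine obstacle here — the corollary is flagged as immediate — and the only point that deserves care is the bookkeeping just described: confirming that $y_{-k}$ and $z_{-k}$ genuinely sit in one disc $W^u_\delta(x_{-k})$, which is what simultaneously licenses the Lipschitz bound for $J^u$ and the exponential contraction, and which rests on C1) together with property (2) of Lemma \ref{L:UnstableManifold}.
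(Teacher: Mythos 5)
Your proposal is correct and follows essentially the same route the paper intends: the corollary is derived from the Lipschitz estimate of Lemma \ref{L:DeterminateLip}, the backward contraction in (3) of Lemma \ref{L:UnstableManifold}, and the lower bound $J^u>1$, by passing to logarithms and summing a geometric series. Your extra bookkeeping (that $y_{-k},z_{-k}$ lie in the single disc $W^u_\delta(x_{-k})$, via property (2) and injectivity) is exactly the point the paper leaves implicit, and the uniform-Cauchy argument for $\Delta(x,y)$ matches the intended "immediate" conclusion.
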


Fix a point $\hat{x} \in \Lambda$ and write $L=W_\delta^u(\hat{x})$. Let $\lambda_L$ be the normalized Lebesgue measure on $L$. Let $\mu$ be a limit measure of $\frac{1}{n} \sum_{k=0}^{n-1} f^k \lambda_L$, $n \geq 1$ and assume that
$$
\frac{1}{n_i} \sum_{k=0}^{n_i-1} f^k \lambda_L \to \mu
$$
as $i \to +\infty$ for some subsequence $\{n_i\}_{i \geq 1}$ of the positive integers. Note that the existence of $\mu$ follows from the compactness of $\Lambda$.
\\

Let $x \in \Lambda$. Set $\Sigma_{x, \varepsilon}=Exp_x(E^{cs}_x (\varepsilon)) \bigcap \Lambda$ and let
$$
V_{x, \varepsilon}=\bigcup\limits_{y \in \Sigma_{x, \varepsilon}} W_\rho^u(y).
$$
By (4) of Lemma \ref{L:UnstableManifold},  we know that, when $\varepsilon$ is small enough, $ V_{x, \varepsilon}$ is the union of pairwisely disjoint pieces $W_\rho^u(y)$ for all $y \in \Sigma_{x, \varepsilon}$,  and it contains a neighborhood of $x$ in $\Lambda$. Since  $\Lambda$ is compact, we have a finite number of such sets which cover $\Lambda$. With a bit abuse of notation, let $V=V_{x, \varepsilon}$ be one of such sets with $\mu(V)>0$. Moreover, since
$W_\rho^u(y)$ is contained in $\Lambda$ for every $y \in \Lambda$, by shrinking $\varepsilon$ and $\rho$ if necessary, without losing any generality, one can assume that $\mu(\partial V)=0$ where $\partial V$ is the boundary of $V$ as a subset in $\Lambda$.
Divide $V$ into pieces  $\{W_\rho^u(y)\}_{y \in \Sigma_{x, \varepsilon}}$. This produces a measurable partition of $V$. Let $(\mu|_V)_y$ be the conditional probability measure of $\mu|_V$ (the restriction of $\mu$ on $V$) on $W_\rho^u(y)$. Then, it is easy to see that
$\mu$ is an SRB measure if,
neglecting a $\mu|_V$-null set, the following holds
\begin{equation}\label{E:(1.0)}
(\mu|_V)_y \ll \lambda^u_y\text{ on every piece } W_\rho^u(y),
\end{equation}
where $\lambda^u_y$ is the Lebesgue measure on  $W_\rho^u(y)$.

For each $n \geq 0$, let
$$L_n=\{ z \in L: f^nz \in  W_\rho^u(y)\ {\rm for\ some}\ y \in \Sigma_{x, \varepsilon}\
{\rm but}\ f^nL \not\supset  W_\rho^u(y) \}.$$
From (3) and (4) of Lemma \ref{L:UnstableManifold}, we have that, for any $z\in L_n$, $d^u(z,\partial L)\le \frac43\d\gamma_0e^{-n(\l_0-\e_0)}$. Otherwise, since for any $z'\in W^u_y(\rho)$, by (4) of Lemma \ref{L:UnstableManifold}, $z'\in W^u_z(\d)$. Then,  by (3) of Lemma \ref{L:UnstableManifold}, we have  $$d^u(z,z'_{-n})\le\gamma_0e^{-n(\l_0-\e_0)}d^u(f^nz,z')\le \frac43\d\gamma_0e^{-n(\l_0-\e_0)},$$
which implies that $z'\in f^nL$. Thus,  $W^u_y(\rho)\subset f^nL$, which is a contradiction. Therefore, we know that $\lambda_L(L_n) \to 0$ as $n \to +\infty$, exponentially. And thus
\begin{equation}\label{E:(1.1)}
\lim\limits_{i \to +\infty} \frac{1}{n_i} \sum_{k=0}^{n_i-1} f^k (\lambda_L|_{(L\setminus L_k)}) = \mu .  \end{equation}
Also, since $\mu(\partial V)=0$, we have
\begin{equation}\label{E:(1.2)}
\lim\limits_{i \to +\infty} \left( \frac{1}{n_i} \sum_{k=0}^{n_i-1} f^k (\lambda_L|_{(L\setminus L_k)})\right)(V) = \mu (V) .
\end{equation}
For every $n \geq 1$ and $z \in W_\rho^u(y)$, we define,
\begin{equation}\label{E:(1.3)}
h_{n}(z)=\frac{\prod\limits_{k=1}^n \frac{1}{J^u(z_{-k})}}{\int_{W_\rho^u(y)} \prod\limits_{k=1}^n \frac{1}{J^u(w_{-k})}\ d\lambda^u_y (w)}
=\frac{\prod\limits_{k=1}^n \frac{J^u(y_{-k})}{J^u(z_{-k})}}{\int_{W_\rho^u(y)} \prod\limits_{k=1}^n \frac{J^u(y_{-k})}{J^u(w_{-k})}\ d\lambda^u_y (w)} .
\end{equation}
Suppose that $f^n(L \setminus L_n) \supset W_\rho^u(y)$  for some  $y \in \Sigma_{x, \varepsilon}$. Let $m_{n, y}$ be the conditional probability measure of  $ [f^n (\lambda_L|_{(L\setminus L_n)})]|_V$ on $W_\rho^u(y)$. Then, by definition, we have that
$$
h_{n}|_{W^u_\rho(y)}=\frac{d m_{n, y}}{d\lambda^u_y}.
$$
For each $n \geq 1$, let $h_{n}: V \to (0, +\infty)$ be defined by (\ref{E:(1.3)}). Then,  it is clearly measurable, and, by Corollary \ref{C:DetUniform}, $h_{n}$ converges uniformly (as $n \to +\infty$) to a measurable function   $h: V \to (0, +\infty)$ defined by
$$
h(z)=\frac{\prod\limits_{k=1}^{+\infty} \frac{J^u(y_{-k})}{J^u(z_{-k})}}{\int_{W_\rho^u(y)} \prod\limits_{k=1}^{+\infty} \frac{J^u(y_{-k})}{J^u(w_{-k})}\ d\lambda^u_y (w) }
$$
if $z \in W_\rho^u(y)$ and $y \in \Sigma_{x, \varepsilon}$. Also note that, for any $n\in \mb N$,  $J^u$ is a continuous function on $\L$ and $(f|_{\L})^{-1}$ is continuous (which follows from compactness of $\L$ and the fact that $f|_\L$ is invertible and continuous). Therefore, $h_n$ is continuous. Thus,  $h$ is continuous since $h_n\to h$ uniformly as $n\to\infty$.

Define now a Borel measure $\nu$
on $V$ by
$$
\nu(A)=\int_{V} \left[ \int_{W_\rho^u(y)\bigcap A} h(z) d\lambda^u_y (z) \right] d (\mu|_V)(y)
$$
for Borel $A \subset V$.

For any continuous function $g:\L\to \mb R$, and $z\in W^u_\rho(y)$ with $y\in \Sigma_{x,\e}$, let
$$\overline g(z)=\int_{ W^u_\rho(y)}g(z')h(z')d\l^u_y(z').$$
By Lemma \ref{L:UnstableManifold} and Remark \ref{R:ContinuousInC2}, we have that
$\overline g$ is a continuous function on $V$ and is constant on each $W^u_\rho(y)$ for $y\in \Sigma_{x,\e}$.

 Denote $\L_{k}=f^{k}(L\setminus L_{k})\cap \Sigma_{x,\e}$. Now, by applying (\ref{E:(1.1)}), (\ref{E:(1.2)}) and the uniform convergence of $h_n$,  we have that for any continuous function $g:\L\to \mb R$
\begin{align*}
\int_Vgd(\mu|_V)=&\lim\limits_{i \to +\infty}\int_V g d\left(\left( \frac{1}{n_i} \sum_{k=0}^{n_i-1} f^k (\lambda_L|_{(L\setminus L_k)})\right)\Big|_V\right)\\
=&\lim_{i\to\infty}\frac1{n_{i}}\sum_{k=1}^{n_i}\sum_{y\in\Lambda_k}\left(f^k(\l_L|_{L\setminus L_k})(W^u_\rho(y))\right)\int_{ W^u_\rho(y)}g(z)h_{k}(z)d\l^u_y(z)\\
=&\lim_{i\to\infty}\frac1{n_{i}}\sum_{k=1}^{n_i}\sum_{y\in\Lambda_k}\left(f^k(\l_L|_{L\setminus L_k})(W^u_\rho(y))\right)\int_{ W^u_\rho(y)}g(z)h(z)d\l^u_y(z)\\
=&\lim_{i\to\infty}\int_V\overline gd\left(\frac1{n_{i}}\sum_{k=1}^{n_i}\sum_{y\in\Lambda_k}\left(f^k(\l_L|_{L\setminus L_k})(W^u_\rho(y))\right)\right)\\
=&\int_V\overline g(z)d(\mu|_V)=\int_V\int_{ W^u_\rho(y)}g(z)h(z)d\l^u_y(z)d(\mu|_V)(y)\\
=&\int_Vgd\nu.
\end{align*}
Hence,
$$
\nu=\mu|_V
$$
which implies (\ref{E:(1.0)}). This proves that $\mu$ is an SRB measure.  $\Box$



\section{Proof of Theorem \ref{T:SRBCont} and \ref{T:ErgodicAttractor}}\label{S:ErgodicAttractors}
In this section, we study hyperbolic measures of $f$ satisfying C1)-C3). In subsection \ref{S:ACW}, we formulate and prove the absolute continuity of stable foliations, which is the main technical result used in the proof of Theorem \ref{T:SRBCont} and \ref{T:ErgodicAttractor}. Then,  in subsection \ref{S:ExiErgodicAttractor}, we prove Theorem \ref{T:SRBCont} and \ref{T:ErgodicAttractor}.

\subsection{Absolute Continuity of $ W^s$}\label{S:ACW}
In this subsection, we prove the following proposition
\begin{prop}\label{P:AbsoluteContinuity}
$\{W^s(x)\}_{x\in supp(\mu)}$ is absolute continuous for all hyperbolic $f$-invariant measure $\mu$ supported by $\L$.
\end{prop}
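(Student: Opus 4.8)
The plan is to reduce the statement to the \textbf{absolute continuity of the stable holonomy maps}, and then to prove the latter by a telescoping push-forward argument which is the exact analogue, on the stable side, of Corollary \ref{C:DetUniform}. By the Multiplicative Ergodic Theorem and the Pesin theory of \cite{LL, LY} (cf.\ Remark \ref{R:WeakerCondition}), for $\mu$-a.e.\ $x$ there is a splitting $H=E^u_x\oplus E^s_x$ with $k:=\dim E^u_x<\infty$, all Lyapunov exponents on $E^u_x$ positive and all on $E^s_x$ negative, and measurable tempered functions $r,C:\Lambda\to(0,\infty)$ such that the local manifolds $W^u_{\mathrm{loc}}(x)=\exp_x(\mathrm{Graph}(h^u_x))$ and $W^s_{\mathrm{loc}}(x)=\exp_x(\mathrm{Graph}(h^s_x))$, with $h^u_x:E^u_x(r(x))\to E^s_x$ and $h^s_x:E^s_x(r(x))\to E^u_x$ of class $C^2$, vanishing to first order at $0$ and with $C^1$-norms and distortions controlled by $C(x)$, are well defined; here the unstable discs are finite-dimensional and the stable discs finite-codimensional. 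Exhausting $\Lambda$ (mod $0$) by Pesin regular sets $\Lambda_\ell=\{r\ge 1/\ell,\ C\le\ell\}$, on which $x\mapsto E^u_x$ and $x\mapsto W^{u}_{\mathrm{loc}}(x),W^{s}_{\mathrm{loc}}(x)$ are (H\"older) continuous, it suffices to treat the model case: for each $\ell$ and each $x_0$ with $\mu(\Lambda_\ell\cap B(x_0,\varrho))>0$ ($\varrho$ small), and for any $w_1,w_2\in\Lambda_\ell\cap B(x_0,\varrho)$, the stable holonomy
\[
P:\ D_1\cap\Lambda_\ell\ \longrightarrow\ D_2,\qquad P(y)=W^s_{\mathrm{loc}}(y)\cap D_2,\qquad D_i:=W^u_{\mathrm{loc}}(w_i),
\]
is well defined, injective, and pushes $m_{D_1}|_{D_1\cap\Lambda_\ell}$ forward to an absolutely continuous measure on $(D_2,m_{D_2})$ with Radon--Nikodym derivative bounded above and below by constants depending only on $\ell$. (The general case of two arbitrary $k$-dimensional transversals reduces to this one by composition and is handled by the same method; and note that no Lebesgue measure on the infinite-dimensional stable leaves is used, which is what makes the statement meaningful in $H$.)

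For the main argument I would push everything forward by $f^n$ and let $n\to\infty$. Since $f$ and $Df_x$ are injective on $\Lambda$, each $f^n$ restricts to a $C^2$ diffeomorphism of $D_i$ onto its image; $T_{f^j y}(f^jD_i)=E^u_{f^j y}$ because $f^jD_i$ lies along an unstable manifold; and since $y$ and $Py$ lie on a common stable leaf, $\di(f^n y,f^n Py)\to 0$ exponentially. Writing $\pi_n$ for the holonomy (along the stable lamination) between the discs $f^nD_1$ and $f^nD_2$, we have $f^n\circ P=\pi_n\circ f^n$ on $D_1\cap\Lambda_\ell$, so, taking Jacobians of the induced linear maps of $k$-dimensional tangent spaces,
\[
\mathrm{Jac}\,P(y)\ =\ \frac{\mathrm{Jac}(f^n|_{D_1})(y)}{\mathrm{Jac}(f^n|_{D_2})(Py)}\cdot\mathrm{Jac}\,\pi_n(f^n y)\ =\ \Bigl(\ \prod_{j=0}^{n-1}\frac{J^u(f^j y)}{J^u(f^j Py)}\ \Bigr)\cdot\mathrm{Jac}\,\pi_n(f^n y),
\]
with $J^u(x)=|\det(Df_x|_{E^u_x})|$ as in Section \ref{S:ExistSRB}.

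Two estimates, carried out in Lyapunov charts along the orbit of $y$ (the charts along $y$ and along $Py$ being comparable with tempered distortion), finish the proof. First, $\mathrm{Jac}\,\pi_n(f^n y)\to 1$: in these charts $f^nD_1$ and $f^nD_2$ are graphs over $E^u$ whose graph maps and their first derivatives tend to $0$ in the $E^s$-direction, so $\pi_n$ tends to the identity in $C^1$ and its Jacobian tends to $1$. Second, the infinite product $\Delta^s(y):=\prod_{j\ge 0}J^u(f^j y)/J^u(f^j Py)$ converges: in the same charts $J^u$ is represented by a Lipschitz function with a (tempered) uniformly bounded constant, so, as $\di(f^j y,f^j Py)=O(\theta^j)$ for some $\theta<1$, the $j$-th factor equals $1+O(\theta^j)$. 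This is precisely Corollary \ref{C:DetUniform} transported to the stable side. Hence $\mathrm{Jac}\,P(y)=\Delta^s(y)$, a function of $y\in D_1\cap\Lambda_\ell$ that is continuous and bounded away from $0$ and $\infty$ by constants depending only on $\ell$. One then upgrades this pointwise identity to the absolute continuity of $P$ --- with $\bigl(dP_*(m_{D_1}|_{D_1\cap\Lambda_\ell})/dm_{D_2}\bigr)(Py)=\Delta^s(y)^{-1}$, bounded above and below --- by approximating the stable lamination by genuine $C^1$ foliations and passing to the limit in the $f^n$-conjugated $C^1$ holonomies, arguing as with the uniformly convergent densities $h_n$ in the proof of Theorem \ref{T:SRBExist}. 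This yields the proposition.

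The step I expect to be the main obstacle is \textbf{the absence of uniform hyperbolicity}: the exponential contraction along stable leaves, the $C^1$-convergence $\pi_n\to\mathrm{id}$, the transversality of $E^u$ to the stable lamination, and the Lipschitz modulus of $J^u$ are genuinely uniform only on a fixed Pesin block $\Lambda_\ell$, whereas the forward orbits $f^j y,\,f^j Py$ leave $\Lambda_\ell$. The remedy I would follow is to run the whole estimate in the Lyapunov charts of the appendix, in which all contraction/expansion rates along the forward orbit of the Oseledets-regular point $y$ are dominated by $e^{-(\lambda-\varepsilon)j}$ and all distortion constants by $e^{\varepsilon j}$; the products remain convergent once $\varepsilon$ is fixed small relative to the spectral gap of $\mu$, and $Py$ --- which need not be regular nor lie in any $\Lambda_\ell$ --- inherits the same tempered bounds because its forward orbit shadows that of $y$. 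A secondary, genuinely infinite-dimensional, difficulty is that the $\Lambda_\ell$ are not compact and each leaf $W^s_{\mathrm{loc}}(x)$ is infinite-dimensional; this is dealt with by invoking the stable-manifold theorem and the continuous ($C^1$) dependence of $W^s_{\mathrm{loc}}$ on the base point from \cite{LL}, and by the observation --- already used above --- that every measure-theoretic statement is made on the finite-dimensional transversals $D_i$, where ordinary Riemannian volume makes sense.
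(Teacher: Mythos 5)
Your route is the classical ``holonomy has a Jacobian'' argument, and its two analytic ingredients (the $C^1$-convergence of the pushed-forward transversals to unstable graphs, and the convergent ratios of unstable Jacobians along orbits) are exactly the paper's disc lemmas and Lemmas \ref{L:Jacob1}--\ref{L:Jacob2}. The gap is the bridge from these to absolute continuity. You write $f^n\circ P=\pi_n\circ f^n$ and conclude $\mathrm{Jac}\,P(y)=\Delta^s(y)$ from ``$\pi_n\to\mathrm{id}$ in $C^1$, so $\mathrm{Jac}\,\pi_n\to 1$''. But $\pi_n$ is itself a holonomy along the stable lamination: it is defined only on the measurable, typically nowhere dense set $f^n(U_1)\subset f^nD_1$, so it has no classical derivative, and assigning it a Jacobian (or assigning one to $P$) presupposes precisely the differentiability/absolute continuity of stable holonomies that is to be proved --- the step is circular as stated. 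The closing remark that one ``upgrades'' the pointwise identity by approximating the stable lamination by genuine $C^1$ foliations is not carried out, and in the nonuniform setting it is the whole difficulty: the leaves through $\G_{n_0,m_0,l_0}$ do not extend to an $f$-invariant smooth foliation, so the only way to pass to the limit is through measure estimates on images of small sets. That is what the paper does instead of differentiating $T_{v_1,v_2}$: it covers $f^n(A)$ by balls of radius $\sim r_0e^{-n/2n_0}$ in $f^nD_1$ with intersection multiplicity at most $32^{\dim E^u(x_0)}$ (Lemmas \ref{L:LocalHomeomorphism} and \ref{L:OverCover}), uses the stable contraction to see that the time-$n$ holonomy displaces these balls into slightly larger balls in $f^nD_2$, and pulls back by $f^{-n}$ with the bounded-distortion estimates of Lemmas \ref{L:DetEstimateHolonomyMap1}--\ref{L:DetEstimateHolonomyMap2}, obtaining $m_{D_2}(T_{v_1,v_2}(A))\le C\e_1$ directly. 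Without an argument of this kind (or an actual construction of smooth approximating foliations with controlled holonomies), your proof does not close.

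Two secondary points. First, the domain of the holonomy is wrong: it must be $D_1\cap\bigcup_{z\in B(x_0,\e)\cap\G_{n_0,m_0,l_0}}W^s_{loc}(z)$, i.e.\ the set $U_1$ of Lemma \ref{L:HolonomyMap} and Definition \ref{D:AbsContinuity}, not $D_1\cap\L_\ell$; the intersection points of the stable leaves with a transversal are in general not Oseledets-regular and lie in no block, and $D_1\cap\L_\ell$ can even be $m_{D_1}$-null, which would make your pushforward statement vacuous (note also that Definition \ref{D:AbsContinuity} requires arbitrary Lipschitz transversals $\tilde v\in\tilde{\mc V}$, not only local unstable manifolds, and Section \ref{S:ExiErgodicAttractor} uses that generality). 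Second, the product $\prod_{j\ge0}J^u(f^jy)/J^u(f^jPy)$ refers to $E^u$ along the forward orbit of $Py$, which need not be a regular point, so ``$Py$ inherits the tempered bounds by shadowing'' is not available as stated; the paper sidesteps this by comparing determinants of $Df^n$ restricted to the tangent spaces of the transformed graphs $\tilde T^k_0(\tilde v_i')$ rather than to $E^u(f^jPy)$ (Lemmas \ref{L:Jacob1}, \ref{L:Jacob2}, \ref{L:DetEstimateHolonomyMap1}, \ref{L:DetEstimateHolonomyMap2}), and your argument should be reformulated in the same way.
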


In the following, we will introduce basic tools and the concept of absolute continuous stable foliation for nonuniformly hyperbolic system. Then,  we prove Proposition \ref{P:AbsoluteContinuity}. The main techniques we used here are the multiplicative ergodic theorem, the Lyapunov charts, the invariant manifold theorems and the graph transform method (to derive Disc Lemmas).

\subsubsection{Multiplicative Ergodic Theorem.} We first recall the multiplicative ergodic theorem (MET). MET for finite dimensional maps
or matrix-valued cocycles was first proved by Oseledec \cite{O}. This result
has since been generalised, with the matrices in
Oseledec's theorem replaced by linear maps of Hilbert and Banach spaces;
see \cite{R}, \cite{M}, \cite{T}, \cite{LL}. We state a
version that will be used
in this paper. It is a simplified version, in which we distinguish
 Lyapunov exponents of different signs, i.e., positive, zero, or negative, and also distinguish positive Lyapunov exponents.

\begin{theorem}[\bf MET]\label{T:MET}
Let $f$ and $\L$ satisfy C1)-C3) as in Section \ref{S:Setting}. Then,
there is a full measure Borel set $\G\subset \L$ with $f$-invariant measurable functions  $k:\G\to \mb N\cup\{0\}$ and $\l_{i}:\G\to \mb R^+$ where $i\in \mb N\cup\{0\}$
such that, for every $x\in\G$, $0<\l_{0}(x)<\l_{1}(x)<\cdots<\l_{k(x)}$, and there is a splitting of the
tangent space ${\mb H}_x$ at $x$ into
$${\mb H}_x=E^u(x)\oplus E^c(x)\oplus E^s(x)\text{ with } E^{u}(x)=E_{1}(x)\oplus \cdots\oplus E_{k(x)}(x)$$
(some of these factors may be trivial), and the following properties are satisfied:

\begin{itemize}
\item[1.] 

\noindent (a)  $\dim E^\tau(x) < \infty$ for $\tau=u,c$;

\noindent (b) $Df_xE^c(x)=E^c(fx)$, $Df_{x}E_{i}=E_{i}(fx)$ for all $i\in\{1,\ldots,k(x)\}$ and $Df_xE^s(x)\subset E^s(fx)$.

\medskip
\item[2.] For $v\in E^\tau(x),\
\tau=u,c$, and $n>0$, there is a unique $v'\in
E^\tau(f^{-n}x)$, denoted $Df^{-n}_xv$, such that
$Df^n_{f^{-n}x}v'=v$.

\smallskip
\noindent
(a) For $v\in E_{i}(x) \setminus \{0\}$,
$$
\lim_{n\to \pm \infty}\frac1n\log|Df^{n}_xv|\ge\l_i(x)\ .
$$
\noindent  (b) For $v\in E^c(x) \setminus \{0\}$,
$$\lim_{n\to\pm\infty}\frac1n\log|Df^{n}_xv|=0\ .
$$
\noindent (c) For $v\in E^s(x) \setminus \{0\}$,
$$\limsup_{n\to\infty}\frac1n\log \|Df^{n}_x|_{E^s(x)}\|\le-\l_0(x)\ .
$$



\medskip
\item[3.] The projections $\pi_x^u$, $\pi_x^c$, $\pi_x^s$ with respect to the
splitting ${\mb H}_x=E^u(x)\oplus E^c(x)\oplus E^s(x)$ are Borel, and
if for closed subspaces $E,F \subset {\mb H}$, we define
$$\an(E,F)=\inf\left\{\frac{|v'\wedge
v|}{|v'||v|}\right\}_{v'\in E\setminus \{0\},v\in F \setminus \{0\}},$$
then for $(E,F) = (E^u, E^c), (E^c, E^s), (E^u, E^c \oplus E^s)$ and
$(E^u \oplus E^c, E^s)$, we have
$$\lim_{n\to\pm\infty}\frac1n\log\an(E(f^n(x)), F(f^n(x))) =0\ .
$$
\end{itemize}
\end{theorem}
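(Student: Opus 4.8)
The plan is to derive Theorem \ref{T:MET} from the general Multiplicative Ergodic Theorem for injective operator cocycles on a separable Hilbert space, as established by Lian and Lu \cite{LL} (see also the earlier versions in \cite{R}, \cite{M}, \cite{T}, and \cite{O} in finite dimensions), and then to repackage its conclusion by separating the Lyapunov exponents according to sign. First I would apply that theorem to the derivative cocycle $\{Df^n_x\}_{n\ge 0}$ over $(\L,f)$: conditions C1)--C3) are exactly its hypotheses (injectivity of $f$ and of each $Df_x$ on $\L$ together with $\kappa(x)<0$ for the index of compactness), so it produces an $f$-invariant full-measure Borel set $\G\subseteq\L$ on which the index of compactness equals $\kappa(x)=\lim_n\frac1n\log\|Df^n_x\|_\kappa<0$, the full Lyapunov spectrum (a finite or countable sequence of exponents, each one lying above $\kappa(x)$ having finite multiplicity and finite-dimensional Oseledets space), the measurable $Df$-equivariant Oseledets splitting, and the standard temperedness estimates for the associated projections and for the angles between the blocks of the splitting.

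The one substantive ingredient --- and the point where C2)(ii) is indispensable, since it is what makes the unstable and center directions finite-dimensional --- is that $\kappa(x)<0$ forces every Lyapunov exponent $\ge 0$ to lie strictly above $\kappa(x)$, hence to be isolated in the spectrum with finite multiplicity, so that there are only finitely many of them; this is precisely Remark \ref{R:WeakerCondition}(iii) and is supplied by \cite{LL}. Granting it, I would let $k(x)$ be the number of strictly positive exponents at $x$, write them as $0<\l_1(x)<\cdots<\l_{k(x)}(x)$, take $E_i(x)$ to be the Oseledets space of $\l_i(x)$ and $E^u(x)=\bigoplus_{i=1}^{k(x)}E_i(x)$, take $E^c(x)$ to be the Oseledets space of the exponent $0$ (trivial when $0$ is not an exponent), and take $E^s(x)$ to be the closure of the sum of the Oseledets spaces of all negative exponents, i.e.\ the ``slow'' complement of $E^u\oplus E^c$ coming from the general theorem, which carries all the essential-spectrum directions and may be infinite-dimensional. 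Since $k(x)$ is an $f$-invariant measurable $\mb N\cup\{0\}$-valued function and each of $E^u,E^c,E^s$ is a measurable union or closure of blocks of the Oseledets splitting, these objects are Borel and $Df$-equivariant; $\dim E^\tau(x)<\infty$ for $\tau=u,c$ by the finite multiplicities, giving property 1(a), and property 1(b) is the $Df$-equivariance.

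What remains are transcriptions. For property 2, on the finite-dimensional space $E^u(x)\oplus E^c(x)$ the operator $Df_x$ is injective by C2)(i) and maps it onto $E^u(fx)\oplus E^c(fx)$, a space of equal dimension, hence it is a bijection there; this makes the backward iterates $Df^{-n}_xv$ well defined for $v\in E^u\oplus E^c$ and, with the Oseledets growth rates, gives 2(a) (rate exactly $\l_i$ on $E_i\setminus\{0\}$, a fortiori $\ge\l_i$) and 2(b) (rate $0$ on $E^c\setminus\{0\}$). For 2(c) one chooses an $f$-invariant measurable function $\l_0(x)$ with $0<\l_0(x)<\l_1(x)$ and $\limsup_n\frac1n\log\|Df^n_x|_{E^s(x)}\|\le-\l_0(x)$; such a function exists because every exponent on $E^s(x)$ is bounded above by a negative number --- the largest negative point exponent, or $\kappa(x)$ if there is none --- and the ordering $0<\l_0(x)<\l_1(x)<\cdots<\l_{k(x)}(x)$ then holds by construction. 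Property 3 is just the angle estimate of the general MET applied to the four listed pairs, each a pair of finite unions of blocks of the tempered Oseledets splitting. The only thing to check throughout is that measurability and $f$-invariance of the grouped data ($k$, the $\l_i$, $\l_0$, $E^u$, $E^c$, $E^s$) are inherited from the general statement, which is routine; beyond citing the structure theorem above there is no genuine obstacle.
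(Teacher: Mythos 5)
Your proposal is correct and follows essentially the same route as the paper: the paper does not prove Theorem \ref{T:MET} but presents it as a ``simplified version'' of the infinite-dimensional multiplicative ergodic theorem of \cite{R} and \cite{LL}, obtained exactly as you describe by applying that theorem to the derivative cocycle under C1)--C3) and regrouping the (finitely many, finite-multiplicity) nonnegative exponents into $E^u$, $E^c$ and the slow complement $E^s$, with $\l_0$ an auxiliary invariant gap function. Your spelled-out checks (bijectivity of $Df_x$ on the finite-dimensional fast spaces for backward iterates, choice of $\l_0$ below both $\l_1$ and the modulus of the top negative/essential rate, temperedness of projections and angles) are precisely the routine verifications the paper leaves implicit.
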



\subsubsection{Invariant Manifolds for Nonuniformy Hyperbolic System in Charts}\label{S:InvariantManifolds}

In this section, we state a version of stable and
unstable manifold theorems. { The setting and conclusions in Theorems \ref{L:LocalUnstableManifolds},
\ref{L:LocalStableManifolds} and \ref{L:ContinuityOnMaps} are borrowed from \cite{LY}, which is clearly motivated by
chart maps $\{\tilde f_{f^ix}, i \in \mb Z\}$.

\medskip
\noindent {\bf Setting.} Let $\lambda_1>0$ be fixed, and let $\d_1$ and
$\d_2>0$ be
as small as needed  depending on $\lambda_1$.
We assume that there is a splitting of $\mb H$ into orthogonal subspaces:
$\mb H = E^u \oplus E^s$ with dim$(E^u)< \infty$.
For $i \in \mb Z$, let $r_i$ be positive numbers such
that $r_{i+1}e^{-\d_1} < r_i < r_{i+1}e^{\d_1}$ for all $i$, and let
$B_i = B^u_i \times B^s_i$ where $B^\tau_i=B^\tau(0,r_i), \tau=u,s$.
We consider a sequence of differentiable maps
$$
g_i: B_i \to \mb H, \qquad i= \cdots, -1,0,1,2,\cdots,
$$
such that for each $i$, $g_i = \Lambda_i + G_i$ where $\Lambda_i$ and $G_i$
are as the following:

\vskip0.1in

\begin{itemize}
\item[(I)] $\Lambda_i \in \mc L({\mb H},{\mb H})$ and splits into
$\Lambda_i = \Lambda^u_i \oplus \Lambda^s_i$ where
$\Lambda^u_i \in \mc L(E^u,E^u)$, $\Lambda^s_i \in \mc L(E^s,E^s)$,
and $\|(\Lambda^u_i)^{-1}\|, \|\Lambda^s_i\| \le e^{-\lambda_1}$;
\item[(II)] $|G_i(0)| < \delta_2 r_{i+1}$, and $\|DG_i(x)\|< \delta_2$ for all $x \in B_i$.
\item[(III)] there are positive numbers $\ell_i$ with
$\ell_{i+1}e^{-\d_1} < \ell_i < \ell_{i+1}e^{\d_1}$ such that
Lip$(DG_i)< \ell_i$.
\end{itemize}

\vskip0.1in

Throughout the section, the orthogonal projections from $\mb H$ to $E^u$ and $E^s$ are denoted by
$\pi^u$ and $\pi^s$ respectively.

\begin{lemma}\label{L:LocalUnstableManifolds} {\rm (Local unstable manifolds)}
 Assume (I) and (II), and let
 $\d_1$ and $\delta_2$ (depending only on $\l_1$) be sufficiently small.
 Then,  for each $i$
there is a differentiable function $h^u_i:B^u_i \to B^s_i$ depending only
on $\{g_j, j < i\}$, with

(i) $|h^u_i(0)| < \frac12 r_i$ and

(ii) $\|Dh^u_i\| \le \frac{1}{10}$

\noindent such that if $W^u_i = $graph$(h^u_i)$, then

\smallskip
(a) $g_i(W^u_i) \supset W^u_{i+1}$;

(b) for  $x,y \in W^u_i$ such that $g_ix, g_iy \in B_{i+1}$,
$$|\pi^u(g_ix)-\pi^u(g_iy)| > (e^{\lambda_1} - 2\delta_2)|\pi^ux-\pi^uy|.$$

\smallskip
\noindent If (III) holds additionally, then $h^u_i \in C^{1+{\rm Lip}}$ with Lip$(Dh^u_i)<$ const$ \cdot \ell_i$, where the "const" only depends on $\l_1,\d_1$ and $\d_2$.

\end{lemma}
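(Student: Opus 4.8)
The plan is to prove Lemma \ref{L:LocalUnstableManifolds} by the classical graph transform (Hadamard--Perron) method adapted to the chart setting. I would work in the sequence space of admissible graphs: fix the small constants $\d_1,\d_2$ (and later $\ell_i$) depending only on $\l_1$, and for each $i$ let $\mathcal{G}_i$ be the set of Lipschitz functions $\varphi:B^u_i\to B^s_i$ with $|\varphi(0)|<\tfrac12 r_i$ and $\mathrm{Lip}(\varphi)\le \tfrac1{10}$. The first step is to define the graph transform: given $\varphi\in\mathcal{G}_i$, one wants $\Psi_i(\varphi)\in\mathcal{G}_{i+1}$ whose graph is (the relevant piece of) $g_i(\mathrm{graph}(\varphi))$. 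Concretely, for $\bar u\in B^u_{i+1}$ one solves $\pi^u g_i(u,\varphi(u))=\bar u$ for $u\in B^u_i$; because $\Lambda^u_i$ is invertible with $\|(\Lambda^u_i)^{-1}\|\le e^{-\l_1}$ and $G_i$ has derivative bounded by $\d_2$, the map $u\mapsto \pi^u g_i(u,\varphi(u))$ is a Lipschitz perturbation of a linear expansion by a factor $\ge e^{\l_1}-2\d_2>1$, hence invertible onto its image, and that image covers $B^u_{i+1}$ once $\d_2$ is small relative to the $\d_1$-slow variation of the $r_i$ (this uses $|G_i(0)|<\d_2 r_{i+1}$ to control the center). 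Setting $(\Psi_i\varphi)(\bar u)=\pi^s g_i(u,\varphi(u))$ with that $u$, a routine cone/Lipschitz estimate shows $\mathrm{Lip}(\Psi_i\varphi)\le\tfrac1{10}$ and $|(\Psi_i\varphi)(0)|<\tfrac12 r_{i+1}$ provided $\d_1,\d_2$ are small; this is where properties (i),(ii) and item (a) $g_i(W^u_i)\supset W^u_{i+1}$ come from.

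The second step is to produce $h^u_i$ as a fixed point of the backward composition $\Psi_{i-1}\circ\Psi_{i-2}\circ\cdots$. Since expansion in $E^u$ under $g_j$ translates into contraction of the graph transform in the direction of the $E^s$-coordinate (the standard Hadamard--Perron contraction, rate $\sim e^{-\l_1}$ modulo $\d_2$), the sequence obtained by iterating $\Psi$ on any starting graph over longer and longer windows of past indices $j<i$ is Cauchy in the $C^0$ (sup) norm on $B^u_i$, uniformly, and converges to a unique limit $h^u_i$ that depends only on $\{g_j:j<i\}$ — that last point being exactly why only the past matters. Item (b), the effective expansion $|\pi^u(g_ix)-\pi^u(g_iy)|>(e^{\l_1}-2\d_2)|\pi^u x-\pi^u y|$ on $W^u_i$, is immediate from the invertibility estimate for $\pi^u g_i(\cdot,\varphi(\cdot))$ applied with $\varphi=h^u_i$ and the cone condition $\|Dh^u_i\|\le\tfrac1{10}$.

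The third step, under the additional hypothesis (III), is to upgrade $h^u_i$ to $C^{1+\mathrm{Lip}}$ with $\mathrm{Lip}(Dh^u_i)\le \mathrm{const}\cdot\ell_i$. Here I would run the graph transform at the level of derivatives: differentiating the defining relations gives an induced action on candidate tangent-plane fields (sections of the Grassmann bundle of $\tfrac1{10}$-Lipschitz subspaces), which is again a fiberwise contraction with rate $\sim e^{-2\l_1}$ in the relevant norm; its fixed point is $Dh^u_i$, establishing $C^1$. For the Hölder/Lipschitz bound on $Dh^u_i$ one iterates the same computation one order higher, using $\mathrm{Lip}(DG_j)<\ell_j$ and the slow variation $\ell_{j+1}e^{-\d_1}<\ell_j<\ell_{j+1}e^{\d_1}$ to see that the telescoping sum $\sum_{j<i}e^{-2\l_1(i-j)}\ell_j$ converges and is comparable to $\ell_i$, so the constant depends only on $\l_1,\d_1,\d_2$ as claimed. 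The main obstacle, and the part requiring the most care, is making the graph transform well-defined across the \emph{variable} domains $B_i$: one must check that $g_i(\mathrm{graph}(h^u_i))$ genuinely projects over all of $B^u_{i+1}$ (surjectivity of the $\pi^u$-component onto the larger ball), which forces the smallness of $\d_2$ to be calibrated against $\d_1$ and uses $|G_i(0)|<\d_2 r_{i+1}$ in an essential way; once that bookkeeping with the $r_i$ is in place, the contraction estimates that give existence, uniqueness, dependence on the past, and the regularity are the standard ones.
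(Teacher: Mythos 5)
Your graph-transform (Hadamard--Perron) argument is correct and is essentially the expected proof: the paper does not prove Lemma \ref{L:LocalUnstableManifolds} itself but imports it from \cite{LY}, where precisely this construction is carried out, and the same machinery is reproduced in the paper's own disc lemmas (Lemma \ref{L:Graph1}, Lemma \ref{L:DiscLemma1}, Lemma \ref{L:DiscLemma2}), including the $C^0$-contraction rate $(e^{-\l_1}+2\d_2)$ per step and the derivative-level contraction $\approx e^{-2\l_1}$ you invoke. Your bookkeeping of the variable radii $r_i$ (using $r_{i+1}e^{-\d_1}<r_i$, $|G_i(0)|<\d_2 r_{i+1}$ to get surjectivity of the $\pi^u$-projection onto $B^u_{i+1}$) and of the weights $\ell_j\le \ell_i e^{(i-j)\d_1}$ in the telescoping sum for $\mathrm{Lip}(Dh^u_i)$ is exactly the calibration needed, so no gap.
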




\begin{lemma}\label{L:LocalStableManifolds} {\rm (Local stable manifolds)}
Assume (I) and (II), and let
 $\d_1$ and $\delta_2$ (depending only on $\l_1$) be sufficiently small.
 Then,  for each $i$
there is a differentiable function $h^s_i:B^s_i \to B^u_i$ depending only on
$\{g_j, j \ge i\}$, with

(i) $|h^s_i(0)|< \frac12 r_i$ and

(ii) $\|Dh^s_i\| \le \frac{1}{10}$

\noindent such that if $W^s_i = $graph$(h^s_i)$, then

\smallskip
(a) $g_{i}W^s_i \subset W^s_{i+1}$;

(b) for $x,y \in W^s_i$, $|\pi^s(g_ix)-\pi^s(g_iy)|< (e^{- \lambda_1} +2\delta_2) |\pi^sx-\pi^sy|$.

\smallskip
\noindent If (III) holds additionally, then $h^s_i \in C^{1+{\rm Lip}}$ with Lip$(Dh^s_i)<$ const$ \cdot \ell_i$, where the "const" only depends on $\l_1,\d_1$ and $\d_2$.
\end{lemma}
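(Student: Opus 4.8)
The plan is to run the graph transform for the stable direction; the construction mirrors that of Lemma~\ref{L:LocalUnstableManifolds}, but since the maps $g_i$ are not assumed invertible one cannot simply apply that lemma to an ``inverse'' sequence — instead the transform is set up through an implicit equation whose solvability uses only that $\Lambda^u_i$ is invertible. Write points of $\mb H$ as $(u,s)\in E^u\times E^s$, so that $g_i(u,s)=\bigl(\Lambda^u_i u+G^u_i(u,s),\ \Lambda^s_i s+G^s_i(u,s)\bigr)$. Given $\phi:B^s_{i+1}\to B^u_{i+1}$, I want $\phi_i=:\Gamma^s_i\phi:B^s_i\to B^u_i$ with $g_i(\mathrm{graph}\,\phi_i)\subset\mathrm{graph}\,\phi$, i.e.
$$
\phi_i(s)=(\Lambda^u_i)^{-1}\Bigl[\phi\bigl(\Lambda^s_i s+G^s_i(\phi_i(s),s)\bigr)-G^u_i(\phi_i(s),s)\Bigr].
$$
Because $\|(\Lambda^u_i)^{-1}\|\le e^{-\lambda_1}$ and $\|DG_i\|<\delta_2$, the right-hand side is, for each fixed $s$, a contraction in the unknown value $\phi_i(s)$; hence this equation has a unique solution, Lipschitz in $s$, and a short computation with $\delta_1,\delta_2$ small gives $|\Lambda^s_i s+G^s_i(\phi_i(s),s)|<r_{i+1}$ and $|\phi_i(s)|<r_i$, so $\phi_i$ is a genuine map $B^s_i\to B^u_i$. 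Thus $\Gamma^s_i$ carries $\mathcal G_{i+1}$ into $\mathcal G_i$, where $\mathcal G_j:=\{\phi\in C^{0,1}(B^s_j,\overline{B^u_j}):\ \mathrm{Lip}(\phi)\le\tfrac1{10},\ |\phi(0)|\le\tfrac12 r_j\}$ with the $C^0$ metric; the bound $|\phi_i(0)|\le\tfrac12 r_i$ follows from the displayed formula at $s=0$ since $e^{-\lambda_1+\delta_1}(\tfrac12+\mathrm{small})<\tfrac12$, and preservation of the slope bound $\tfrac1{10}$ is the invariant (vertical) cone computation: $Dg_i$ maps a vector tangent to $\mathrm{graph}\,\phi_i$ to one tangent to $\mathrm{graph}\,\phi$, and hyperbolicity of $\Lambda_i$ plus smallness of $\delta_1,\delta_2$ keeps the slope $\le\tfrac1{10}$.

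Next I would check that $\Gamma^s_i$ contracts the $C^0$ metric: subtracting the implicit equations for $\Gamma^s_i\phi$ and $\Gamma^s_i\phi'$ and using $\mathrm{Lip}(\phi')\le\tfrac1{10}$, $\|DG_i\|<\delta_2$ gives $\sup_{B^s_i}|\Gamma^s_i\phi-\Gamma^s_i\phi'|\le\theta\,\sup_{B^s_{i+1}}|\phi-\phi'|$ with $\theta=e^{-\lambda_1}+O(\delta_2)<1$. Consequently, for any choice of $\psi_N\in\mathcal G_{i+N}$ — e.g. $\psi_N\equiv 0$ — the iterates $\Gamma^s_i\Gamma^s_{i+1}\cdots\Gamma^s_{i+N-1}\psi_N$ form a Cauchy sequence in $C^0$ (using $\theta e^{\delta_1}<1$ to absorb the $e^{\pm\delta_1}$ drift of the $r_j$) whose limit $h^s_i$ is independent of $\{\psi_N\}$, lies in $\mathcal G_i$ (so (i), (ii) hold), depends only on $\{g_j:j\ge i\}$ by construction, and satisfies (a) on the nose. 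Property (b) is then immediate: $\pi^s g_i(x)=\Lambda^s_i\pi^s x+\pi^s G_i(x)$ gives $|\pi^s g_ix-\pi^s g_iy|\le\|\Lambda^s_i\|\,|\pi^s x-\pi^s y|+\|DG_i\|\,|x-y|$, and on $\mathrm{graph}\,h^s_i$ one has $|x-y|\le(1+\tfrac1{100})^{1/2}|\pi^s x-\pi^s y|$, whence the bound $(e^{-\lambda_1}+2\delta_2)|\pi^s x-\pi^s y|$ after absorbing constants.

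The remaining work is the regularity: upgrading $h^s_i$ to $C^1$ with $\|Dh^s_i\|\le\tfrac1{10}$, and to $C^{1+\mathrm{Lip}}$ under (III). For the $C^1$ statement I would invoke the fiber contraction theorem: differentiating the defining equation shows that the pair $(\phi,D\phi)$ transforms by a bundle map over $\Gamma^s_i$ which, restricted to the invariant cone of linear graphs $E^s\to E^u$ of norm $\le\tfrac1{10}$, is a fiberwise contraction with rate $\sim\|(\Lambda^u_i)^{-1}\|\,\|\Lambda^s_i\|\le e^{-2\lambda_1}$ times $1+O(\delta_1+\delta_2)$; hence the iterated transform converges in $C^1$ and $Dh^s_i$ is continuous with $\|Dh^s_i\|\le\tfrac1{10}$. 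Assuming (III), differentiating once more yields a recursion $\mathrm{Lip}\bigl(D(\Gamma^s_i\phi)\bigr)\le e^{-2\lambda_1+O(\delta_1)}\mathrm{Lip}(D\phi)+\mathrm{const}\cdot\ell_i$, and since $\ell_{i+k}\le\ell_i e^{k\delta_1}$ with $\delta_1\ll\lambda_1$ the resulting geometric series sums to $\mathrm{Lip}(Dh^s_i)\le\mathrm{const}\cdot\ell_i$, the constant depending only on $\lambda_1,\delta_1,\delta_2$. I expect the main obstacle to be precisely this last pair of steps — keeping the nested domains $B^s_i$ and the $e^{\pm\delta_1}$ drift of $r_i,\ell_i$ under control so that the hyperbolic contraction always dominates, with the fiber contraction theorem doing the real work in producing a genuinely $C^1$ (rather than merely Lipschitz) limit.
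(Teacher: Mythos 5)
Your proof is correct and is essentially the standard argument: the paper itself does not prove this lemma but quotes it from \cite{LY}, where it is obtained by exactly the backward graph transform you set up, with the implicit equation
$\phi_i(s)=(\Lambda^u_i)^{-1}\bigl[\phi(\Lambda^s_i s+G^s_i(\phi_i(s),s))-G^u_i(\phi_i(s),s)\bigr]$
being the right device for the non-invertible $g_i$. Your estimates --- the $C^0$ contraction with rate $e^{-\lambda_1}+O(\delta_2)$ dominating the $e^{\pm\delta_1}$ drift of $r_i$ and $\ell_i$, the fiber-contraction (rate $\approx e^{-2\lambda_1}$) step upgrading to $C^1$ with $\|Dh^s_i\|\le\tfrac1{10}$, and the geometric-series recursion yielding $\mathrm{Lip}(Dh^s_i)\le \mathrm{const}\cdot\ell_i$ --- mirror the forward graph-transform computations the paper does carry out for unstable discs in Lemmas \ref{L:Graph1}--\ref{L:DiscLemma2}, so no genuine gap remains beyond routine bookkeeping of the domains $B_i$.
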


The following results will be used later, which tells that
$h^s_0$ and $h^u_0$ depend on $\{g_i\}$ continuously in the $C^1$-topology as in the setting at the beginning of this subsection. For the proof we also refer to \cite{LY}.

\begin{lemma}\label{L:ContinuityOnMaps} \  \ {\it Let $\l_1, \d_1$ and $\d_2$ be as
in Lemma \ref{L:LocalStableManifolds}, and let
$r_0$ and $\ell_0$ be fixed.
Given $\e>0$, there exists $N$ which is only depending on $\e,\l_1,\d_1,\d_2,r_0$ and $\ell_0$ such that if
$\{g_i\}$ and $\{\hat g_i\}$ are two sequences of maps satisfying Conditions
(I)--(III) and
$g_i = \hat g_i$ for all $0 \le i \le N$, then
$\|h_0^s - \hat h_0^s\|_{C^1}< \e$ where $h^s_0$ and
$\hat h_0^s$ are as in Lemma \ref{L:LocalStableManifolds} for
$\{g_i\}$ and $\{\hat g_i\}$ respectively.

Analogous results hold for $h^u_0$ provided $g_i = \hat g_i$ for
$-N < i < 0$ for sufficiently large $N$.}
\end{lemma}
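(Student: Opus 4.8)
The plan is to realize $h^s_0$ as the limit of a contracting sequence of graph transforms, the $n$-th term of which depends on only the finitely many maps $g_0,\dots,g_{n-1}$, and to make the convergence geometric. Then, if $\{g_i\}$ and $\{\hat g_i\}$ agree on $0\le i\le N$, the $N$-th terms of the two sequences coincide, and each of the two tails is $O(\theta^{N})$ in the $C^1$-topology.

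First I would recall the graph transform underlying Lemma~\ref{L:LocalStableManifolds}. For each $i$ one has an operator $\mc G_i$ assigning to a $C^1$ map $\psi\colon B^s_{i+1}\to B^u_{i+1}$ with $\|D\psi\|\le\frac1{10}$ the $C^1$ map $\mc G_i\psi\colon B^s_i\to B^u_i$ whose graph is the component through the origin of $g_i^{-1}(\mathrm{graph}(\psi))\cap B_i$; equivalently, $\mc G_i\psi(b)$ is the unique fixed point in $B^u_i$ of $v\mapsto(\Lambda^u_i)^{-1}\big[\psi(\pi^s g_i(b,v))-\pi^u G_i(b,v)\big]$. Conditions (I)--(II), with $\d_1,\d_2$ small relative to $\l_1$, make $\mc G_i$ well defined, slope preserving, and a contraction in the $C^0$-metric with a factor $\theta_0=e^{-\l_1}+O(\d_2)<1$, uniformly over every sequence satisfying (I)--(III). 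Condition (III) is then used to show $\mc G_i$ also contracts the second-order data: if $\mathrm{Lip}(D\psi)\le L_{i+1}$ then $\mathrm{Lip}(D\mc G_i\psi)\le\theta_1 L_i+c\,\ell_i$ for some $\theta_1<1$ and $c=c(\l_1,\d_1,\d_2)$, so there is a forward-invariant family of balls $\{\mathrm{Lip}(D\psi)\le K_i\}$ (with $K_i$ comparable to $\ell_i$, using $\ell_{i+1}e^{-\d_1}<\ell_i<\ell_{i+1}e^{\d_1}$) on which $\mc G_i$ is a $C^1$-contraction with factor $\theta:=\max\{\theta_0,\theta_1\}<1$.

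Next I would exhibit $h^s_0$ as $\lim_{n\to\infty}\mc G_0\circ\cdots\circ\mc G_{n-1}(\psi_n)$ for any admissible initial map $\psi_n$ over $B^s_n$ (e.g.\ $\psi_n\equiv0$): the composition $\mc G_0\circ\cdots\circ\mc G_{n-1}$ is a $\theta^n$-contraction from the (uniformly $C^1$-bounded) set of admissible graphs over $B^s_n$ into the admissible graphs over $B^s_0$, so the limit exists, is independent of the choice of $\psi_n$, and equals $h^s_0$ by the forward-invariance in Lemma~\ref{L:LocalStableManifolds}(a). The point is that the $n$-step approximant $h^{s,(n)}_0:=\mc G_0\circ\cdots\circ\mc G_{n-1}(0)$ depends only on $g_0,\dots,g_{n-1}$, and $$\|h^s_0-h^{s,(n)}_0\|_{C^1}\le C_0\,\theta^{\,n},$$ where $C_0$ and $\theta$ depend only on $\l_1,\d_1,\d_2,r_0,\ell_0$ (the last two entering through the uniform bounds on the domains $B^s_i$ and the constants $\ell_i$, which are pinned down from $r_0,\ell_0$ by the $\d_1$-comparability hypotheses). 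Given $\e>0$, choose $N$ with $2C_0\theta^{N}<\e$. If $g_i=\hat g_i$ for $0\le i\le N$, then $h^{s,(N)}_0=\hat h^{s,(N)}_0$ since both use only $g_0,\dots,g_{N-1}$, whence $$\|h^s_0-\hat h^s_0\|_{C^1}\le\|h^s_0-h^{s,(N)}_0\|_{C^1}+\|\hat h^s_0-\hat h^{s,(N)}_0\|_{C^1}\le 2C_0\theta^{N}<\e.$$ The statement for $h^u_0$ is proved identically after interchanging the roles of $E^u$ and $E^s$ and reversing the direction of the index: the unstable graph transform at step $i$ pushes graphs over $B^u_{i-1}$ forward by $g_{i-1}$ into graphs over $B^u_i$, so its $N$-step approximant to $h^u_0$ depends only on $g_{-1},\dots,g_{-N}$, and agreement of the two sequences on $-N<i<0$ suffices.

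The main obstacle is the $C^1$ (as opposed to merely $C^0$) contraction estimate in the second step: one must show that the derivative of the transformed graph depends contractively on that of the input, with a rate strictly below $1$, and that the inhomogeneous term $c\,\ell_i$ produced by the nonlinearity $G_i$ does not accumulate along the sequence. This is exactly where hypothesis (III) and the geometric comparability $\ell_{i+1}e^{-\d_1}<\ell_i<\ell_{i+1}e^{\d_1}$ enter, and it is also the source of the dependence of $N$ on $r_0$ and $\ell_0$ but on nothing else about the two sequences.
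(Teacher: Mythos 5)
Your plan is correct and is essentially the argument the paper relies on for this lemma (which it quotes from Lian--Young, and whose mechanism is reproduced in the paper's own Lemmas \ref{L:Graph1}, \ref{L:DiscLemma1}--\ref{L:DiscLemma3} and in the proof of Lemma \ref{L:ContinuityOfW}): the $n$-fold backward graph transform of any admissible initial graph depends only on $g_0,\dots,g_{n-1}$, fixes the true family $h^s_i$ by Lemma \ref{L:LocalStableManifolds}(a), and converges to $h^s_0$ geometrically in $C^1$, so agreement of the two sequences up to time $N$ gives $2C_0\theta^N<\e$. The only refinement to record is that the ``$C^1$-contraction'' is really the coupled recursion $b_i\le q\,b_{i+1}+p\,\ell_{i+1}a_{i+1}$ for the derivative differences $b_i$ and $C^0$ differences $a_i$ (exactly as in (\ref{E:C1Estimate1})--(\ref{E:C1Estimate2})), so the geometric rate is $e^{-\l_1}$ degraded by $e^{O(\d_1)}$ coming from the growth of $r_i,\ell_i$; you flag this inhomogeneous term yourself, and it is harmless since $\d_1\ll\l_1$.
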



\subsubsection{Unstable Disc Lemmas}\label{S:DiscLemmas}

In the following, we will derive some results of unstable discs (almost paralleled to $E^u$) under the evolution of the maps defined in Section \ref{S:InvariantManifolds}. The main tools used here is the so called graph transform method. Define
$$\mc V_i=\left\{v_i:B^u_i\to B^s_i\big|\ |v_i(0)|\le\frac12 r_i,\ \text{Lip}(v_i)\le \frac1{10}\right\}.$$
$\mc V_i$ is  a complete metric space when it is equipped with the $C^0$ norm. Now we consider $g_i(graph(v_i))\cap B_{i+1}$ for which if one can show that there exists $v_{i+1}\in\mc V_{i+1}$ such that $graph(v_{i+1})=g_i(graph(v_i))\cap B_{i+1}$, then one can define maps $\{T_i:\mc V_i\to \mc V_{i+1}\}_{i\in \mb Z}$ by letting
$$ graph(T_i(v_i))=g_i(graph(v_i))\cap B_{i+1},$$
which is so called {\em graph transform} and is well-defined due to the following lemma:
\begin{lemma}\label{L:Graph1}
Assume (I), (II), and let $\d_1,\d_2$ (only depending on $\l_1$) be sufficient small, then for any $v_i\in\mc V_i$, there exists $v_{i+1}$ such that
$$ graph(v_{i+1})=g_i(graph(v_i))\cap B_{i+1},$$
and furthermore for any $y_1,y_2\in graph(v_i)$
$$|\pi^ug_i(y_2)-\pi^ug_i(y_1)|\ge (e^{\l_1}-\frac{11}{10}\d_2)|\pi^uy_2-\pi^uy_1|.$$
\end{lemma}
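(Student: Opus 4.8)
The plan is to set up the standard graph transform argument, tracking how a Lipschitz graph over $E^u$ is carried forward by the perturbed expanding map $g_i$. First I would write an arbitrary point of $\mathrm{graph}(v_i)$ as $(\xi, v_i(\xi))$ with $\xi \in B^u_i$, and decompose $g_i = \Lambda_i + G_i$ according to (I); then $\pi^u g_i(\xi, v_i(\xi)) = \Lambda^u_i \xi + \pi^u G_i(\xi, v_i(\xi))$. The key quantitative input is that $\|(\Lambda^u_i)^{-1}\| \le e^{-\lambda_1}$, i.e. $\Lambda^u_i$ expands by at least $e^{\lambda_1}$, while $G_i$ is a small Lipschitz perturbation with $\|DG_i\| < \delta_2$ on $B_i$ by (II), and $v_i$ has Lipschitz constant $\le \frac1{10}$. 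I would first prove the expansion estimate (the displayed inequality): for $y_1 = (\xi_1, v_i(\xi_1))$, $y_2 = (\xi_2, v_i(\xi_2))$, since $|y_2 - y_1| \le (1 + \frac1{10})|\xi_2 - \xi_1| = \frac{11}{10}|\xi_2-\xi_1|$, one has
\[
|\pi^u g_i(y_2) - \pi^u g_i(y_1)| \ge |\Lambda^u_i(\xi_2-\xi_1)| - |\pi^u G_i(y_2) - \pi^u G_i(y_1)| \ge e^{\lambda_1}|\xi_2-\xi_1| - \delta_2 |y_2 - y_1| \ge \left(e^{\lambda_1} - \tfrac{11}{10}\delta_2\right)|\xi_2-\xi_1|,
\]
which is exactly the claimed bound since $|\pi^u y_2 - \pi^u y_1| = |\xi_2 - \xi_1|$.

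Next I would use this injectivity-plus-expansion to define $v_{i+1}$. The map $\xi \mapsto \pi^u g_i(\xi, v_i(\xi))$ from $B^u_i$ into $E^u$ is injective (by the estimate above, with constant $> e^{\lambda_1} - \frac{11}{10}\delta_2 > 1$ once $\delta_2$ is small) and, by (a) of Lemma \ref{L:LocalUnstableManifolds} combined with property (II) controlling $|G_i(0)| < \delta_2 r_{i+1}$ and the relation $r_{i+1}e^{-\delta_1} < r_i$, its image covers $B^u_{i+1}$. Hence there is a well-defined inverse branch $\phi: B^u_{i+1} \to B^u_i$, and I set
\[
v_{i+1} = \pi^s \circ g_i \circ (\mathrm{id}, v_i) \circ \phi,
\]
so that $\mathrm{graph}(v_{i+1}) = g_i(\mathrm{graph}(v_i)) \cap B_{i+1}$ by construction. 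It remains to check $v_{i+1} \in \mathcal V_{i+1}$: the bound $|v_{i+1}(0)| \le \frac12 r_{i+1}$ follows from $|G_i(0)| < \delta_2 r_{i+1}$, $|v_i(0)| \le \frac12 r_i$, $\|\Lambda^s_i\| \le e^{-\lambda_1}$, and the smallness of $\delta_2, \delta_1$; and the Lipschitz bound $\mathrm{Lip}(v_{i+1}) \le \frac1{10}$ comes from combining $\|\Lambda^s_i\| \le e^{-\lambda_1} < 1$, $\|DG_i\| < \delta_2$, $\mathrm{Lip}(v_i) \le \frac1{10}$, and the fact that $\phi$ contracts with factor at most $(e^{\lambda_1} - \frac{11}{10}\delta_2)^{-1}$, yielding roughly $\frac{e^{-\lambda_1} + \delta_2(1 + 1/10)}{e^{\lambda_1} - \frac{11}{10}\delta_2} \le \frac1{10}$ for $\delta_1,\delta_2$ small relative to $\lambda_1$.

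The main obstacle — really the only subtle point — is the surjectivity claim, namely that $\xi \mapsto \pi^u g_i(\xi, v_i(\xi))$ maps $B^u_i$ \emph{onto} (at least) $B^u_{i+1}$, since the domain and target radii $r_i, r_{i+1}$ differ (only constrained by $r_{i+1}e^{-\delta_1} < r_i < r_{i+1}e^{\delta_1}$). This is a degree / open-and-closed argument in the finite-dimensional space $E^u$: the map is a homeomorphism onto its image by the expansion estimate, its image is open, and one shows the image contains $B^u_{i+1}$ by checking that the boundary $\partial B^u_i$ is mapped outside $B^u_{i+1}$ — this uses $e^{\lambda_1} r_i \gtrsim r_{i+1}$ together with the error terms $\delta_2 r_{i+1}$ and $\frac1{10}$-Lipschitz slack being absorbable when $\delta_1, \delta_2$ are chosen small depending only on $\lambda_1$. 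Since this is exactly the mechanism already underlying Lemma \ref{L:LocalUnstableManifolds}(a), I would either invoke that lemma directly or reproduce the short boundary estimate. Everything else is the routine bookkeeping of constants, which I would not belabor.
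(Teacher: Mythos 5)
Your overall scheme is the same as the paper's: you prove the displayed expansion inequality exactly as the paper does (expansion of $\Lambda^u_i$ by $e^{\lambda_1}$ from $\|(\Lambda^u_i)^{-1}\|\le e^{-\lambda_1}$, minus the $\delta_2$-Lipschitz perturbation evaluated on $|y_2-y_1|\le \tfrac{11}{10}|\pi^u y_2-\pi^u y_1|$), and you define $v_{i+1}=\pi^s\circ g_i\circ(\mathrm{id},v_i)\circ\phi$ via the inverse branch, then check membership in $\mathcal V_{i+1}$. The one place you genuinely diverge is the surjectivity of $\xi\mapsto \Lambda^u_i\xi+\pi^u G_i(\xi,v_i(\xi))$ onto $B^u_{i+1}$: the paper solves $\Lambda^u_i\xi+\pi^uG_i(\xi,v_i(\xi))=\tilde\xi$ by the Banach fixed point theorem applied to the contraction $F_{\tilde\xi}(\xi)=(\Lambda^u_i)^{-1}\tilde\xi-(\Lambda^u_i)^{-1}\pi^uG_i(\xi,v_i(\xi))$, which simultaneously gives existence, uniqueness, and the bound $|\xi|\le e^{-\lambda_1}r_{i+1}(1+\delta_2+\delta_2e^{\delta_1})<r_i$ needed to stay in the chart; your degree/open-and-closed argument is a valid alternative in the finite-dimensional space $E^u$ (openness of the image plus the boundary estimate, and closedness because the expansion estimate makes preimage sequences Cauchy), but it is somewhat heavier than the one-line contraction argument. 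One caution: your fallback of ``invoking Lemma \ref{L:LocalUnstableManifolds}(a) directly'' is not legitimate, since that statement concerns the specific unstable graphs $h^u_i$ and not an arbitrary $v_i\in\mathcal V_i$; you must actually run the boundary/contraction estimate, as you propose in your primary route.

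Two bookkeeping slips, both fixable. In the Lipschitz estimate your displayed quotient misplaces the factor $\tfrac1{10}$: the correct bound is $\mathrm{Lip}(v_{i+1})\le \bigl(\tfrac1{10}e^{-\lambda_1}+\tfrac{11}{10}\delta_2\bigr)\bigl(e^{\lambda_1}-\tfrac{11}{10}\delta_2\bigr)^{-1}=\tfrac1{10}\,\bigl(e^{-\lambda_1}+11\delta_2\bigr)\bigl(e^{\lambda_1}-\tfrac{11}{10}\delta_2\bigr)^{-1}$, which is $\le\tfrac1{10}$ for $\delta_2$ small because the second factor is $<1$; the inequality as you wrote it, with numerator $e^{-\lambda_1}+\tfrac{11}{10}\delta_2$ required to be at most $\tfrac1{10}$ of the denominator, cannot be arranged by shrinking $\delta_1,\delta_2$ alone (it would force $\lambda_1\ge\tfrac12\log 10$, whereas $\lambda_1>0$ is arbitrary). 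Similarly, for $|v_{i+1}(0)|\le\tfrac12 r_{i+1}$ you should record that the preimage $\xi_0$ of $0$ satisfies $|\xi_0|=O(\delta_2)r_{i+1}$ (from the same fixed-point/expansion estimate) and then use $r_i<e^{\delta_1}r_{i+1}$ so that $\tfrac12 r_i(e^{-\lambda_1}+\delta_2)+O(\delta_2)r_{i+1}<\tfrac12 r_{i+1}$; this is exactly how the paper closes that estimate.
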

\begin{proof}
First, the inequality follows from assumption (I) and (II) of $g_i$ immediately.\\

Next, we will show that $v_{i+1}$ can be defined as follows: for any $y\in graph(v_i)$ so that $f(y)\in B_{i+1}$ $$v_{i+1}(\L^u_i\pi^uy+\pi^uG_i(y))=\L^s_i\pi^sy+\pi^sG_i(y).$$
Note that for any $\tilde \xi\in B^u_{i+1}$, there exists a unique $\xi\in B^u_i$ such that
\begin{equation}\label{E:Preimage}
\L^u\xi+\pi^uG_i((\xi,v_i(\xi)))=\tilde \xi.
\end{equation}
The proof of (\ref{E:Preimage}) follows directly from the Banach fixed point theorem if we consider the following contracting map
$$F_{\tilde \xi}(\xi)=(\L^u_i)^{-1}\tilde \xi-(\L^u_i)^{-1}\pi^uG_i((\xi,v_i(\xi))),$$
whose fixed point exists and satisfies (\ref{E:Preimage}). Furthermore, if $f(y)\in B_{i+1}$, then
$$|\L^u_i\pi^uy+\pi^uG_i(y)|\le r_{i+1},$$
which implies that
$$|\pi^uy|\le e^{-\l_1}r_{i+1}(1+\d_2+\d_2 e^{\d_1}).$$
Since $v_i\in\mc V_i$, for any $y\in graph(v_i)$, we have that if $\L^u\pi^uy+\pi^uG_i(y)=0$. Thus,
$$|\pi^uy|\le e^{-\l_1}\d_2(1+e^{\d_1})r_{i+1},$$ therefore,
$$|\L^s_i\pi^sy+\pi^sG_i(y)|\le \frac12r_i(e^{-\l_1}+\d_2)+\frac{11}{10}(e^{-\l_1}+\d_2)|\pi^uy|+\d_{2}r_{i+1}. $$
Since $e^{-\l_1}<e^{-\d_1}<1$, we can choose $\d_2$ small enough to make the right hand side of above inequality to be $<\frac12 r_{i+1}$.\\

For any $y_1,y_2\in graph(v_i)$, by a straightforward computation, we have that
\begin{align*}
&|\pi^sg_i(y_2)-\pi^sg_i(y_1)|=|\L^sy_2+\pi^sG_i(y_2)-(\L^sy_1+\pi^sG_i(y_1))|\\
&\le \frac1{10}(e^{-\l_1}+11\d_2)|\pi^uy_2-\pi^uy_1|
\le\frac1{10}\frac{e^{-\l_1}+11\d_2}{e^{\l_1}-\frac{11}{10}\d_2}|\pi^ug_i(y_2)-\pi^ug_i(y_1)|,
\end{align*}
which completes the proof by choosing $\d_2$ small enough so that $\frac{e^{-\l_1}+11\d_2}{e^{\l_1}-\frac{11}{10}\d_2}<1$.

\end{proof}

For $k\in \mb Z^+$, denote $T^k_i:\mc V_i\to\mc V_{i+k}$ the composition $T_{i+k-1}\circ\cdots\circ T_i$

\begin{lemma}\label{L:DiscLemma1}
Assume (I), (II), and let $\d_1,\d_2$ (only depending on $\l_1$) be sufficient small, then for any $v^1_i,v^2_i\in \mc V_i\cap C^1$,  $\|T^k_i(w^1_i)-T^k_i(w^2_i)\|_{C^0}\le 2(e^{-\l_1}+2\d_2)^kr_i$.
\end{lemma}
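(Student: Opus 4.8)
The plan is to prove Lemma~\ref{L:DiscLemma1} by iterating the contraction estimate for a single graph transform step that is already implicit in the proof of Lemma~\ref{L:Graph1}. First I would record the one-step estimate: for any $v^1_i,v^2_i\in\mc V_i\cap C^1$, one has
\[
\|T_i(v^1_i)-T_i(v^2_i)\|_{C^0}\le (e^{-\l_1}+2\d_2)\,\|v^1_i-v^2_i\|_{C^0}.
\]
This is the heart of the matter, and it is essentially the computation at the end of the proof of Lemma~\ref{L:Graph1}, rewritten for two graphs rather than two points on one graph. Concretely, fix $\tilde\xi\in B^u_{i+1}$; by the argument around \eqref{E:Preimage} there are unique $\xi^1,\xi^2\in B^u_i$ with $\L^u_i\xi^j+\pi^uG_i((\xi^j,v^j_i(\xi^j)))=\tilde\xi$, and $(T_i v^j_i)(\tilde\xi)=\L^s_i v^j_i(\xi^j)+\pi^s G_i((\xi^j,v^j_i(\xi^j)))$. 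Subtracting the two identities for $\tilde\xi$ and using $\|(\L^u_i)^{-1}\|\le e^{-\l_1}$, $\|DG_i\|<\d_2$, $\mathrm{Lip}(v^j_i)\le\frac1{10}$ gives a bound $|\xi^1-\xi^2|\le C\d_2(|\xi^1-\xi^2|+\|v^1_i-v^2_i\|_{C^0})$, hence $|\xi^1-\xi^2|\lesssim\d_2\|v^1_i-v^2_i\|_{C^0}$ for $\d_2$ small. Feeding this back into the $\pi^s$-difference, together with $\|\L^s_i\|\le e^{-\l_1}$ and the Lipschitz bounds on $v^j_i$ and $G_i$, yields exactly the one-step factor $e^{-\l_1}+2\d_2$ (possibly after absorbing a harmless constant times $\d_2$ into the $2\d_2$, which is the point of choosing $\d_1,\d_2$ small).

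Given the one-step estimate, the lemma follows by induction on $k$: writing $T^{k}_i=T_{i+k-1}\circ T^{k-1}_i$ and applying the one-step bound at level $i+k-1$,
\[
\|T^k_i(v^1_i)-T^k_i(v^2_i)\|_{C^0}\le (e^{-\l_1}+2\d_2)\,\|T^{k-1}_i(v^1_i)-T^{k-1}_i(v^2_i)\|_{C^0}\le (e^{-\l_1}+2\d_2)^k\,\|v^1_i-v^2_i\|_{C^0}.
\]
Finally I would bound the initial $C^0$ distance: since $v^1_i,v^2_i\in\mc V_i$ both map $B^u_i$ into $B^s_i=B^s(0,r_i)$, their values differ by at most $2r_i$, so $\|v^1_i-v^2_i\|_{C^0}\le 2r_i$, giving the stated bound $2(e^{-\l_1}+2\d_2)^k r_i$.

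One technical point to handle carefully is that $T^{k}_i(v^j_i)$ is defined as the graph over $B^u_{i+k}$ obtained by intersecting $g_{i+k-1}(\mathrm{graph}(T^{k-1}_i v^j_i))$ with $B_{i+k}$, so at each stage the relevant domain shrinks and one must check that the points $\xi^1,\xi^2$ produced by \eqref{E:Preimage} genuinely lie in $B^u_i$ (i.e. the preimages stay in the charts) — but this is already guaranteed by Lemma~\ref{L:Graph1}, which tells us $T_i$ maps $\mc V_i$ into $\mc V_{i+1}$, so every iterate is again an honest element of the corresponding $\mc V$. The main obstacle is thus purely in making the one-step contraction constant come out as $e^{-\l_1}+2\d_2$ rather than merely $e^{-\l_1}+C\d_2$; since $\d_1,\d_2$ are free small parameters depending only on $\l_1$, this is a matter of bookkeeping — choosing $\d_2$ small enough that all the error terms generated by $G_i$ and by the discrepancy $|\xi^1-\xi^2|$ are jointly absorbed — rather than a genuine difficulty, and I would phrase the statement (as the paper does) with the understanding that $\d_1,\d_2$ are taken sufficiently small.
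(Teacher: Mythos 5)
Your proposal is correct and follows essentially the same route as the paper: the paper likewise reduces the lemma to the one-step contraction $\|T_i(v^1_i)-T_i(v^2_i)\|_{C^0}\le (e^{-\l_1}+2\d_2)\|v^1_i-v^2_i\|_{C^0}$, proved by comparing the $\pi^u$-preimages of a common point $x^u\in B^u_{i+1}$ exactly as you do, and then iterates, using $\|v^1_i-v^2_i\|_{C^0}\le 2r_i$. Your explicit handling of the absorption of constants into $2\d_2$ and of the well-definedness of the iterates via Lemma \ref{L:Graph1} matches what the paper leaves implicit.
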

\begin{proof}
Note that it is sufficient to show that for any $v^1_i,v^2_i\in\mc V_i$,
$$\|T_i(v^1_i)-T_i(v^2_i)\|_{C^0}\le (e^{-\l_1}+2\d_2)\|v^1_i-v^2_i\|_{C^0}.$$
By a straightforward computation, one has the following estimates:
for any $x^u\in B^u_{i+1}$, let $x_j^u=\pi^ug_i^{-1}(x^u,T_i(v_i^j)(x^u)),\ j=1,2$, then
\begin{equation}\label{E:GraphTransform1}
|x_1^u-x_2^u|\le\frac{e^{-\l_1}\d_2}{1-\frac{11}{10}e^{-\l_1}\d_2}\|v_i^1-v_i^2\|_{C^0},
\end{equation}
and
$$|T_i(v_i^1)(x^u)-T_i(v_i^2)(x^u)|\le\left(e^{-\l_1}+
\d_2+\frac{(e^{-\l_1}+11\d_2)e^{-\l_1}\d_2}{10-11e^{-\l_1}\d_2}\right)\|v_i^1-v_i^2\|_{C^0}.$$
Since $x^u$ is arbitrary, and by taking $\d_2$ sufficient small (only depending on $\l_1$), we have the desired estimate.
\end{proof}

\begin{lemma}\label{L:DiscLemma2}
Assume (I), (II) and (III), let $\d_1,\d_2$ (only depending on $\l_1$) be sufficient small. Then,  there exists a constant $C$ which is only depending on $\l_1,\d_1,\d_2,r_{0},\ell_{0}$ such that for any $v_0^1,v_0^2\in \mc V_0\cap C^1$
\begin{equation}\label{E:C1Coverging}
\|D(T_0^nv_0^1)-D(T_0^nv_0^2)\|_{C^0}\le Ce^{-n\frac{49}{50}\l_1}.
\end{equation}
\end{lemma}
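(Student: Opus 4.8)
The plan is to prove Lemma~\ref{L:DiscLemma2} by a graph-transform argument that tracks the first variation (derivative) of $T_0^n v_0$ along the orbit, in the same spirit as the $C^0$-contraction estimate of Lemma~\ref{L:DiscLemma1}, but now one order higher. First I would set up notation: write $v_n^j = T_0^n v_0^j \in \mc V_n \cap C^1$ and, for a fixed base point $\xi^u \in B^u_n$, trace back its preimages $\xi^u_{n-k} = \pi^u g_{n-k-1}^{-1}\circ\cdots$ through the tower, as in the proof of Lemma~\ref{L:Graph1}. Differentiating the defining relation $\mathrm{graph}(v_{i+1}) = g_i(\mathrm{graph}(v_i))\cap B_{i+1}$ implicitly — i.e. differentiating the identities $\Lambda^u_i\xi + \pi^u G_i((\xi,v_i(\xi))) = \tilde\xi$ and $v_{i+1}(\tilde\xi) = \Lambda^s_i v_i(\xi) + \pi^s G_i((\xi,v_i(\xi)))$ from (\ref{E:Preimage}) — produces a recursion for $Dv_{i+1}$ in terms of $Dv_i$, the linear maps $\Lambda_i$, and $DG_i$. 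The contraction factor in this recursion is governed by $\|\Lambda^s_i\|\cdot\|(\Lambda^u_i)^{-1}\| + O(\delta_2) \le e^{-2\lambda_1} + O(\delta_2)$, which for $\delta_2$ small (depending only on $\lambda_1$) is bounded by $e^{-\frac{49}{50}\lambda_1}$ (indeed by anything strictly less than $1$; the exponent $\tfrac{49}{50}\lambda_1$ is a convenient safe choice).

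Next I would carry out the estimate in two parts. Part one: the recursion shows $\|Dv_{i+1}^1 - Dv_{i+1}^2\|_{C^0}$ is bounded by $\big(e^{-\frac{49}{50}\lambda_1}\big)\|Dv_i^1 - Dv_i^2\|_{C^0}$ plus an inhomogeneous error term coming from the fact that $v_i^1$ and $v_i^2$ are evaluated at \emph{different} preimage points $\xi_{i,1}^u \ne \xi_{i,2}^u$; this error involves $\mathrm{Lip}(DG_i) < \ell_i$ and the already-established $C^0$ gap $|\xi_{i,1}^u - \xi_{i,2}^u|$, which by (\ref{E:GraphTransform1}) and Lemma~\ref{L:DiscLemma1} is itself exponentially small, of order $(e^{-\lambda_1}+2\delta_2)^i r_0$. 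Part two: summing the geometric series — the homogeneous decay rate $e^{-\frac{49}{50}\lambda_1}$ against the exponentially small forcing terms — yields a bound of the form $C e^{-n\frac{49}{50}\lambda_1}$, where $C$ absorbs $r_0$, $\ell_0$, and the constants depending only on $\lambda_1,\delta_1,\delta_2$ (using the slow-variation conditions $r_{i+1}e^{-\delta_1} < r_i < r_{i+1}e^{\delta_1}$ and $\ell_{i+1}e^{-\delta_1} < \ell_i < \ell_{i+1}e^{\delta_1}$ to keep these uniform along the tower). One also needs that $\|Dv_n^j\| \le \tfrac1{10}$ for all $n$ (part of $v_n^j \in \mc V_n$), which keeps all the intermediate quantities controlled.

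The main obstacle I anticipate is \emph{bookkeeping the inhomogeneous term correctly}: because the graph transform moves base points, comparing $Dv_i^1$ at $\xi^u_{i,1}$ with $Dv_i^2$ at $\xi^u_{i,2}$ forces one to split $Dv_i^1(\xi^u_{i,1}) - Dv_i^2(\xi^u_{i,2})$ into $\big(Dv_i^1(\xi^u_{i,1}) - Dv_i^2(\xi^u_{i,1})\big) + \big(Dv_i^2(\xi^u_{i,1}) - Dv_i^2(\xi^u_{i,2})\big)$, and only the second summand can be controlled by $\mathrm{Lip}(Dv_i^2) \le \mathrm{const}\cdot\ell_i$ (which is where Condition (III) and the $C^{1+\mathrm{Lip}}$ conclusion of Lemmas~\ref{L:LocalUnstableManifolds}--\ref{L:LocalStableManifolds} enter). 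Keeping the constant $C$ genuinely independent of $n$ through this recursion — rather than letting it grow like $\prod(1+\text{small}_i)$ — requires that the per-step multiplicative error be summable, which it is precisely because the $\ell_i$ and $r_i$ vary subexponentially while the contraction is strictly exponential. Once this is organized, the conclusion (\ref{E:C1Coverging}) drops out; no genuinely new idea beyond the graph-transform machinery already in place is needed.
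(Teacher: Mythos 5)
Your overall scheme coincides with the paper's: differentiate the graph-transform relation to get the recursion $Dv_{i+1}^\tau(x^u)=(\L_i^s+\pi^sDG_i(x_\tau))\,Dv_i^\tau(x^u_\tau)\,(\L_i^u+\pi^uDG_i(x_\tau))^{-1}$, observe that the homogeneous contraction factor is $\approx e^{-2\l_1}$, control the inhomogeneous term coming from the mismatch of base points via Lemma \ref{L:DiscLemma1} and $\mathrm{Lip}(DG_i)<\ell_i$, and sum the resulting series (the final rate $e^{-\frac{49}{50}n\l_1}$ is in fact dictated by the forcing term $\ell_i\|v_i^1-v_i^2\|_{C^0}\approx e^{i\d_1}(e^{-\l_1}+2\d_2)^i$, not by the homogeneous factor, but that is a presentational point). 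However, there is a genuine gap at precisely the step you flagged as the main obstacle: when you split $Dv_i^1(\xi^u_{i,1})-Dv_i^2(\xi^u_{i,2})$ and bound the second summand by $\mathrm{Lip}(Dv_i^2)\le \mathrm{const}\cdot\ell_i$, you have no justification for that Lipschitz bound. The $C^{1+\mathrm{Lip}}$ conclusions of Lemmas \ref{L:LocalUnstableManifolds} and \ref{L:LocalStableManifolds} apply only to the invariant graphs $h^u_i$, $h^s_i$, not to the pushforward $T_0^i(v_0)$ of an arbitrary disc; and since the hypothesis only asks $v_0^j\in\mc V_0\cap C^1$, the derivative $Dv_0^j$ need not be Lipschitz at all, so no bound can be propagated forward — and even if $v_0^j$ were $C^{1+\mathrm{Lip}}$, its Lipschitz constant would enter $C$, contradicting the requirement that $C$ depend only on $\l_1,\d_1,\d_2,r_0,\ell_0$.

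The paper closes exactly this hole by a two-stage argument you should adopt: first run the recursion for the special pair $(v_i^1,v_i^2)=(h_i,v_i^2)$, so that the only Lipschitz-of-derivative term needed is $\|Dh_i(x_1^u)-Dh_i(x_2^u)\|\le \mathrm{const}\cdot\ell_i\,|x_1^u-x_2^u|$, which Lemma \ref{L:LocalUnstableManifolds} supplies under Condition (III); this yields $\|D(T_0^n v_0)-Dh_n\|_{C^0}\le \tfrac12 Ce^{-\frac{49}{50}n\l_1}$ for every $v_0\in\mc V_0\cap C^1$. Then the general estimate follows from the triangle inequality
\begin{equation*}
\|Dv_n^1-Dv_n^2\|_{C^0}\le \|Dv_n^1-Dh_n\|_{C^0}+\|Dh_n-Dv_n^2\|_{C^0}\le Ce^{-\frac{49}{50}n\l_1}.
\end{equation*}
With this modification (and the bookkeeping you already describe — the auxiliary operator comparing the two evaluations, the subexponential growth $\ell_i\le e^{i\d_1}\ell_0$, and the geometric summation) your argument matches the paper's proof.
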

\begin{proof}
Denote $T_0^iv_0^\tau$ by $v_i^\tau$ for $\tau=1,2$. We first derive a formula of $Dv_{i+1}^\tau$ in terms of $Dv_i^\tau$. For any $x^u\in B^u_{i+1}$, let $x_\tau=g_i^{-1}(x^u,v_{i+1}^\tau(x^u))$ and $x^u_\tau=\pi^u x_\tau$, for $\tau=1,2$. Then,  one has that
\begin{align*}
(\L_i+DG(x_\tau))diag(I,Dv_i^\tau(x^u_\tau))
=diag(\L_i^u+\pi^uDG_i(x_\tau),(\L_i^s+\pi^sDG_i(x_\tau))Dv_i^\tau(x^u_\tau)),
\end{align*}
which implies that
$$Dv_{i+1}^\tau(x^u)=(\L_i^s+\pi^sDG_i(x_\tau))Dv_i^\tau(x^u_\tau)(\L_i^u+\pi^uDG_i(x_\tau))^{-1}.$$
It is straightforward to obtain the following estimates: for $x,y\in B(0,r_i)$
\begin{itemize}
\item[a)] $\|(\L_i^u+\pi^uDG_i(x))^{-1}-(\L_i^u+\pi^uDG_i(y))^{-1}\|\le(e^{\l_1}-\d_2)^{-2}\ell_i|x-y|$;
\item[b)] $\|(\L_i^s+\pi^sDG_i(x))-(\L_i^s+\pi^sDG_i(y))\|\le \ell_i|x-y|$;
\item[c)] $\|(\L_i^u+\pi^uDG_i(x))^{-1}\|\le \frac{e^{-\l_1}}{1-e^{-\l_1}\d_2}$;
\item[d)] $\|\L_i^s+\pi^sDG_i(x)\|\le e^{-\l_1}+\d_2$;
\item[e)] $\|Dv_i^\tau(\pi^ux)\|\le \frac1{10}$.
\end{itemize}
We first consider the case when $v^{1}_i=h_i$ whose graph is the unstable manifold for $i=0,1,2,\ldots$. Then, by Lemma \ref{L:LocalUnstableManifolds} and Condition (III),  $Dv_i$'s are Lipchitz with $\text{Lip}<const\cdot \ell_i$. Set
$$x_{1,2}=(x_2^u,v_i^1(x_2^u))\text{ and }
Q_i^{1,2}=(\L_i^s+\pi^sDG_i(x_{1,2}))Dv_i^1(x^u_2)(\L_i^u+\pi^uDG_i(x_{1,2}))^{-1}.$$ By a computation, one  obtains the following estimates:\\
\begin{align*}
&\|Dv_{i+1}^2(x^u)-Dv_{i+1}^1(x^u)\|\le\|Dv_{i+1}^2(x^u)-Q_i^{1,2}\|+\|Q_i^{1,2}-Dv_{i+1}^1(x^u)\|\\
&\le\frac1{10}\left[\frac{e^{-\l_1}+\d_2}{(e^{\l_1}-\d_2)^{2}}+\frac{e^{-\l_1}}{1-e^{-\l_1}\d_2}\right]\ell_i(|x_2-x_{1,2}|+|x_1-x_{1,2}|)\\
&+\frac{e^{-\l_1}(e^{-\l_1}+\d_2)}{1-e^{-\l_1}\d_2}\|Dv_i^2-Dv_i^1\|_{C^0}+\frac{e^{-\l_1}(e^{-\l_1}+\d_2)}{1-e^{-\l_1}\d_2}\|Dv_i^1(x_1^u)-Dv_i^1(x_2^u)\|.
\end{align*}
By inequality (\ref{E:GraphTransform1}), and noting that $\text{Lip }v_i^\tau\le\frac1{10}$, $|x_2-x_{1,2}|\le\|v_i^1-v_i^2\|_{C^0}$ and $|x_1-x_{1,2}|\le\frac{11}{10}|x_2^u-x_1^u|$, one has the following estimate:
\begin{align*}
&\|Dv_{i+1}^2(x^u)-Dv_{i+1}^1(x^u)\|\\
\le&\frac1{10}\left[\frac{e^{-\l_1}+\d_2}{(e^{\l_1}-\d_2)^{2}}+\frac{e^{-\l_1}}{1-e^{-\l_1}\d_2}\right]
\frac{\ell_i}{1-\frac{11}{10}e^{-\l_1}\d_2}\|v_i^1-v_i^2\|_{C^0}\\
&+\frac{e^{-\l_1}(e^{-\l_1}+\d_2)}{1-e^{-\l_1}\d_2}\|Dv_i^2-Dv_i^1\|_{C^0}+\frac{e^{-\l_1}(e^{-\l_1}+\d_2)}{1-e^{-\l_1}\d_2}\|\frac{e^{-\l_1}\d_2}{1-\frac{11}{10}e^{-\l_1}\d_2}
const\cdot \ell_i\|v_i^1-v_i^2\|_{C^0}\\
\le& p\ell_i\|v_i^1-v_i^2\|_{C^0}+q\|Dv_i^2-Dv_i^1\|_{C^0},
\end{align*}
where $$p=\frac{\left[(\frac{e^{-\l_1}+\d_2}{(e^{\l_1}-\d_2)^{2}}+\frac{e^{-\l_1}}{1-e^{-\l_1}\d_2}+
\frac{10e^{-2\l_1}(e^{-\l_1}+\d_2)\d_2}{1-e^{-\l_1}\d_2}const\right]}{10-11e^{-\l_1}\d_2}$$
and
$$q=\frac{e^{-\l_1}(e^{-\l_1}+\d_2)}{1-e^{-\l_1}\d_2}(\approx e^{-2\l_1}\text{ as }\d_2 \text{ being sufficient small}).$$
By Lemma \ref{L:DiscLemma1} and arbitrariness of $x^u$, one obtains that
\begin{equation}\label{E:C1Estimate1}
\|Dv_{i+1}^1-Dv_{i+1}^2\|_{C^0}\le 2p \ell_i(e^{-\l_1}+2\d_2)^{i+1}r_0+q\|Dv_i^2-Dv_i^1\|_{C^0}.
\end{equation}
Noting that $\ell_i\le e^{i\d_1}\ell_0$, then by induction we obtain that for $n\in \mb N$
\begin{align}\begin{split}\label{E:C1Estimate2}
\|Dv_{n}^1-Dv_{n}^2\|&\le 2p\ell_0r_0\sum_{j=1}^{n} \hat q^{j}q^{n-j}+q^n\|Dv_0^1-Dv_0^2\|_{C^0}\le \frac{2p\ell_0r_0}{1-\frac{q}{\hat q}}\hat q^{n}+\frac15q^n,
\end{split}\end{align}
where we set $\hat q=(e^{-\l_1}+2\d_2)e^{\d_1}$.
Note that by taking $\d_1,\d_2$ sufficient small, $q\approx \hat q^2\ge e^{-\frac{99}{50}\l_1}$, so if one sets $C=4\max\{\frac{2p \ell_0r_0}{1-\frac{q}{\hat q}},\frac15\}$, then
\begin{equation}\label{E:C1Estimate2}
\|Dv_i^1-Dv_i^2\|_{C^0}\le \frac12Ce^{-\frac{49}{50}i\l_1}.
\end{equation}
Here we finish the proof when $v_i^1=h_i$. For the general case, one has that
$$ \|Dv_i^1-Dv_i^2\|_{C^0}\le\|Dv_i^1-Dh_i\|_{C^0}+\|Dh_i-Dv_i^2\|_{C^0}\le Ce^{-\frac{49}{50}i\l_1}.$$
The proof is complete.
\end{proof}

\begin{remark}\label{R:HolderCase}
 1. "$\frac{49}{50}$" in the previous lemma is not a sharp bound, actually, one can make the number arbitrarily close to $1$ by taking sufficient small $\d_1,\d_2$;\\
 2. For the case that the $f$ being $\b$-H$\ddot{o}$lder continuous, using the same idea, one can obtain the similar result of which $\d_1,\d_2$ will depend on $\b$ as well as $\l_1$, and the rate of convergence will be $e^{-\frac{49}{50}i\b\l_1}$ consequently.
\end{remark}

For any $\d_3>\d_1$, let $\hat r_i=e^{-i\d_3}r_0$ for $i\ge 0$.

\begin{lemma}\label{L:DiscLemma3}
Assume (I), (II) and (III), let $\d_1,\d_2$ be sufficient small, and $\d_3>\d_1$. Then,  there exists constant $C'$ which is only depending on $\l_1,\d_1,\d_2,r_{0},\ell_{0}$ such that for any $v\in {\mc V_0}\cap C^1$ and $n\ge 0$,
$$\sup_{x\in  B^{u}(0,\hat r_{n})}\{\|D( T_0^n(v))(x)-Dh_n(0)\|\}<C'e^{-n\min\{\d_3-\d_1,\frac{49}{50}\l_1\}}.$$
\end{lemma}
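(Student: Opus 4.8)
The plan is to combine Lemma \ref{L:DiscLemma2} (which controls the $C^1$-distance between the graph transforms of two discs, both comparisons being made \emph{at the same base point} after correcting for the preimage shift) with Lemma \ref{L:LocalUnstableManifolds}(a) (the nesting $g_i(W^u_i)\supset W^u_{i+1}$, so that $h_n=T^n_0 h_0$ in the sense that $\mathrm{graph}(h_n)\cap B_n$ is the image of $\mathrm{graph}(h_0)$) and the elementary observation that $\|Dh_n\|\le\frac1{10}$ is Lipschitz with constant $\lesssim\ell_n\le e^{n\d_1}\ell_0$. First I would write, for $x\in B^u(0,\hat r_n)$,
\[
\|D(T^n_0 v)(x)-Dh_n(0)\|\le \|D(T^n_0 v)(x)-Dh_n(x)\|+\|Dh_n(x)-Dh_n(0)\|.
\]
The first term is bounded by $\|D(T^n_0 v)-Dh_n\|_{C^0}\le Ce^{-\frac{49}{50}n\l_1}$, which is exactly the content of Lemma \ref{L:DiscLemma2} applied with $v^1_0=h_0$ and $v^2_0=v$ (so that $v^1_n=h_n$). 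For the second term, using that $Dh_n$ is Lipschitz on $B^u_n$ with constant $\le\mathrm{const}\cdot\ell_n\le \mathrm{const}\cdot e^{n\d_1}\ell_0$ and that $|x|<\hat r_n=e^{-n\d_3}r_0$, we get
\[
\|Dh_n(x)-Dh_n(0)\|\le \mathrm{const}\cdot e^{n\d_1}\ell_0\cdot e^{-n\d_3}r_0=\mathrm{const}\cdot\ell_0 r_0\, e^{-n(\d_3-\d_1)}.
\]
Adding the two estimates and setting $C'$ to be the larger of $C$ and the constant just produced (times $2$, say) yields the claimed bound with rate $e^{-n\min\{\d_3-\d_1,\frac{49}{50}\l_1\}}$.

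The one point that needs a little care — and which I expect to be the main (minor) obstacle — is justifying that Lemma \ref{L:DiscLemma2} does apply on the shrunken ball $B^u(0,\hat r_n)$ rather than on all of $B^u_n$: Lemma \ref{L:DiscLemma2} is stated as a $C^0$-bound on $D(T^n_0 v^1)-D(T^n_0 v^2)$ over the natural domain $B^u_n$, so restricting to the smaller ball $B^u(0,\hat r_n)\subset B^u_n$ (which holds since $\d_3>\d_1$ forces $\hat r_n=e^{-n\d_3}r_0<e^{-n\d_1}r_0<r_n$) is harmless and only improves the estimate. One should also note that the Lipschitz bound $\mathrm{Lip}(Dh_n)<\mathrm{const}\cdot\ell_n$ is precisely the last assertion of Lemma \ref{L:LocalUnstableManifolds} under hypothesis (III), with the constant depending only on $\l_1,\d_1,\d_2$; together with $\ell_n\le e^{n\d_1}\ell_0$ from (III) this is all that is needed. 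Thus the proof is essentially a two-line triangle inequality once the ingredients are lined up, and the constant $C'$ depends only on $\l_1,\d_1,\d_2,r_0,\ell_0$ as required.
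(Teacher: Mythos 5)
Your proposal is correct and follows essentially the same argument as the paper: the same triangle-inequality decomposition through $Dh_n(x)$, with Lemma \ref{L:DiscLemma2} (applied with $v^1_0=h_0$, so $T^n_0h_0=h_n$) bounding the first term and the Lipschitz bound $\mathrm{Lip}(Dh_n)\le \mathrm{const}\cdot\ell_n\le \mathrm{const}\cdot\ell_0e^{n\d_1}$ together with $|x|<\hat r_n=e^{-n\d_3}r_0$ bounding the second, yielding $C'=C+\mathrm{const}\cdot r_0\ell_0$.
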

\begin{proof}
This lemma follows from Lemma \ref{L:DiscLemma2} and \ref{L:LocalUnstableManifolds} immediately, since for any $x\in  B^{u}(0,\hat r_{n})$,
\begin{align*}
&\|D( T_0^n(v))(x)-Dh_n(0)\|\le\|D( T_0^n(v))(x)-Dh_n(x)\|+\|Dh_n(x)-Dh_n(0)\|\\
\le& Ce^{-\frac{49}{50}n\l_1}+ const\cdot \ell_n|x-0|\le Ce^{-\frac{49}{50}n\l_1}+const\cdot r_0\ell_0e^{n(\d_1-\d_3)}\\
\le&(C+const\cdot r_0\ell_0)e^{-n\min\{\d_3-\d_1,\frac{49}{50}\l_1\}},
\end{align*}
where $C, const$ are from Lemma \ref{L:DiscLemma2} and \ref{L:LocalUnstableManifolds} respectively.
The proof is complete by letting $C'=C+const\cdot r_{0}\ell_{0}$.
\end{proof}

\begin{remark}\label{R:UnstableManifoldCase}
1. Roughly speaking, Lemma \ref{L:DiscLemma1} and \ref{L:DiscLemma2} tell that, locally, the pushed forward unstable disc becomes more and more "parallel" to each other, and converges to unstable manifold exponentially fast in sense of $C^1$-norm.\\
2. If one assumes that $G_i(0)=0$ and $DG_i(0)=0$ in Condition (II), then $Dh_n(0)=0$. Thus,  $\sup_{x\in B^{u}(0,\hat r_{n})}\{\|D( T_0^i(g))(x)\|\}$ tends to zero exponentially fast, which is the local version of inclination lemma.
\end{remark}


\subsubsection{Estimates on Jacobians }\label{S:JacobianEstimates}
In this section we keep the setting as in Section \ref{S:InvariantManifolds} and establish some estimates of Jacobians of $Dg_i$ restricted on the tangential spaces of graphs.
To do so, we need to further assume that:

\vskip0.1in

\begin{itemize}
\item[(IV)] There are positive numbers $l_i'$ with $l'_{i+1}e^{-\d_1}<l'_i<l'_{i+1}e^{\d_1}$ such that
$\|Dg_i\|_{C^0}\le l'_i$.
\end{itemize}

\vskip0.1in

For the rest of this subsection, we assume (I), (II), (III) and (IV).

\begin{lemma}\label{L:Jacob1}
Let $\d_1,\d_2$ being sufficient small (only depending on $\l_1$). Then,  for any $v_0\in\mc V_0\cap C^1$, there exists constant $C\ge 1$ which depends only on $\dim E^u,\l_1,\d_1,\d_2,r_0,\ell_0,l_0'$ such that for any $x_1,x_2\in graph(v_0)$ if
$$g_0^k(x_1),g_0^k(x_2)\in graph(T_0^k(v_0)),\text{ for all }1\le k\le n,$$
then
$$\frac1C\le \frac{\left|\det\left(Dg_0^n\big|_{graph(Dv_0(\pi^ux_1))}\right)\right|}
{\left|\det\left(Dg_0^n\big|_{graph(Dv_0(\pi^ux_2))}\right)\right|}\le C.$$
\end{lemma}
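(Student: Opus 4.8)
The statement to be proved is a bounded-distortion estimate for the Jacobian of $Dg_0^n$ along the tangent spaces of the pushed-forward discs. The plan is to write the Jacobian $\det(Dg_0^n|_{graph(Dv_0(\pi^ux))})$ as a product over the $n$ steps, take logarithms, and bound the difference of the two logarithms by a geometric series. First I would set, for $j=1,2$ and $0\le k\le n$, the point $x_k^{(j)}=g_0^k(x_j)$ and the linear map $A_k^{(j)}=Dg_k|_{T_{x_k^{(j)}}graph(T_0^k(v_0))}$ acting from $T_{x_k^{(j)}}graph(T_0^k v_0)$ to $T_{x_{k+1}^{(j)}}graph(T_0^{k+1}v_0)$, so that $Dg_0^n|_{graph(Dv_0(\pi^u x_j))}=A_{n-1}^{(j)}\circ\cdots\circ A_0^{(j)}$ and hence
\[
\log\left|\det\left(Dg_0^n\big|_{graph(Dv_0(\pi^u x_j))}\right)\right|=\sum_{k=0}^{n-1}\log|\det A_k^{(j)}|.
\]
Thus it suffices to prove $\sum_{k=0}^{n-1}\bigl|\log|\det A_k^{(1)}|-\log|\det A_k^{(2)}|\bigr|\le\log C$ for a constant $C$ depending only on the listed data.

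The key step is to control each summand $\bigl|\log|\det A_k^{(1)}|-\log|\det A_k^{(2)}|\bigr|$ by something of the form (constant)$\cdot e^{-ck}$ for some $c>0$. Since $\dim E^u<\infty$ is fixed, $|\det|$ is a locally Lipschitz function of a linear operator on the relevant (finite-dimensional) spaces, with Lipschitz constant controlled once the operator norms are bounded — and the operator norms $\|A_k^{(j)}\|$ are bounded by $l_0'e^{k\d_1}$ by Condition (IV), while their restricted inverses are controlled by $\|(\L_i^u)^{-1}\|\le e^{-\l_1}$ together with (II). (Technically one should normalize the volume forms by orthonormal bases or, equivalently, express $A_k^{(j)}$ in the orthonormal coordinates of $E^u$ using that the graphs have slope $\le\frac1{10}$; this introduces only bounded factors $(1+\frac1{100})^{\dim E^u/2}$ and their inverses.) Moreover $A_k^{(j)}$ depends on two things: the base point $x_k^{(j)}$, through $Dg_k$, which is Lipschitz by Condition (III); and the tangent direction $D(T_0^k v_0)(\pi^u x_k^{(j)})$. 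For the base points, the expansion estimate of Lemma \ref{L:Graph1} (or part (b) of Lemma \ref{L:LocalUnstableManifolds}) applied in reverse gives $|x_k^{(1)}-x_k^{(2)}|\lesssim (e^{\l_1}-2\d_2)^{-(n-k)}|x_n^{(1)}-x_n^{(2)}|\lesssim r_0(e^{-\l_1}+\ldots)^{n-k}$, which is summable over $k$; for the tangent directions, Lemma \ref{L:DiscLemma2} gives $\|D(T_0^k v_0)(\cdot)-Dh_k(\cdot)\|_{C^0}\le Ce^{-\frac{49}{50}k\l_1}$, and $Dh_k$ itself is Lipschitz (Lemma \ref{L:LocalUnstableManifolds}, Condition (III)) with constant $\lesssim\ell_k\le\ell_0e^{k\d_1}$, so the two tangent directions $D(T_0^kv_0)(\pi^u x_k^{(1)})$ and $D(T_0^k v_0)(\pi^u x_k^{(2)})$ differ by at most $Ce^{-\frac{49}{50}k\l_1}+\ell_0 e^{k\d_1}|x_k^{(1)}-x_k^{(2)}|$, which — since $\d_1$ is small and $|x_k^{(1)}-x_k^{(2)}|$ decays at rate roughly $e^{-(n-k)\l_1}$ and is also bounded by $2r_0$ — is bounded by a summable sequence in $k$ (one can crudely bound $e^{k\d_1}|x_k^{(1)}-x_k^{(2)}|\le 2r_0e^{k\d_1}e^{-(n-k)\l_1}$, which for $k\le n$ is $\le 2r_0e^{-(n-k)(\l_1-\d_1)}$, again geometric in $n-k$).

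Assembling these, each summand is bounded by $C_0(e^{-\frac{49}{50}k\l_1}+e^{-(n-k)(\l_1-\d_1)})$ for a constant $C_0$ depending only on $\dim E^u,\l_1,\d_1,\d_2,r_0,\ell_0,l_0'$; summing the geometric series over $k=0,\dots,n-1$ gives a bound independent of $n$, which is the desired $\log C$. Exponentiating yields $\frac1C\le\frac{|\det(Dg_0^n|_{graph(Dv_0(\pi^u x_1))})|}{|\det(Dg_0^n|_{graph(Dv_0(\pi^u x_2))})|}\le C$. The main obstacle I anticipate is bookkeeping: carefully verifying that the comparison of $|\det|$ of two linear maps between \emph{different} (though nearly parallel) subspaces reduces, after the orthonormal-coordinate identification, to a genuine Lipschitz estimate with the right uniform constants, and checking that the contraction coming from the backward expansion (factor $e^{-\l_1}$ per step) genuinely dominates the growth coming from Conditions (III) and (IV) (factors $e^{\d_1}$ per step) — this is where smallness of $\d_1$ relative to $\l_1$ is used, exactly as in Lemma \ref{L:DiscLemma2}. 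No single estimate is deep; the work is in organizing the three sources of error (base-point drift, tangent-direction drift, and the determinant Lipschitz constant) so that all of them are summable.
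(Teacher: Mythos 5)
Your overall strategy coincides with the paper's: the paper also proves Lemma \ref{L:Jacob1} by a per-step comparison, first replacing each disc Jacobian by the Jacobian along the unstable graph $W^u_k={\rm graph}(h_k)$ at the point with the same $\pi^u$-coordinate (error geometric in $k$, via Lemmas \ref{L:DiscLemma1}--\ref{L:DiscLemma2}), and then comparing the two unstable Jacobians at the two matched points using the Lipschitz property of $Dh_k$ and the backward expansion of Lemma \ref{L:LocalUnstableManifolds}(b) (error geometric in $n-k$); your telescoping of $\log|\det|$ with the triangle inequality through $Dh_k$ is the same argument in a different arrangement.

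There is, however, a concrete gap at the crux, namely in your justification of the $(n-k)$-geometric series. The inequality you invoke, $e^{k\d_1}e^{-(n-k)\l_1}\le e^{-(n-k)(\l_1-\d_1)}$, is equivalent to $k\le n-k$ and so fails for $k>n/2$; what your estimates actually give is a per-step error of size $e^{k\d_1}e^{-(n-k)\l_1}$, whose sum over $k\le n-1$ is comparable to $e^{n\d_1}$, i.e.\ a distortion constant growing with $n$ rather than the $n$-independent $C$ the lemma requires. The same problem affects your base-point term ${\rm Lip}(DG_k)\,|x^{(1)}_k-x^{(2)}_k|\le\ell_k|x^{(1)}_k-x^{(2)}_k|$, and it is compounded by the fact that $|x^{(1)}_n-x^{(2)}_n|$ is only bounded by the chart diameter $\approx r_n\le r_0e^{n\d_1}$, not by $r_0$ as you wrote. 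The repair is exactly the bookkeeping you flagged and then skipped: the Lipschitz constant $\ell_k$ must be paired with a chart radius. In the charts in which this lemma is applied one has $\ell_i r_i$ uniformly bounded (in Section \ref{S:ProofofACW} the radii are $r'_i\sim e^{-i\d}$ while $\ell_i\sim l_0e^{i\d}$, reflecting Theorem \ref{T:LyapunovChart}, where the chart radius is $\d l^{-1}$ and ${\rm Lip}(D\tilde f)\le l$), and then $\ell_k|x^{(1)}_k-x^{(2)}_k|\lesssim \ell_k r_n\,(e^{\l_1}-2\d_2)^{-(n-k)}\lesssim e^{-(n-k)(\l_1-O(\d_1))}$, which is precisely the per-step bound $1+Ce^{-\frac{49}{50}(n-k)\l_1}$ asserted in Step 2 of the paper's outline and is summable uniformly in $n$. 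Without some such relation between the $\ell_i$ and the $r_i$ (it is not a formal consequence of conditions (I)--(IV) alone), neither your crude bound nor a rearrangement of it yields an $n$-uniform constant, so as written your argument does not close.
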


\begin{lemma}\label{L:Jacob2}
For any $v_0^1,v_0^2\in \mc V_0\cap C^1$, there exists constant $C\ge 1$ which depends only on $\dim E^u, \l_1,\d_1,\d_2,r_0,\ell_0,l_0'$ such that for $n\ge 1$
$$\frac1C\le \frac{\left|\det\left(Dg_0^n\big|_{graph(Dv_0^1(\pi^ux_1))}\right)\right|}
{\left|\det\left(Dg_0^n\big|_{graph(Dv_0^2(\pi^ux_2))}\right)\right|}\le C,$$
where $x_\tau=W^s_0\cap graph(v_0^\tau),\ \tau=1,2$.
\end{lemma}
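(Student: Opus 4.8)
The plan is to reduce Lemma \ref{L:Jacob2} to Lemma \ref{L:Jacob1} by interpolating between the two discs $graph(v_0^1)$ and $graph(v_0^2)$ using the common reference point provided by their intersections with the stable manifold $W^s_0$. The key observation is that both $x_1 = W^s_0 \cap graph(v_0^1)$ and $x_2 = W^s_0 \cap graph(v_0^2)$ lie on the stable manifold, so by Lemma \ref{L:LocalStableManifolds}(b) their forward iterates $g_0^k(x_1)$ and $g_0^k(x_2)$ stay exponentially close: $|\pi^s(g_0^k x_1) - \pi^s(g_0^k x_2)|$ decays like $(e^{-\lambda_1} + 2\delta_2)^k$, and since both points remain in the charts $B_k$ their full separation $|g_0^k x_1 - g_0^k x_2|$ is also exponentially small (the unstable components are controlled because $Dh^s$ has small norm). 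This means the two orbits never leave a shrinking neighborhood of each other.

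Concretely, I would split the ratio in Lemma \ref{L:Jacob2} as a product of three factors and bound each separately. Write
\begin{align*}
\frac{\left|\det\left(Dg_0^n\big|_{graph(Dv_0^1(\pi^ux_1))}\right)\right|}
{\left|\det\left(Dg_0^n\big|_{graph(Dv_0^2(\pi^ux_2))}\right)\right|}
= A \cdot B \cdot C,
\end{align*}
where $A$ compares the Jacobian along $graph(Dv_0^1(\pi^u x_1))$ with the Jacobian along the iterate of a reference disc through $x_1$ (handled by Lemma \ref{L:Jacob1} applied with $v_0 = v_0^1$), $C$ is the analogous quantity for $v_0^2$ and $x_2$, and $B$ is the ratio of the two reference-disc Jacobians along the two nearby orbits. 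For $B$ the point is that after a few iterates all pushed-forward discs $T_0^k(v_0^\tau)$ are exponentially $C^1$-close to the unstable manifold $graph(h_k)$ by Lemma \ref{L:DiscLemma2}, so the tangent planes $graph(D(T_0^k v_0^1)(\cdot))$ at $g_0^k x_1$ and $graph(D(T_0^k v_0^2)(\cdot))$ at $g_0^k x_2$ are both $O(e^{-\frac{49}{50}k\lambda_1})$-close to $graph(Dh_k(\pi^u g_0^k x_1))$, and these base points themselves differ by $O((e^{-\lambda_1}+2\delta_2)^k)$; feeding these into the Lipschitz continuity of $DG_k$ (Condition (III), with $\text{Lip}(DG_k) < \ell_k$ and $\ell_k$ growing sub-exponentially) and of the subspace-dependence of the determinant gives a per-step multiplicative error of the form $1 + O(\ell_k \theta^k)$ for some $\theta < 1$, whose infinite product converges to a constant depending only on the allowed data.

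The main obstacle I expect is organizing factor $B$ cleanly: one must track simultaneously (a) the exponential contraction of the stable separation $|\pi^s(g_0^k x_1) - g_0^k x_2)|$, (b) the exponential $C^1$-convergence of the pushed discs to the unstable manifold from Lemma \ref{L:DiscLemma2}, and (c) the sub-exponential growth $\ell_k \le e^{k\delta_1}\ell_0$ of the Lipschitz constants, and verify that the product of $(a)$ or $(b)$ against $(c)$ is summable — this forces the constraint $\delta_1$ small relative to $\lambda_1$, which is already built into ``$\delta_1$ sufficiently small (only depending on $\lambda_1$)''. The determinant estimate itself is the routine linear-algebra fact that $|\det(L_1)/\det(L_2)|$ is close to $1$ when $L_1, L_2$ are uniformly bounded and bounded below on uniformly bounded subspaces and $\|L_1 - L_2\|$ plus the gap between the domains is small; this is the same computation as in Lemma \ref{L:DeterminateLip} and Corollary \ref{C:DetUniform}, now carried out step-by-step along the orbit rather than in one shot, and I would simply cite that style of argument. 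Once $A$, $B$, $C$ are each bounded by a constant in $[\frac1C, C]$, their product is too, completing the proof.
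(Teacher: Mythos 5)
Your analytic core is correct and is in substance the paper's own argument: since $x_1,x_2\in W^s_0$, Lemma \ref{L:LocalStableManifolds} forces $|g_0^kx_1-g_0^kx_2|$ to decay like $(e^{-\l_1}+2\d_2)^k$ (the unstable components being slaved to the stable ones through $h^s_k$); Lemma \ref{L:DiscLemma2} together with the Lipschitz bound on $Dh_k$ makes the tangent planes $graph(Dv^1_k(\pi^ug_0^kx_1))$ and $graph(Dv^2_k(\pi^ug_0^kx_2))$ differ by $O(e^{-\frac{49}{50}k\l_1})$; condition (III) with $\ell_k\le e^{k\d_1}\ell_0$ controls the variation of $Dg_k$ between the two base points; and since $|\det(\pi^uDg_k|_{E^u})|>1$, the per-step determinant ratio is $1+O(e^{-\frac{49}{50}k\l_1})$, whose product converges. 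The paper runs exactly these estimates step by step, factoring each one-step ratio into a $\pi^uDg_k|_{E^u}$-term, a projection-determinant term and a $\det(I,Dv^\tau_k)$-term, so with your summability observation the proof is the same.

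The one piece that does not stand as written is the $A\cdot B\cdot C$ scaffolding and the appeal to Lemma \ref{L:Jacob1} for $A$ and $C$. Lemma \ref{L:Jacob1} compares the Jacobians at two base points of the \emph{same} disc $graph(v_0)$; it does not compare two different tangent planes through the same point, and your ``reference disc through $x_1$'' is never specified. Moreover, the natural way to make such a reduction literal --- inserting the auxiliary point $x_{1,2}=(\pi^ux_2,v_0^1(\pi^ux_2))\in graph(v_0^1)$ and applying Lemma \ref{L:Jacob1} to the pair $(x_1,x_{1,2})$ --- fails for large $n$: by Lemma \ref{L:Graph1} the unstable separation $|\pi^ug_0^k(x_{1,2})-\pi^ug_0^k(x_1)|$ grows at rate at least $(e^{\l_1}-\tfrac{11}{10}\d_2)^k$, while the chart radii $r_k$ vary only at the sub-exponential rate $e^{k\d_1}$, so the auxiliary orbit eventually leaves the boxes and the standing hypothesis $g_0^k(x_{1,2})\in graph(T_0^k(v_0^1))$ of Lemma \ref{L:Jacob1} is violated. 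This flaw is harmless here only because the estimates you describe under factor $B$ already compare the two pushed-disc tangents at the two orbit points directly; taking $A=C=1$ and keeping your $B$-argument, the proof collapses to the paper's direct one.
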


  In the following proof we will use $C$ as a generic constant to denote a function of $\dim E^u, \l_1,\d_1,\d_2,r_0,\ell_0,l_0'$.
\begin{proof}
We first prove Lemma \ref{L:Jacob2}. Denote $x_\tau^k=g_0^k(x_\tau)$ and $v^\tau_k=T_0^k(v^\tau_0)$ for $k\ge 0$.We need to compare $\det(Dg_k(x_1^k)|_{graph(Dv^1_k(\pi^ux_1^k))}))$ and $\det(Dg_k(x_2^k)|_{graph(Dv^2_k(\pi^ux_2^k))}))$. First by the properties of determinate we have that
\begin{align*}
&\frac{\det(Dg_i(x_1^k)|_{graph(Dv^1_k(\pi^ux_1^k))})}{\det(Dg_i(x_2^k)|_{graph(Dv^2_k(\pi^ux_2^k))})}\\
=&\frac{\det(\pi^uDg_k(\pi^ux_1^k)|_{E^u})}{\det(\pi^uDg_k(\pi^ux_2^k)|_{E^u})}\cdot
\frac{\det(\pi^u|_{graph(Dv^2_{k+1}(\pi^ux_2^{k+1}))})}{\det(\pi^u|_{graph(Dv^1_{k+1}(\pi^ux_1^{k+1}))})}
\cdot\frac{\det((I,Dv^2_k(\pi^ux_2^k)))}{\det((I,Dv^1_k(\pi^ux_1^k)))}.
\end{align*}
Then,  noting that $\pi^u|_{graph(Dv^\tau_k(\pi^ux_\tau^k))}(I,Dv^\tau_k(\pi^ux_\tau^k))=id|_{E^u}$, and by Lemma \ref{L:DiscLemma2}, we have that
\begin{align}\begin{split}\label{E:Norm1}
&\|(I,Dv^2_k(\pi^ux_2^k))\pi^u|_{graph(Dv^1_k(\pi^ux_1^k))}\|\le 1+\|Dv_k^2(\pi^ux_2^k)-Dv_k^1(\pi^ux_1^k)\|.
\end{split}\end{align}
To estimate the right hand side of above inequality, applying Lemma \ref{L:DiscLemma2}, the Lipchitz property of $Dh_{i}$ in Lemma \ref{L:LocalUnstableManifolds} and Lemma \ref{L:LocalStableManifolds}, we obtain that
\begin{align}\begin{split}\label{E:Norm1'}
&\|Dv_k^2(\pi^ux_2^k)-Dv_k^1(\pi^ux_1^k)\|\\
\le&\|Dv_k^2-Dh_{k}\|_{C_{0}}+\|Dv_k^1-Dh_{k}\|_{C_{0}}+\|Dh_{k}(\pi^ux_1^k)-Dh_{k}(\pi^ux_2^k)\|\\
\le& Ce^{-k\frac{49}{50}\l_1}+const\cdot \ell_{i}|\pi^ux_1^k-\pi^ux_2^k|\le Ce^{-k\frac{49}{50}\l_1}+C(e^{-\l_1}+2\d_{2})^{k}\\
\le &Ce^{-k\frac{49}{50}\l_1}\text{ for small enough } \d_{2}.
\end{split}\end{align}

Again by Lemma \ref{L:LocalStableManifolds} and Condition (III), we have
\begin{align}\begin{split}\label{E:Norm2}
\left\|\pi^uDg_k(\pi^ux_1^k)|_{E^u}-\pi^uDg_k(\pi^ux_2^k)|_{E^u}\right\|
\le\ell_{k}|x_1^k-x_2^k|
\le\ell_{0}e^{k\d_{1}}(e^{-\l_1}+2\d_2)^kr_0\le Ce^{-k\frac{49}{50}\l_1}.
\end{split}\end{align}

Also note that for all $ k\ge 0$ and $\tau=1,2$
\begin{align}\begin{split}\label{E:Norm3}
&\left\|\pi^uDg_k(\pi^ux_\tau^k)|_{E^u}\right\|\le l'_0e^{k\d_1},\|\pi^u|_{graph(Dv_{k}^{\tau}(\cdot))}\|\le 1,
\|(I,Dv_k^\tau)\|_{C^0}\le \frac{11}{10},\\
&\text{and }\left|\det\left(\pi^uDg_k(\pi^ux_\tau^k)|_{E^u}\right)\right|>1.
\end{split}\end{align}
Then,  (\ref{E:Norm1}), (\ref{E:Norm1'}), (\ref{E:Norm2}) and (\ref{E:Norm3}) together imply that, for sufficient small $\d_1,\d_2$, the following holds
 $$\left|\frac{\det(Dg_k(x_1^k)|_{graph(Dv^1_k(\pi^ux_1^k))})}
 {\det(Dg_k(x_2^k)|_{graph(Dv^2_k(\pi^ux_2^k))})}\right|\le 1+Ce^{-k\frac{49}{50}\l_1},$$
which complete the proof of Lemma \ref{L:Jacob2}.\\

The proof of Lemma \ref{L:Jacob1} is similar, so we omit the detailed argument and just give the outlines the proof:\\

Denote $x_\tau^k=g^k_0(x_\tau)$ and $v_k=T_0^k(v_0)$ for $0\le k\le n$, $\tau=1,2$. Since $x_\tau^k\in graph(v_k)$, there exists $y_\tau^k\in W_i^u$ such that $\pi^ux_\tau^k=\pi^uy_\tau^k$. Then,  the proof is divided  into two main steps:
\begin{itemize}
\item[Step 1.] For each $0\le k\le n-1$ and $\tau=1,2$, show that for sufficient small $\d_2,\d_1$
 $$\left(1+Ce^{-k\frac{49}{50}\l_1}\right)^{-1}\le\left|\frac{\det (Dg_k(x_\tau^k)|_{graph(Dv_k(\pi^ux_\tau^k))})}{\det (Dg_k(y_\tau^k)|_{\mc T_{y_\tau^k}W^u_k})}\right|\le 1+Ce^{-k\frac{49}{50}\l_1},$$
 where $\mc T_{y_\tau^k}W^u_k$ is the tangential space of $W^u_k$ on point $y_\tau^k$. To prove this, one can apply the same arguments in proving (\ref{E:Norm1}), (\ref{E:Norm1'}), (\ref{E:Norm2}) and (\ref{E:Norm3}), and the fact that $\|v_k-h^u_k\|_{C^0},\|Dv_k-Dh^u_k\|_{C^0}\le Ce^{-n\frac{49}{50}\l_1}$ which follows from Lemma \ref{L:DiscLemma1} and \ref{L:DiscLemma2}.
 \item[Step 2.] For each $0\le k\le n-1$, show that for sufficient small $\d_2,\d_1$
 $$\left|\frac{\det (Dg_k(y_2^k)|_{\mc T_{y_2^k}W^u_k})}{\det (Dg_k(y_1^k)|_{\mc T_{y_1^k}W^u_k})}\right|\le 1+Ce^{-(n-k)\frac{49}{50}\l_1}.$$
 This follows from the Lipchitz property of $Dh_i$ and property (b) stated in Lemma \ref{L:LocalUnstableManifolds}.
\end{itemize}

\end{proof}

\subsubsection{Hyperbolic Blocks}\label{S:HyperbolicBlocks}
In this subsection, we will set up the Lyapunov charts and then define the so called hyperbolic blocks.

First, we define a set of subsets $\{\G_i\subset \G\}_{i\in I}$, where $\G$ is given in the above multiplicative theorem, by the following:
$$ x,y\in\G_i,\text{ if and only if } \dim E^{u}(x)=\dim E^{u}(y).$$
Since $\mu$ is hyperbolic, up to an $\mu$-null set, $\{\G_i\}_{i\in I}$ forms a countable partition of $\G$ (then one can choose the $I$ to be natural numbers), and each $\G_i$ is $f$-invariant because of 1(b) in Theorem \ref{T:MET}(from Appendix A).

Another way to classify points is to look at their Lyapunov exponents: For $n\ge 1$,  where we define
$$\G^n=\{x\in\G|\ \l_0\in[\frac1n,\frac{1}{n-1})\}, \text{ let }\frac1{n-1}=+\infty, \text{ when }n=1 .$$
Here $\l_0$ is the function given in Theorem \ref{T:MET}. Also since $\mu$ is hyperbolic, up to an $\mu$-null set, $\{\G^n\}_{n\ge 1}$ forms a countable partition of $\G$, and  each $\G_i$ is $f$-invariant since so is $\l_0$.

Let $\G_{n,m}=\G^n\cap\G_m$, then $\mu=\sum_{n,m\ge 1}\mu(\G_{n,m})\mu_{n,m}$ and each $\G_{n,m}$ is $f$-invariant, where $\mu_{n,m}$ is the conditional measure of $\mu$ on $\G_{n,m}$.

Now we are able to construct Lyapunov charts. Let $\d_0:\G\to (0,1)$ be $f$-invariant functions satisfies that
$$\d_0|_{\G_{n,m}}=\text{ constant, and }\le \frac1{200n}.$$
Then,  let $\l(x)=\l_0(x)-2\d_0(x)$, where $\l_0(x)\ge\frac1n$ for any $x\in\G^n$. Note that on $\G_m$, $\dim E^u=codim E^s$ are constants. Then,  one can fix a measurable splitting $\mb H= \tilde E^u(x)\oplus \tilde E^s(x)$ as the following: for $\tau=u,s$
\begin{itemize}
\item[(i)] $\tilde E^\tau(x)$ is constant subspace on $\G_m$;
\item[(ii)] $\dim\tilde E^\tau(x)=\dim E^\tau(x)$;
\item[(iii)] $\tilde E^u(x)\perp \tilde E^s(x)$.
\end{itemize}

Now we define functions $\d_1,\d_2$ and $\d$ on $\G_{n,m}$: For any $x\in \G_{n,m}$, take $\l_1=\frac3{4n}$ as under the setting in Section \ref{S:InvariantManifolds}, then we choose $\d_1(x),\d_2(x)$ sufficiently small to satisfy all the conditions of $\d_1,\d_2$ proposed in Section \ref{S:InvariantManifolds}, \ref{S:DiscLemmas} and \ref{S:JacobianEstimates}. Since $\d_1(x)$ and $\d_2(x)$ only depend on $\l_1$,  we choose them to be constants on $\G_{n,m}$ and to be $f$-invariant.  To construct the chart we also need a control on $\|Df_x^2\|$, so in the rest of this section we always assume that $\d(x)<<r_0$ where $r_0$ is from Lemma \ref{L:C2Bound}. Finally, set $\d(x)=\frac18\min\{\d_1(x),\d_2(x)\}$. And then we apply Theorem \ref{T:LyapunovChart}, and use the same symbol $\Phi$ to denote the chart map.
\begin{remark}\label{R:ForLargerLambda1}
Note that all the results in Section \ref{S:InvariantManifolds}, \ref{S:DiscLemmas} and \ref{S:JacobianEstimates}  still hold if one keeps $\d_1$ and $\d_2$ but use a larger $\l_1$. And we will use this fact if needed in the rest of Section \ref{S:ErgodicAttractors} without further explanation.
\end{remark}

 Then,  by  applying Theorem \ref{T:LyapunovChart} to construct the Lyaponuv charts, we obtain a Borel function $l:\G\to [1,\infty)$ and the chart maps $\Phi_x$ satisfying the properties in Theorem \ref{T:LyapunovChart}.

Let $\G_{n,m,l_0}=\{x\in \G_{n,m}|\ l(x)\le l_0\}$. This set usually is not $f$-invariant. Nevertheless, since $\mu(\cup_{l_0\ge 1}\cup_{n,m=1}^\infty \G_{n,m,l_0})=1$, we have that for some $n,m\ge 1$, and large enough $l_0$, $\mu(\G_{n,m,l_0})>0$.

\begin{remark}\label{R:SplittingOnGnml0}
The following properties follow from the construction of chart maps (we refer the reader to \cite{LY} for more details): For any $x\in \G_{n,m,l_0}$,
\begin{itemize}
\item[(a)] $\left\|\left(D^pf_{f^qx}|_{E^u(f^qx)}\right)^{-1}\right\|\le l_0e^{-p\frac{99}{100n}+|q|\d(x)}$ for all $p\ge 0$ and $q\in\mb Z$;
\item[(b)] $\left\|D^pf_{f^qx}|_{E^{s}(f^qx)}\right\|\le l_0e^{-p\frac{99}{100n}+|q|\d(x)}$ for all $p\ge 0$ and $q\in\mb Z$;
\item[(c)] $\max\{\|\pi^u(f^q(x))\|,\|\pi^{s}(f^q(x))\|\}\le l_0e^{|q|\d(x)}$ for all $q\in\mb Z$.
\end{itemize}

\end{remark}
 We summarize the properties of $\G_{n,m,l_0}$ in the following lemma which is an analog of the Continuity Lemma in \cite{Pu}.
\begin{lemma}\label{L:ContinuityLemma}
The $Df$-invariant hyperbolic splitting $\mb H=E^u(x)\oplus E^s(x)$ over $\G_{n,m,l_0}$ can be extended to a unique continuous $Df$-invariant splitting on $\overline{\G_{n,m,l_0}}$, where $\overline{\G_{n,m,l_0}}$ is the closure of $\G_{n,m,l_0}$ in $\L$. The growth controls along orbits through $\G_{n,m,l_0}$ can be extended to growth controls along orbits through $\overline{\G_{n,m,l_0}}$, precisely (a), (b) and (c) in Remark \ref{R:SplittingOnGnml0} hold for $x\in \overline{\G_{n,m,l_0}}$.
\end{lemma}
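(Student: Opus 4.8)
The statement is the standard ``Continuity Lemma'' in the Pesin theory of nonuniformly hyperbolic systems (cf.\ \cite{Pu}), adapted to the infinite-dimensional setting; the plan is to carry out the classical argument, paying attention only to the points where infinite-dimensionality or non-invertibility of $f$ on $\mb H$ could interfere.  First I would fix $n,m,l_0$ with $\mu(\G_{n,m,l_0})>0$ and note that $\G_{n,m,l_0}\subset\L$, so $\overline{\G_{n,m,l_0}}$ is a compact subset of the attractor on which $f$ is a homeomorphism (by C1)--C3), $(f|_\L)^{-1}$ exists and is continuous).  The key structural input is (c) of Remark \ref{R:SplittingOnGnml0}: along any orbit through $\G_{n,m,l_0}$ the projections $\pi^u_{f^q x},\pi^s_{f^q x}$ are bounded by $l_0 e^{|q|\d}$, so the splitting $\mb H=E^u(x)\oplus E^s(x)$ has angle bounded below by a constant depending only on $l_0,\d$; in particular $\dim E^u$ is the constant $m$ there, and $E^u$ is a map into the (finite-dimensional) Grassmannian $G_m(\mb H)$ while $E^s$ is a map into the Grassmannian of codimension-$m$ subspaces, both of which are metric spaces under the Kato gap metric $\di$.

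The core of the argument is a \emph{uniform contraction / graph-transform} estimate showing that $E^u$ and $E^s$ extend continuously.  Take a sequence $x_j\in\G_{n,m,l_0}$ with $x_j\to \bar x\in\overline{\G_{n,m,l_0}}$; I want to show $E^u(x_j)$ is Cauchy in the gap metric.  Given $x_j,x_k$ close, compare the two forward orbit segments $\{f^q x_j\}$ and $\{f^q x_k\}$ for $0\le q\le N$: since $f$ is $C^2$ on a neighbourhood of $\L$ and $\L$ is compact (so $\|Df\|,\|D^2f\|$ are bounded there, by \eqref{E:C2BoundLambda}), these two orbit segments stay within distance $\mathrm{const}\cdot e^{CN}|x_j-x_k|$.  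Now $E^u(x_j)$ is characterized dynamically: it is the unique $m$-dimensional subspace $V$ such that $\|Df^{-p}_{x_j}|_V\|$ decays like $e^{-p\,\frac{99}{100n}}$ (property (a) of Remark \ref{R:SplittingOnGnml0}), equivalently $E^u(x_j)=\lim_{p\to\infty} Df^p_{f^{-p}x_j}(\text{any fixed reference }m\text{-plane transverse to }E^s)$.  The graph-transform contraction (the mechanism already used in Lemma \ref{L:DiscLemma1}, via Conditions (I)--(III) with $\l_1=\tfrac3{4n}$) gives: if two orbit segments of length $N$ agree to within $\e$ and the charts have bounded norm $l_0$, then the images of the reference plane under $Df^N$ along the two orbits differ (in the gap metric) by at most $\mathrm{const}(l_0,n)\cdot[\,(e^{-\l_1}+2\d_2)^N + e^{CN}|x_j-x_k|\,]$.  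Choosing first $N$ large to kill the first term, then $|x_j-x_k|$ small, makes $\di(E^u(x_j),E^u(x_k))$ arbitrarily small; hence $E^u$ is uniformly continuous on $\G_{n,m,l_0}$ and extends continuously (and uniquely) to $\overline{\G_{n,m,l_0}}$.  The same argument run backwards (using $(f|_\L)^{-1}$, which is well-defined on $\L$, and the stable Disc Lemma / Lemma \ref{L:LocalStableManifolds}) handles $E^s$.  $Df$-invariance of the extended splitting is automatic: $Df_{\bar x}E^u(\bar x)=\lim Df_{x_j}E^u(x_j)=\lim E^u(fx_j)=E^u(f\bar x)$ by continuity of $f$, $Df$, and the just-established continuity of $E^u$; and the inclusion $Df_{\bar x}E^s(\bar x)\subset E^s(f\bar x)$ passes to the limit likewise, with equality of dimensions forcing continuity of the full splitting.

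Finally, the growth controls (a), (b), (c) extend by continuity: each is an inequality of the form $\|D^pf_{f^q x}|_{E^\tau(f^qx)}\|\le l_0 e^{-p\frac{99}{100n}+|q|\d}$ (or the analogous bound for the projection norms), the left-hand side being a continuous function of $x$ once $E^\tau(\cdot)$ is continuous and $f$ is $C^1$ on a neighbourhood of $\L$; a non-strict inequality valid on a set is valid on its closure.  So (a), (b), (c) hold for all $x\in\overline{\G_{n,m,l_0}}$, which is the assertion.  \textbf{The main obstacle} is making the graph-transform contraction estimate genuinely uniform in the gap metric on the infinite-dimensional space $\mb H$ rather than on a fixed finite-dimensional chart: one must check that the uniform lower bound on the angle $\an(E^u,E^s)\ge c(l_0,\d)$ coming from (c) is enough to run the Disc-Lemma machinery of Section \ref{S:DiscLemmas} with constants depending only on $n,m,l_0$ (not on the individual point), so that the exponential contraction rate $e^{-\l_1}+2\d_2<1$ with $\l_1=\tfrac3{4n}$ is uniform over $\G_{n,m,l_0}$; this is exactly the role played by restricting to the level set $\{l(x)\le l_0\}$, and it is the step that replaces compactness of the ambient space (which fails here) by compactness of $\overline{\G_{n,m,l_0}}\subset\L$ together with the a priori chart bounds.
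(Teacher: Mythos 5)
Your proposal is correct and rests on the same underlying mechanism as the paper, but it reaches it by a different, more self-contained route. The paper's own proof of Lemma \ref{L:ContinuityLemma} is essentially a citation: uniform continuity of $E^u$ and $E^s$ on $\G_{n,m,l_0}$ is taken from Propositions \ref{P:G^uConti} and \ref{P:G^sConti}, which rest on the general Propositions \ref{P:E^uCont} and \ref{P:E^sHolderCont}; there the estimate is carried out directly at the linear level, bounding $|\pi^s_x v|$ for a unit vector $v\in E^u(y)$ by a hyperbolic-decay term $l_0^3e^{n(-\l_1+\l_2+\d)}$ plus a finite-window term $l_0^2e^{-n\l_1}\|Df^n_{f^{-n}x}-Df^n_{f^{-n}y}\|$, and then choosing first $n$ and then $|x-y|$ small; the unique extension to $\overline{\G_{n,m,l_0}}$ and the persistence of (a)--(c) of Remark \ref{R:SplittingOnGnml0} are then immediate, exactly as you say. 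You instead re-derive the uniform continuity through the chart/graph-transform machinery of Section \ref{S:DiscLemmas}, characterizing $E^u$ as the limit of pushed-forward reference $m$-planes and bounding the gap by $(e^{-\l_1}+2\d_2)^N$ plus a term controlled by closeness of length-$N$ orbit segments; this is the same ``exponential loss of memory plus finite-time continuity'' scheme, run on planes in Lyapunov charts rather than on the projections, so it buys independence from Section \ref{S:ContinuousSplit} at the cost of re-proving what the paper has already isolated there. One wiring slip to repair: for $E^u$ the relevant segments are the backward ones $\{f^{-q}x_j\}_{0\le q\le N}$, whose closeness comes from uniform continuity of $(f|_\L)^{-1}$ (this is exactly what the paper uses in proving Proposition \ref{P:E^uCont}), not from the forward Lipschitz bound $e^{CN}|x_j-x_k|$; since you fix $N$ before choosing the points close, a mere modulus of continuity suffices, so the slip is harmless, and your final step (define the splitting along the orbit of a limit point by $Df$-invariance and pass the non-strict inequalities to the limit for each fixed $p,q$) coincides with the paper's ``straightforward'' second part.
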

\begin{proof}
By Propositions \ref{P:G^uConti} and \ref{P:G^sConti}, we have  that the splitting $\mb H=E^u(x)\oplus E^s(x)$ is uniformly continuous varying on $\G_{n,m,l_0}$, thus such continuous splitting can be extended uniquely to $\overline{\G_{n,m,l_0}}$. The second part the lemma follows from the compactness of $\Lambda$, $C^1$-ness of $f$, and continuity of $\left(f|_{\Lambda}\right)^{-1}$ straightforwardly.
\end{proof}

By an $\e_0-Hyperbolic\ Block$, we mean an element of a countable measurable partition $\{P_i\}_{i=1}^\infty$ of $\G$ such that
\begin{itemize}
\item[(i)] $P_i\subset \G_{n,m,l_0}$ for all $i\ge 1$ and for some $n,m,l_0$;
\item[(ii)] $diam(P_i)\le \e_0$.
\end{itemize}
By the compactness of $\Lambda$, such partition exists for any $\e_0$.


\subsubsection{Invariant Manifolds and Holonomy maps of nonuniformly hyperbolic systems}\label{S:HolonomyMaps}
In this subsection, we define the so called  holonomy map and formulate the concept of absolute continuity of stable foliations for nonuniformly hyperbolic systems.
We keep the setting of charts defined in subsection \ref{S:HyperbolicBlocks}.
The following result  is an immediate
corollary of
Lemmas \ref{L:LocalUnstableManifolds} and \ref{L:LocalStableManifolds}, which gives local stable and unstable manifolds
$\mu$-a.e. for a given hyperbolic measure $\mu$ of $f$ satisfying C1)-C3).
Various versions of this result have been
proved before; see e.g. \cite{R}, \cite{LL}.
\begin{cor}[\bf Local Stable and Unstable Manifolds for Nonuniformly Hyperbolic System] \label{C:LocalInvariantManifolds}
For $\mu$-a.e. $x$, one has that
$$ W^\tau_{loc}(x)=\Phi_x(\tilde W^\tau_x),\  \tilde W^\tau_x=\text{graph}(\tilde h^\tau_x), \tau=u,s$$
where $\tilde h^s_{x}:\tilde B^s(0,\d(x)l(x)^{-1})\to \tilde B^u(0,\d(x)l(x)^{-1})$ and $\tilde h^u_{x}:\tilde B^u(0,\d(x)l(x)^{-1})\to \tilde B^s(0,\d(x)l(x)^{-1})$ satisfy the following properties:
\begin{itemize}
\item[(i)] $\tilde h^\tau_x(0)=0,\ (D\tilde h^\tau_x)_0=0$, $\tau =u,s$;
\item[(ii)] $\|D\tilde h^\tau_{x}\|\le \frac1{10}$, $\tau=u,s$;
\item[(iii)] (a)$f (W^u_{loc}(x))\supset W^u_{loc}(f(x))$, (b)$f (W^s_{loc}(x))\subset W^s_{loc}(f(x))$;
\item[(iv)] (a) For any $y_1,y_2\in \tilde W^u_x$ such that $\tilde f_x(y_1), \tilde f_x(y_1)\in \tilde W^u_{f(x)}$,
    $$|\tilde \pi^u_{f(x)}(\tilde f_x(y_1))-\tilde \pi^u_{f(x)}(\tilde f_x(y_2))|>(e^{\l_1(x)}-2\d_2(x))|\tilde \pi^u_{x}y_1-\tilde \pi^u_{x}y_2|;$$
    (b)For any $y_1,y_2\in \tilde W^s_x$,
    $$|\tilde \pi^s_{f(x)}(\tilde f_x(y_1))-\tilde \pi^s_{f(x)}(\tilde f_x(y_2))|<(e^{-\l_1(x)}+2\d_2(x))|\tilde \pi^s_{x}y_1-\tilde \pi^s_{x}y_2|;$$
    where $\tilde \pi^u_{x},\tilde \pi^s_{x}$ are the projections associated to the splitting $\mb H=\tilde E^u(x)\oplus\tilde E^s(x)$;
\item[(v)] $\tilde h^\tau_x\in C^{1+lip}$ with $Lip(D\tilde h^\tau_x)<const\cdot l(x)$, $\tau=u,s$.
\end{itemize}
\end{cor}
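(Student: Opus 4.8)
The statement is the translation through the Lyapunov charts of the abstract Lemmas~\ref{L:LocalUnstableManifolds} and~\ref{L:LocalStableManifolds}, so the plan is to realize the orbit of $x$ as a sequence of chart maps satisfying Conditions (I)--(III) and then read off (i)--(v). Fix $n,m,l_0$ with $\mu(\G_{n,m,l_0})>0$, and for $x$ in the full-measure set $\G$ carrying the charts of Subsection~\ref{S:HyperbolicBlocks} put $g_i:=\tilde f_{f^ix}=\Phi_{f^{i+1}x}^{-1}\circ f\circ\Phi_{f^ix}$, $i\in\mb Z$, regarded as a map between balls $B_i=B^u_i\times B^s_i\subset\mb H=\tilde E^u\oplus\tilde E^s$ after the measurable identification of $\tilde E^\tau(f^ix)$ with fixed coordinate subspaces. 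I take $\l_1=\l_1(x)=\tfrac3{4n}$, $\d_1=\d_1(x)$, $\d_2=\d_2(x)$ and $r_i=\d(f^ix)l(f^ix)^{-1}$ exactly as chosen there.

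First I would check Conditions (I)--(III) for $\{g_i\}$ (and (IV), for later use). Because $\Phi$ is affine and aligns the orthogonalized splitting $\tilde E^u\oplus\tilde E^s$, the linear part $\Lambda_i:=Dg_i(0)$ is block diagonal, $\Lambda_i=\Lambda^u_i\oplus\Lambda^s_i$, with $\|(\Lambda^u_i)^{-1}\|,\|\Lambda^s_i\|\le e^{-\l_1}$, which is (I); the nonlinear part $G_i:=g_i-\Lambda_i$ satisfies $G_i(0)=0$, $DG_i(0)=0$, while Theorem~\ref{T:LyapunovChart} makes $\|DG_i\|_{C^0}<\d_2$ on $B_i$, which is (II) (the bound $|G_i(0)|<\d_2r_{i+1}$ being vacuous here); and temperedness of $l$ together with the uniform $C^2$ bound of $f$ near $\L$ from Lemma~\ref{L:C2Bound} gives $\mathrm{Lip}(DG_i)\le\ell_i$, $\|Dg_i\|_{C^0}\le l'_i$ and the slow variation $r_{i+1}e^{-\d_1}<r_i<r_{i+1}e^{\d_1}$ (and likewise for $\ell_i,l'_i$), which is (III)--(IV). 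Since $\d_1(x),\d_2(x)$ were chosen in Subsection~\ref{S:HyperbolicBlocks} to be sufficiently small relative to $\l_1(x)$, all the smallness hypotheses of Lemmas~\ref{L:LocalUnstableManifolds}--\ref{L:LocalStableManifolds} are met.

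Next I would set $\tilde h^u_x:=h^u_0$, $\tilde h^s_x:=h^s_0$ for $\{g_i\}$, $\tilde W^\tau_x:=\mathrm{graph}(\tilde h^\tau_x)$, $W^\tau_{loc}(x):=\Phi_x(\tilde W^\tau_x)$, and read off the conclusions. Items (ii), (iv) and (v) are verbatim the bounds (ii), (b), and the $C^{1+\mathrm{Lip}}$ clause of the two lemmas, with $r_0=\d(x)l(x)^{-1}$ and Lipschitz constant $\mathrm{const}\cdot\ell_0=\mathrm{const}\cdot l(x)$. For (i), $G_i(0)=0$ puts $0$ on the invariant graphs, so $\tilde h^\tau_x(0)=0$, and $DG_i(0)=0$ forces $D\tilde h^\tau_x(0)=0$ by the computation recorded in Remark~\ref{R:UnstableManifoldCase}(2) and its stable counterpart. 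For (iii), since $h^u_i$ depends only on $\{g_j:j<i\}$ and $h^s_i$ only on $\{g_j:j\ge i\}$, the index-$1$ manifolds of $\{g_i\}$ are exactly the index-$0$ manifolds of the shifted sequence $\{g_{i+1}\}$, that is $W^u_1=\tilde W^u_{fx}$ and $W^s_1=\tilde W^s_{fx}$; hence conclusion (a) $g_0(W^u_0)\supset W^u_1$ of Lemma~\ref{L:LocalUnstableManifolds} reads, back in $\mb H$, $f(W^u_{loc}(x))\supset W^u_{loc}(fx)$, and conclusion (a) $g_0W^s_0\subset W^s_1$ of Lemma~\ref{L:LocalStableManifolds} reads $f(W^s_{loc}(x))\subset W^s_{loc}(fx)$. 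All of this holds on the full-measure set supporting the charts, hence for $\mu$-a.e. $x$.

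The argument is bookkeeping once Theorem~\ref{T:LyapunovChart} and the abstract lemmas are granted; the two places needing care are the index shift used for (iii) --- identifying the invariant manifolds of the orbit-shifted chart sequence with those based at $fx$ --- and the verification that the chart parameters $\d_1(x),\d_2(x),r_i,\ell_i$ of Subsection~\ref{S:HyperbolicBlocks} genuinely satisfy the quantitative smallness thresholds of Lemmas~\ref{L:LocalUnstableManifolds}--\ref{L:LocalStableManifolds}, which is exactly what those thresholds were calibrated against.
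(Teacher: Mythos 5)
Your proposal is correct and follows exactly the route the paper intends: the paper presents this corollary as an immediate consequence of Lemmas \ref{L:LocalUnstableManifolds} and \ref{L:LocalStableManifolds} applied to the chart maps $\tilde f_{f^ix}$ of Theorem \ref{T:LyapunovChart}, and your verification of Conditions (I)--(III), the use of $G_i(0)=0$, $DG_i(0)=0$ for item (i), and the index-shift identification $W^\tau_1=\tilde W^\tau_{fx}$ for item (iii) is precisely the bookkeeping the paper leaves implicit.
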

The $\Phi_x$-images of $\tilde W^s_x$ and $\tilde W^u_x$ are
called the {\it local stable} and {\it unstable manifolds} at $x$.

 To define the holonomy map, we first fix numbers $n_0,m_0,l_0$ so that $\mu(\G_{n_0,m_0,l_0})>0$ and then an $x_0\in \G_{n_0,m_0,l_0}\cap supp\ \mu$, and let $r_0=\d(x_0)l_0^{-1}$. Define that
  $$\tilde{\mc V}=\{\tilde v:\tilde B^u(0,r_0)\to \tilde B^s(0,r_0)|\ |\tilde v(0)|\le \frac14 r_0, \ Lip(\tilde v)\le \frac1{20}\},$$
  $$\tilde{\mc V}'=\{\tilde v:\tilde B^u(0,\frac34r_0)\to \tilde B^s(0,\frac34r_0)|\ |\tilde v(0)|\le \frac38 r_0, \ Lip(\tilde v)\le \frac1{10}\},$$
  and for any small $c>0$
  $$\tilde{\mc W}'=\{\tilde w:\tilde B^s(0,\frac34r_0)\to \tilde B^u(0,\frac34r_0)|\ |\tilde w(0)|\le \frac38 r_0, \ Lip(\tilde w)\le \frac1{10}+c\}.$$


 \begin{lemma}\label{L:HolonomyMap}
 There exists $\e\in(0,\frac{1}{10}r_0)$ such that for any  $x,y\in B(x_0,\e)\cap \G_{m_0,n_0,l_0}$ with $|x-y|<\e$ and $\tilde v,\tilde v_1, \tilde v_2 \in\tilde{\mc V}$, the following holds:
 \begin{itemize}
 \item[(i)] There is a unique $\tilde v'\in\tilde {\mc V}'$ such that $$\Phi_{y}^{-1}\left(\Phi_{x}(graph(\tilde v))\right)\cap\tilde B(0,\frac34r_0)=graph(\tilde v');$$
 \item[(ii)] $W_{loc}^s(y)$ meets $\Phi_{x}(graph(\tilde v))$ at exact one point transversally;
 \item[(iii)] There exists a homeomorphism $T_{v_1,v_2}:U_1\to U_2$, where
 \[U_i=\left(\cup_{y\in B(x_0,\e)\cap \G_{m_{0},n_{0},l_{0}}}W^s_{loc}(y)\right)\cap \Phi_{x_0}(graph(v_i)),i=1,2,
 \] such that
 $T_{v_1,v_2}$ maps $W_{loc}^s(y)\cap\Phi_{x_0}(graph(\tilde v_1))$ to $W_{loc}^s(y)\cap\Phi_{x_0}(graph(\tilde v_2))$.
 \end{itemize}
 \end{lemma}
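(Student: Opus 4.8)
The plan is to establish the three claims of Lemma~\ref{L:HolonomyMap} by applying the graph-transform/disc machinery of Section~\ref{S:DiscLemmas} in the chart $\Phi_{x_0}$, using the continuity of the chart maps and of the local stable/unstable manifolds on the hyperbolic block $\G_{n_0,m_0,l_0}$. First I would fix the data: $x_0\in\G_{n_0,m_0,l_0}\cap\operatorname{supp}\mu$, the scale $r_0=\d(x_0)l_0^{-1}$, and the constants $\l_1=\frac34/n_0$, $\d_1(x_0),\d_2(x_0)$ attached to the chart. The key preliminary observation is that for $x,y$ close to $x_0$ and lying in $\G_{n_0,m_0,l_0}$, the conjugated map $\Phi_y^{-1}\circ\Phi_x$ is $C^1$-close to an isometry respecting the splitting $\tilde E^u(x_0)\oplus\tilde E^s(x_0)$; this follows from the uniform continuity of $\Phi_{(\cdot)}$ on the block and from Lemma~\ref{L:ContinuityLemma}, which says the splitting varies continuously. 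Hence, shrinking $\e$, $\Phi_y^{-1}\circ\Phi_x$ maps any graph of a function in $\tilde{\mc V}$ to a set whose intersection with $\tilde B(0,\tfrac34 r_0)$ is again a graph, now of a function with slightly worse Lipschitz constant — i.e.\ an element of $\tilde{\mc V}'$; uniqueness is immediate since a graph over $\tilde B^u$ is determined by its point set. That gives (i).

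For (ii), I would argue that $W^s_{loc}(y)$, by Corollary~\ref{C:LocalInvariantManifolds}, is (in the chart $\Phi_{x_0}$, after the change of coordinates from (i)) the graph of a function $\tilde h^s_y\in\tilde{\mc W}'$ over $\tilde B^s$ with Lipschitz constant $\le\frac1{10}$, while $\Phi_{x_0}(\operatorname{graph}(\tilde v))$ is the graph of $\tilde v'\in\tilde{\mc V}'$ over $\tilde B^u$ with Lipschitz constant $\le\frac1{10}$. A graph over $E^u$ with slope $\le\frac1{10}$ and a graph over $E^s$ with slope $\le\frac1{10}$ meet in exactly one point: existence and uniqueness follow from the contraction mapping theorem applied to the obvious fixed-point equation (the composed slope bound $\frac1{10}\cdot\frac1{10}<1$ makes it a contraction), and transversality follows because the two tangent spaces are $\frac1{10}$-graphs over complementary subspaces, hence uniformly transverse. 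One must check that the unique intersection point actually lies in the domain where both graphs are defined; this is arranged by the choice of $\e$ small relative to $r_0$ so that the relevant pieces overflow each other, exactly as in the overflowing estimates used in the proof of Theorem~\ref{T:SRBExist}.

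For (iii), the holonomy map $T_{v_1,v_2}$ is defined pointwise: for each $y\in B(x_0,\e)\cap\G_{n_0,m_0,l_0}$, send the unique intersection point $W^s_{loc}(y)\cap\Phi_{x_0}(\operatorname{graph}(v_1))$ to the unique intersection point $W^s_{loc}(y)\cap\Phi_{x_0}(\operatorname{graph}(v_2))$, both of which exist and are unique by (ii). Its inverse is $T_{v_2,v_1}$, so it is a bijection $U_1\to U_2$. Continuity in both directions is the point that needs care: I would deduce it from the continuous dependence of $W^s_{loc}(y)$ on $y\in\G_{n_0,m_0,l_0}$ — which comes from Lemma~\ref{L:ContinuityOnMaps} applied to the chart maps $\tilde f_{f^i y}$, together with Lemma~\ref{L:ContinuityLemma} — combined with the fact that the intersection-point map is continuous in the graph (a standard consequence of the implicit function theorem for the uniformly transverse, uniformly Lipschitz intersection set up in (ii)). So $T_{v_1,v_2}$ is a homeomorphism.

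\textbf{Main obstacle.} The routine parts are the graph-transform estimates, which are already packaged in Section~\ref{S:DiscLemmas}. The delicate step is (iii): proving that the holonomy map is genuinely a homeomorphism onto its image rather than merely a bijection. This requires knowing that $y\mapsto W^s_{loc}(y)$ is continuous in the $C^1$ sense as $y$ ranges over the non-invariant set $\G_{n_0,m_0,l_0}$ (not just $\mu$-a.e.), which is why Lemma~\ref{L:ContinuityLemma} and the continuity statement of Lemma~\ref{L:ContinuityOnMaps} are invoked; one then has to transfer that continuity through the chart changes and through the transverse-intersection construction, keeping all Lipschitz and transversality bounds uniform over the block. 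Getting the quantitative dependence of $\e$ on $n_0,m_0,l_0$ right — so that all overflowing and contraction inequalities hold simultaneously — is where the bulk of the careful bookkeeping lies.
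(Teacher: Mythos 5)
Your proposal follows essentially the same route as the paper's proof: part (i) via the near-splitting-preserving change of charts (the paper quotes the ``switching charts'' estimate from \cite{LY}), part (ii) via the contraction mapping theorem applied to the composition of the two Lipschitz graphs, and part (iii) by reducing the homeomorphism property of $T_{v_1,v_2}$ to the continuous dependence of $W^s_{loc}(y)$ on $y$ over the hyperbolic block. The only minor difference is the tool cited for that last continuity: the paper isolates it as Lemma \ref{L:ContinuityOfW} (proved by graph transforms), which is the right reference here, since Lemma \ref{L:ContinuityOnMaps} as stated requires the two chart sequences to coincide for finitely many steps rather than merely to be close.
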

 \begin{proof}
 First, we borrow a result following Proposition 17 in \cite{LY}  for switching charts: for any $\Delta>0$, there is an $\e>0$ such that if $x,y\in \G_{m_0,n_0,l_0}$ satisfying $|x-y|<\e$, then for any $v\in \tilde E^\tau$, $\tau=u,s$, the following holds:
 \begin{itemize}
 \item[(a)] $(1-\Delta)|v|<|\tilde\pi^\tau L_y L_x^{-1} v|\le (1+\Delta)|v|$;
 \item[(b)] $|\tilde\pi^{\tau'} L_y L_x^{-1} v|\le \Delta|v|$ when $\tau'\neq \tau$.
 \end{itemize}
 Here $L_x$ is defined for the chart map and $\Phi_x=Exp\circ L_x^{-1}$ (for definitions of $\Phi_{x}$ we refer to Section \ref{S:LyapunovCharts}).
 Then,  (i) is easily seen to be true for small enough $\Delta$. To show (ii), note that for $\Delta$ small enough, if $x,y\in \G_{m_0,n_0,l_0}$ satisfying $|x-y|<\e$, by Corollary \ref{C:LocalInvariantManifolds}, there exists $\tilde w'\in \tilde {\mc W}'$ such that
 $$\Phi_{x}^{-1}(\Phi_y(\tilde W^s_y))\cap \tilde B(0,\frac34 r_0)=graph(\tilde w').$$
 Note that the intersection of $W_{loc}^s(y)$ and $\Phi_{x}(graph(\tilde v))$ is the fixed point of the mapping $\tilde x^u\to \tilde w'(\tilde v (\tilde x^u))$ for $\tilde x^u\in \tilde\pi^u(graph(\tilde w'))$. Since this mapping has Lipschitz constant $\frac1{20}(\frac1{10}+c)<1$, (ii) follows from the contraction mapping theorem.

For (iii), it remains to show $T_{v_1,v_2}$ being continuous which follows from  the next Lemma which gives the continuous dependence of $W^\tau_{loc}(\cdot)$ on $\G_{n_0,m_0,l_0}$ for $\tau=u,s$. This completes the proof of the lemma.
 \end{proof}

 \begin{lemma}\label{L:ContinuityOfW}
 For any $\Delta'>0$, there exists $\e\in(0,\frac{1}{10}r_0)$ such that if  $y,x\cap \G_{m_0,n_0,l_0}$ with $|y-x|<\e$, then
  $$\left\|\Phi_{x}^{-1}(W^{\tau}_{loc}(y))\cap \tilde B(0,\frac34r_0)-\tilde W^{\tau}_{x}\cap \tilde B(0,\frac34r_0)\right\|_{C1}<\Delta'$$ for $\tau=s,u$.
 \end{lemma}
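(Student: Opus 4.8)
Lemma \ref{L:ContinuityOfW} asserts the $C^1$-continuous dependence of the local stable and unstable manifolds on the base point within a single hyperbolic block $\G_{m_0,n_0,l_0}$, expressed after pulling everything back into a fixed chart $\Phi_{x_0}$ (here written $\Phi_x$ with $x$ near $x_0$). The whole point of working inside $\G_{m_0,n_0,l_0}$ is that the chart maps, the hyperbolicity rates, and the constants $\d_1,\d_2$ are uniform there, so the abstract sequences of maps $\{g_i\}$ that realize the invariant manifolds via Lemmas \ref{L:LocalUnstableManifolds}–\ref{L:ContinuityOnMaps} vary uniformly as well. I would prove the statement by reducing it to Lemma \ref{L:ContinuityOnMaps}.

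Here is the plan. Fix $\tau=s$ (the case $\tau=u$ is symmetric, using the backward orbit instead of the forward one). For $x,y\in\G_{m_0,n_0,l_0}$ with $|x-y|<\e$, the local stable manifold $W^s_{loc}(y)=\Phi_y(\tilde W^s_y)$ is produced by Corollary \ref{C:LocalInvariantManifolds} applied to the chart sequence along the forward orbit of $y$, namely the maps $\tilde f_{f^i y}=\Phi_{f^{i+1}y}^{-1}\circ f\circ\Phi_{f^i y}$ for $i\ge 0$, which satisfy Conditions (I)–(III) with constants $\l_1=\frac{3}{4n_0}$, $\d_1,\d_2$ fixed on the block, and $r_0=\d(x_0)l_0^{-1}$ up to the block-uniform factor $e^{|q|\d(x)}$. \textbf{First}, I would switch the description of $W^s_{loc}(y)$ from the $y$-chart to the $x$-chart: by the chart-switching estimates (a) and (b) quoted from Proposition 17 of \cite{LY} in the proof of Lemma \ref{L:HolonomyMap}, for any $\Delta>0$ there is $\e>0$ such that $|x-y|<\e$ forces $\Phi_x^{-1}(\Phi_y(\tilde W^s_y))\cap\tilde B(0,\tfrac34 r_0)=\mathrm{graph}(\tilde w')$ with $\tilde w'$ close to $\tilde h^s_x$ in $C^1$ — this is essentially already contained in part (i) of Lemma \ref{L:HolonomyMap} and the $\tilde{\mc W}'$ bound, and what remains is to upgrade the $C^0$ closeness it gives to genuine $C^1$ closeness.

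\textbf{Second}, and this is where Lemma \ref{L:ContinuityOnMaps} enters, I would compare the two chart sequences $\{g_i\}=\{\tilde f_{f^i x}\}$ and $\{\hat g_i\}=\{\tilde f_{f^i y}\}$. They are not equal, but by uniform continuity of $f$, of $(f^k|_\L)^{-1}$, of the splitting $x\mapsto E^{u}(x)\oplus E^{s}(x)$ (Lemma \ref{L:ContinuityLemma}), and of the chart maps on the block, for any $N$ and any $\eta>0$ there is $\e>0$ so that $|x-y|<\e$ implies $\|g_i-\hat g_i\|_{C^1}<\eta$ on the relevant domains for $0\le i\le N$. Lemma \ref{L:ContinuityOnMaps} is stated for $g_i=\hat g_i$ on $0\le i\le N$, but its proof is quantitative: the graph-transform iteration $h^s_0=\lim T^{(N)}_N(\cdot)$ contracts at a uniform rate $(e^{-\l_1}+2\d_2)$ (Lemma \ref{L:DiscLemma1}) and its $C^1$ derivative at rate $e^{-\frac{49}{50}\l_1}$ (Lemma \ref{L:DiscLemma2}), so the tail past index $N$ contributes at most $Ce^{-\frac{49}{50}N\l_1}$ independently of the two sequences, while the first $N$ steps depend on the maps continuously. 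Choosing $N$ large enough to kill the tail below $\Delta'/2$, then $\e$ small enough to make the first-$N$-steps discrepancy below $\Delta'/2$, gives $\|\tilde h^s_x-\tilde w'\|_{C^1}<\Delta'$, and since $\tilde W^s_x=\mathrm{graph}(\tilde h^s_x)$ by Corollary \ref{C:LocalInvariantManifolds}, this is exactly the claimed bound; the case $\tau=u$ uses $g_i=\tilde f_{f^i x}$ for $-N<i<0$ and the analogous assertion of Lemma \ref{L:ContinuityOnMaps}.

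\textbf{The main obstacle} is the bookkeeping in the second step: Lemma \ref{L:ContinuityOnMaps} as quoted assumes exact agreement of the two chart sequences on $\{0,\dots,N\}$, whereas here they only agree approximately, so one must either re-run its proof keeping track of the $\|g_i-\hat g_i\|_{C^1}$ error through the finitely many graph-transform steps (each step is Lipschitz in the underlying map in $C^1$, with constants controlled on the block), or invoke a slightly stronger "continuous dependence on the maps'' statement. This is routine but needs care because the domains $\tilde B(0,r_i)$ and the projections $\tilde\pi^\tau$ themselves move with the base point, so the comparison has to be made after transporting into the common chart $\Phi_x$ using the switching estimates (a),(b); keeping the two transported sequences within Conditions (I)–(III) with the same $\l_1,\d_1,\d_2$ (possibly after slightly enlarging $\d_2$, which is harmless by Remark \ref{R:ForLargerLambda1}) is the one place where uniformity on $\G_{m_0,n_0,l_0}$ is genuinely used.
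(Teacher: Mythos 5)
Your proposal is correct in outline and shares the paper's overall quantifier structure (first fix the number of graph-transform steps using an exponential tail estimate, then shrink $\e$ to control finitely many steps), but the mechanism is genuinely different from the paper's. The paper never compares the two chart sequences $\{\tilde f_{f^ix}\}$ and $\{\tilde f_{f^iy}\}$ and never invokes Lemma \ref{L:ContinuityOnMaps}: for $\tau=u$ it transports $\tilde W^u_{f^{-k}y}$ into the charts along the backward orbit of $x$, using the switching-chart estimates only to verify that the transported graphs stay in the admissible classes $\tilde{\mc W}'_k$ for $1\le k\le n$, and then applies Lemmas \ref{L:DiscLemma1} and \ref{L:DiscLemma2}: \emph{any} admissible disc pushed forward $n$ times by $x$'s own graph transforms is $Ce^{-n\cdot\mathrm{rate}}$-close in $C^1$ to $\tilde W^u_x$, uniformly in the initial disc, so the attraction property of $\tilde W^u_x$ does all the work and no perturbative comparison of the two map sequences is needed (the case $\tau=s$ is the backward-transform dual, for which the paper refers to \cite{LY}). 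Your route instead compares the maps along the two orbits and needs a strengthening of Lemma \ref{L:ContinuityOnMaps} from exact agreement on $[0,N]$ to $C^1$-approximate agreement, which is precisely the bookkeeping you flag as the main obstacle; it is routine but not carried out, and it forces chart-switching at every time $0\le i\le N$ (with constants degrading like $e^{i\d}$) rather than only along the finitely many backward images used by the paper. Two smaller points: your tail estimate for $h^s_0$ cites Lemmas \ref{L:DiscLemma1}--\ref{L:DiscLemma2}, which concern forward transforms converging to \emph{unstable} graphs — for the stable case one needs the backward graph transform (this is what Lemma \ref{L:ContinuityOnMaps} encapsulates), and Remark \ref{R:ForLargerLambda1} allows enlarging $\l_1$, not $\d_2$. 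If the perturbative version of Lemma \ref{L:ContinuityOnMaps} were written out, your approach would yield a reusable continuity-in-the-maps statement; the paper's argument is shorter because it only reuses the already-proved disc lemmas.
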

 \begin{proof}
 Here we only give the outline of the proof for $\tau=u$ since the proof is mainly same for $\tau=s$ which are both based on graph transforms.  Note that $f^{q}x\in \G_{n_0,m_0,l_0e^{|q|\d}}$ for all $q\in\mb Z$. For a small $c>0$ and $q\in \mb Z$, define
 {\small
 \begin{align*}
  &\tilde{\mc W}_q'=\left\{\tilde v:\tilde B^s(0,\frac34r_0e^{-|q|\d})\to \tilde B^u(0,\frac34r_0e^{-|q|\d})\Big|\ |\tilde v(0)|\le \frac38 r_0e^{-|q|\d}, \ Lip(\tilde v)\le \frac1{10}+c\right\}.
  \end{align*}}
  For any $n\in\mb N$, since $(f|_\L)^{-1}$ is continuous, and also by the switching chart results in the proof of Lemma \ref{L:HolonomyMap}, there exists $\e'>0$ such that  if $x,y\in \G_{m_0,n_0,l_0}$ satisfying $|x-y|<\e'$ then there exists $\tilde w'_k\in \tilde {\mc W}_k'$ for each $1\le k\le n$ the following holds
 $$\Phi_{x}^{-1}(\Phi_{f^{-k}y}(\tilde W^u_{f^{-k}y}))\cap \tilde B(0,\frac34 r_0)=graph(\tilde w'_k).$$
 Then,  by going through the same procedure (the graph transform) as in proving Lemma \ref{L:DiscLemma1} and \ref{L:DiscLemma2}, and noting that $\Phi_{x}^{-1}(W^{u}_{loc}(y))\cap \tilde B(0,\frac34r_0)$ is the image of $graph(\tilde w'_n)$ under $n$th graph transforms along charts along orbits through $x$, we have that
$$\left\|\Phi_{x}^{-1}(W^{u}_{loc}(y))\cap \tilde B(0,\frac34r_0)-\tilde W^{u}_{x}\cap \tilde B(0,\frac34r_0)\right\|_{C1}\le C e^{-n(\frac{9}{10n_0})},$$
where $C$ is depending on $n_0,l_0,r_0,\d$ only. To complete the proof, one needs only to choose an $n$ satisfying $C e^{-n(\frac{9}{10n_0})}<\Delta'$ for a given $\D'$.

For $\tau=s$, one needs the "backward graph transform", for details we refer to \cite{LY}.
 \end{proof}


Now we are ready to formulate the concept of absolute continuity of stable foliations $\{W^s(x)\}_{x\in supp(\mu)}$:
\begin{definition}\label{D:AbsContinuity}
The stable foliation $\{W^s(x)\}_{x\in supp(\mu)}$ is said to be absolutely continuous if for any $\G_{m_0,n_0,l_0}\subset supp(\mu)$ with $\mu(\G_{m_0,n_0,l_0})>0$, and any $\tilde v_1,\tilde v_2\in \tilde {\mc V}$, the map $T_{v_1,v_2}$ defined in (iii) of Lemma \ref{L:HolonomyMap} is absolute continuous i.e., for any Borel set $A\subset U_1$ with $0$-Lebesgue measure in $\Phi_{x_0}(graph(\tilde v_1))$, $T_{v_1,v_2}(A)$ is a null Lebesgue set in $\Phi_{x_0}(graph(\tilde v_2))$.
\end{definition}

 The last lemma in this subsection is to extend the local unstable manifolds defined on $\G_{n,m,l_0}$ to  $\overline{\G_{n,m,l_0}}$, which will be used in subsection \ref{S:ProofofACW} to derive the measurability of the partition of hyperbolic block into local unstable manifolds. The main difference between the following lemma and Corollary \ref{C:LocalInvariantManifolds} is that, unlike $supp(\mu)$, $\overline{\G_{n,m,l_0}}$ is not $f$-invariant.

\begin{lemma}\label{L:UnstableManifoldsExtension}
 There exists a continuous family of $C^2$ embedded discs  $\{W_r^u(x)\}_{x\in \overline{\G_{n,m,l_0}}}$ with $r$ only depending on $n,m,l_0$ such that the following holds  for each $x \in \overline{\G_{n,m,l_0}}$:
\begin{itemize}
\item[(1)] $W_r^u(x)=\exp_x\left({\rm graph} (h^u_x)\right)$ where
$$
h^u_x:  E_x^u (r)  \to E_x^{s},
$$
is a $C^{2}$ map with $h^u_x(0)=0$, $Dh^u_x(0)=0$, $\|Dh^u_x\| \leq \frac{1}{10}$, $\|D^2h^u_x\|$ being uniformly bounded on $x$ and $E_x^u (r)=   \{ \xi \in E_x^u: |\xi|<r\}$;

\item[(2)] $ W_r^u(x) \subset W_{loc}^u(x)$ for any $x\in \G_{n,m,l_0}$;

\item[(3)] $|y_{-p}-z_{-p}| \leq \gamma_0 (e^{\frac{99}{100n}-2\d(x)}-3\d(x))^{-p} |y-z|$ for any $y,\ z \in  W_r^u(x)$, where $y_{-k}$ is the unique point in $\Lambda$ such that $f^ky_{-k}=y$, $z_{-k}$ is defined similarly and $\gamma_0>0$is a constant depending on $n,m,l_0,\d(x)$ only;

\item[(4)] there is $0<\rho <r$ which is only depending on $n,m,l_0$ such that, if  \[W_\rho^u(x):=\exp_x\left({\rm graph} (h_x|_{E_x^u (\rho)})\right)\]
     intersects $W_\rho^u(\bar{x})$ for $\bar{x} \in \Lambda$ for any $x,\bar x\in \G_{n,m,l_0}$, then
$W_\rho^u(x) \subset W_r^u(\bar{x})$.
\end{itemize}

\end{lemma}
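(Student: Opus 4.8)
The plan is to mimic the construction and the continuity argument of Lemma \ref{L:UnstableManifold}, but carried out in Lyapunov charts along backward orbits; the essential point is that although $\overline{\G_{n,m,l_0}}$ is not $f$-invariant, Lemma \ref{L:ContinuityLemma} supplies a continuous $Df$-invariant splitting $\mb H=E^u(x)\oplus E^s(x)$ over $\overline{\G_{n,m,l_0}}$ together with the uniform growth controls (a), (b), (c) of Remark \ref{R:SplittingOnGnml0} along the \emph{entire} orbit $\{f^q(x)\}_{q\in\mb Z}$ of every $x\in\overline{\G_{n,m,l_0}}$. Fix such an $x$ and write $x_{-q}=f^{-q}(x)$ for $q\ge 0$ (these exist and lie in $\L$ by C1)--C3)). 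Using the splitting $E^u(x_{-q})\oplus E^s(x_{-q})$ and the estimates (a)--(c), one builds a two-sided sequence of differentiable maps $g_{-q}$, namely the chart maps $\tilde f_{x_{-q}}$, defined on balls of radii $r_{-q}\sim \d\, l_0^{-1}e^{-q\d}$, which satisfy Conditions (I)--(IV) of Section \ref{S:InvariantManifolds} with $\l_1$ slightly below $\frac{99}{100n}$ (using Remark \ref{R:ForLargerLambda1}) and $\d_1,\d_2$ of order $\d(x)$ (constant on $\G_{n,m}$); here (III) and (IV) come from the uniform bounds on $\|Df\|$, $\|D^2f\|$ near $\L$ together with the chart renormalization. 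Applying Lemma \ref{L:LocalUnstableManifolds} to $\{g_{-q}\}$ produces $h^u_x:E^u_x(r)\to E^s_x$ with $h^u_x(0)=0$, $Dh^u_x(0)=0$, $\|Dh^u_x\|\le\frac1{10}$; setting $W^u_r(x)=\exp_x(\mathrm{graph}(h^u_x))$ gives property (1), the uniform bound on $\|D^2h^u_x\|$ and the $C^2$ regularity being obtained by the same bootstrapping from the $C^2$-ness of $f$ as in Lemma \ref{L:UnstableManifold} (cf. \cite{LY}, \cite{LL}), and property (3) is precisely part (b) of Lemma \ref{L:LocalUnstableManifolds} read backward along the orbit, the chart distortion bound $l_0$ producing $\gamma_0$ and the chart-to-ambient conversion producing the $\d(x)$ corrections in the stated rate.

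For property (2), when $x\in\G_{n,m,l_0}$ the manifold $W^u_{loc}(x)=\Phi_x(\tilde W^u_x)$ of Corollary \ref{C:LocalInvariantManifolds} is the graph of the unique fixed point of the same backward graph transform; since $l(x)\le l_0$ and $\d(x)=\d$ is constant on $\G_{n,m}$, $\tilde W^u_x$ is defined on $\tilde B^u(0,\d l_0^{-1})$, so choosing $r$ small enough in terms of $n,m,l_0$ (to absorb the chart distortion $\le l_0$) forces $\mathrm{graph}(h^u_x)\subset\mathrm{graph}(\tilde h^u_x)$ and hence $W^u_r(x)\subset W^u_{loc}(x)$. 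Property (4) is obtained exactly as in Lemma \ref{L:UnstableManifold}: from $\|Dh^u_x\|\le\frac1{10}$ for all $x$ and the uniform backward contraction (3), one checks that if $W^u_\rho(x)$ meets $W^u_\rho(\bar x)$ with $x,\bar x\in\G_{n,m,l_0}$ then $W^u_\rho(x)\subset W^u_r(\bar x)$, provided $\rho$ is a small enough fraction of $r$, again depending only on $n,m,l_0$.

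The remaining and main task is the continuity of $\{W^u_r(x)\}_{x\in\overline{\G_{n,m,l_0}}}$, which I would establish by the Hausdorff-distance argument of Lemma \ref{L:UnstableManifold} adapted to charts. Given $\e'>0$, first use property (3) and the uniform $C^1$ bound on the $h^u$'s to choose $N$ so that, after pulling back $N$ times, every point of $W^u_r(y)$ is within $\frac14\e'$ of $W^u_r(y_{-N})$ read in the chart at $x_{-N}$; then use the compactness of $\L$, the continuity of $f^{-1}|_\L$ (hence uniform continuity of $f^{-j}|_\L$ for $0\le j\le N$), and the continuity of $E^u,E^s$ on $\overline{\G_{n,m,l_0}}$ from Lemma \ref{L:ContinuityLemma}, to find $\d'>0$ such that $|x-y|<\d'$ forces the finite backward orbit segments of $x$ and $y$ together with their associated chart data to be $\frac14\e'$-close; finally run the inductive graph-transform estimate for those $N$ steps (the analogue of the chain of inequalities around \eqref{E:SmallDelta}--\eqref{E:DhEst3}, now invoking Lemma \ref{L:Graph1} and Lemma \ref{L:DiscLemma1}) to conclude $d_H(W^u_r(x),W^u_r(y))<\e'$; the upgrade to $C^1$-continuity follows the same route with Lemma \ref{L:DiscLemma2} in place of Lemma \ref{L:DiscLemma1}, or by invoking Lemma \ref{L:ContinuityOnMaps}. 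The main obstacle, and the reason this is not a verbatim copy of Lemma \ref{L:UnstableManifold}, is exactly that $\overline{\G_{n,m,l_0}}$ is not $f$-invariant, so the backward orbit of a point of $\overline{\G_{n,m,l_0}}$ need not remain in $\G_{n,m,l_0}$; one must check that the estimates (a)--(c) extended by Lemma \ref{L:ContinuityLemma} are precisely what keeps the chart construction valid (with its geometrically decaying radii $r_{-q}$) all the way back along the orbit, and that the uniform continuity one needs is that of $f^{-N}|_\L$ on the compact set $\L$ — not on $\overline{\G_{n,m,l_0}}$ alone — which is what makes the continuity argument close.
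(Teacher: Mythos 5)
Your proposal follows essentially the same route as the paper: construct Lyapunov charts along the backward orbit of each $x\in\overline{\G_{n,m,l_0}}$ using the growth controls extended by Lemma \ref{L:ContinuityLemma} (Remark \ref{R:SplittingOnGnml0}), run the graph-transform machinery of Section \ref{S:InvariantManifolds} to obtain the discs and properties (1)--(4), and prove continuity by the finite-step graph-transform argument of Lemma \ref{L:ContinuityOfW}, relying on the uniform continuity of the splitting rather than chart switching. The only cosmetic difference is that the paper builds new discs only for $x\in\overline{\G_{n,m,l_0}}\setminus\G_{n,m,l_0}$ and simply trims $W^u_{loc}(x)$ for $x\in\G_{n,m,l_0}$, whereas you re-derive the graphs at those points and obtain (2) from uniqueness of the graph-transform fixed point.
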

\begin{proof}
Actually, we only need to construct $W_r^u(x)$'s for $x\in\overline{\G_{n,m,l_0}}-\G_{n,m,l_0}$ and trim the $W^u_{loc}$ given in Corollary \ref{C:LocalInvariantManifolds} to the one with proper size. Note that the Peoperties (a), (b) and (c) in Remark \ref{R:SplittingOnGnml0} hold for any $x\in \G_{n,m,l_0}$. We will construct the Lyapunov charts along the backward orbit $\{f^{-p}x\}_{p\ge 0}$ for any given $x\in\overline{\G_{n,m,l_0}}-\G_{n,m,l_0}$.  The procedure of doing this is the same as in \cite{LY}, so the details are omitted here. In fact, focusing on what we need here, one does not need to construct a globally defined chart maps and then do not need to deal with the issue of measurability, which makes the constructing simpler.

Before we continue the proof, we summarise the properties of the chart maps in the following: For any $p\ge 0$, we denote $\mb H=\tilde E^u\oplus \tilde E^s$ the sample orthogonal splitting, $\Phi_p$ the chart map on $f^{-p}x$ and $\tilde f_{p+1}=\Phi_p^{-1} \circ f\circ \Phi_{p+1}$, which satisfy  the following properties
\begin{itemize}
\item[(i)] $l_0^6e^{-6p\d(x)}|y-y'|\le |\Phi_p(y)-\Phi_p(y')|\le \sqrt 2 |y-y'|$ for any $y,y'\in \mb H$;
\item[(ii)] $(D\tilde f_{p+1})_{0}(\tilde E^u)=\tilde E^u$, $(D\tilde f_{p+1})_{0}\tilde E^s\subset \tilde E^s$ and
 $$\left\|\left((D\tilde f_{p+1})_0\big|_{\tilde E^u}\right)^{-1}\right\|, \left\|(D\tilde f_{p+1})_0\big|_{\tilde E^s}\right\|\le e^{\frac{99}{100n}-2\d(x)};$$
 \item[(iii)] For any $r'\in(0,\d(x))$, the following hold on $\tilde B_{p+1}=\tilde B^u(0,r'l_{p+1})\oplus \tilde B^s(0,r'l_{p+1})$ where $l_p=6(M_2+1)l_0^6e^{6(p+2)\d(x)}$ and $M_2$ is from Lemma \ref{L:C2Bound}:
 \begin{itemize}
 \item[(a)] $Lip(\tilde f_p-(D\tilde f_p)_0)<r'$;
 \item[(b)] $Lip(D\tilde f)\le l_p$.
 \end{itemize}
 \end{itemize}
By the choice of $\d_1(x),\d_2(x)$ and $\d(x)$ and also by Remark \ref{R:ForLargerLambda1} in subsection \ref{S:HyperbolicBlocks}, it is obvious that by choosing  $r'$ small enough (only depend on $n,m,l_0$), $f_p$'s can be adapted into the setting in section \ref{S:InvariantManifolds} and one can derive a sequence of differentiable functions $h_i^u:\tilde B^u(0,r'l_{i})\to \tilde B^s(0,r'l_{i})$ for $i\ge 0$ with
\begin{itemize}
\item[(h1)] $h^u_i(0)=0, Dh^u_i(0)=0, \|Dh^u_i\|\le \frac1{10}, \|D^2h_i\|\le const\cdot l_i$ where $const$ depends on $n,,m,l_0,\d(x)$ only;
\item[(h2)] Set $W_i^u=graph(h_i^u)$, then for any $x,y\in W_i^u$ $\tilde f_{i-1}^{-1} x$ and $\tilde f_{i-1}^{-1} y$ exist and
$$|\tilde \pi^u\tilde f_{i-1}^{-1} x-\tilde \pi^u\tilde f_{i-1}^{-1} y|\le (e^{\frac{99}{100n}-2\d(x)}-2r')^{-1}|\tilde \pi^u x-\tilde \pi^u y|.$$
\end{itemize}

Now, together with Corollary \ref{C:LocalInvariantManifolds}, it is seen that Properties (1), (2) and (3) are satisfied if one takes $r$ small enough (only depending on $\l_0)$ and set
$$W^u_r(x)=\Phi_0(graph(h_0^u))\cap Exp_x(E^u_x(r)\oplus E^s_x)\text{ for }x\in\overline{\G_{n,m,l_0}}-\G_{n,m,l_0}$$  and $$W^u_r(x)=W^u_{loc}(x)\cap  Exp_x(E^u_x(r)\oplus E^s_x)\text{ for }x\in\G_{n,m,l_0}.$$
Then,  Property (4) is an immediate consequence of (3) if one takes $\rho<\frac14r$.

And furthermore, by choosing a smaller $r$, one has
$$\|Dh_x^u\|\le \frac1{20}\left(6(M_2+1)l_0^6e^{12\d(x)}\right)^{-1}.$$
Then,  all these $f^{-i}W^u_r(x)$'s are close enough to $Exp_{f^{-i}x}E^u_{f^{-i}x}$ in the $C^1$ topology to ensure that the similar argument used in the proof of Lemma \ref{L:ContinuityOfW} is applicable. Note that, instead of using the technique of switching charts,
 the uniformly continuity of the splitting $\mb H=E^u_\cdot\oplus E^s_\cdot$ alone will guarantee the same argument goes through.
 The proof is complete.
\end{proof}

\subsubsection{ Proof of Proposition \ref{P:AbsoluteContinuity}}\label{S:ProofofACW}

 Before going to the main proof of Proposition \ref{P:AbsoluteContinuity}, we first derive  technical estimates given in the following Lemma \ref{L:DetEstimateHolonomyMap1} and \ref{L:DetEstimateHolonomyMap2}. These lemmas follow from the disc lemmas proved in Section \ref{S:InvariantManifolds} once we set up the proper sequence of maps as in Section \ref{S:InvariantManifolds}.

We keep all the settings of Lyapunov charts in subsection \ref{S:HyperbolicBlocks}. For any $y\in B(x_0,\e)\cap \G_{m_0,n_0,l_0}$,  set up the maps by defining $r'_i=\frac34\d(x_0)l_0^{-1}e^{-i\d(x_0)}$ for $i\ge 0$, and then define
 \begin{itemize}
 \item[(i)] $g_{y,i}=\tilde f_{f^i(y)}=\Phi_{f^{i+1}(y)}^{-1}\circ f\circ \Phi_{f^i(y)}$;
 \item[(ii)] $\L_{y,i}=(D\tilde f_{f^i(y)})_0$, $G_{y,i}=\tilde f_{f^i(y)}-(D\tilde f_{f^i(y)})_0$;
 \item[(iii)] $\L^\tau_{y,i}=\L_{y,i}|_{\tilde E^\tau}$, $\tau=u,s$.
\end{itemize}
Note that in the following proof, one  goes forward only, so definitions of $g_{y,i}$ for $i<0$ is not needed. By Theorem \ref{T:LyapunovChart} and the choice of $\d_0(x_0),\d(x_0)$, one has that $g_{y,i}$'s satisfy conditions (I)-(III) in Section \ref{S:InvariantManifolds} with $\l_1=\frac1{n_0}-2\d_0(x_0)$ and $\d_1=\d_2=\d(x_0)$, and $\ell_i$ can be chosen as $\ell_i=l_0e^{i\d(x_0)}$. Since $Df$ is continuous and $\L$ is compact, (IV) in Section \ref{S:InvariantManifolds} follows from Theorem \ref{T:LyapunovChart}. Furthermore, in this setting we also have that $|G_{y,i}(0)|=0$ additional to Condition (II) in Section \ref{S:InvariantManifolds}.

Note that under this setting, $\tilde{\mc V}'=\mc V_0$, where $\mc V_0$ is the one defined in Section \ref{S:InvariantManifolds}. By Lemma \ref{L:HolonomyMap} (i), for any $\tilde v_1,\tilde v_2\in \tilde{\mc V}$, there exist $\tilde v_1',\tilde v_2'\in \tilde{\mc V}'$ such that  $$\Phi_{y}^{-1}\left(\Phi_{x_0}(graph(\tilde v_i))\right)\cap\tilde B(0,\frac34r_0)=graph(\tilde v_i'), i=1,2.$$

As the same as in Section  \ref{S:InvariantManifolds}, we denote $\tilde T_{y,i}$ and $\tilde T_{y,0}^n$ the graph transform operator and its iterations defined for $g_{y,i}$'s respectively. Then,  by applying Lemma \ref{L:Jacob1}, \ref{L:Jacob2} and Theorem \ref{T:LyapunovChart}, we obtain Lemma \ref{L:DetEstimateHolonomyMap1} and \ref{L:DetEstimateHolonomyMap2}. In fact,  one needs to compare $\det(L_{f^{n}y}|_{L_{y}^{-1}(graph(D\tilde v'_0(\tilde\pi^u\Phi_y^{-1}(x_i))))}$'s $i=1,2$, for Lemma \ref{L:DetEstimateHolonomyMap1}. It can be shown that
$$\frac{|\det(L_{f^{n}y}|_{L_{y}^{-1}(graph(D\tilde v'_0(\tilde\pi^u\Phi_y^{-1}(x_1))))}|}{|\det(L_{f^{n}y}|_{L_{y}^{-1}(graph(D\tilde v'_0(\tilde\pi^u\Phi_y^{-1}(x_2))))}|}\lessapprox 1+Ce^{-n\l_{1}},$$ where $C$ is a constant. To obtain this, one can employ the same argument as in the proof of Lemma \ref{L:Jacob1} and \ref{L:Jacob2}, and use the facts that
$$d\left(graph\left(\left(D\tilde T^{k}_{0}(\tilde v'_0)\right)_{\tilde\pi^u\Phi_{f^{n}y}^{-1}(f^{n}x_1)}\right),graph\left(\left(D\tilde T^{k}_{0}(\tilde v'_0)\right)_{\tilde\pi^u\Phi_{f^{n}y}^{-1}(f^{n}x_2)}\right)\right)\approx e^{-n\l_{1}},$$
and $\|L_{f^{n}y} \|,\|(L_{f^{n}y})^{-1} \|\le l_{0}e^{n\d_{1}}$. Similar arguments hold for Lemma \ref{L:DetEstimateHolonomyMap2}, and the details in proving these lemmas are omitted.
\begin{lemma}\label{L:DetEstimateHolonomyMap1}
There exists a constant $C\ge 1$ such that for any $y\in B(x_0,\e)\cap \G_{m_0,n_0,l_0}$, $\tilde v'_0\in \tilde{\mc V}'\cap C^1$, $x_1, x_2\in \Phi_y\left(graph(\tilde v'_0)\right)$, and $n\ge 1$, if $f^k(x_1),f^k(x_2)\in \Phi_{f^ky}\left(graph(\tilde T_0^k(\tilde v'_0))\right)$ for all $0\le k\le n$, then
$$C^{-1}\le \frac{\det\left(Df_{x_1}^n\big|_{L_{y}^{-1}(graph(D\tilde v'_0(\tilde\pi^u\Phi_y^{-1}(x_1))))}\right)}
{\det\left(Df_{x_2}^n\big|_{L_{y}^{-1}(graph(D\tilde v'_0(\tilde\pi^u\Phi_y^{-1}(x_2))))}\right)}\le C.$$
\end{lemma}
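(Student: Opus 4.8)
The plan is to transport the whole statement into the Lyapunov charts along the orbit of $y$, where it reduces to Lemma~\ref{L:Jacob1}, and then to absorb the two ``boundary'' chart contributions produced by this transport. First I would set up, exactly as in the paragraph preceding the statement, the sequence $g_{y,i}=\tilde f_{f^i(y)}=\Phi_{f^{i+1}(y)}^{-1}\circ f\circ\Phi_{f^i(y)}$ with $\L_{y,i}=(D\tilde f_{f^i(y)})_0$ and $G_{y,i}=\tilde f_{f^i(y)}-(D\tilde f_{f^i(y)})_0$, and check that these satisfy Conditions (I)--(IV) of Section~\ref{S:InvariantManifolds} with $\l_1=\frac1{n_0}-2\d_0(x_0)$, $\d_1=\d_2=\d(x_0)$, $\ell_i=l_0e^{i\d(x_0)}$, and $l'_i$ controlled by the chart norm bound of Theorem~\ref{T:LyapunovChart}; moreover $G_{y,i}(0)=0$ and $(DG_{y,i})_0=0$, so by Lemma~\ref{L:LocalUnstableManifolds} and Remark~\ref{R:UnstableManifoldCase} the unstable graphs $W^u_i=\mathrm{graph}(h^u_i)$ satisfy $h^u_i(0)=0$ and $Dh^u_i(0)=0$. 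Since $\Phi_x=\mathrm{Exp}_x\circ L_x^{-1}$ is affine, $D\Phi_x\equiv L_x^{-1}$. Write $P_j:=\mathrm{graph}\big(D\tilde v'_0(\tilde\pi^u\Phi_y^{-1}(x_j))\big)$ for the chart tangent plane to $\mathrm{graph}(\tilde v'_0)$ at $\Phi_y^{-1}(x_j)$ and $Q^{(j)}:=$ the chart tangent plane to $\mathrm{graph}(\tilde T_0^n\tilde v'_0)$ at $g_{y,0}^n(\Phi_y^{-1}(x_j))$. Because, by hypothesis, the orbit of $x_j$ stays on the evolved discs $\Phi_{f^ky}(\mathrm{graph}(\tilde T_0^k\tilde v'_0))$, applying the chain rule to $f^n=\Phi_{f^ny}\circ g_{y,0}^n\circ\Phi_y^{-1}$ gives, as maps between the appropriate subspaces, $Df_{x_j}^n|_{L_y^{-1}(P_j)}=L_{f^ny}^{-1}\circ Dg_{y,0}^n|_{P_j}\circ L_y$.

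Taking determinants, the ratio in the statement factors as a product of three ratios: (a) the interior factor $\det(Dg_{y,0}^n|_{P_1})/\det(Dg_{y,0}^n|_{P_2})$; (b) the time-$0$ factor built from the \emph{fixed} operator $L_y$ restricted to $L_y^{-1}(P_1)$ and $L_y^{-1}(P_2)$; (c) the time-$n$ factor $\det(L_{f^ny}^{-1}|_{Q^{(1)}})/\det(L_{f^ny}^{-1}|_{Q^{(2)}})$. Factor (a) is exactly the content of Lemma~\ref{L:Jacob1} (with $v_0=\tilde v'_0\in\mc V_0\cap C^1$, since $\tilde{\mc V}'=\mc V_0$), so it lies in $[C^{-1},C]$. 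Factor (b) is harmless: $\|L_y^{\pm1}\|$ is bounded in terms of $l_0$ only by Theorem~\ref{T:LyapunovChart}, the two planes are graphs over $\tilde E^u$ of slope $\le\frac1{10}$, and $\dim E^u=m_0$ is fixed on $\G_{m_0}$, so the ratio is bounded by a constant depending only on $l_0$ and $m_0$.

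The main obstacle is factor (c): the chart $\Phi_{f^ny}$ degenerates as $n\to\infty$, only $\|L_{f^ny}^{\pm1}\|\le l_0e^{n\d_1}$ being available. The plan here is to mimic the bookkeeping in the proofs of Lemmas~\ref{L:Jacob1} and~\ref{L:Jacob2}: by the disc lemmas (Lemmas~\ref{L:DiscLemma1}--\ref{L:DiscLemma3}) the derivative $D(\tilde T_0^k\tilde v'_0)$ converges to $Dh^u_k$ uniformly at rate $Ce^{-\frac{49}{50}k\l_1}$, and since $Dh^u_k(0)=0$ the planes $Q^{(1)},Q^{(2)}$ are close to $\tilde E^u$ and to each other, the discrepancy being controlled by an $e^{-cn}$ term (from the disc-lemma rate) together with a term governed by the chart parameter $\d(x_0)$; feeding this into the elementary Lipschitz dependence of $\det(L_{f^ny}^{-1}|_{\cdot})$ on planes of bounded slope, and using that $\d_1=\d_2=\d(x_0)$ was chosen as small as needed \emph{after} fixing $\l_1$ (cf. Remark~\ref{R:ForLargerLambda1}), so that any factor $e^{O(n\d_1)}$ is dominated by an $e^{-n\l_1}$-type gain, one gets $\det(L_{f^ny}^{-1}|_{Q^{(1)}})/\det(L_{f^ny}^{-1}|_{Q^{(2)}})\le 1+Ce^{-cn}$ for some $c>0$. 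Multiplying the three bounded factors yields the constant $C$ of the lemma. The delicate point I expect to have to work hardest on is making the ``closeness of $Q^{(1)},Q^{(2)}$'' step genuinely decay in $n$ rather than merely be small; this is where one must exploit the freedom in the chart constants and, if necessary, compare each $Q^{(j)}$ with the tangent plane of the unstable manifold $W^u_n$ at the corresponding point, exactly as in Step~1 of the proof of Lemma~\ref{L:Jacob1}.
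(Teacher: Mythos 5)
Your plan follows the paper's own argument: conjugate by the Lyapunov charts along the orbit of $y$ (so that, since $\Phi_x$ is affine with $D\Phi_x\equiv L_x^{-1}$, the interior factor is exactly Lemma \ref{L:Jacob1}, with $\tilde{\mc V}'=\mc V_0$), and absorb the two chart-boundary determinant factors using the disc lemmas' exponential convergence together with the tempered bounds $\|L_{f^ny}\|,\|L_{f^ny}^{-1}\|\le l_0e^{n\d_1}$ and the smallness of $\d_1$ relative to $\l_1$. The delicate point you flag — getting the closeness of the time-$n$ image planes to decay exponentially in $n$ rather than merely be small — is precisely the estimate the paper itself invokes without detail (it asserts the distance between the two tangent planes is $\approx e^{-n\l_1}$), and your proposed remedy of comparing each plane with the tangent plane of the unstable graph $W^u_n$ (as in Step 1 of Lemma \ref{L:Jacob1}, or via Lemma \ref{L:DiscLemma3}) is the intended route.
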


\begin{lemma}\label{L:DetEstimateHolonomyMap2}
There exists a constant $C\ge 1$ such that for any $y\in B(x_0,\e)\cap \G_{m_0,n_0,l_0}$, $\tilde v'_{1},\tilde v'_{2}\in \tilde{\mc V}'\cap C^1$, and $n\ge 1$
$$C^{-1}\le \frac{\det\left(Df_{x_1}^n\big|_{L_{y}^{-1}(graph(D\tilde v'_1(\tilde\pi^u\Phi_y^{-1}(x_1))))}\right)}
{\det\left(Df_{x_2}^n\big|_{L_{y}^{-1}(graph(D\tilde v'_2(\tilde\pi^u\Phi_y^{-1}(x_2))))}\right)}\le C,$$
where $x_i=\Phi_y\left(\tilde W^s_{loc}(y)\cap graph(\tilde v_{i}')\right)$ for $i=1,2$.
\end{lemma}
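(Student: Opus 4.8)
\emph{Strategy.} The plan is to transfer the statement into the Lyapunov charts built along the forward orbit of $y$ and then invoke Lemma~\ref{L:Jacob2}. On a neighbourhood of $y$ one has $g_{y,0}^n=\Phi_{f^ny}^{-1}\circ f^n\circ\Phi_y$, and each chart map factors as $\Phi_x=\exp_x\circ L_x^{-1}$ with $L_x$ linear, so $D\Phi_x$ and $D\Phi_x^{-1}=L_x$ are constant. Put $\tilde x_i:=\Phi_y^{-1}(x_i)=\tilde W^s_{loc}(y)\cap graph(\tilde v'_i)$, which is exactly the point $W^s_0\cap graph(\tilde v'_i)$ of Lemma~\ref{L:Jacob2} once we identify the stable manifold $\tilde W^s_{loc}(y)$ of $\{g_{y,i}\}$ with $W^s_0$; write $\tilde V_i:=graph(D\tilde v'_i(\tilde\pi^u\tilde x_i))$, so that the subspace in the statement is $L_y^{-1}(\tilde V_i)$; and set $W_i:=Dg_{y,0}^n(\tilde V_i)$, which is the tangent space of $graph(\tilde T_0^n(\tilde v'_i))$ at $g_{y,0}^n(\tilde x_i)$. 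Differentiating the conjugacy gives $Df^n_{x_i}=L_{f^ny}^{-1}\circ Dg_{y,0}^n|_{\tilde x_i}\circ L_y$, whence
\[
\det(Df^n_{x_i}|_{L_y^{-1}(\tilde V_i)})=\det(L_{f^ny}^{-1}|_{W_i})\cdot\det(Dg_{y,0}^n|_{\tilde V_i})\cdot\det(L_y|_{L_y^{-1}(\tilde V_i)}).
\]
So the ratio to be bounded splits as (chart factor at $f^ny$)$\times$(dynamical factor)$\times$(chart factor at $y$), and it suffices to bound each of the three $i{=}1$-over-$i{=}2$ ratios above and below by a constant depending only on $n_0,m_0,l_0,r_0$.

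\emph{The dynamical factor and the chart factor at $y$.} For the middle factor $\det(Dg_{y,0}^n|_{\tilde V_1})/\det(Dg_{y,0}^n|_{\tilde V_2})$ I would quote Lemma~\ref{L:Jacob2} directly: by construction the $g_{y,i}$ satisfy Conditions (I)--(IV) with $\l_1=\tfrac1{n_0}-2\d_0(x_0)$, $\d_1=\d_2=\d(x_0)$, $\ell_i=l_0e^{i\d(x_0)}$; moreover $\tilde{\mc V}'=\mc V_0$ and $\tilde x_i=W^s_0\cap graph(\tilde v'_i)$. The extra hypothesis appearing in Lemma~\ref{L:DetEstimateHolonomyMap1} --- that the orbit of $x_i$ stay over the images of the discs --- is automatic here, because $\tilde x_i\in\tilde W^s_{loc}(y)$, so $g_{y,0}^k(\tilde x_i)\in W^s_k\subset B_k$ for every $k$ by Lemma~\ref{L:LocalStableManifolds}, hence $g_{y,0}^k(\tilde x_i)\in graph(\tilde T_0^k(\tilde v'_i))$. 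Thus this factor lies in $[C_1^{-1},C_1]$. The chart factor at $y$ is harmless: $L_y$ is a fixed linear map with $\|L_y^{\pm1}\|\le l(y)\le l_0$, so each $\det(L_y|_{L_y^{-1}(\tilde V_i)})$ lies in $[l_0^{-m_0},l_0^{m_0}]$ and the ratio is controlled by $l_0^{2m_0}$, uniformly in $n$ and $y$.

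\emph{The chart factor at $f^ny$ --- the delicate point.} Here $L_{f^ny}^{-1}=D\Phi_{f^ny}$ has uniformly bounded norm ($\le\sqrt2$) but its inverse is only tempered, $\|L_{f^ny}\|\le l(f^ny)\le l_0e^{n\d(x_0)}$, so a priori $\det(L_{f^ny}^{-1}|_{W_i})$ is only pinched in $[(l_0e^{n\d(x_0)})^{-m_0},(\sqrt2)^{m_0}]$ and a crude bound on the ratio degenerates. The fix is that $W_1$ and $W_2$ become exponentially aligned: both $g_{y,0}^n(\tilde x_1)$ and $g_{y,0}^n(\tilde x_2)$ lie on the uniformly contracting manifold $W^s_n$ (Lemma~\ref{L:LocalStableManifolds}(b)), hence are $O((e^{-\l_1}+2\d_2)^n)$-close, and combining this with Lemma~\ref{L:DiscLemma2} and the Lipschitz control on $Dh^u_n$ --- precisely the computation carried out in (\ref{E:Norm1'}) inside the proof of Lemma~\ref{L:Jacob2} --- yields $d(W_1,W_2)\le Ce^{-n\frac{49}{50}\l_1}$. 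Since $\|D\Phi_{f^ny}\|\le\sqrt2$ and $|\det(L_{f^ny}^{-1}|_{W_2})|\ge(l_0e^{n\d(x_0)})^{-m_0}$, we get
\[
\left|\frac{\det(L_{f^ny}^{-1}|_{W_1})}{\det(L_{f^ny}^{-1}|_{W_2})}-1\right|\le C\,m_0\,2^{m_0/2}\,l_0^{m_0}\,e^{n(m_0\d(x_0)-\frac{49}{50}\l_1)},
\]
which tends to $0$ as $n\to\infty$ because, when the hyperbolic block was fixed, $\d_1(x_0),\d_2(x_0)$ (hence $\d(x_0)$) were taken small enough relative to $\l_1$ and $m_0=\dim E^u$ that $m_0\d(x_0)<\tfrac{49}{50}\l_1$. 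In particular this factor stays in a fixed interval $[C_2^{-1},C_2]$ for all $n$ and all admissible $y$. Multiplying the three uniform bounds gives the constant $C$ asserted in the lemma; and, as the text indicates, the argument for Lemma~\ref{L:DetEstimateHolonomyMap1} is identical with $\tilde v'_0$ in both slots, the points $x_1,x_2\in\Phi_y(graph(\tilde v'_0))$ staying over the pushed discs now being part of the hypothesis.

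\emph{Main obstacle.} The only step that is not bookkeeping is the chart factor at $f^ny$: one has to balance the temperedness growth $e^{m_0 n\d(x_0)}$ of the chart maps against the exponential parallelization $e^{-n\frac{49}{50}\l_1}$ of the pushed-forward unstable discs furnished by the Disc Lemmas of Section~\ref{S:DiscLemmas}, which is exactly the balance for which the Lyapunov-chart construction deliberately leaves the distortion exponent $\d(x_0)$ as small as one wishes relative to $\l_1$ and $\dim E^u$.
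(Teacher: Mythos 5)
Your proposal is correct and follows essentially the same route as the paper's (sketched) argument: factor $Df^n_{x_i}$ through the chart conjugacy $g_{y,0}^n=\Phi_{f^ny}^{-1}\circ f^n\circ\Phi_y$, invoke Lemma \ref{L:Jacob2} (with $\tilde{\mc V}'=\mc V_0$ and $x_i=W^s_0\cap graph(\tilde v'_i)$) for the in-chart Jacobian ratio, and control the remaining chart factor at $f^ny$ by playing the $e^{-n\frac{49}{50}\l_1}$ alignment of the pushed-forward tangent planes (the computation of (\ref{E:Norm1'})/Lemma \ref{L:DiscLemma2}) against the tempered bound $\|L_{f^ny}\|\le l_0e^{n\d}$ --- precisely the two facts the paper cites before omitting details. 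Your write-up in fact supplies the bookkeeping the paper leaves out; the only cosmetic remark is that a sharper perturbation estimate for determinants on nearby subspaces needs only $2\d(x_0)<\tfrac{49}{50}\l_1$ rather than $m_0\d(x_0)<\tfrac{49}{50}\l_1$, so no extra smallness condition on $\d$ beyond the paper's is actually required.
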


\bigskip

 Now we are ready to prove  Proposition \ref{P:AbsoluteContinuity}. We will keep the notations introduced previously in subsection \ref{S:ACW}.
We will show that there exists a constant $C>1$ such that the following holds: For
  all $\tilde v_1,\tilde v_2\in\tilde{\mc V}$, denote that $D_i=\Phi_{x_0}(graph(\tilde v_i))$ for $i=1,2$, and let $A\subset (D_1\cap \cup_{y\in B(x_0,\e)\cap \G_{n_0,m_0,l_0}}W^s_{loc}(y))$ be an arbitrary Borel set with $0$ Lebesgue measure, then for any $\e_1>0$
  \begin{equation}\label{E:ABS}
  m_{D_2}(T_{v_1,v_2}(A))\le C\e_1,
  \end{equation}
  where  $m_{D_i}$ are Lebesgue measures on the finite dimensional embedded discs $D_i$, $i=1,2$.

Since the Lebesgue measure is regular, every bounded Borel set can be approximated from inside by a compact subset in measure, it suffices to prove (\ref{E:ABS}) for compact sets.
For any given small $\e_1>0$, there exists a neighborhood $U$ of $A$ in $D_1$ such that $m_{D_1}(U)\le \e_1$. For any $y\in B(x_0,\e)\cap \G_{n_0,m_0,l_0}$ with $W^s_{loc}(y)\cap D_1\in A$, by (i) of Lemma \ref
{L:HolonomyMap}, there exist $\tilde v'_{1,y}\in\tilde{\mc V}'$ such that
$$\Phi_y^{-1}(D_1)\cap \tilde B(0,\frac34r_0)=graph(\tilde v_y').$$
Denote that $\tilde v_{1,y,n}'=\tilde T^n_{y,0}(\tilde v'_{1,y})$ for $n\ge 0$. Since $dist(A,\partial U)>0$, by Lemma \ref{L:Graph1}, there exists $N_1$ such that for any $n>N_1$ if $f^n(x)\in \Phi_{f^n(y)}\left(graph(\tilde v_{1,y,n}')\right)$ then $x\in U$.

By definitions of $\d$ and $\d_0$, we have that $\d(x_0)<<\frac1{2n_0}<\l(x_0)$. Then,  by the properties of the Lyapunov charts and the choice of $y$, there exists $N_{2}>N_{1}$ such that for $n>N_{2}$
$$\Phi_{f^n(y)}^{-1}B^u\left(f^n(y),\frac34r_0e^{-\frac{n}{2n_0}}\right)\subset \tilde B^u(0,r'_n).$$
Therefore, for such $n$, we define the map $$\phi_{f^n(y)}:B^u\left(f^n(y),\frac34r_0e^{-\frac{n}{2n_0}}\right)\to f^n(D_1)$$ by letting
$$\phi_{f^n(y)}(w)=\Phi_{f^n(y)}\left((\Phi_{f^n(y)})^{-1}w,\tilde v'_{1,n,y}\left((\Phi_{f^n(y)})^{-1}w\right)\right)$$
for $w\in  B^u\left(f^n(y),\frac34r_0e^{-\frac{n}{2n_0}}\right)$. For this $\phi_{f^n(y)}$, we have the following lemma:
 \begin{lemma}\label{L:LocalHomeomorphism}
 For any $\e_2>0$, there is an $N_3>N_2$ such that for any $n>N_3$ and each $y\in B(x_0,\e)\cap\G_{n_0,m_0,l_0}$, $\phi_{f^n(y)}$ is well-defined and the following holds:
 \begin{itemize}
 \item[(i)] $f^n(W^s_{loc}(y)\cap D_1)\in \phi_{f^n(y)}\left( B^u\left(f^n(y),\frac34\e_2r_0e^{-\frac{n}{2n_0}}\right)\right)$;
 \item[(ii)] For any  $w_1,w_2\in B^u(f^n(y),\frac34r_0e^{-\frac{n}{2n_0}})$
 $$1-\e_2\le \frac{|\phi_{f^n(y)}(w_2)-\phi_{f^n(y)}(w_1)|}{|w_2-w_1|}\le 1+\e_2.$$
 \end{itemize}
 \end{lemma}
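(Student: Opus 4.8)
The plan is to transport everything into the Lyapunov chart $\Phi_{f^n(y)}$ centred at $f^n(y)$: there $f^n(D_1)$ becomes the graph of $\tilde v'_{1,n,y}:=\tilde T^n_{y,0}(\tilde v'_{1,y})$, and both assertions will be read off from the disc estimates of Lemma \ref{L:DiscLemma3} together with the exponential contraction along $W^s_{loc}$ from Corollary \ref{C:LocalInvariantManifolds}(iv)(b). I would first record the structural facts already in place: the maps $g_{y,i}$ fixed in this subsection satisfy (I)--(IV) with $\l_1=\frac1{n_0}-2\d_0(x_0)\ge\frac{99}{100n_0}$ and $\d_1=\d_2=\d(x_0)$, and since $G_{y,i}(0)=0$ and $(DG_{y,i})_0=0$, the unstable graphs in these charts have $Dh^u_n(0)=0$ by Remark \ref{R:UnstableManifoldCase}(2). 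Well-definedness of $\phi_{f^n(y)}$ for $n>N_2$ is then the bookkeeping already recorded: $\Phi_{f^n(y)}^{-1}$ sends $B^u(f^n(y),\frac34 r_0e^{-n/(2n_0)})$ into $\tilde B^u(0,r'_n)$ (this is $N_2$), on which $\tilde v'_{1,n,y}=\tilde T^n_{y,0}(\tilde v'_{1,y})\in\mc V_n$ is defined (iterate Lemma \ref{L:Graph1}), and $\phi_{f^n(y)}=\Phi_{f^n(y)}\circ\Gamma_n\circ\Phi_{f^n(y)}^{-1}$ with $\Gamma_n(\tilde w^u)=(\tilde w^u,\tilde v'_{1,n,y}(\tilde w^u))$ the graph map; I would also restrict at the outset to $\tilde v_1,\tilde v_2\in\tilde{\mc V}\cap C^1$ (or mollify) so that the $C^1$ disc lemmas apply.

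For (ii), the point is that the pushed-forward disc flattens at an exponential rate fast enough to beat the growth of the chart norms. Choosing $\d_3\in(\d(x_0),\frac1{2n_0})$, one has $\frac34 r_0e^{-n/(2n_0)}\le e^{-n\d_3}r_0=\hat r_n$ for $n$ large, so Lemma \ref{L:DiscLemma3} (using $Dh^u_n(0)=0$) gives $\sup\{\|D\tilde v'_{1,n,y}(x)\|:x\in\tilde B^u(0,\tfrac34 r_0e^{-n/(2n_0)})\}\le C'e^{-n\rho_0}$ with $\rho_0:=\min\{\d_3-\d(x_0),\frac{49}{50}\l_1\}>0$, where $C',\rho_0$ depend only on $n_0,m_0,l_0$. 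Writing $w_j=\Phi_{f^n(y)}(\tilde w^u_j)$, one gets $\phi_{f^n(y)}(w_2)-\phi_{f^n(y)}(w_1)=L_{f^n(y)}^{-1}(\tilde w^u_2-\tilde w^u_1)+L_{f^n(y)}^{-1}\bigl(\tilde v'_{1,n,y}(\tilde w^u_2)-\tilde v'_{1,n,y}(\tilde w^u_1)\bigr)$; the second summand has norm $\le\|L_{f^n(y)}^{-1}\|\,C'e^{-n\rho_0}|\tilde w^u_2-\tilde w^u_1|$ and the first has norm between $\|L_{f^n(y)}\|^{-1}$ and $\|L_{f^n(y)}^{-1}\|$ times $|\tilde w^u_2-\tilde w^u_1|$, so with the chart bounds $\|L_{f^n(y)}^{\pm1}\|\le l_0e^{n\d(x_0)}$ from Theorem \ref{T:LyapunovChart} the quotient in (ii) lies in $[1-C''e^{-n(\rho_0-2\d(x_0))},\,1+C''e^{-n(\rho_0-2\d(x_0))}]$, and $\rho_0-2\d(x_0)>0$ since $\d(x_0)\ll\frac1{2n_0}\le\l_1$. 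Hence (ii) holds for all $n$ past a threshold depending only on $\e_2,n_0,m_0,l_0$.

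For (i), let $x_1:=W^s_{loc}(y)\cap D_1$, unique by Lemma \ref{L:HolonomyMap}(ii). By Corollary \ref{C:LocalInvariantManifolds}(iii)(b), $f^n(x_1)\in W^s_{loc}(f^n(y))$, and iterating (iv)(b) the chart coordinate $\Phi_{f^n(y)}^{-1}(f^n(x_1))$ has norm $\le C(e^{-\l_1}+2\d_2)^n$ (contracting the $O(r_0)$ initial stable size $n$ times); since $f^n(x_1)$ lies on $W^s_{loc}(f^n(y))=\Phi_{f^n(y)}(\mathrm{graph}(\tilde h^s_{f^n(y)}))$ with $\tilde h^s(0)=0$ and $\|D\tilde h^s\|\le\frac1{10}$, its unstable component $\tilde\pi^u\Phi_{f^n(y)}^{-1}(f^n(x_1))$ has norm $\le\frac1{10}C(e^{-\l_1}+2\d_2)^n$. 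But $f^n(x_1)\in f^n(D_1)=\Phi_{f^n(y)}(\mathrm{graph}(\tilde v'_{1,n,y}))$, so $f^n(x_1)=\phi_{f^n(y)}(w)$ with $\Phi_{f^n(y)}^{-1}(w)=\tilde\pi^u\Phi_{f^n(y)}^{-1}(f^n(x_1))$, whence $w$ lies in $B^u(f^n(y),\,\mathrm{const}\cdot l_0e^{n\d(x_0)}(e^{-\l_1}+2\d_2)^n)$. Since $\l_1\ge\frac{99}{100n_0}>\frac1{2n_0}+\d(x_0)$ and $\d_2=\d(x_0)$ is small, $e^{-\l_1}+2\d_2<e^{-(1/(2n_0)+\d(x_0))}$, so $l_0e^{n\d(x_0)}(e^{-\l_1}+2\d_2)^n=o(e^{-n/(2n_0)})$ and the radius above is $\le\frac34\e_2 r_0e^{-n/(2n_0)}$ for $n$ large, giving (i). All constants $C,\l_1,\d_2,C',\rho_0$ depend only on $n_0,m_0,l_0$, so a single $N_3$ works for every $y\in B(x_0,\e)\cap\G_{n_0,m_0,l_0}$.

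The main obstacle is purely quantitative exponent bookkeeping: the Lyapunov chart is only $l_0e^{n\d(x_0)}$-bi-Lipschitz at $f^n(y)$, so its distortion grows along the orbit, and one must check that both the flattening rate $e^{-n\rho_0}$ of the disc (which carries the loss $\d_3-\d(x_0)$ incurred by evaluating an $n$-fold graph transform on a ball of radius $e^{-n/(2n_0)}$ rather than its natural radius $\approx e^{-n\d(x_0)}$, via Lemma \ref{L:DiscLemma3}) and the stable contraction rate $(e^{-\l_1}+2\d_2)^n$ dominate $e^{2n\d(x_0)}$. This is exactly what the choice $\d(x_0)\ll\frac1{2n_0}\le\l_1$ made when the charts were set up in subsection \ref{S:HyperbolicBlocks} is designed to guarantee, so the estimate is forced; the only care needed is to keep the exponents straight and to ensure the disc lemmas of subsection \ref{S:InvariantManifolds} are invoked on the correct (rescaled) balls.
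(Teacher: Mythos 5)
Your proposal is correct and follows essentially the same route as the paper's proof: part (ii) is obtained by reading $f^n(D_1)$ as the graph of $\tilde T^n_{y,0}(\tilde v'_{1,y})$ in the chart at $f^n(y)$ and combining Lemma \ref{L:DiscLemma3} (with $Dh_n(0)=0$) with the bi-Lipschitz bounds of Theorem \ref{T:LyapunovChart}, while part (i) follows from the stable contraction of Corollary \ref{C:LocalInvariantManifolds}, exactly as in the paper (where (i) is dispatched in one line via $|f^n(W^s_{loc}(y)\cap D_1)-f^n(y)|\lessapprox e^{-n/n_0}$). The only small correction is in (ii): since $\Phi_{f^n(y)}^{-1}$ inflates distances by up to $l(f^n(y))\le l_0e^{n\d(x_0)}$, the supremum from Lemma \ref{L:DiscLemma3} must be taken over the chart ball of radius about $\tfrac34 r_0e^{n(2\d(x_0)-\frac{1}{2n_0})}$ rather than $\tfrac34 r_0e^{-n/(2n_0)}$, so one should fix $\d_3<\frac{1}{2n_0}-2\d(x_0)$ (the paper's choice), which your own exponent bookkeeping already accommodates.
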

 \begin{proof}
 It is obvious that such $\phi_{f^{n}(y)}$ is well-defined for all $n\ge 1$.

For (i), the existence of such $N_{3}$ follows from Corollary \ref{C:LocalInvariantManifolds} and Theorem \ref{T:LyapunovChart} immediately if one noting that
 $$|f^n(W^s_{loc}(y)\cap D_1)-f^{n}(y)|\lessapprox e^{-\frac{n}{n_{0}}}.$$

For (ii), the existence of such $N_{3}$ follows from Lemma \ref{L:DiscLemma3}. For any $w_1,w_2\in B^u(f^n(y),\frac34r_0e^{-\frac{n}{2n_0}})$, one obtains that
\begin{align*}
&\frac{|\phi_{f^n(y)}(w_2)-\phi_{f^n(y)}(w_1)|}{|w_2-w_1|}
=\frac{|\Phi_{f^n(y)}\Phi^{-1}_{f^n(y)}(w_2,v'_{1,y,n}(w_2))-\Phi_{f^n(y)}\Phi^{-1}_{f^n(y)}(w_1,v'_{1,y,n}(w_1))|}{|w_2-w_1|}\\
&=\frac{|w_2+\Phi_{f^n(y)}(\tilde v'_{1,y,n}(\Phi^{-1}_{f^n(y)}w_2))-(w_1+\Phi_{f^n(y)}(\tilde v'_{1,y,n}(\Phi^{-1}_{f^n(y)}w_1)))|}{|w_2-w_1|},
\end{align*}
where $v'_{1,y,n}=\Phi_{f^n(y)}(\tilde v'_{1,y,n})$.\\

Also note that for $n\ge \frac{\log l_{0}}{\d_{1}}$
\begin{align*}
&|\Phi_{f^n(y)}(\tilde v'_{1,y,n}(\Phi^{-1}_{f^n(y)}w_2))-\Phi_{f^n(y)}(\tilde v'_{1,y,n}(\Phi^{-1}_{f^n(y)}w_1))|\\
\le&\sqrt3 \sup_{\tilde w\in \Phi_{f^n(y)}^{-1}(B^u(f^n(y),\frac34r_0e^{-\frac{n}{2n_0}}))}\|D\tilde v'_{1,y,n}(\tilde w)\|l(f^n(y))|w_2-w_1|\\
\le&\sqrt3\sup_{\tilde w\in \tilde B(0,\frac34r_0e^{n(2\d_1-\frac{1}{2n_0})})}\|D\tilde v'_{1,y,n}(\tilde w)\|l_0e^{n\d_1}|w_2-w_1|.
\end{align*}
And then by Lemma \ref{L:DiscLemma3}, one has
$$\sup_{\tilde w\in \tilde B(0,\frac34r_0e^{n(2\d_1-\frac{1}{2n_0})})}\|D\tilde v'_{1,n,y}(\tilde w)\|\le C'e^{-n\min\{\frac{49}{50n_0},\frac{1}{2n_0}-3\d_1\}}.$$
By the choice of $\d_1$, for any $\e_2$, there exists $N_3>\max\{N_1,\frac{\log l_{0}}{\d_{1}}\}$ such that (ii) is satisfied. The proof is complete.
\end{proof}

The following technical lemma is a modification of the overcovering Lemma from \cite{Pu}. Since it is under a different setting, we include the proof here for the sake of completeness.
\begin{lemma}\label{L:OverCover}
For a fixed small  $\e_{2}\in(0,\frac1{24})$ (where $\e_{2}$ is as in Lemma \ref{L:LocalHomeomorphism}), there exists $N_4>N_{3}$ such that for any $n>N_4$ there exist a finite set $\{y_i\}_{i\in I_{n}}\subset B(x_0,\e)\cap\G_{n_0,m_0,l_0}$ such that the following are satisfied:
\begin{itemize}
\item[(i)] For any $w_1,w_2\in B^u(f^n(y_i),\frac3{4}r_0e^{-\frac{n}{2n_0}})$
$$\frac34 \le \frac{|\phi_{f^n(y_i)}(w_2)-\phi_{f^n(y_i)}(w_1)|}{|w_2-w_1|}\le\frac43; $$
 \item[(ii)] For any $x\in A$, there exists $i\in I_{n}$ such that
 \begin{equation*}f^n(x)\in \phi_{f^n(y_i)}(B^u(f^n(y_i),\frac3{16}r_0 e^{-\frac{n}{2n_0}}));\end{equation*}
 \item[(iii)] There are at most $32^{\dim E^u(x_0)}$ elements of
 \[\left\{\phi_{f^n(y_i)}(B^u(f^n(y_i),\frac3{16}r_0 e^{-\frac{n}{2n_0}}))\right\}_{i\in I_{n}}
 \] having nonempty intrsection.
 \end{itemize}
\end{lemma}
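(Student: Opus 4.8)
The plan is to run a Vitali-type selection. Fix $n$ for the moment and write $s:=\tfrac3{16}r_0e^{-n/(2n_0)}$ for the ``small'' radius and $\bar r:=\tfrac34r_0e^{-n/(2n_0)}=4s$ for the ``large'' one. For every admissible $y$ the map $\phi_{f^n(y)}$ carries $B^u(f^n(y),\bar r)$ into the single compact finite-dimensional disc $f^n(D_1)$, and by Lemma \ref{L:LocalHomeomorphism}(ii) together with $\e_2<\tfrac1{24}$ (so $1-\e_2>\tfrac34$ and $1+\e_2<\tfrac43$) it does so with distortion in $[\tfrac34,\tfrac43]$. This already yields conclusion (i) for every $n>N_3$, and it shows in addition that $\phi_{f^n(y)}$ is $\tfrac43$-Lipschitz and keeps each sphere $\partial B^u(f^n(y),t)$ at distance $\ge\tfrac34 t$ from $c(y):=\phi_{f^n(y)}(f^n(y))$ in $f^n(D_1)$. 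Thus the whole statement reduces to the classical equal-radius covering lemma for the balls $\{\phi_{f^n(y)}(B^u(f^n(y),s))\}$ inside the nearly flat disc $f^n(D_1)$, together with a volume-packing count; the only use of ``$n$ large'' beyond $n>N_3$ will be to make $f^n(D_1)$ look Euclidean on the scale $s$.

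For the covering family, for each $x\in A$ fix $y(x)\in B(x_0,\e)\cap\G_{n_0,m_0,l_0}$ with $x=W^s_{loc}(y(x))\cap D_1$. Lemma \ref{L:LocalHomeomorphism}(i) gives $f^n(x)=\phi_{f^n(y(x))}(w)$ with $|w-f^n(y(x))|\le\tfrac34\e_2 r_0e^{-n/(2n_0)}$, so by the $\tfrac43$-Lipschitz bound $|f^n(x)-c(y(x))|\le\e_2 r_0 e^{-n/(2n_0)}=\tfrac{16\e_2}{3}s<\tfrac29 s$. Hence $\{c(y(x))\}_{x\in A}$ is a $\tfrac29 s$-net for $f^n(A)$ in $f^n(D_1)$, and since $\tfrac34\e_2 r_0e^{-n/(2n_0)}<s$ each $f^n(x)$ already lies in $\phi_{f^n(y(x))}(B^u(f^n(y(x)),s))$; the job is to thin this infinite family.

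Choose, by Zorn's lemma, a maximal $\tfrac12 s$-separated subset $\{c(y_i)\}_{i\in I_n}$ of $\{c(y(x)):x\in A\}$ and set $y_i:=y(x_i)$ for a corresponding $x_i\in A$; then $\{y_i\}\subset B(x_0,\e)\cap\G_{n_0,m_0,l_0}$, and $I_n$ is finite because a $\tfrac12 s$-separated subset of the compact disc $f^n(D_1)$ is finite. For (ii): given $x\in A$, maximality produces $i$ with $|c(y(x))-c(y_i)|<\tfrac12 s$, whence $|f^n(x)-c(y_i)|<\tfrac29 s+\tfrac12 s=\tfrac{13}{18}s<\tfrac34 s$; since $\phi_{f^n(y_i)}$ sends $f^n(y_i)$ to $c(y_i)$ and keeps $\partial B^u(f^n(y_i),s)$ at distance $\ge\tfrac34 s$ from $c(y_i)$, invariance of domain forces $f^n(x)\in\phi_{f^n(y_i)}(B^u(f^n(y_i),s))$. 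For (iii): if a point $p$ lies in $k$ of these sets, then (upper distortion $\tfrac43$) the $k$ centers $c(y_i)$ are pairwise $\ge\tfrac12 s$ apart and all within $\tfrac43 s$ of $p$, and the standard disjoint-half-balls volume comparison in the $(\dim E^u(x_0))$-dimensional disc $f^n(D_1)$ gives $k\le\big(1+\tfrac{2\cdot4/3}{1/2}\big)^{\dim E^u(x_0)}(1+o(1))=\big(\tfrac{19}{3}\big)^{\dim E^u(x_0)}(1+o(1))<32^{\dim E^u(x_0)}$ for $n$ large.

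I expect the main obstacle to be precisely the last bookkeeping rather than any new idea: one must check that on the scale $s$ the disc $f^n(D_1)$ is Euclidean up to the slack left between $\big(\tfrac{19}{3}\big)^{\dim E^u(x_0)}$ and $32^{\dim E^u(x_0)}$, and that all chart- and curvature-distortions entering the estimates above are negligible. Here I would invoke the disc lemmas of Section \ref{S:DiscLemmas}: in the charts $\Phi_{f^n(y)}$ the disc $f^n(D_1)$ is a graph whose $C^0$- and $C^1$-size tends to $0$ and whose $C^2$-size grows at most like $\ell_n=l_0e^{n\d(x_0)}$, while the chart distortion is $\le l(f^n(y))\le l_0 e^{n\d(x_0)}$; since $s$ carries the factor $e^{-n/(2n_0)}$ and $\d(x_0)\le\tfrac1{200n_0}\ll\tfrac1{2n_0}$, every such error is $O\big(e^{-n(1/(2n_0)-\d(x_0))}\big)\to0$. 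Fixing $N_4>N_3$ so that all these quantities sit below the thresholds used in the three items for $n>N_4$ completes the proof; the selection itself is just the textbook Vitali covering lemma for balls of equal radius.
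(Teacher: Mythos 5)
Your proposal is correct in substance and follows the same skeleton as the paper's argument: item (i) read off from Lemma \ref{L:LocalHomeomorphism}(ii) with $\e_2<\frac1{24}$, a Zorn/Vitali maximal separated net inside the compact finite-dimensional disc $f^n(D_1)$, the points $y_i$ recovered through $W^s_{loc}(y_i)\cap D_1$, the covering from Lemma \ref{L:LocalHomeomorphism}(i) plus the $[\frac34,\frac43]$ distortion, and the multiplicity bound from an equal-radius packing count transported through the charts. The one genuine difference is the metric in which you run the selection: the paper takes a maximal $\frac18 r_0e^{-n/(2n_0)}$-separated subset of $f^n(A)$ with respect to the \emph{intrinsic} arc-length metric $d_{arc}$ on $f^n(D_1)$, whereas you separate the projected centers $c(y(x))=\phi_{f^n(y(x))}(f^n(y(x)))$ in the ambient Hilbert norm. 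The intrinsic choice is what makes the paper's "straightforward computation" genuinely straightforward: $d_{arc}$-balls are automatically path-connected inside the disc, so lifting a short arc through $\phi_{f^n(y_i)}$ gives the covering and disjointness statements with no reference to how the disc sits in $\mb H$. Your extrinsic version pushes exactly that issue into the step "invariance of domain forces $f^n(x)\in\phi_{f^n(y_i)}(B^u(f^n(y_i),s))$": as written this needs the slice $\{q\in f^n(D_1):|q-c(y_i)|<\frac34 s\}$ to be connected (an extrinsic ball can a priori cut the disc into several pieces), and likewise your volume comparison for (iii) needs $d$-dimensional volume on the disc to behave Euclidean-ly at scale $s$. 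You do flag both points and invoke the right inputs — the $C^1$-smallness of the graphs $\tilde v'_{1,y,n}$ from Lemma \ref{L:DiscLemma3}, the chart distortion $l_0e^{n\d(x_0)}$, and $\d(x_0)\ll\frac1{2n_0}$ so all errors decay like $e^{-n(\frac1{2n_0}-O(\d(x_0)))}$ — so the deferred verification is available and your constants ($\frac{13}{18}s<\frac34 s$ for the covering, roughly $(\frac{19}{3})^{\dim E^u(x_0)}$ times bounded chart distortion $<32^{\dim E^u(x_0)}$ for the multiplicity) close with room to spare; but be aware that the paper's $d_{arc}$ device is precisely what lets it skip this flatness bookkeeping, so in a final write-up you should either adopt the intrinsic metric or state the connectivity/volume estimates explicitly rather than leaving them under "invariance of domain" and "$(1+o(1))$".
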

\begin{proof}
First note that $D_1$ is a finite dimensional embedded compact disc and $f$ is an injective and continuous map, so $f^n(D_1)$ is an embedded disc with the same dimension. By Lemma \ref{L:LocalHomeomorphism}, for any $y\in B(x_0,\e)\cap\G_{n_0,m_0,l_0}$ and $n>N_1$, $\phi_{f^n(y)}$ is a local homeomorphism from $B^u(f^n(y_i),\frac34r_0e^{-\frac{n}{2n_0}})$ to $f^n(D_1)$.\\
A natural metric $d_{arc}$ on $f^n(D_1)$ is that $d_{arc}(x,y)$ equals the infimum of the arc-length of $C^1$ paths in $f^n(D_1)$ connecting $x$ and $y$. By Zorn's lemma, there exists a maximal $\frac1{8}r_0e^{-\frac{n}{2n_0}}$-separated set $A'_{n}\subset f^n(A)$ such that $d_{arc}(x',y')\ge\frac1{8}r_0e^{-\frac{n}{2n_0}}$  for any distinct $x',y'\in A'_{n}$, and for any $x\in f^{n}(A)$ there exists an $x'\in A'_{n}$ such that $d_{arc}(x,x')\le \frac1{8}r_0e^{-\frac{n}{2n_0}}$. If we use the notation $B_d$ to denote the ball in $f^n(D_1)$ with respect to the metric $d_{arc}$, then $\{B_d(x',\frac1{8}r_0e^{-\frac{n}{2n_0}})\}_{x'\in A'}$ covers $f^{n}(A)$ and $\{B_d(x',\frac1{16}r_0e^{-\frac{n}{2n_0}})\}_{x'\in A'}$ are disjoint. \\

By the compactness of $f^n(D_1)$, $A'_{n}$ is a finite set. So one can label all the elements of the set as $A'_{n}=\{x'_i\}_{i\in I_{n}}$ with a finite index set $I_{n}$. For any $x'_i$, there exists a $y_i\in B(x_0,\e)\cap \G_{n_0,m_0,l_0}$ such that $f^{-n}(x'_i)=W^s_{loc}(y_i)\cap D_1$. From Lemma \ref{L:LocalHomeomorphism}, since $\e_2<\frac1{24}$, by a straightforward computation, we obtain that if $n>N_2$, the following holds
\begin{itemize}
\item[(i)] For any $w_1,w_2\in B^u(f^n(y_i),\frac3{4}r_0e^{-\frac{n}{2n_0}})$
$$\frac34 \le \frac{|\phi_{f^n(y_i)}(w_2)-\phi_{f^n(y_i)}(w_1)|}{|w_2-w_1|}\le\frac43; $$
\item[(ii)] $\{\phi_{f^n(y_i)}(B^u(f^n(y_i),\frac3{16}r_0e^{-\frac{n}{2n_0}}))\}_{i\in I}$ covers $f^n(A)$;
\item[(iii)] Elements in $ \{\phi_{f^n(y_i)}(B^u(f^n(y_i),\frac1{24}r_0e^{-\frac{n}{2n_0}}))\}_{i\in I}$ are disjoint.
\end{itemize}
Next we estimate the maximum number of the covering balls having nonempty intersection.
Suppose that $\cap_{1\le i\le k}\phi_{f^n(y_i)}(B^u(f^n(y_i),\frac3{16}r_0e^{-\frac{n}{2n_0}}))\neq \emptyset$. Then, by a straightforward computation, one has that for all $ i=1,\ldots,k$
$$\phi_{f^n(y_i)}(B^u(f^n(y_i),\frac1{24}r_0e^{-\frac{n}{2n_0}}))\subset \phi_{f^n(y_1)}(B^u(f^n(y_1),\frac34r_0e^{-\frac{n}{2n_0}})).$$
Note that for $i=1,\ldots,k$, $\phi_{f^n(y_1)}^{-1}\phi_{f^n(y_i)}(B^u(f^n(y_i),\frac1{24}r_0e^{-\frac{n}{2n_0}}))$ contains a ball with radius $\frac{9}{512}r_0e^{-\frac{n}{2n_0}}$, and these balls are disjoint and belong to a $\frac34r_0e^{-\frac{n}{2n_0}}$-ball. So $k\le 32^{\dim E^u(x_0)}$. The proof is complete.
\end{proof}

In the following, we define $\psi$ for $D_2$ as the same as $\phi$ defined for $D_1$. Since $D_1,D_2$ are arbitrarily chosen, one can assume $\psi$ has the same properties of $\phi$ as shown in Lemma \ref{L:LocalHomeomorphism} and \ref{L:OverCover} with given $\e_{1}$ and $\e_{2}$ (maybe with larger $N_1,N_2,N_3,N_{4}$). By Lemma \ref{L:LocalStableManifolds}, for any $x\in  \phi_{f^n(y)}(B^u(f^n(y),\frac{3}{16}r_0e^{-\frac{n}{2n_0}}))$ on which $T_{v'_{1,y,n},v'_{2,y,n}}$ is well-defined, we have
$$|T_{v'_{1,y,n},v'_{2,y,n}}(x)-x|\le r_{0}l_{0}(e^{-\frac1{n_{0}}}+2\d_{2})^{n}.$$
Therefore, there exists $N_{5}>N_{4}$ such that for any $n>N_5$ and each $y\in B(x_0,\e)\cap \G_{n_0,m_0,l_0}$
\begin{equation}\label{E:Inclision}
T_{v_{1,y,n},v_{2,y,n}}(\phi_{f^n(y)}(B^u(f^n(y),\frac{3}{16}r_0e^{-\frac{n}{2n_0}})))\subset \psi_{f^n(y)}(B^u(f^n(y),\frac{3}{8}r_0e^{-\frac{n}{2n_0}})).
\end{equation}

For an arbitrary fixed $n>N_5$, let $\{y_i\}_{i\in I_{n}}$ be the finite set derived from Lemma \ref{L:OverCover}. Since $\cup_{i\in I_{n}} f^{-n}(\phi_{f^n(y_i)}(B^u(f^n(y_i),\frac{3}{16}r_0e^{-\frac{n}{2n_0}})))\subset U$, we have that
\begin{equation}\label{E:Measure1}
\e_1\ge m_{D_{1}}(U)\ge \frac{\sum_{i\in I_{n}} m_{D_{1}}(f^{-n}(\phi_{f^n(y_i)}(B^u(f^n(y_i),\frac{3}{16}r_0e^{-\frac{n}{2n_0}}))))}{32^{\dim E^u(x_0)}}.
\end{equation}
Here we divide the right side by $32^{\dim E^u(x_0)}$ to compensate for potential over-counting due to overlaping. On the other hand, (\ref{E:Inclision}) implies that
\begin{equation}\label{E:Measure2}
m_{D_{2}}(T_{v_{1,n,y},v_{2,n,y}}(A))\le \sum_{i\in I_{n}} m_{D_{2}}(f^{-n}(\psi_{f^n(y_i)}(B^u(f^n(y_i),\frac{3}{8}r_0e^{-\frac{n}{2n_0}})))).
\end{equation}
 Comparing (\ref{E:Measure1}) and (\ref{E:Measure2}), one can see that to complete the proof, it remains to produce a constant $C$ which does not depend on $n$, $i$ or $I_{n}$ such that for each $i$,
 \begin{align}\begin{split}\label{E:Measure3}
 &m_{D_{2}}(f^{-n}(\psi_{f^n(y_i)}(B^u(f^n(y_i),\frac{3}{8}r_0e^{-\frac{n}{2n_0}}))))\le C
 m_{D_{1}}(f^{-n}(\phi_{f^n(y_i)}(B^u(f^n(y_i),\frac{3}{16}r_0e^{-\frac{n}{2n_0}})))).
 \end{split}
\end{align}
By Lemma \ref{L:LocalHomeomorphism} (ii), we have that
$$\left(\frac{9}{8}\right)^{\dim E^u(x_0)}\le \frac{m_{f^{n}D_{2}}\left(\psi_{f^n(y_i)}\left(B^u\left(f^n(y_i),\frac{3}{8}r_0e^{-\frac{n}{2n_0}}\right)\right)\right)}
{m_{f^{n}D_{1}}\left(\phi_{f^n(y_i)}\left(B^u\left(f^n(y_i),\frac{3}{16}r_0e^{-\frac{n}{2n_0}}\right)\right)\right)}
\le\left(\frac{32}{9}\right)^{\dim E^u(x_0)}$$
Then,  (\ref{E:Measure3}) follows from Lemma \ref{L:DetEstimateHolonomyMap1} and \ref{L:DetEstimateHolonomyMap2}: (In the following estimate, we set $x_\tau=W^s_{loc}(y_i)\cap D_{\tau}$, $\tau =1,2$, and use $C$ as a generic constant depending on the system constants only.)
{\small\begin{align*}
&m_{D_{2}}(f^{-n}(\psi_{f^n(y_i)}(B^u(f^n(y_i),\frac{3}{8}r_0e^{-\frac{n}{2n_0}}))))\\
=&\int_{\psi_{f^n(y_i)}\left(B^u\left(f^n(y_i),\frac{3}{8}r_0e^{-\frac{n}{2n_0}}\right)\right)}
\left|\det\left(Df_x^{-n}\Big|_{\mc T_x\psi_{f^n(y_i)}\left(B^u\left(f^n(y_i),\frac{3}{8}r_0e^{-\frac{n}{2n_0}}\right)\right)}\right)\right|dm_{f^{n}D_{2}}(x)\\
\le&Cm_{f^{n}D_{2}}\left(\psi_{f^n(y_i)}\left(B^u\left(f^n(y_i),\frac{3}{8}r_0e^{-\frac{n}{2n_0}}\right)\right)\right)
\left|\det\left(Df_{x_2}^{n}\Big|_{\mc T_{x_2}D_2}\right)\right|^{-1}\\
\le& Cm_{f^{n}D_{1}}\left(\phi_{f^n(y_i)}\left(B^u\left(f^n(y_i),\frac{3}{16}r_0e^{-\frac{n}{2n_0}}\right)\right)\right)
\left|\det\left(Df_{x_1}^{n}\Big|_{\mc T_{x_1}D_1}\right)\right|^{-1}\\
\le&C\int_{\phi_{f^n(y_i)}\left(B^u\left(f^n(y_i),\frac{3}{16}r_0e^{-\frac{n}{2n_0}}\right)\right)}
\left|\det\left(Df_x^{-n}\Big|_{\mc T_x\phi_{f^n(y_i)}\left(B^u\left(f^n(y_i),\frac{3}{16}r_0e^{-\frac{n}{2n_0}}\right)\right)}\right)\right|dm_{f^{n}D_{1}}(x)\\
=&Cm_{D_{1}}(f^{-n}(\phi_{f^n(y_i)}(B^u(f^n(y_i),\frac{3}{16}r_0e^{-\frac{n}{2n_0}})))).
\end{align*}}
The proof of Proof of Proposition \ref{P:AbsoluteContinuity} is complete.

$\hfill$ $\square$


\subsection{Proof of Theorem \ref{T:ErgodicAttractor}}\label{S:ExiErgodicAttractor}
Throughout this section, $\mu$ is assumed to be a hyperbolic and SRB measure. In constructing ergodic attractors, we take the approach used in \cite{Pu}.

 Let $\mc G_{\pm}$ and $\mc G$ be the sets of points $x\in \Lambda$ such that for all continuous function $\phi:U\to \mc R$ where $U$ is the basin of $\Lambda$, the following three Birkhoff averages
\begin{align*}
&\overline{\phi_{\pm}}(p):= \lim_{n\to\pm\infty}\frac{1}{|n|}\sum_{i=0}^{n-1}\phi(f^ip),\\
&\overline{\phi}(p):=\lim_{n\to\infty}\frac1{2n-1}\sum_{i=n-1}^{n-1}\phi(f^ip),
\end{align*}
exist. By \cite{Peterson}, the set $\mc G^*(\subset\mc G\cap\mc G_{-}\cap\mc G_{+})$ on which the above three Birkhoff averages equal is of full $\mu$-measure and is $f$-invariant.

Following the argument in subsection \ref{S:HyperbolicBlocks}, $supp(\mu)$ can be divided into at most countable many hyperbolic blocks of positive $\mu$-measure. Apply Lemma \ref{L:UnstableManifoldsExtension} to a given $\G_{n_0,m_0,l_0}$ and choose the diameter of the hyperbolic block $\e_0$ small enough (e.g. $<\frac12\rho$ where $\rho$ is as in Lemma \ref{L:UnstableManifoldsExtension}). Arbitrarily take one of the $\e_0$-hyperbolic blocks and denote it by $P$. Without losing of generality, we set $P\subset \mc G^*$, since $\mc G^*$ is an $f$-invariant full $\mu$-measure set. Denote $\overline P$ the closure of $P$, which is a closed subset of $\Lambda$ thus is a compact set.

By Lemma \ref{L:UnstableManifoldsExtension}, we have that: a) $W^u_{r}(y_1)\cap W^u_{r}(y_2)\cap \overline P=\emptyset$ or $W^u_{r}(y_1)\cap \overline P=W^u_{r}(y_2)\cap \overline P$ for any $y_1,y_2\in\overline P$; and b) $W^u_{r}$ varies continuously on $\overline P$.


Thus, also by the compactness of $\overline P$, we have that
 $$\{W^u_{r}(y)\cap \overline P|\ y\in \overline P\}\text{ forms a Borel measurable partition of }\overline P.$$
We relabel the elements  and denote the partition by $\zeta=\{\zeta_\a\}_{\a\in \mc I}$ where $\mc I$ is the index set. Denote $\pi$ the canonical projection from $\overline P$ to $(\overline P/\zeta)$, where $(\overline P/\zeta)$ is the quotient space induced by partition $\zeta$. Since the partition $\zeta$ is Borel measurable, $\pi$ is Borel.  Now we are able to disintegrate $\mu|_{\overline P}$ into $\mu_\a\otimes \eta$, where $\mu_\a$ is the conditional probability measure of $\mu|_{\overline P}$ on $\zeta_\a$ and $\eta=\mu|_{\overline P}\pi^{-1}$ is the pullforward measure. So for any Borel set $A\in \overline P$,
\begin{equation}\label{E:FubiniEqu}
\mu(A)=\int_{\overline P/\zeta}\mu_\a(A\cap \zeta_\a)d \eta(\zeta_\a).
\end{equation}
Note that $\overline P$ is the closure of $P$, thus the above Fubini type equation holds for any measurable subset $A\subset P$, in particular, it holds for $P$. Since $\mu$ is SRB, we have that $\mu_\a<< \rho_\a$ for $\eta$-a.e. $\zeta_{\a}$, where $\rho_\a$ is a Lebesgue measure on $\zeta_\a$. Consider the subset $\zeta_\a'\subset \zeta_\a$ on which the Radon Nikodym derivative $d\mu_\a/d\rho_\a>0$, which means that $\mu_\a$ and $\rho_\a$ are equivalent on $\zeta_\a'$. And by Equation (\ref{E:FubiniEqu}), we have that
\begin{equation}\label{E:FubiniEquP}
\mu(P)=\int_{\overline P/\zeta}\mu_\a(P\cap\zeta_\a')d\eta(\zeta_\a).
\end{equation}
So,  for the sake of convenience, we assume $P=\cup_{\a\in\mc I}(P\cap \zeta_\a')$ and $\zeta_\a=\zeta_a'$ in the rest of the proof without losing any generality.

Now we will construct a countable partition $\{\L_n\}_{n\ge 1}$ of $P$, each of the saturated sets of which ($:=\cup_{n\in\mb Z}f^n(\L_n)$)  will produce an ergodic attractor.
We first define an equivalence relation in $\zeta$ as the following:
\vskip0.1in

$\zeta_{\a_1}\sim \zeta_{\a_2}$ if there exist finite indexes $\{\b_1,\cdots,\b_k\}\subset \mc I$ satisfying that a) $\b_1=\a_1,\b_k=\a_2$; and b) for any $1\le i\le k-1$, there exist $x_i\in P\cap \zeta_{\b_i}$ and $x_{i+1}\in P\cap \zeta_{\b_{i+1}}$ such that $W^s_{loc}(x_i)\cap P=W^s_{loc}(x_{i+1})\cap P$.

\vskip0.1in

It is easy to see this is a well-defined equivalent relation. For a given $x_0\in P$, let $T$ be the projection which maps $P$ into $W^u_{loc}(x_0)$ by sliding along $\{W^s_{loc}(x)\}_{x\in P}$. Such projection is well-defined because of Lemma \ref{L:HolonomyMap}. By the definition of $\sim$, it is easy to see that $T$ maps equivalent classes into disjoint sets. For $x\in P\cap \zeta_\a$, $\mu_\a(\zeta_\a)>0$ implies $\rho_\a(\zeta_\a)>0$, and then the absolute continuity of the holonomy map induced by stable manifolds (Proposition \ref{P:AbsoluteContinuity}) implies that $\rho_{x_0}(T(\zeta_\a))>0$. Therefore,  there are only countable many elements of these equivalent classes, say $\{\Delta_n\}_{1\le n<a,n\in \mb N}$ (where $a\in \mb N\cup\infty$ and $a=\infty$ is permitted), such that for each $\Delta_n$ there exists $\zeta_\a\subset \Delta_n$ with $\mu_\a(\zeta_\a)>0$. Furthermore, by (\ref{E:FubiniEquP}), we have that
$$\mu(P\setminus \cup_{n=1}^{a-1}\D_{n})=0\ i.e.\ \mu(P)=\sum_{n=1}^{a-1}\mu(\Delta_n).$$

Noting that $f|_{\L}$ is invertible, we set $\Lambda_n=\cup_{i\in \mb Z}f^i (\Delta_n)$. Since there exists $\zeta_\a\in \zeta$ such that $\rho_\a(\Delta_n\cap \zeta_\a)>0$, by Definition \ref{D:Observable} (observable set) and Proposition \ref{P:AbsoluteContinuity} (absolute continuity of stable foliations), it is easy to see that $W^s(\Delta_n):=\cup_{x\in \Delta_n}W^s(x)$ contains an observable set since $\cup_{x\in \Delta_n}W^s_{loc}(x)$ does.
Since $\Delta_n\subset P\subset \mc G^*$ and $\mc G^*$ is $f$-invariant, we have that $\L_n\subset \mc G^*$. For any continuous function $\phi:H\to \mb R$, the three Birkhoff averages $\overline{\phi_{\pm}},\overline{\phi}$ exit and coincide at all points $x\in \L_n$. By the continuity of $\phi$ and also noting that for any $\zeta_\a\subset \Delta_n$ one has that $\zeta_\a\subset W^u_{loc}(x)$ for some $x\in P\cap \zeta_\a$, then we obtain that $\overline{\phi_{-}}$ is constant on $\zeta_\a$. Therefore, by the definition of equivalent relation, we have that $\overline{\phi_+}(x_1)=\overline{\phi_+}(x_2)$ for any $x_1\in \zeta_{\a_1},x_2\in \zeta_{\a_2}$ with $\zeta_{\a_1},\zeta_{\a_2}\in \Delta_n$. \\

Thus, for all continuous function $\phi:H\to \mb R$, we have that $\overline{\phi_{\pm}}$ and $\overline\phi$ are measurable functions being constants and equal on $\Delta_n$, thus so is on $\L_n$, the value of which is denoted by $\phi_{ave}(n)$. Moreover, by continuity of $\phi$, $\phi_+(x)=\phi_{ave}(n)$ for all $x\in W^s(\L_n):=\cup_{y\in\L_n}W^s(y)$.\\

Let $\nu_n=(\mu(\L_n))^{-1}\mu|_{\L_n}$, then $\nu_n$ is an $f$-invariant Borel probability measure on $\L_{n}$. We have shown that for any continuous function $\phi:\L\to \mb R$ and $\nu_n$-a.e. $x\in \L_n$
$$\overline{\phi_{\pm}}(x)=\overline{\phi}(x)=\phi_{ave}(n)=\int\phi d\nu_n,$$
where the last equality follows from the Birkhoff Ergodic Theorem.

By \cite{Peterson}, one has that the Birkhoff averaging with respect to an $f$-invariant probability measure $\nu$ gives a continuous onto projection from $\mc L^1(\L,\nu)$ to the space of $f$-invariant $\nu$-integrable functions with the $\mc L^1(\L,\nu)$-norm being preserved. Note that $\nu_n$ can be also seen as a Borel measure on $\L$. Given any $\psi\in \mc L^1(\L_n,\nu_n)$, $\psi$ can be extended to a measurable function defined on $\L$ simply by assigning constant value on points outside $\L_n$, which is obvious in $\mc L^1(\L,\nu_n)$. Since the space of continuous functions $C(\L)$ is dense in $\mc L^1(\L,\nu_n)$ and the images of $C(\L)$ under the Birkhoff averaging with respect to $\nu_n$ are all $\nu_n$-a.e. constant functions, we have that each $f$-invariant function is $\nu_n$-a.e. constant. Hence,  $\nu_n$ is ergodic.

Also note that, up to a $\mu$-null set, $P$ is the union of at most countable many $\Delta_n$'s. Thus,  $P$ is contained in the union of at most countable many ergodic attractors all of which are contained in $supp\ \mu$. Since $supp\ \mu$ is the union of at most countable many such hyperbolic blocks, and each hyperbolic block gives at most countable many ergodic attractors, we have that, up to a set of $\mu$-null set, $supp\ \mu$ is a union of at most countable many ergodic attractors. This completes the proof of Theorem \ref{T:ErgodicAttractor}.

$\hfill\square$

\subsection{Proof of Theorem \ref{T:SRBCont}}
Noting that the basin of two different ergodic attractors are disjoint. And by above argument, any ergodic hyperbolic SRB measure will produce an ergodic attractor whose basin contains an observable set. Then,  the countability of cardinality of hyperbolic ergodic SRB measures can be proved by applying the following lemma:
\begin{lemma}\label{L:ContObsSet}
The cardinality of any collection of disjoint observable sets in a separable Banach space is at most countable.
\end{lemma}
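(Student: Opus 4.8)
The plan is to argue by contradiction. Suppose $\{K_j\}_{j\in J}$ is a pairwise disjoint family of observable subsets of the separable Banach space $X$ with $J$ uncountable, and derive a contradiction by producing a single finite-dimensional affine plane $P\subset X$ such that uncountably many of the sets $P\cap K_j$ are Borel, mutually disjoint and of positive Lebesgue measure on $P$ — impossible, since $\R^d$ with Lebesgue measure is $\sigma$-finite and hence admits at most countably many disjoint sets of positive measure.

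First I would strip off countably many parameters by repeated pigeonholing. Each $K_j$ carries data $(x_0^j,\, X=F_j\oplus E_j,\, U_j,\, \delta_j)$ with $d_j:=\dim E_j<\infty$ and $U_j\neq\emptyset$ open in $F_j$; after shrinking each $\delta_j$ (which preserves observability) I may also assume that every $d_j$-dimensional subspace within gap-distance $\delta_j$ of $E_j$ is still complementary to $F_j$. Since $d_j\in\mb N$ and $\delta_j>0$, passing to an uncountable subfamily I may assume $d_j\equiv d$ and $\delta_j>\delta_0$ for a fixed $\delta_0>0$. The key reduction is then to fix the slice direction once and for all: the Grassmannian $\mc G_d(X)$ of $d$-dimensional subspaces of $X$, equipped with Kato's gap metric, is separable (for instance the spans of linearly independent $d$-tuples from a countable dense subset of $X$ form a dense set), so each $E_j$ lies within $\delta_0/2$ of a member of a fixed countable dense subset of $\mc G_d(X)$; one more pigeonhole lets me assume all $E_j$ lie within $\delta_0/2<\delta_j$ of a single $d$-plane $G$. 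By the robustness clause in Definition \ref{D:Observable}, applied with $E'=G$, for every $j$ and every $x\in U_j$ the affine plane $x_0^j+x+G$ meets $K_j$ in positive Lebesgue measure.

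Next I would fix a closed complement $H$ of $G$ in $X$ (it exists because $\dim G<\infty$, by Hahn--Banach). Every $d$-plane parallel to $G$ has the form $h+G$ for a unique $h\in H$, and the projection $\pi_H$ along $G$ restricts to a continuous linear bijection of $F_j$ onto $H$ (injectivity from $F_j\cap G=\{0\}$, which holds since $G$ is complementary to $F_j$; surjectivity from $X=G\oplus F_j$), hence an affine homeomorphism of $x_0^j+F_j$ onto $H$ by the open mapping theorem. Consequently $S_j:=\pi_H(x_0^j+U_j)$ is a nonempty open subset of $H$, and since $x_0^j+x+G=\pi_H(x_0^j+x)+G$, the planes realizing positive intersection with $K_j$ are precisely $\{\,h+G:h\in S_j\,\}$. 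Because $H$ is separable, each nonempty open $S_j$ contains a point of a fixed countable dense subset of $H$; a final pigeonhole yields an uncountable subfamily and a single $h^*\in H$ with $h^*\in S_j$ for all $j$ in it. Then $m_{h^*+G}\bigl((h^*+G)\cap K_j\bigr)>0$ for uncountably many pairwise disjoint Borel subsets of the $d$-dimensional plane $h^*+G\cong\R^d$, contradicting $\sigma$-finiteness of Lebesgue measure.

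The main obstacle is the reduction of the slice directions $E_j$ to a single fixed plane $G$: this is exactly where the robustness built into ``$\mathrm{dist}(E',E)<\delta$'' in the definition of observable set is indispensable, for without it the $E_j$ could form an uncountable family with no common choice. Making it rigorous requires separability of the Grassmannian in the gap metric and the (standard, but worth recording) fact that small gap-perturbations of a complemented closed subspace remain complementary, so that $\pi_H|_{F_j}$ is an isomorphism and the identification of $G$-parallel planes with points of $H$ is legitimate. Everything else is routine pigeonholing together with the $\sigma$-finiteness of finite-dimensional Lebesgue measure.
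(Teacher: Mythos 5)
Your proof is correct. It rests on the same two pillars as the paper's own argument: the separability, in Kato's gap metric, of the collection of $d$-dimensional subspaces of a separable Banach space (so that the robustness clause ``$\mathrm{dist}(E',E)<\delta$'' in Definition \ref{D:Observable} lets one replace the slicing direction of each observable set by a member of a fixed countable family), and the $\sigma$-finiteness of Lebesgue measure on a finite-dimensional plane, which forbids uncountably many pairwise disjoint subsets of positive measure in a single plane. The execution differs: the paper assigns to each $K_i$ one member $E_j$ of a countable dense family with $\rho_j(E_j\cap K_i)>0$ and simply counts the fibres of the resulting map $\varpi:\mc I\to\mb N$, whereas you argue by contradiction and pigeonhole successively on the dimension, on a lower bound for $\delta$, on the direction $G$, and finally on a base point, reducing everything to one affine plane $h^*+G$ that would meet uncountably many of the disjoint $K_j$ in positive measure. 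Your additional work with the complement $H$, the bijection $\pi_H|_{F_j}$, and the openness of $\pi_H(x_0^j+U_j)$ is exactly what is needed to handle the affine base points: the paper's countable dense family consists of linear subspaces through the origin, and its one-line claim that some such subspace meets each observable set in positive measure is literally justified only after one also lets the base point range over a countable dense set (equivalently, observes that $x_0+U+G$ is open in the ambient space), which is the step your argument makes explicit. So both routes prove the lemma; the paper's is shorter, while yours spells out why a plane from a countable family actually catches each $K_j$, at the modest cost of invoking the stability of the splitting under small gap perturbations and the open mapping theorem (the latter legitimate once the splitting $X=F_j\oplus E_j$ is read, as intended, as a topological direct sum with $F_j$ closed).
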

\begin{proof}
Note that the collection of all finite dimensional subspaces of a separable Banach space, say $\mc K$,  forms a separable complete metric space which is endowed by Kato's gap metric. Actually, $\mc K=\cup_{n=1}^\infty \mc K_n$ where $\mc K_n$ is the collection of all $n$-dimensional subspaces. Each $\mc K_n$ is a separable complete metric space in the Kato's metric, and the distance between elements from $\mc K_n$ and $\mc K_m$ are $\ge 1$ if $n\neq m$. Let $\{E_j\}_{j\ge 1}$  be a countable dense set in $\mc K$, and let $\rho_j$ be a Lebesque measure on $E_j$.  Let $\{K_i\}_{i\in\mc I}$ be a collection of disjoint observable set, we will show that the index set $\mc I$ contains at most countable many elements. By the definition of observable set, we have that
$$\text{ for any } i\in\mc I, \text{ there exists } E_j \text{ such that }\rho_j(E_j\cap K_i)>0.$$
 For any $i\in \mc I$, we pick one of the $E_j$'s above and assign $j$ to $i$, which induce a map $\varpi:\mc I\to \mb N$.\\
  On the other hand, for any $j\in \mb N$ and $i\in\varpi^{-1}(j)$, $\rho_j(K_i\cap E_j)>0$, which implies that $\varpi^{-1}(j)$ contains at most countably many elements since $\rho_j$ is $\sigma$-finite. Thus,  $\mc I=\cup_{j\in\mb N}\varpi^{-1}(j)$ contains at most countably many elements.

\end{proof}


\section{Proof of Theorem \ref{T:SRBUnique} and \ref{T:SRBFinite}}\label{S:UniFiniSRB}
This section is decided into three parts: in the first part, we state some fundamental properties of uniformly hyperbolic system; In the second part, we prove  Theorem \ref{T:SRBFinite} by assuming Theorem \ref{T:SRBUnique}; in the last part, we prove  Theorem \ref{T:SRBUnique}.

\subsection{General Property of Uniformly Hyperbolic Attractor.}\label{S:AxiomA}
In this section, we assume that $f|\L$ is uniformly hyperbolic, and for the sake of simplicity, we use $E^{s}$ instead of $E^{cs}$. We first produce the stable and unstable manifolds theorem for uniformly hyperbolic case. These results can be derived from Lemma \ref{L:LocalUnstableManifolds} and \ref{L:LocalStableManifolds} once we construct proper Lyapunov charts. Under the uniformly hyperbolic setting, constructing Lyapunov charts becomes much simpler, the only issue is to orthogonalize the splitting $E^u\oplus E^s$ on each $x\in \L$. So we only online the main steps and summarise the main properties for charts.

Since the splitting $E^u\oplus E^s$ is continuous, without losing any generality, we assume $\dim E^u=m$ being a constant, otherwise one can work on each connected components of $\L$.

First, fix an orthogonal splitting $\mb H=\tilde E^u\oplus \tilde E^s$ and an orthonormal  basis $\{\tilde u_n\}_{n\ge 1}$ of $\mb H$ satisfying that $\tilde E^u=\spa\{\tilde u_1,\cdots,\tilde u_m\}$ and $\tilde E^s=\spa\{\tilde u_n\}_{n> m}$.

Second, for each $x\in \L$ take a unit basis $\{u_n(x)\}_{n\ge 1}$ of $\mb H$ satisfying that $ E^u_x=\spa\{u_1(x),\cdots,u_m(x)\}$, $ E^s_x=\spa\{u_n(x)\}_{n> m}$, and $\{u_1(x),\cdots,u_m\}$ and $\{u_n(x)\}_{n> m}$ are orthogonal sets respectively. Note that $u_{i}(\cdot)$ can be taken to be measurable by measurable selection theorem for which we refer to \cite{LL}; However, usually one can not expect to make $u_{i}(\cdot)$ to be continuous although the splitting $\mb H=E^{u}\oplus E^{s}$ is continuous.

For any $u=\sum_{n=1}^\infty c_i(x)u_i(x)$, define
$$L_xu=\sum_{n=1}^\infty c_i(x)\tilde u_i.$$
It is easily seen that for $\tau=u,s$
$$\|L_x|_{E^\tau_x}\|=\|L_x^{-1}|_{\tilde E^\tau}\|=1, \|L_x\|\le \sqrt 2 \max\{\|\pi^u_x\|,\|\pi^s_x\|\} \text{ and }\|L_x^{-1}\|\le 2.$$
Note that $E^u, E^s$ are varying uniformly continuously on $\L$, then for any $\e>0$, there exists $\d>0$ such that if $x,y\in \L$ satisfying $|x-y|<\d$, then
 for any $v\in \tilde E^\tau$, $\tau=u,s$, the following holds:
 \begin{itemize}
 \item[(a)] $(1-\e)|v|<|\tilde\pi^\tau L_y L_x^{-1} v|\le (1+\e)|v|$;
 \item[(b)] $|\tilde\pi^{\tau'} L_y L_x^{-1} v|\le \e|v|$ when $\tau'\neq \tau$.
 \end{itemize}
Now define
$$\Phi_x=Exp_x\circ L^{-1}_x\text{ and } \tilde f_x=\Phi_x^{-1}\circ f\circ \Phi_x.$$
Let $M=\max\left\{\sup_{x\in \L}\{\|D^2f_x\|\},\sup_{x\in \L}\{\|\pi^u_x\|\},\sup_{x\in \L}\{\|\pi^s_x\|\}\right\}$ and set $r_\d=(2\sqrt2M^2)^{-1}\d$. Then,  for any $x\in \L$ the bi-infinite sequence of maps $\{\tilde f_{f^ix}|_{\tilde B^u(0,r_\d)\oplus \tilde B^u(0,r_\d)}\}_{i\in \mb Z}$ satisfies all the conditions (I)-(III) of $g_i$'s defined in Section \ref{S:InvariantManifolds} with
$$\l_1=\l_0, G_i(0)=0, \d_1=0,\d_2=\d, \ell_i=2\sqrt2M^2.$$

We recall the definition of local stable and unstable manifolds here with a split difference: for any $x\in \L$
\begin{align*}
&W^s_\e(x)=\{y\in U|\ |f^ny-f^nx|\le \e \text{ for all } n\ge 0\}\\
&W^u_\e(x)=\{y\in U|\ |f^ny-f^nx|\le \e \text{ for all } n\le 0\},
\end{align*}
where $U$ is the basin of $\L$ and is open.

By applying Lemma \ref{L:LocalUnstableManifolds} and \ref{L:LocalStableManifolds}, and also employing the same argument used in proving Lemma \ref{L:ContinuityOfW}, we have the following results which are needed for this section.
\begin{lemma}\label{L:SUManifolds}
For any $\l\in(0, \l_0)$, there exists $\e_0>0$ such that for any $\e\in(0,\e_0]$
\begin{itemize}
\item[i)] $W^s_\e(x),W^u_\e(x)$ are $C^{2}$ embedded disks for all $x\in\L$ with $T_xW^\tau(x)=E^\tau(x)$, $\tau=s,u$;
\item[ii)] $|f^nx-f^ny|\le e^{-n\l}|x-y|$ for $y\in W^s_\e(x)$, $n\ge 0$, and \\
$|f^{-n}x-f^{-n}y|\le e^{-n\l}|x-y|$ for $y\in W^u_\e(x)$, $n\ge 0$;
\item[iii)] $W^s_\e(x),W^u_\e(x)$ vary continuously in $x$ (in $C^{1}$ topology).
\end{itemize}
\end{lemma}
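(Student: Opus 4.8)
\medskip

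\noindent\textbf{Proof proposal.} The plan is to obtain $W^s_\e(x)$ and $W^u_\e(x)$ as $\Phi_x$-images of the graphs produced by Lemmas~\ref{L:LocalStableManifolds} and~\ref{L:LocalUnstableManifolds} applied to the bi-infinite sequence of chart maps $\{\tilde f_{f^i x}\}_{i\in\mb Z}$, and then to identify these graphs with the dynamically defined local invariant sets. Fix $\l\in(0,\l_0)$ and pick $\l'\in(\l,\l_0)$. Choosing $\d>0$ small enough, the sequence $g_i:=\tilde f_{f^ix}|_{\tilde B^u(0,r_\d)\times\tilde B^s(0,r_\d)}$ satisfies (I)--(III) of Section~\ref{S:InvariantManifolds} with $\l_1=\l_0$, $\d_1=0$, $\d_2=\d$, $\ell_i=2\sqrt2 M^2$ (as recorded above), and in addition $e^{-\l_0}+2\d<e^{-\l'}$ and $(e^{\l_0}-2\d)^{-1}<e^{-\l'}$. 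Lemmas~\ref{L:LocalStableManifolds} and~\ref{L:LocalUnstableManifolds} then give $C^{1+\mathrm{Lip}}$ maps $h^s_x:=h^s_0$ and $h^u_x:=h^u_0$ with $\|Dh^\tau_x\|\le\frac1{10}$; since in this setting $\tilde f_x=\L_x+G_x$ with $G_x(0)=0$, $DG_x(0)=0$, the graph-transform fixed point satisfies $h^\tau_x(0)=0$ and $Dh^\tau_x(0)=0$. Put $W^u_\e(x):=\Phi_x(\mathrm{graph}(h^u_x|_{\tilde B^u(0,\rho_\e)}))$ and $W^s_\e(x):=\Phi_x(\mathrm{graph}(h^s_x|_{\tilde B^s(0,\rho_\e)}))$ with $\rho_\e$ a small multiple of $\e$ chosen below. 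As $\Phi_x=\exp_x\circ L_x^{-1}$ is a $C^2$ diffeomorphism onto its image with $\|L_x\|,\|L_x^{-1}\|$ uniformly bounded, these are $C^2$ embedded disks, and $h^\tau_x(0)=0$, $Dh^\tau_x(0)=0$ together with $L_x^{-1}(\tilde E^\tau)=E^\tau_x$ give $T_xW^\tau_\e(x)=E^\tau(x)$; this proves~(i).

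For~(ii) and the identification with the set in the text, I would argue in two directions. First, for $y\in W^s_\e(x)$ set $\tilde y=\Phi_x^{-1}(y)\in\mathrm{graph}(h^s_x)$; by Lemma~\ref{L:LocalStableManifolds}(a) the forward chart orbit $\tilde y_n:=(\tilde f_{f^{n-1}x}\circ\cdots\circ\tilde f_x)(\tilde y)$ stays in $\mathrm{graph}(h^s_{f^nx})$ for all $n\ge0$, provided it stays in the chart balls, which is guaranteed by the $\pi^s$-contraction of Lemma~\ref{L:LocalStableManifolds}(b) and the fact that the graph is $\tfrac1{10}$-Lipschitz over $\tilde B^s$; hence $f^ny\in W^s_\e(f^nx)$, and applying Lemma~\ref{L:LocalStableManifolds}(b) with the point $0\in\mathrm{graph}(h^s_i)$ yields $|\pi^s\tilde y_n|\le(e^{-\l_0}+2\d)^n|\pi^s\tilde y|$, which after passing back through $\Phi$ and absorbing the uniform bound on $\|L_x\|,\|L_x^{-1}\|$ into the choice of $\d$ and $\e_0$ gives $|f^ny-f^nx|\le e^{-n\l}|y-x|$. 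Conversely, if $y\in U$ satisfies $|f^ny-f^nx|\le\e$ for all $n\ge0$, then $\tilde y_n:=\Phi_{f^nx}^{-1}(f^ny)$ lies in $\tilde B(0,r_\d)$ for all $n$ once $\e_0$ is small relative to $r_\d$; a bounded forward chart orbit must lie on $\mathrm{graph}(h^s_0)$, since otherwise the $\pi^u$-expansion of Lemma~\ref{L:LocalUnstableManifolds}(b) (a cone/inclination argument) would drive $|\pi^u\tilde y_n|$ out of $\tilde B(0,r_\d)$. Therefore $W^s_\e(x)$ agrees with $\{y\in U:|f^ny-f^nx|\le\e,\ \forall n\ge0\}$, and the symmetric argument using the backward maps and Lemma~\ref{L:LocalUnstableManifolds} handles $W^u_\e(x)$; this gives~(ii).

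For~(iii) I would invoke Lemma~\ref{L:ContinuityOnMaps}: given $\e'>0$ there is $N$, depending only on $\e',\l_0,\d,r_\d,\ell$, such that $C^1$-closeness of the chart maps $\tilde f_{f^ix}$ for $-N<i<0$ (resp.\ $0\le i\le N$) forces $\|h^u_0-\hat h^u_0\|_{C^1}<\e'$ (resp.\ for $h^s_0$). Since the chart maps at $x$ and at a nearby $x'$ are never exactly equal, one runs this estimate quantitatively: if $|x-x'|$ is small then, using continuity of $f$ and $Df$ on a neighbourhood of $\L$ and uniform continuity of the splitting $E^u\oplus E^s$ (the frames $u_i(\cdot)$ being only measurable is harmless, since the graph transform depends on them only through the subspaces and through products $L_{f^ix}L_{f^jx}^{-1}$), the maps $\tilde f_{f^ix}$ and $\tilde f_{f^ix'}$ are $C^1$-close for the finitely many relevant indices, and the graph-transform contraction of Lemmas~\ref{L:DiscLemma1} and~\ref{L:DiscLemma2} propagates this to $C^1$-closeness of $h^\tau_0$; transporting back through $\Phi_x$ and $\Phi_{x'}$ (which are $C^1$-close because $E^\tau$ vary continuously) gives $C^1$-closeness of $W^\tau_\e(x)$ and $W^\tau_\e(x')$, exactly as in the proofs of Lemma~\ref{L:ContinuityOfW} and Lemma~\ref{L:UnstableManifoldsExtension}.

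I expect the main obstacle to be the second half of~(ii) together with the identification of the chart-level graphs with the dynamically defined sets $W^\tau_\e(x)$: one must control that the entire one-sided orbit stays inside the chart domains and then invoke the expanding-cone uniqueness. A secondary difficulty is making the continuity in~(iii) quantitative despite the chart frames $u_i(\cdot)$ being only measurable. The remaining bounded-distortion bookkeeping between the chart and ambient metrics is routine.
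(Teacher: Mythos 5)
Your proposal takes essentially the same route as the paper, which offers no separate proof of this lemma but, exactly as you do, constructs the uniform Lyapunov charts of Subsection \ref{S:AxiomA}, applies Lemmas \ref{L:LocalUnstableManifolds} and \ref{L:LocalStableManifolds} to the bi-infinite sequence $\{\tilde f_{f^ix}\}_{i\in\mb Z}$ (with $G_i(0)=0$, $DG_i(0)=0$ giving $h^\tau_x(0)=0$, $Dh^\tau_x(0)=0$), identifies the graphs with the dynamically defined sets, and gets the $C^1$ continuity in (iii) by the argument of Lemma \ref{L:ContinuityOfW} via Lemma \ref{L:ContinuityOnMaps}. One cosmetic caveat: the constant-free rate in (ii) is not obtained by ``absorbing'' $\|L_x\|,\|L_x^{-1}\|$ (these equal $1$ only on the subspaces $E^\tau_x$), but by using $Dh^\tau_x(0)=0$ together with the uniform Lipschitz bound on $Dh^\tau_x$ to make the graphs arbitrarily flat after shrinking $\e_0$ and $\d$, which the choices in your setup already permit.
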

By C3), we note that $W^u_\e(x)\subset \L$ for all $x\in \L$, and in the following proof we will use this fact without further explanations. And by the definition, $W^u_\e(x)$ and $W^s_\e(x)$ are closed subsets of $U$.\\

The following lemma provides local {\it canonical coordinates} on $\L$.
\begin{lemma}\label{L:CanonicalCoor}
For any small $\e\in(0,\e_0)$ there is a $\d>0$ such that $W^s_\e(x)\cap W^u_\e(y)$ consists of a single point $[x,y]$ whenever $x,y\in \L$ and $|x-y|<\d$. Furthermore $[x,y]\in \L$ and
$$[\cdot,\cdot]:\{(x,y)\in \L\times\L:|x-y|<\d\}\to \L \text{ is continuous}.$$
\end{lemma}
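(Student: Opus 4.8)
The plan is to realise $[x,y]$ as the unique fixed point of a uniform contraction and then read off both continuity and membership in $\L$ from there. First I would fix $\e\in(0,\e_0)$ and collect the uniform data supplied by Lemma~\ref{L:SUManifolds} together with the compactness of $\L$: there is a constant $K\ge1$ with $\|\pi^u_x\|,\|\pi^s_x\|\le K$ for all $x\in\L$; there is $\a_0>0$ with $\an(E^u_x,E^s_x)\ge\a_0$; the maps $x\mapsto E^\tau_x$ ($\tau=u,s$) are uniformly continuous; and, shrinking $\e_0$ if necessary, each $W^s_\e(x)$ is the graph of a $C^1$ map $g^s_x\colon E^s_x(\e)\to E^u_x$ with $g^s_x(0)=0$, $Dg^s_x(0)=0$, $\|Dg^s_x\|\le\tfrac1{10}$, and each $W^u_\e(x)$ is likewise the graph of $g^u_x\colon E^u_x(\e)\to E^s_x$ (here I use $\exp_x(v)=x+v$, as throughout the paper). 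By Lemma~\ref{L:SUManifolds}(iii) the graphs $g^s_x,g^u_x$ vary continuously in $x$ in the $C^1$ sense.

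Next I would set up the intersection problem in coordinates centred at $x$. Given $x,y\in\L$ with $|x-y|<\d$, where $\d=\d(\e)$ is to be chosen, the uniform continuity of the splitting makes $E^u_y$ so close to $E^u_x$ that $\pi^u_x|_{E^u_y}$ is invertible with small distortion; combined with $\|Dg^u_y\|\le\tfrac1{10}$ this lets me rewrite the part of $W^u_\e(y)$ lying near $x$ as a graph
\[
\{\,x+\zeta+\psi_y(\zeta)\ :\ \zeta\in E^u_x,\ |\zeta|<\e'\,\},\qquad \psi_y\colon E^u_x(\e')\to E^s_x,
\]
with $\mathrm{Lip}(\psi_y)\le\tfrac15$ and $|\psi_y(0)|\le C|x-y|\le C\d$ (when $y=x$ one has $\psi_x(0)=0$ since $x\in W^u_\e(x)$). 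A point $z=x+\eta+\zeta$, with $\eta\in E^s_x$ and $\zeta\in E^u_x$ small, lies on $W^s_\e(x)$ iff $\zeta=g^s_x(\eta)$ and on this piece of $W^u_\e(y)$ iff $\eta=\psi_y(\zeta)$; eliminating $\zeta$ gives the fixed-point equation $\eta=\psi_y\big(g^s_x(\eta)\big)$ on $\overline{E^s_x(\e')}$. Its right-hand side is Lipschitz with constant $\le\tfrac1{50}$ and, once $\d$ is chosen so small that $C\d<\tfrac12\e'$, maps $\overline{E^s_x(\e')}$ into itself; hence it has a unique fixed point $\eta^*$, and I set $[x,y]:=x+\eta^*+g^s_x(\eta^*)$. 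Shrinking $\e_0$ further so that these graph representations hold on the whole of $W^s_\e(x)$ and of $W^u_\e(y)$ (both of which sit in a small ball around $x$), the point $[x,y]$ is then the only point of $W^s_\e(x)\cap W^u_\e(y)$, not merely the only one near $x$. Since $[x,y]\in W^u_\e(y)$ and $W^u_\e(y)\subset\L$ by C3), we get $[x,y]\in\L$ at once.

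For continuity of $[\cdot,\cdot]$ on $\{(x,y)\in\L\times\L:|x-y|<\d\}$ I would observe that every ingredient of the fixed-point equation — the projections $\pi^u_x,\pi^s_x$, the subspaces $E^\tau_x$, the maps $g^s_x$, and through them and the splitting the maps $\psi_y$ — depends continuously on $(x,y)$, while the contraction constant $\le\tfrac1{50}$ is uniform; thus the fixed point $\eta^*=\eta^*(x,y)$ depends continuously on $(x,y)$, and so does $[x,y]=x+\eta^*+g^s_x(\eta^*)$. I expect the main difficulty to be organisational rather than conceptual: arranging that a single $\d$ works uniformly for all $x,y\in\L$ — this is exactly where the compactness of $\L$ and the uniformity of $K$, $\a_0$, the modulus of continuity of the splitting, and the constants of Lemma~\ref{L:SUManifolds} are used — and checking that the reparametrisation of $W^u_\e(y)$ as a graph over $E^u_x$ is valid on all of $W^u_\e(y)$, so that uniqueness is truly global. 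I would also emphasise that this argument deliberately bypasses the chart maps $\Phi_x$, which in the present uniformly hyperbolic setting need not depend continuously on $x$.
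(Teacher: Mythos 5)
Your argument is correct and is essentially the paper's proof: the paper also obtains $[x,y]$ as the unique fixed point of the composition of the two small-Lipschitz graph maps via the contraction mapping theorem (the idea of Lemma \ref{L:HolonomyMap}(ii)), with the uniform choice of $\d$, the inclusion $[x,y]\in W^u_\e(y)\subset\L$, and the continuity of $[\cdot,\cdot]$ all coming from the uniform continuous dependence of $W^s_\e(\cdot)$ and $W^u_\e(\cdot)$ on the base point (Lemma \ref{L:SUManifolds}) together with the compactness of $\L$. Working directly with graphs over $E^u_x\oplus E^s_x$ in the ambient space instead of in the charts $\Phi_x$ is only a cosmetic difference, not a different route.
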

\begin{proof}
The proof employs the same idea in proving (ii) of Lemma \ref{L:HolonomyMap} and the existence of such $\d$ follows from the uniform continuity of $W^s_\e(x)$ and $W^u_\e(y)$.
\end{proof}

\begin{lemma}\label{L:Expansive}
There is an $\e>0$ such that for any $x,y\in \L$ with $y\neq x$
$$|f^kx-f^ky|>\e \text{ for some } k\in\mb Z.$$
\end{lemma}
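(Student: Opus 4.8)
The plan is to prove the contrapositive: if $x,y\in\L$ and $|f^kx-f^ky|\le\e$ for all $k\in\mb Z$, where $\e$ is a suitably small constant fixed below, then $x=y$. The idea is that staying $\e$-close along the forward orbit forces $y$ into a local stable manifold of $x$, staying $\e$-close along the backward orbit forces $y$ into a local unstable manifold of $x$, and by the local canonical coordinates of Lemma \ref{L:CanonicalCoor} these two sets meet only at $x$.

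Concretely, I would first fix $\e_1\in(0,\e_0)$ and let $\d_1>0$ be the constant associated to $\e_1$ by Lemma \ref{L:CanonicalCoor}, then set $\e=\tfrac12\min\{\e_1,\d_1\}$. Given $x,y\in\L$ with $|f^kx-f^ky|\le\e$ for all $k\in\mb Z$: since $\L\subset U$ and $\e\le\e_1$, the inequalities for $n\ge 0$ give $y\in W^s_{\e_1}(x)$, and those for $n\le 0$ give $y\in W^u_{\e_1}(x)$; in particular $|x-y|\le\e<\d_1$. Applying Lemma \ref{L:CanonicalCoor} with both arguments equal to $x$ shows that $W^s_{\e_1}(x)\cap W^u_{\e_1}(x)$ is a single point $[x,x]$, and since $x$ itself lies in this intersection we conclude $W^s_{\e_1}(x)\cap W^u_{\e_1}(x)=\{x\}$. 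Hence $y=x$, which is exactly the contrapositive of the claim. (Equivalently, one may invoke Lemma \ref{L:SUManifolds}(i): $W^s_{\e_1}(x)$ and $W^u_{\e_1}(x)$ are embedded discs tangent at $x$ to the complementary subspaces $E^s(x)$ and $E^u(x)$, whose angle is bounded below uniformly over the compact set $\L$, so after shrinking $\e_1$ they can intersect only at $x$.)

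There is essentially no serious obstacle here, since the analytic content is already packaged in Lemmas \ref{L:SUManifolds} and \ref{L:CanonicalCoor}; the only point needing minor care is the order of quantifiers—one must choose $\e_1$ (hence $\d_1$) first and then pick the expansiveness constant $\e$ strictly smaller than both $\e_1$ and $\d_1$, so that the hypothesis $|x-y|\le\e$ legitimately places $(x,x)$ (and $(x,y)$) in the range where the canonical-coordinate map is defined.
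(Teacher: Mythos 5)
Your proposal is correct and follows essentially the same route as the paper: the paper's proof likewise observes that failure of the conclusion puts $y$ in $W^s_\e(x)\cap W^u_\e(x)$ and that this intersection is the single point $x$ (via the canonical coordinates of Lemma \ref{L:CanonicalCoor}, equivalently the transverse graph structure of Lemma \ref{L:SUManifolds}). Your extra care with the choice of $\e$ relative to the constants of Lemma \ref{L:CanonicalCoor} is a harmless refinement of the same argument.
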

\begin{proof}
Otherwise $y\in W^s_\e(x)\cap W^u_\e(x)$, and also note that $\{x\}=W^s_\e(x)\cap W^u_\e(x)$, so $y=x$.
\end{proof}

We say a sequence $\underline{x}=\{x_i\}^b_{i=a}(a=-\infty \text{ or } b=\infty\text{ is permitted })$ of points in $\L$ is an $\a$-$pseudo$-$orbit$ if
$$|fx_i-x_{i+1}|<\a \text{ for all }i\in[a,b-1).$$
We say a point $x\in \L$ $\b$-$shadows$ $\underline{x}$ if
$$|f^ix-x_i|\le \b\text{ for all }i\in[a,b].$$
\begin{lemma}\label{L:Shadow}
If $f|_\L$ is uniformly hyperbolic, then for every $\b>0$ there is an $\a>0$ so that every $\a$-$pseudo$-$orbit$ $\{x_i\}_{i=a}^b\subset \L$ is $\b$-$shadowed$ by a point $x\in \L$.
\end{lemma}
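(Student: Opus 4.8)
The plan is to prove the finite case by induction on the length of the pseudo-orbit, constructing the shadowing point out of the local canonical coordinates $[\cdot,\cdot]$ furnished by Lemma \ref{L:CanonicalCoor}, and then to deduce the bi-infinite case by a compactness argument on $\Lambda$.

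First I would fix the target $\beta>0$ and choose $\epsilon\in(0,\epsilon_0)$ (from Lemma \ref{L:SUManifolds}) together with the radius $\delta$ of Lemma \ref{L:CanonicalCoor}, all small relative to $\beta$ and small enough that $W^u_\epsilon(fx)\subset f\big(W^u_\epsilon(x)\big)$ for every $x\in\Lambda$; this last inclusion follows from property (a) of Lemma \ref{L:LocalUnstableManifolds} transported to the charts built in Section \ref{S:AxiomA}. I also record: the uniform contraction rates $|f^np-f^nq|\le e^{-n\lambda}|p-q|$ on $W^s_\epsilon$ and $|f^{-n}p-f^{-n}q|\le e^{-n\lambda}|p-q|$ on $W^u_\epsilon$ (Lemma \ref{L:SUManifolds}), the inclusion $W^u_\epsilon(x)\subset\Lambda$, and a uniform modulus of continuity $\omega$ (with $\omega(0)=0$) for the bracket map on the compact set $\{(p,q):|p-q|<\delta\}$.

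The inductive step is the heart of the matter. Suppose $\{x_i\}_{i=0}^{n+1}$ is an $\alpha$-pseudo-orbit and, applying the induction hypothesis to $\{x_i\}_{i=1}^{n+1}$, that $y\in\Lambda$ satisfies $|f^iy-x_{i+1}|\le\kappa$ for $0\le i\le n$, where $\kappa$ is a constant \emph{independent of the length}. Then $|fx_0-y|\le\alpha+\kappa<\delta$, so $p:=[y,fx_0]=W^s_\epsilon(y)\cap W^u_\epsilon(fx_0)\in\Lambda$ is defined, with $|p-y|,|p-fx_0|\le\omega(\alpha+\kappa)$. Since $p\in W^u_\epsilon(fx_0)\subset f\big(W^u_\epsilon(x_0)\big)$, there is $z\in W^u_\epsilon(x_0)\subset\Lambda$ with $fz=p$; contraction of $f^{-1}$ along $W^u_\epsilon$ gives $|z-x_0|\le e^{-\lambda}\omega(\alpha+\kappa)$, and for $i\ge1$ one has $f^iz=f^{i-1}p$ with $p\in W^s_\epsilon(y)$, hence $|f^iz-f^{i-1}y|\le e^{-(i-1)\lambda}\omega(\alpha+\kappa)$ and therefore $|f^iz-x_i|\le e^{-(i-1)\lambda}\omega(\alpha+\kappa)+\kappa$. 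Iterating this estimate down the induction shows that the discrepancy of the final shadowing point at any time is bounded by $\sum_{k\ge0}e^{-k\lambda}\omega(\alpha+\kappa)+\epsilon=\tfrac{\omega(\alpha+\kappa)}{1-e^{-\lambda}}+\epsilon$, which is independent of $n$. So one fixes $\kappa$ to be this value; the self-consistency closes because $\kappa\mapsto\tfrac{\omega(\alpha+\kappa)}{1-e^{-\lambda}}+\epsilon$ has a fixed point near $\epsilon$ once $\alpha$ is small (using $\omega(0)=0$), and for $\alpha$ small enough that fixed point is $\le\beta$. This gives, for every finite $\alpha$-pseudo-orbit in $\Lambda$, a $\beta$-shadowing point in $\Lambda$.

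Finally, for an infinite (one- or two-sided) $\alpha$-pseudo-orbit $\{x_i\}_{i=a}^b$ I would apply the finite case to the truncations $\{x_i\}_{|i|\le N}$, obtaining shadowing points $z^{(N)}\in\Lambda$; by compactness of $\Lambda$ a subsequence $z^{(N_k)}\to z\in\Lambda$, and for each fixed $i$ the inequality $|f^iz^{(N_k)}-x_i|\le\beta$ passes to the limit by continuity of $f^i$, so $z$ $\beta$-shadows the whole sequence. The main obstacle is exactly the bookkeeping in the inductive step: one must verify that the error introduced by each bracket operation is damped geometrically by the stable (resp. unstable) contraction, so that the shadowing constant $\kappa$ does not accumulate with the length of the pseudo-orbit; everything else — uniformity of the charts over the compact set $\Lambda$, the inclusions $W^u_\epsilon(x)\subset\Lambda$ and $W^u_\epsilon(fx)\subset f(W^u_\epsilon(x))$, and continuity of $[\cdot,\cdot]$ — is a routine consequence of the lemmas already established.
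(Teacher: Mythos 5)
Your overall architecture (finite case by induction using the bracket $[\cdot,\cdot]$ of Lemma \ref{L:CanonicalCoor}, then a compactness limit) is reasonable, but the quantitative heart of the inductive step does not close, and this is a genuine gap rather than a bookkeeping detail. Track what is actually fed into each successive bracket: when you prepend $x_j$, the bracket is applied to $y_{j+1}$ and $fx_j$, whose distance is at most $\alpha+u_{j+1}$, where $u_{j+1}=|y_{j+1}-x_{j+1}|$ is the error of the previous shadowing point \emph{at its own initial time}. Your construction then gives $u_j\le e^{-\lambda}\,\omega(\alpha+u_{j+1})$, because only one application of $f^{-1}$ along $W^u_\epsilon(x_j)$ is available before the next bracket is taken. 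The geometric sum $\sum_k e^{-k\lambda}\omega(\alpha+\kappa)$ you write down is valid only if all the $u_j$ stay below a fixed $\kappa$, and that is exactly what fails in general: Lemma \ref{L:CanonicalCoor} gives only continuity of $[\cdot,\cdot]$, and even granting a Lipschitz bound $\omega(t)\le Ct$ (which can be extracted from the graph representation, with $C$ controlled by the norms of the projections $\pi^u,\pi^{cs}$), the recursion $u_j\le e^{-\lambda}C(\alpha+u_{j+1})$ blows up geometrically in the length of the pseudo-orbit whenever $e^{-\lambda}C\ge 1$; once $u_j+\alpha$ exceeds $\delta$ the bracket is no longer defined and the induction stops, and shrinking $\alpha$ only buys logarithmically many extra steps. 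Nothing in the paper's uniform hyperbolicity hypothesis prevents $e^{-\lambda}C\ge1$: $\lambda_0$ may be tiny while the angle between $E^u$ and $E^{cs}$ (hence $C$) is merely bounded, as already happens for a linear hyperbolic map with eigenvalues $e^{\pm 0.01}$ and non-orthogonal eigendirections. For the same reason your ``self-consistency'' map $\kappa\mapsto\frac{\omega(\alpha+\kappa)}{1-e^{-\lambda}}+\epsilon$ typically has slope $\ge 1$ and possesses no fixed point below $\beta$, so the asserted uniform constant $\kappa$ does not exist as argued.

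The gap is repairable, but it requires an extra idea: either run your construction for a power $f^N$ with $N$ chosen so that $e^{-N\lambda}$ beats the bracket constant (converting an $\alpha$-pseudo-orbit of $f$ into a pseudo-orbit of $f^N$ via uniform continuity of $f$ near $\Lambda$, and controlling intermediate times afterwards), or abandon the product-structure induction in favor of the sequence-space contraction-mapping proof. The paper does the latter implicitly: it simply quotes the shadowing theorem of Chow--Lin--Palmer \cite{ChLP} for existence of the shadowing point, and its only original content is the trick of extending the pseudo-orbit backward by the true backward orbit of $x_a$ so that the shadowing point has its whole backward orbit in $B(\Lambda,\beta)\subset U$, forcing it into $\Lambda=\cap_{n\ge0}f^n(U)$. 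Your scheme would get membership in $\Lambda$ for free (everything is built inside $\Lambda$ via $W^u_\epsilon(x)\subset\Lambda$), and your compactness passage to infinite pseudo-orbits is fine once one notes that $(f|_\Lambda)^{-1}$ is continuous so the estimates at negative times survive the limit; but neither of these points rescues the uniform error bound in the finite case.
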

\begin{proof}
This result mainly follows from  the shadowing theorem in \cite{ChLP}. But the results from \cite{ChLP} does not guarantee $x\in\L$. To avoid this, one can extend the pseudo orbit backward by assigning $x_{a-i}=f^{-i}x_a$ for all $i\ge 1$ (such operation is not needed when $a=-\infty$). Obviously, this extended pseudo orbit is an $\a$-$pseudo$-$orbit$ too. Note that the point, denoted by $x'$, $\b$-shadowing the extended orbit is also  $\b$-shadowing the original one. As long as $\b$ small enough such that $B(\L,\b)\subset U$, one observes that $x'\in \cap_{n\ge 0}f^n(U)=\L$. The proof is complete.
\end{proof}

\begin{lemma}\label{L:DensePeriodicOrb}
If $f|_\L$ is uniformly hyperbolic and non-wandering, then
$$\L=closure\{x \;\big |\ \text{ there exists }n\in\mb N \text{ such that } f^{n}x=x\},$$
where such $x$ is called a periodic point.
\end{lemma}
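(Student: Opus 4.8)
The plan is to derive density of periodic points from the three facts already available for uniformly hyperbolic $f|_\L$: the non-wandering hypothesis (Definition \ref{D:NonWandering}), the shadowing lemma (Lemma \ref{L:Shadow}), and expansiveness (Lemma \ref{L:Expansive}); this is the classical Anosov closing-lemma argument carried out in the present infinite-dimensional setting. The inclusion $\L\supset\overline{\{x:\ f^nx=x\text{ for some }n\in\mb N\}}$ is immediate, since $\L$ is closed and $f$-invariant, so a periodic point lying in $\L$ carries its whole finite orbit inside $\L$; the substance of the lemma is the reverse inclusion, i.e.\ that periodic points lie dense in $\L$.

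First I would fix $x\in\L$ and $\eta>0$, and let $c>0$ be the expansiveness constant of Lemma \ref{L:Expansive}. Choosing $\b=\min\{\eta/2,\ c/3\}$, let $\a>0$ be the associated shadowing constant from Lemma \ref{L:Shadow}, and set $r=\min\{\a/2,\ \eta/2\}$. Applying the non-wandering property to the neighbourhood $B(x,r)$ of $x$, there are $y\in B(x,r)\cap\L$ and $n\in\mb N$ with $f^ny\in B(x,r)\cap\L$; hence $|f^ny-y|\le|f^ny-x|+|x-y|<2r\le\a$. Now form the bi-infinite $n$-periodic sequence $\underline x=\{x_i\}_{i\in\mb Z}$ with $x_i=f^{\,i\bmod n}(y)$; since $\L$ is $f$-invariant this sequence lies in $\L$. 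Every consecutive jump $|fx_i-x_{i+1}|$ vanishes unless $i\equiv n-1\pmod n$, in which case it equals $|f^ny-y|<\a$, so $\underline x$ is an $\a$-pseudo-orbit, and it is manifestly $n$-periodic. By Lemma \ref{L:Shadow} there is $z\in\L$ with $|f^iz-x_i|\le\b$ for every $i\in\mb Z$.

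Because $\underline x$ is $n$-periodic, the point $f^nz$ also $\b$-shadows it: $|f^i(f^nz)-x_i|=|f^{i+n}z-x_{i+n}|\le\b$. Hence $|f^kz-f^k(f^nz)|\le 2\b<c$ for all $k\in\mb Z$, and Lemma \ref{L:Expansive} forces $f^nz=z$, so $z$ is a periodic point contained in $\L$ with $|z-x|\le|z-x_0|+|x_0-x|=|z-y|+|y-x|\le\b+r\le\eta$. Since $x\in\L$ and $\eta>0$ were arbitrary, periodic points are dense in $\L$, which together with the trivial inclusion yields the claim. The argument is essentially routine once the three cited lemmas are in place; the only points that need care are that the shadowing point $z$ is genuinely an element of $\L$ (exactly the refinement built into Lemma \ref{L:Shadow}) and that $z$ and $f^nz$ shadow the \emph{same} periodic pseudo-orbit, so that the two one-sided estimates combine into a two-sided bound on which expansiveness can act.
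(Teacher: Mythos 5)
Your proposal is correct and follows essentially the same route as the paper's proof: use the non-wandering property to get a point of $\L$ returning $\a$-close to itself, form the periodic $\a$-pseudo-orbit, shadow it by a point $z\in\L$ via Lemma \ref{L:Shadow}, observe that $f^nz$ shadows the same periodic pseudo-orbit so that $|f^kz-f^k(f^nz)|\le 2\b$ for all $k\in\mb Z$, and conclude $f^nz=z$ by expansiveness (Lemma \ref{L:Expansive}). Your bookkeeping of the constants (choosing $2\b$ below the expansiveness constant and $r\le\a/2$) matches, and is in fact slightly more explicit than, the paper's argument.
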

\begin{proof}
Note that all periodic points near $\L$ are in $\L$ since $\L$ is an attractor.
For any $x\in \L$, we will show that for any $\e>0$ there is a periodic point $y\in \L$ with $|y-x|<\e$.   By Lemma \ref{L:Shadow}, there exists $\a\in(0,\e)$ so that every $\a$-pseudo-orbit is $\frac12 \e$-shadowed by a point in $\L$. Since $f|_{\L}$ is non-wandering, there exists $n\in \mb N$ such that
$$f^{n}\left(B\left(x,\frac12\a\right)\cap \L\right)\cap\left( B\left(x,\frac12\a\right)\right)\cap \L\neq \emptyset.$$
Pick an $x'$ from the above set and define
$$x_{nk+l}=f^{l}x' \text{ for all integers } l\in[0,n-1], k\in \mb N,
$$
which is seen to be an $\a$-pseudo-orbit. Thus,  there is a point $y\in \L$ which is $\frac12\e$-shadowing $\{x_{m}\}_{m\in\mb Z}$. Therefore,
$$|f^{k}(f^{n}y)-f^{k}y|\le |f^{n+k}y-x_{n+k}|+|x_{k}-f^{k}y|\le \e \text{ for all }k\in \mb Z.$$
For any $\e>0$ smaller than the $\e$ in Lemma \ref{L:Expansive}, Lemma \ref{L:Expansive} implies that $f^{n}y=y$ with $|y-x|\le |y-x'|+|x'-x|<\e$.

\end{proof}

The following result is the so-called {\em Spectral Decomposition} of the system $f|_\L$.
\begin{lemma}\label{L:SpectralDecomposition}
If $f|_\L$ is uniformly hyperbolic and non-wandering, then $\L=\L_{1}\cup \L_{2}\cup \cdots\cup \L_{k}$  where $\L_{i}$'s are pairwise disjoint compact sets with the following properties:
\begin{itemize}
\item[(a)] $f(\L_{i})=\L_{i}$ and $f|_{\L_{i}}$ is topological transitive;
\item[(b)] $\L_{i}=\L_{1,i}\cup \L_{2,i}\cup\cdots\cup\L_{n_{i},i}$ with $\L_{j,i}$'s being pairwise disjoint compact sets, $f(\L_{j,i})=\L_{j+1,i}$ (set $\L_{n_{i}+1,i}=\L_{1,i}$) and $f^{n_{i}}|_{\L_{j,i}}$ being topological mixing.
\end{itemize}
\end{lemma}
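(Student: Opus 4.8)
The plan is to reproduce Smale's classical spectral decomposition argument, adapting each step to the present infinite-dimensional setting by invoking the tools already established: local canonical coordinates (Lemma \ref{L:CanonicalCoor}), density of periodic points (Lemma \ref{L:DensePeriodicOrb}), expansiveness (Lemma \ref{L:Expansive}), and shadowing (Lemma \ref{L:Shadow}). First I would introduce, on the set $\mathrm{Per}(f|_\L)$ of periodic points, the relation $x\sim y$ iff $W^u_\e(x)$ meets $W^s_\e(y)$ transversally (in the sense that the intersection point, which exists and is unique when the base points are close, lies in $\L$) \emph{and} $W^u_\e(y)$ meets $W^s_\e(x)$; using the local product structure one upgrades this to an equivalence relation on all of $\mathrm{Per}(f|_\L)$ via chains of such intersections. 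One shows each equivalence class, after taking closures $\L_i=\overline{[p_i]}$, is compact (closed subset of the compact $\L$), $f$-invariant, and that there are only finitely many of them: this finiteness is forced by compactness of $\L$ together with the existence of a uniform $\e>0$ from Lemma \ref{L:CanonicalCoor} such that nearby periodic points are automatically equivalent, so the classes are separated by a definite distance, hence finite in number. Disjointness of the $\L_i$ follows from expansiveness (Lemma \ref{L:Expansive}): if two closures met, a point in the intersection would be bi-asymptotic to periodic orbits of two different classes, contradicting the defining relation.

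For part (a), that $\L=\bigcup_i\L_i$, I would use the non-wandering hypothesis plus the shadowing lemma (Lemma \ref{L:Shadow}): given any $x\in\L$ and $\e>0$, non-wandering produces a return, which one splices into a periodic $\a$-pseudo-orbit through an $\e$-neighborhood of $x$; shadowing yields a genuine periodic point of $\L$ within $\e$ of $x$, so $\mathrm{Per}(f|_\L)$ is dense (this is exactly Lemma \ref{L:DensePeriodicOrb}), whence $x$ lies in the closure of some class. That $f|_{\L_i}$ is topologically transitive is the standard consequence of the construction: given two nonempty relatively open sets $V_1,V_2\subset\L_i$, pick periodic points $p_1\in V_1$, $p_2\in V_2$ in the same class; by the chain of stable/unstable intersections and the $\lambda$-lemma-type inclination behavior (here provided by the exponential contraction/expansion in Lemma \ref{L:SUManifolds}), the unstable manifold of $p_1$ accumulates on the orbit of $p_2$, so a forward iterate of $V_1$ meets $V_2$; density of periodic points then promotes this to transitivity of $f|_{\L_i}$.

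For part (b), the cyclic decomposition, I would fix a periodic point $p\in\L_i$ of period $m$ and let $\L_{1,i}=\overline{W^u_\e(p)\cap\L_i}$ saturated appropriately; more precisely one defines $\L_{1,i}$ to be the closure of the stable-saturate of one "local piece" and sets $\L_{j+1,i}=f(\L_{j,i})$, with $n_i$ the least period of this cycle of pieces. One checks the $\L_{j,i}$ are compact, pairwise disjoint (again by expansiveness), cyclically permuted by $f$, cover $\L_i$, and that $f^{n_i}|_{\L_{j,i}}$ is topologically mixing — the mixing being the standard "specification-free" argument: transitivity of $f^{n_i}|_{\L_{j,i}}$ together with the local product structure (Lemma \ref{L:CanonicalCoor}) and the uniform hyperbolic estimates forces that for any two open sets the iterates eventually overlap for \emph{all} large times, not just a syndetic set of times.

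I expect the main obstacle to be purely bookkeeping rather than conceptual: in infinite dimensions one must be careful that every manifold and every intersection point produced stays inside $\L$ (so that $W^u_\e(\cdot)\subset\L$ by C3) and that the holonomy/inclination arguments use only the $C^1$ continuity of $W^s_\e,W^u_\e$ established in Lemma \ref{L:SUManifolds}(iii) and the canonical coordinates of Lemma \ref{L:CanonicalCoor}, never local compactness of $\mathbb H$. Since all the required local structure is already uniform and finite-dimensional along unstable leaves, the classical proof of Smale/Bowen (see \cite{Bowen74}) goes through essentially verbatim, and I would present it as such, indicating at each step which of Lemmas \ref{L:SUManifolds}--\ref{L:DensePeriodicOrb} replaces the corresponding finite-dimensional input.
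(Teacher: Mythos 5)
Your plan is exactly the route the paper takes: its proof of Lemma \ref{L:SpectralDecomposition} consists of noting that periodic points are dense in $\L$ (Lemma \ref{L:DensePeriodicOrb}) and then adapting Bowen's spectral decomposition (Theorem 3.5 of \cite{Bowen}), with the canonical coordinates, expansiveness and shadowing lemmas supplying the local product structure — precisely the ingredients you invoke. Your sketch just spells out the Smale/Bowen argument that the paper cites, so it is correct and essentially the same approach.
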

\begin{proof}
By Lemma \ref{L:DensePeriodicOrb}, the set of periodic points is dense in $\L$. Then,  the proof of Theorem 3.5 in \cite{Bowen} can be adapted under the setting of this lemma, which we refer the reader to \cite{Bowen} for details.

\end{proof}

\subsection{Proof of Theorem \ref{T:SRBFinite}}\label{S:ProofofT7}
In this section, we prove Theorem \ref{T:SRBFinite} by assuming Theorem \ref{T:SRBUnique}. By Lemma \ref{L:SpectralDecomposition}, it is sufficient to prove the following result:
\begin{prop}\label{P:SRBUniqueTransitive}
If $f|_{\L}$ is uniformly hyperbolic, topologically transitive and $(f|_\L)^{-1}$ is Lipchitz continuous, then there is a unique ergodic SRB measure of $f$ with support in $\L$.
\end{prop}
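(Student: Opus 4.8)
The plan is to reduce the topologically transitive case to the topologically mixing case already handled in Theorem \ref{T:SRBUnique}, using the finer structure provided by Lemma \ref{L:SpectralDecomposition}(b). First I would observe that, since $f|_\L$ is uniformly hyperbolic and topologically transitive (hence in particular non-wandering), Lemma \ref{L:SpectralDecomposition}(b) gives a cyclic decomposition $\L=\L_1'\cup\cdots\cup\L_p'$ (writing $p=n_i$, $\L_j'=\L_{j,i}$) into pairwise disjoint compact sets with $f(\L_j')=\L_{j+1}'$ (indices mod $p$) and with $g:=f^p|_{\L_1'}$ topologically mixing. I would also need that $g|_{\L_1'}$ is still uniformly hyperbolic with the splitting H\"older continuous: the hyperbolicity constants simply get replaced by their $p$-th powers, the splitting is the restriction of $E^u\oplus E^{cs}$, and H\"older continuity of the splitting follows from the hypothesis that $(f|_\L)^{-1}$ is Lipschitz (as noted in the paper's Remark, this is the stated sufficient condition, and $(f^p|_\L)^{-1}$ is then Lipschitz as well); moreover $g$ satisfies C1)--C3) relative to $\L_1'$ since $\L_1'$ is compact and $f^p$-invariant, $Df^p$ inherits injectivity and the Kuratowski-norm decay from $Df$, and $\L_1'$ is an attractor for $g$ because $\cap_{n\ge 0} g^n(U')=\L_1'$ for $U'$ a suitable neighborhood (intersect the basin $U$ with a neighborhood that respects the cyclic pieces). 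Hence Theorem \ref{T:SRBUnique} applies to $g$ on $\L_1'$ and yields a unique SRB measure $\nu_1$ for $g$, which is mixing for $g$.

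Next I would build the candidate SRB measure for $f$ on all of $\L$ by symmetrization: set
\[
\mu=\frac1p\sum_{j=0}^{p-1} f^j_*\nu_1,
\]
where $f^j_*\nu_1$ is supported on $\L_{j+1}'$. One checks $f_*\mu=\mu$ directly from $f^p_*\nu_1=\nu_1$. To see $\mu$ is an SRB measure for $f$ I would argue locally: on each $\L_j'$ the conditional measures of $f^{j-1}_*\nu_1$ along unstable manifolds are the pushforwards under $f^{j-1}$ of the conditionals of $\nu_1$, which are absolutely continuous (with densities given by the infinite product of inverse unstable Jacobians, exactly as in the construction in Section \ref{S:ExistSRB}); since $f^{j-1}$ maps unstable discs diffeomorphically onto unstable discs with bounded distortion, absolute continuity of the conditionals is preserved, so $\mu$ has absolutely continuous conditionals on unstable manifolds, i.e. $\mu$ is SRB.

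For ergodicity of $\mu$ under $f$: since $\nu_1$ is mixing (hence ergodic) for $g=f^p$ and the $\L_j'$ are permuted cyclically by $f$, the standard cyclic-decomposition argument shows $\mu$ is ergodic for $f$ — any $f$-invariant set $A$ meets each $\L_j'$ in a $g$-invariant set, which has $\nu_1$-measure $0$ or full on $\L_1'$, and $f$-invariance forces the same on all pieces. Finally, for uniqueness I would argue that any ergodic SRB measure $\mu'$ of $f$ supported in $\L$ must, by ergodicity, give full measure to a single "transitive cycle", and by topological transitivity of $f|_\L$ that cycle is all of $\L$; restricting $\mu'$ to $\L_1'$ and normalizing gives a $g$-invariant measure whose conditionals on unstable manifolds of $g$ are still absolutely continuous (unstable manifolds for $g$ and for $f$ coincide on $\L_1'$), hence an SRB measure for $g$ on $\L_1'$; by the uniqueness clause of Theorem \ref{T:SRBUnique} it equals $\nu_1$, and then $f$-invariance forces $\mu'=\mu$.

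I expect the main obstacle to be verifying carefully that the hypotheses of Theorem \ref{T:SRBUnique} genuinely transfer to $(g,\L_1')$ — in particular that $\L_1'$ is an \emph{attractor} for $f^p$ with an open basin inside $\mb H$ (not merely a compact invariant set), and that the H\"older continuity of the splitting for $g$ is available; the bounded-distortion bookkeeping needed to propagate absolute continuity of unstable conditionals through the finitely many maps $f,\dots,f^{p-1}$ is routine given Lemma \ref{L:DeterminateLip} and Corollary \ref{C:DetUniform}, but it must be done with some care because these lemmas were stated for the local unstable discs $W^u_\delta$ and one must check the pushforwards land in (unions of) such discs.
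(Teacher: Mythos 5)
Your proposal is correct in its overall skeleton and shares the paper's first move — the spectral decomposition of Lemma \ref{L:SpectralDecomposition} into a cycle of pieces on which $f^{p}$ is topologically mixing, followed by an application of Theorem \ref{T:SRBUnique} to $f^{p}$ on one piece — but it transfers uniqueness back to $f$ by a genuinely different mechanism. The paper never touches conditional measures at this stage: it uses the equivalence of Theorem \ref{T:SRBUnique} with the entropy-formula characterization (\ref{E:UniqueEntropyFormula}), applies Theorem \ref{T:EntropyFormula} to an arbitrary SRB measure $\mu$ of $f$, and with an Abramov-type entropy identity shows that each normalized restriction $\mu_i'=n\,\mu|_{\L_i}$ satisfies Pesin's formula for $f^{n}$, hence equals the unique equilibrium state; existence of some SRB measure for $f$ comes for free from Theorem \ref{T:SRBExist}, so nothing has to be verified about restrictions or pushforwards being SRB. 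You instead argue directly at the level of Definition \ref{D:SRB}: the restriction of an SRB measure of $f$ to a cyclic piece is SRB for $f^{p}$ (because $W^{u}$ for $f$ and $f^{p}$ coincide and subordinate partitions restrict), and the averaged measure $\frac1p\sum_j f^{j}_*\nu_1$ has absolutely continuous conditionals since $f$ maps unstable leaves to unstable leaves with nonvanishing unstable Jacobian. This is more elementary and self-contained (it does not invoke the Li--Shu entropy formula beyond what Theorem \ref{T:SRBUnique} already uses), at the cost of the extra checks you correctly identify: the coincidence of unstable laminations for $f$ and $f^{p}$, the existence of subordinate partitions on the other pieces, and — the point you flag as the main obstacle — that $(f^{p},\L_{j,i})$ satisfies C1)--C3), in particular the attractor/basin condition, so that Theorems \ref{T:SRBExist} and \ref{T:SRBUnique} apply. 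Note that the paper glosses over this last point in exactly the same way when it applies Theorem \ref{T:SRBUnique} to $f^{n}|_{\L_i}$, so your flagged obstacle is real but not specific to your route; it can be handled along the lines you sketch (disjoint neighborhoods of the clopen-in-$\L$ pieces intersected with the basin $U$). Your construction of $\mu$ could also be bypassed entirely by citing Theorem \ref{T:SRBExist} for $f$, as the paper implicitly does, since the uniqueness argument then identifies any SRB measure with $\frac1p\sum_j f^{j}_*\nu_1$.
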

\begin{proof}
Note that $f|_{\L}$ being transitive implies $f|_{\L}$ being non-wandering. Since $f|_{\L}$ is topologically transitive, by Lemma \ref{L:SpectralDecomposition}, there exist pairwisely disjoint compact sets $\{\L_{i}\}_{1\le i\le n}$ such that $f(\L_{i})=\L_{i+1}$ (set $\L_{n+1}=\L_{1}$) with $f^{n}|_{\L_{i}}$ being topological mixing. By Theorem \ref{T:SRBUnique}, there is a unique SRB $\mu_{i}$ of $f^{n}|_{\L_{i}}$ for each $1\le i\le n$, which is mixing.  Under the setting of this section, an equivalent statement of Theorem \ref{T:SRBUnique} is (\ref{E:UniqueEntropyFormula}), for details we refer the reader to \cite{LSh}. Then,  we have that for any $1\le i\le n$, $\mu_{i}$ is the unique measure satisfying
$$h_{\mu_{i}}(f^{n})=\int_{\L_{i}}\sum\l_{j,n}^{+}m_{j}d\mu_{i},$$
where $\l_{j,n}$ are Lyapunov exponents of $f^{n}|_{\L_{i}}$, $m_{j}$ is the multiplicity of $\l_{j,n}$, $\l_{j,n}^{+}=\max\{\l_{i},0\}$ and the summation is over all Lyapunov exponents. It follows from assumption C2) (ii) that this summation is finite on each $x\in \L$. And the uniform hyperbolicity and transitivity of $f|_{\L}$ implies that this summation is uniformly bounded. By the definition of Lyapunov exponents, we have that they are invariant along any trajectory and  $\l_{j,n}=n\l_{j,1}$, where $\l_{j,1}$'s are the corresponding Lyapunov exponents of $f|_{\L}$.

Let   $\mu$ be an SRB measure of $f$ with support in $\L$.  Since $\mu$ is $f$-invariant, $\mu(\L_{i})=\frac1n$ for all $1\le i\le n$. Denote $\mu'_{i}=n\mu|_{\L_{i}}$ which is the normalized measure of $\mu$ restricted on $\L_{i}$.
  By Theorem \ref{T:EntropyFormula} and the $f$-invariance of Lyapunov exponents, we have that
\begin{align*}
h_{\mu}(f)&=\int_{\L}\sum\l_{j,1}^{+}m_{j}d\mu
=n\int_{\L_{i}}\sum\l_{j,1}^{+}m_{j}d\mu=\int_{\L_{i}}\sum\l_{j,n}^{+}m_{j}d\mu=\frac1nh_{\mu'_{i}}(f^{n}),\ 1\le i\le n.
\end{align*}
Thus,  for all $1\le i\le n$, $$h_{\mu'_{i}}(f^{n})=\int_{\L_{i}}\sum\l_{j,n}^{+}m_{j}d(n\mu)=\int_{\L_{i}}\sum\l_{j,n}^{+}m_{j}d\mu'_{i}.$$
Hence, by (\ref{E:UniqueEntropyFormula}), $\mu|_{\L_{i}}=\frac 1n \mu'_{i}=\frac1n \mu_{i}$ for each $1\le i\le n$, which implies that $\mu=\frac1n\sum_{i=1}^n\mu_i$ is unique. Since each $\mu_{i}$ is $f^{n}$ mixing, $\mu$ is ergodic. The proof is complete.
\end{proof}

\subsection{Proof of Theorem \ref{T:SRBUnique}}\label{S:ProofofT5}
The proof of uniqueness of SRB measure follows from the uniqueness of equilibrium state for a given H\"older continuous function which will be precisely stated later in Proposition  \ref{P:GibState0}. The uniqueness of equilibrium state is established  by using  a Markov partition of the system and the H\"older continuity of the invariant splitting.  The construction of a Markov partition given by Bowen for Axiom A systems is also valid for hyperbolic attractors in a Hilbert space. The proof is almost identical, which will be presented in Appendix  \ref{S:Markov}.
\begin{prop}\label{P:GibState0}
Assume that $f|_{\L}$ is topologically mixing. For every H\"{o}lder continuous function  $\phi:\L\to \mb R$, there exists a unique equilibrium state $\mu_{\phi}\in \mc M_f(\L)$ with respect to $f$.
\end{prop}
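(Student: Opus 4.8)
The plan is to reduce the statement to the classical thermodynamic formalism on a topologically mixing subshift of finite type, via the Markov partition furnished by Appendix~\ref{S:Markov}. First I would fix a Markov partition $\mc R=\{R_1,\dots,R_N\}$ of $\L$ all of whose rectangles have diameter smaller than the expansivity constant of Lemma~\ref{L:Expansive} and than the constant $\d$ on which the canonical coordinates of Lemma~\ref{L:CanonicalCoor} are defined. Let $A=(a_{ij})$ be the transition matrix, $a_{ij}=1$ iff $\mathrm{int}(R_i)\cap f^{-1}(\mathrm{int}(R_j))\neq\emptyset$, and let $(\Si_A,\s)$ be the associated two-sided subshift of finite type. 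As in Bowen's construction, the coding map $\pi:\Si_A\to\L$, $\pi(\underline x)=\bigcap_{n\in\mb Z}f^{-n}(\overline{R_{x_n}})$, is a continuous surjection with $\pi\circ\s=f\circ\pi$, it is uniformly bounded-to-one, and it is injective over the complement of $\bigcup_{n\in\mb Z}f^n\big(\bigcup_i\partial R_i\big)$. Topological mixing of $f|_\L$ transfers to $(\Si_A,\s)$: some power $A^m$ is strictly positive.

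Next I would transport the potential and apply the transfer-operator machinery upstairs. By the uniform contraction/expansion along stable and unstable discs (Lemma~\ref{L:SUManifolds}(ii)), the diameter of the cylinder $\pi([x_{-n}\cdots x_n])$ decays at a fixed exponential rate, so $\phi\circ\pi$ is H\"older continuous on $(\Si_A,d_\theta)$ for a suitable $\theta\in(0,1)$; here the H\"older continuity of the splitting (which, as noted in the introduction, follows from Lipschitz continuity of $(f|_\L)^{-1}$) is what makes the canonical coordinates, and hence the coding, H\"older regular, so that this estimate and the fact that equilibrium states give zero mass to $\bigcup_i\partial R_i$ go through. After the usual reduction of $\phi\circ\pi$ to a cohomologous function on the one-sided shift, the Ruelle transfer operator has a simple leading eigenvalue $e^{P_\s(\phi\circ\pi)}$ with a spectral gap (Ruelle, Bowen; see the appendices), yielding a unique $\s$-invariant Gibbs state $\hat\mu_\phi\in\mc M_\s(\Si_A)$ which is the unique equilibrium state for $\phi\circ\pi$, i.e.\ the unique measure realizing
$$
\sup_{\hat\nu\in\mc M_\s(\Si_A)}\Big(h_{\hat\nu}(\s)+\int(\phi\circ\pi)\,d\hat\nu\Big)=h_{\hat\mu_\phi}(\s)+\int(\phi\circ\pi)\,d\hat\mu_\phi .
$$

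Then I would push the measure down. Put $\mu_\phi=\pi_*\hat\mu_\phi\in\mc M_f(\L)$. Since $\pi$ is a uniformly bounded-to-one factor map, the Abramov--Rokhlin relative entropy formula gives $h_{\hat\nu}(\s)=h_{\pi_*\hat\nu}(f)$ for every $\hat\nu\in\mc M_\s(\Si_A)$, and every $\nu\in\mc M_f(\L)$ admits a lift $\hat\nu$ with $\pi_*\hat\nu=\nu$; combining these with $\int(\phi\circ\pi)\,d\hat\nu=\int\phi\,d(\pi_*\hat\nu)$ one obtains $P_f(\phi)=P_\s(\phi\circ\pi)$ and that $\mu_\phi$ attains the variational supremum for $\phi$, i.e.\ is an equilibrium state. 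For uniqueness, if $\nu\in\mc M_f(\L)$ is any equilibrium state for $\phi$ then every lift $\hat\nu$ of $\nu$ satisfies $h_{\hat\nu}(\s)+\int(\phi\circ\pi)\,d\hat\nu=h_\nu(f)+\int\phi\,d\nu=P_\s(\phi\circ\pi)$, so every such $\hat\nu$ equals $\hat\mu_\phi$ by the uniqueness upstairs; since $\hat\mu_\phi$ is carried by the set where $\pi$ is injective, $\nu=\pi_*\hat\mu_\phi=\mu_\phi$.

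The main obstacle, as always in this circle of ideas, is the boundary bookkeeping of the Markov partition together with the non-invertibility and non-local-compactness features of the ambient system: one must verify that $\pi$ is genuinely bounded-to-one, that every equilibrium state of a H\"older potential assigns zero mass to $\bigcup_{n\in\mb Z}f^n\big(\bigcup_i\partial R_i\big)$, and that metric entropy is preserved under $\pi$ in both directions. These are exactly the places where the expansiveness, shadowing, and spectral-decomposition lemmas of Section~\ref{S:AxiomA} and the H\"older continuity of the splitting enter; the infinite-dimensionality of $\mb H$ introduces no extra trouble here because the unstable discs are finite-dimensional, the stable discs finite-codimensional, and all of the hyperbolicity estimates in Lemma~\ref{L:SUManifolds} are uniform over $\L$.
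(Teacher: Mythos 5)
Your overall route is the same as the paper's: code $f|_\L$ by the Markov partition of Appendix~\ref{S:Markov}, check that $\phi\circ\pi$ lies in $\mc F_A$, invoke the Gibbs/equilibrium theory for the mixing subshift, push the Gibbs measure down, and lift an arbitrary equilibrium state for uniqueness. The genuine gap is in how you transfer entropy and pressure downstairs. You rest this on the assertion that $\pi$ is \emph{uniformly bounded-to-one}, so that (via an Abramov--Rokhlin/Ledrappier--Walters type argument) $h_{\hat\nu}(\s)=h_{\pi_*\hat\nu}(f)$ for \emph{every} lift; but bounded-to-one-ness of the coding is nowhere proved in this setting, you do not prove it, and it is exactly the kind of boundary bookkeeping you flag as ``to be verified.'' Similarly, your alternative claim that every equilibrium state of a H\"older potential gives zero mass to $\cup_{n}f^n(\cup_i\partial R_i)$ is left unverified. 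The paper avoids both claims: it shows that the single Gibbs measure $\mu_{\phi^*}$ upstairs satisfies $\mu_{\phi^*}(\pi^{-1}\partial^s\mc R)=\mu_{\phi^*}(\pi^{-1}\partial^u\mc R)=0$, using only that these are closed sets with $\s D_s\subset D_s$, $\s^{-1}D_u\subset D_u$ (Lemma~\ref{L:RBInc}), that $\mu_{\phi^*}$ is ergodic, and that a Gibbs measure charges every nonempty open set; hence $\pi$ is injective $\mu_{\phi^*}$-a.e., the pushforward is measure-theoretically conjugate to the shift, and $h_{\mu_\phi}(f)=h_{\mu_{\phi^*}}(\s)$. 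For uniqueness one then needs only the trivial inequality $h_{\hat\nu}(\s)\ge h_{\pi_*\hat\nu}(f)$ for a lift supplied by Lemma~\ref{L:PreImaOfPi}, not equality. If you want to keep your version, you must actually prove the bounded-to-one claim (Bowen's finite-dimensional argument would have to be redone here); otherwise replace that step by the invariance-plus-Gibbs argument above.

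A secondary, more minor point: you attribute the H\"older regularity of $\phi\circ\pi$ partly to the H\"older continuity of the splitting (equivalently, Lipschitz continuity of $(f|_\L)^{-1}$). That hypothesis is not needed for Proposition~\ref{P:GibState0}: the estimate $var_k(\phi\circ\pi)\le c\,\a^{\t k}$ follows from the uniform two-sided contraction of Lemma~\ref{L:SUManifolds} alone, packaged as Lemma~\ref{L:QExpansive}, together with the H\"older continuity of $\phi$. The H\"older continuity of the splitting enters only later, in the proof of Theorem~\ref{T:SRBUnique}, to guarantee that the particular potential $\phi(x)=-\log|\det(Df|_{E^u_x})|$ is H\"older.
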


Now we set $\phi(x)=-\log|\det(Df|_{E^u_x})|$ for all $x\in \L$. By the multiplicative ergodic theorem and Birkhoff Ergodic Theorem, we have that, for any $\mu\in \mc M_f(\L)$
\begin{align}\begin{split}\label{E:PositiveLyaExp}
&\sum\l_i^+(x)m_i(x)=\lim_{n\to\infty}\frac1n\log\prod_{i=0}^{n-1}|\det(Df|_{E^u_{f^ix}})|\\
&=\lim_{n\to\infty}\frac1n\sum_{i=0}^{n-1}\left(-\phi(f^ix)\right)=-\overline{\phi}(x),\ \text{ for }\mu-a.e.\ x\in \L,
\end{split}
\end{align}
where $\l_i$'s are Lyapunov exponents, $m_i$ is the multiplicity of $\l_i$, $\l_i^+=\max\{\l_i,0\}$, the summation is over all the Lyapunov exponents, and $\overline{\phi}$ is an $f$-invariant Borel function satisfying $\int_\L \overline \phi d\mu =\int_\L  \phi d\mu$. By Theorem \ref{T:RuelleIne}, we have that
$$h_\mu(f)\le \int_\L \sum\l_i^+m_i d\mu=\int_\L(-\overline\phi)d\mu=-\int_\L\phi d\mu\  i.e.\  h_\mu(f)+\int_\L\phi d\mu\le 0.$$
Actually, "$=$" holds in above inequality when $\mu$ is an SRB measure (due to Theorem \ref{T:EntropyFormula}). By Theorem \ref{T:SRBExist} proved in Section \ref{S:ExistSRB}, a SRB measure exists under the current setting. Then,  by applying Theorem \ref{T:VarPrin2}, we have that
$$P_{f}(\phi)=\sup_{\mu\in \mc M_f(\L)}\left(h_\mu(f)+\int_\L\phi d\mu\right)=0.$$
Since the splitting $\mb H=E^{u}\oplus E^{s}$ is H\"older continuous on $\L$ by Proposition \ref{P:HolderSplit} given in the next subsection,
and $f$ is $C^{2}$, we have that $\phi:\L\to \mb R$ is H\"{o}lder continuous. Then,  Proposition \ref{P:GibState0} implies that
\begin{align}\begin{split}\label{E:UniqueEntropyFormula}
&\text{There exist a unique }\mu\in \mc M_f(\L)\text{ such that }\\
&h_\mu(f)+\int_\L\phi d\mu=P_f(\phi)=0.
\end{split}\end{align}
Let $\nu$ be a SRB measure supported on $\Lambda$. Then, by  Theorem \ref{T:EntropyFormula}, one has
\[
h_\nu(f)=\int_\L \sum\l_i^+(x)m_i(x)d\nu,
\] which together with (\ref{E:PositiveLyaExp}) yield
\[
h_\nu(f)+\int_\L\phi d\nu=0.
\] Using (\ref{E:UniqueEntropyFormula}), we have $\mu=\nu$. This completes the
  proof of Theorem \ref{T:SRBUnique}. $\hfill\square$

\medskip

\subsection{Continuity of Invariant Distributions.}\label{S:ContinuousSplit}
In this subsection, we investigate the continuity of invariant splittings.  In the first part, the results proved is in a more general manner and the setting is independent of the main setting; In the second part, we go back to the main setting of this paper.
\subsubsection{General Results}\label{S:GeneralResults}
Let $f: X\to  X$ be a $C^{1+\a}$ map on a separable Banach space $X$ and $\L$ be a $f$-invariant compact set on which $f$ is homeomorphism. Assume that $Df_x$ is injective for all $x\in \L$.

For given numbers $l_0>0$, $\l_2<\l_1$ and $0<\d<<\l_1-\l_2$. Let $\G^u$ be the collection of points $x\in\L$ satisfying that
\begin{itemize}
\item[(U1)]  $X=E^u(x)\oplus E^s(x)$;
\item[(U2)] There exist splittings $X=E^u_n(x)\oplus E^s_n(x)$ with $E^u_0(x)=E^u(x), E^s_0(x)=E^s(x)$,\\
 and $Df_{f^{-(n+1)}x}E^u_{n+1}(x)=E^u_n(x)$, $Df_{f^{-(n+1)}x}E^s_{n+1}(x)\subset E^s_n(x)$ for all $n\ge 0$;
\item[(U3)] $\left|Df^n_{f^{-n}x}v\right|\ge l_0^{-1}e^{n\l_1}|v|$ for all $v\in E^u_n(x)$ and $n\ge 0$;
\item[(U4)] $\left\|Df^n_{f^{-n}x}|_{E^{s}_n(x)}\right\|\le l_0e^{n\l_2}$ for all $n\ge 0$;
\item[(U5)] $\max\{\|\pi^u_n(x)\|,\|\pi^{s}_n(x)\|\}\le l_0e^{n\d}$ for all $n\ge 0$, where $\pi^u_n(x)$ and $\pi^s_n(x)$ are the projections associating to splitting $X=E^u_n(x)\oplus E^s_n(x)$.
\end{itemize}

\begin{prop}\label{P:E^uCont}
$E^u$ is uniformly continuous on $\G^u$. If further assume that $(f|_\L)^{-1}$ is Lipchitz continuous, then $E^u$ is H\"older continuous on $\G^u$ with exponent
$$\b=\frac{\l_1-\l_2-\d}{\log a-\l_1}\a,$$
where $a>sup_{x\in \L}\{\|Df_x\|\}\left(\max\left\{Lip\left((f|_\L)^{-1}\right),1\right\}\right)^\a$.
\end{prop}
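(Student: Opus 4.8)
The plan is to prove the continuity and Hölder continuity of $E^u$ on $\G^u$ by a standard cone-field/graph-transform argument adapted to the backward dynamics, using the fact that the backward iterates contract the unstable direction and expand (relatively) the stable direction when compared in charts along orbits. First I would set up, for each $x\in\G^u$, the sequence of linearizing-type charts along the backward orbit $\{f^{-n}x\}_{n\ge 0}$, orthogonalizing the splittings $X=E^u_n(x)\oplus E^s_n(x)$ as in Subsection~\ref{S:AxiomA}; conditions (U1)--(U5) are precisely what is needed to put the backward-orbit maps into the abstract setting of Section~\ref{S:InvariantManifolds} (with the roles of $E^u$ and $E^s$ appropriately interpreted, the unstable bundle now playing the role that $W^u$ plays there). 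The key point is that $E^u(x)$ is the ``stable direction'' of the inverse cocycle along the backward orbit: if one represents a nearby candidate subspace as a graph $\{(\xi, w(\xi))\}$ over $E^u_0(x)$ in the chart at $x$, then pulling back under $Df^{-1}$ repeatedly contracts the graph toward $E^u$ at rate governed by $e^{\l_2-\l_1+\d}<1$, exactly as in Lemma~\ref{L:DiscLemma1} and Lemma~\ref{L:Graph1}.

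The continuity statement then follows the scheme of Lemma~\ref{L:ContinuityOnMaps}: given two points $x,y\in\G^u$ close to each other, the backward-orbit chart maps agree to within a small error for the first $N$ steps (using uniform continuity of $f$, of $(f|_\L)^{-1}$, of the projections, and the uniform bounds in (U3)--(U5)), and the graph transform contracts any initial discrepancy at a definite geometric rate. Choosing $N$ large makes the contracted part small; choosing $x,y$ close makes the ``first $N$ steps'' error small. Hence $E^u(y)\to E^u(x)$ in the gap metric as $y\to x$ within $\G^u$, and the argument is uniform since all constants in (U3)--(U5) are uniform over $\G^u$, giving uniform continuity.

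For the Hölder statement one quantifies this trade-off. With the Lipschitz assumption on $(f|_\L)^{-1}$, the backward orbits of $x$ and $y$ stay within distance $\mathrm{const}\cdot\big(\mathrm{Lip}((f|_\L)^{-1})\big)^{n}|x-y|$ of each other for $n$ steps, and the $C^{1+\a}$-ness of $f$ gives that the discrepancy introduced at step $n$ in the relevant graph-transform estimate is of order $\big(\mathrm{Lip}((f|_\L)^{-1})\big)^{n\a}|x-y|^\a$, more precisely bounded by a geometric term involving $a^{n}$ where $a>\sup_{x}\|Df_x\|\cdot(\max\{\mathrm{Lip}((f|_\L)^{-1}),1\})^\a$. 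Summing the resulting series — contraction rate $e^{\l_2-\l_1+\d}$ from the graph transform against growth $a^{n\a}$ (equivalently $e^{n\a\log a}$) from the chart discrepancy — and then optimizing over the cutoff $N$ in the manner of balancing $e^{-N(\l_1-\l_2-\d)}$ against $a^{N\a}|x-y|^\a$ yields the exponent
\[
\b=\frac{\l_1-\l_2-\d}{\log a-\l_1}\,\a.
\]
The main obstacle I expect is bookkeeping the constants: making sure that the chart maps along the backward orbit genuinely satisfy Conditions (I)--(III) of Section~\ref{S:InvariantManifolds} with parameters controlled uniformly by $l_0,\l_1,\l_2,\d$ (not by $x$), and tracking carefully how the Lipschitz constant of $(f|_\L)^{-1}$ enters the backward distance estimate and hence the exponent — the algebra producing exactly the stated $\b$ is where one must be careful, but conceptually it is the familiar contraction-versus-expansion optimization used in the Hölder regularity of Anosov splittings.
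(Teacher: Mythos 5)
Your overall architecture -- compare the derivative cocycles along the backward orbits of $x$ and $y$ for a finite time $n$, exploit a per-step contraction of order $e^{-(\l_1-\l_2-\d)}$, and optimize over the cutoff -- is indeed the shape of the paper's argument (its Lemma \ref{L:Df^nBackHolder} gives $\|Df^n_{f^{-n}x}-Df^n_{f^{-n}y}\|\le Ma^n|x-y|^\a$, and Lemma \ref{L:E^nGap} converts this into a gap estimate). But two steps of your sketch are genuinely wrong or missing. First, the contraction mechanism is reversed: graphs over $E^u_0(x)$ are contracted toward $E^u(x)$ by applying $Df$ \emph{forward} (from time $-n$ up to time $0$), which is what Lemmas \ref{L:Graph1} and \ref{L:DiscLemma1} actually encode; pulling back under $Df^{-1}$ \emph{expands} slopes over $E^u$. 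Moreover, writing $E^u(y)$ as a bounded-slope graph over $E^u_0(x)$ presupposes uniform transversality of $E^u(y)$ to $E^s(x)$, which is part of the conclusion, and invoking ``uniform continuity of the projections'' in the continuity step is circular for the same reason. The paper avoids both problems by a cone-type criterion: it only uses that a unit vector $v\in E^u(y)$ satisfies $|Df^{-n}_yv|\le l_0e^{-n\l_1}$ (from (U2)--(U3) at $y$), decomposes $Df^n_{f^{-n}y}=Df^n_{f^{-n}x}+(\hbox{difference})$, and concludes that such $v$ is strongly expanded by the cocycle along the orbit of $x$ and hence, after applying that cocycle, nearly aligned with $E^u(x)$; no graph representation of $E^u(y)$ and no continuity of projections enters.

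Second, the optimization you describe does not yield the stated exponent. Accumulating the per-step errors against the contraction is not even summable in general: the step-$k$ discrepancy is of order $(L^k|x-y|)^\a$, and $\sum_k e^{-k(\l_1-\l_2-\d)}L^{k\a}$ diverges unless $L^\a<e^{\l_1-\l_2-\d}$. And balancing $e^{-N(\l_1-\l_2-\d)}$ against $a^{N\a}|x-y|^\a$ (or against $Ma^N|x-y|^\a$, which is the correct form of the discrepancy) gives the exponent $\a(\l_1-\l_2-\d)/(\a\log a+\l_1-\l_2-\d)$ (respectively with $\log a$ in place of $\a\log a$), which is strictly smaller than the claimed $\b=\a(\l_1-\l_2-\d)/(\log a-\l_1)$. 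The correct choice of $n$, as in Lemma \ref{L:E^nGap}, is dictated not by matching the error against the angle contraction but by keeping the accumulated discrepancy below the unstable expansion, $Ma^n|x-y|^\a\sim e^{n\l_1}$: this is precisely the condition guaranteeing that $E^u_n(y)$ lies in the expanding region $R_{A_n}$ of $A_n=Df^n_{f^{-n}x}$, and only then does the alignment estimate $e^{-n(\l_1-\l_2-\d)}$ give the gap bound. That comparison is the source of the $\log a-\l_1$ in the denominator; without it your argument either does not close or proves a strictly weaker H\"older exponent.
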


\begin{proof}
In what follows, the tangent spaces
are identified with $X$, so it makes sense to write
$u-v$ where $u$ is in the tangent space at $x$ and $v$ is in the tangent space at $y$ with
$x \ne y$.

To prove the uniform continuity of $E^u$, we  consider
$x,y\in\G^u$ and a unit vector $v\in E^u(y)$,  estimate
$|\pi^{s}_xv|$ by iterating {\it backwards}, obtaining
\begin{align*}
 \begin{split}
&|\pi^{s}_0(x) v| =|\pi^{s}_0(x)Df^n_{f^{-n}(y)}Df^{-n}_yv|\\
& \le |\pi^{s}_0(x)Df^n_{f^{-n}(x)}Df^{-n}_yv| +
|\pi^{s}_0(x)(Df^n_{f^{-n}(y)}-Df^n_{f^{-n}
(x)})Df^{-n}_yv|\\
& = |Df^n_{f^{-n}(x)}\pi^{s}_n(x)Df^{-n}_yv| +
|\pi^{s}_0(x)(Df^n_{f^{-n}(y)}
-Df^n_{f^{-n}
(x)})Df^{-n}_yv|\\
&\le l_0^3e^{n(-\l_1+\l_2+\d)}+l_0^2e^{-n\l_1}\|Df^n_{f^{-n}(y)}
-Df^n_{f^{-n}
(x)}\|.
 \end{split}
\end{align*}
Given $\e>0$, there exists $n$ such that
$$l_0^3e^{n(-\l_1+\l_2+\d)}\le\frac12\e<l_0^3e^{(n-1)(-\l_1+\l_2+\d)}.$$
For such $n$, since $x\mapsto f^{-n}(x)$ is continuous on $\L$ and
$f$ is $C^{1+\a}$, there exists $\D>0$ such that if $|x-y|\le
\D$, then $l_0^2e^{n(-\l_1)}\|Df^{n}_{f^{-n}(x)}-Df^{n}_{f^{-n}(y)}\|\le
\frac12\e$. Note that $\D$ depends on the system constants and $\e$ only, thus $E^u$ is uniformly continuous.

In the second part, we assume that $(f|_\L)^{-1}$ is Lipchitz continuous and show that $E^u$ is H\"older continuous on $\G^u$. We begin with the following two lemmas:

\begin{lemma}\label{L:E^nGap}
Let $\{A_n\}_{n\ge 0}$ and $\{B_n\}_{n\ge 0}$ be two sequences of bounded linear operators in $\mc L( X)$ such that for some $\D>0$ and $c>0$,
$$\|A_n-B_n\| \le \D c^n, \text{ for all }n\ge 0.$$
Assume that there exist sequences of subspaces $ \{E_{A_n}^u\}_{n\ge 0}$, $\{E_{A_n}^s\}_{n\ge 0}$, $\{E_{B_n}^u\}_{n\ge 0}$, $\{E_{B_n}^s\}_{n\ge 0}$ and numbers $\mu_2<\mu_1$, $\d<<\mu_1-\mu_2$ and $C>1$ such that for all $n\ge 0$,
\begin{align*}
& X= E_{A_n}^u\oplus E_{A_n}^s \text{ with } \max\{ \|\pi_{A_n}^u\|,\|\pi_{A_n}^s\|\}\le Ce^{n\d};\\
& X= E_{B_n}^u\oplus E_{B_n}^s \text{ with } \max\{ \|\pi_{B_n}^u\|,\|\pi_{B_n}^s\|\}\le Ce^{n\d};\\
&A_n(E_{A_n}^u)=E_{A_0}^u; \quad A_n(E_{A_n}^s)\subset E_{A_0}^s;\\
&B_n(E_{B_n}^u)=E_{B_0}^u;\quad B_n(E_{B_n}^s)\subset E_{B_0}^s;\\
&|A_nv|\le C e^{n\mu_2}|v| \text{ for all }v\in E_{A_n}^s;\quad |A_nv|\ge C^{-1} e^{n\mu_1}|v| \text{ for all }v\in E_{A_n}^u;\\
&|B_nv|\le C e^{n\mu_2}|v| \text{ for all }v\in E_{B_n}^s;\quad |B_nv|\ge C^{-1} e^{n\mu_1}|v| \text{ for all }v\in E_{B_n}^u.
\end{align*}
Then,  for $\D<\frac12C^{-1}$ and $c>e^{\mu_1}$,
$$d(E_{A_0}^u,E_{B_0}^u)< 4C^3e^{\mu_1-\mu_2-\d}(2C)^{\frac{\mu_1-\mu_2-\d}{\log c-\mu_1}}\D^{\frac{\mu_1-\mu_2-\d}{\log c-\mu_1}}.$$
\end{lemma}
\begin{proof}
Set
$$R_{A_n}=\left\{v\in X\big|\ |A_nv|\ge \frac1{2C} e^{n\mu_1}|v|\right\},$$
and
$$R_{B_n}=\left\{v\in X\big|\ |B_nv|\ge \frac1{2C} e^{n\mu_1}|v|\right\}.$$
For $v\in  X$, set $v_1=\pi_{A_n}^uv\in E_{A_n}^u$ and  $v_2=\pi_{A_n}^sv\in E_{A_n}^s$, then
$$\max\{|v_1|,|v_2|\}\le Ce^{n\d}|v|,$$
and
$$|A_nv_2|\le Ce^{n\mu_2}|v_2|\le C^2e^{n(\mu_2+\d)}|v|.$$
If  $v\in R_{A_n}$, then
\begin{equation}\label{E:GapA_nv}
\di(A_nv,E_{A_0}^u)\le |A_nv_2|\le C^2e^{n(\mu_2+\d)}|v|\le 2C^3e^{n(\mu_2+\d-\mu_1)}|A_nv|.
\end{equation}
For a given $\D\in (0,\frac12C^{-1})$ and $c>e^{\mu_1}$, there exists $n\ge 0$ such that
$$\frac12C^{-1}\frac{e^{(n+1)\mu_1}}{c^{n+1}}\le \D<\frac12C^{-1}\frac{e^{n\mu_1}}{c^n}.$$
For such $n$ and each $w\in E_{B_n}^u$, we have that
\begin{align*}
|A_nw|&\ge |B_nw|-\|A_n-B_n\||w|\ge C^{-1}e^{n\mu_1}|w|-\D c^n|w|\\
&\ge C^{-1}e^{n\mu_1}|w|-\frac12 C^{-1}e^{n\mu_1}|w|\ge  \frac12 C^{-1}e^{n\mu_1}|w|.
\end{align*}
So $w\in R_{A_n}$ and therefore $E_{B_n}\subset R_{A_n}$. Alternatively, for such $n$, $E_{A_n}\subset R_{B_n}$. Then,  (\ref{E:GapA_nv}) and the choice of $n$ implies that
\begin{align*}
d(E_{A_0}^u,E_{B_0}^u)&\le 4C^3e^{n(\mu_2+\d-\mu_1)}=4C^3e^{\mu_1-\mu_2-\d}\left(e^{(n+1)(\mu_1-\log c)}\right)^{\frac{\mu_1-\mu_2-\d}{\log c-\mu_1}}\\
&\le 4C^3e^{\mu_1-\mu_2-\d}(2C\D)^{\frac{\mu_1-\mu_2-\d}{\log c-\mu_1}}.
\end{align*}
\end{proof}

\begin{lemma}\label{L:Df^nBackHolder}
Let $f:X\to  X$ be a $C^{1+\a}$ map on a  Banach space and $\L$ be a $f$-invariant compact set on which $f$ is homeomorphism. Assume that $(f|_\L)^{-1}$ is Lipchitz continuous. Then,  for every $a>bL^\a$, there exists $M$ such that for every $n\in \mb N$ and $x,y\in \L$ , we have
$$ \|Df^n_{f^{-n}x}-Df^n_{f^{-n}y}\|\le M a^n|x-y|^\a,$$
where $b=\sup_{x\in \L}\{\|Df_x\|\}$ and $L=\max\left\{Lip\left((f|_\L)^{-1}\right),1\right\}$.
\end{lemma}
\begin{proof}
Let $L'$ be such that
$$\|Df_x-Df_y\|\le L'|x-y|^\a,\ \forall x,y\in \L.$$
Note that for any $x,y\in \L$
\begin{equation}\label{E:HolderN=1}
\|Df_{f^{-1}x}-Df_{f^{-1}y}\|\le L'|f^{-1}x-f^{-1}y|^\a\le L'L^\a|x-y|^\a.
\end{equation}
Inductively, for $a>bL^\a$, by taking $M>\frac{\frac {L'}b}{1-\frac ba}$, we have that
\begin{align}\begin{split}\label{E:HolderN}
&\|Df^{(n+1)}_{f^{-(n+1)}x}-Df^{(n+1)}_{f^{-(n+1)}y}\|\\
\le &\|Df_{f^{-(n+1)}x}\| \|Df^n_{f^{-n}x}-Df^n_{f^{-n}y}\|+\|Df_{f^{-(n+1)}x}-Df_{f^{-(n+1)}y}\|  \|Df^n_{f^{-n}y}\|\\
\le &b \|Df^n_{f^{-n}x}-Df^n_{f^{-n}y}\|+b^n\|Df_{f^{-(n+1)}x}-Df_{f^{-(n+1)}y}\|\\
\le &b  Ma^n|x-y|^\a+L'b^nL^{(n+1)\a}|x-y|^\a=Ma^{n+1}|x-y|^\a\left(\frac ba+\frac{L'}{Mb}\left(\frac{b L^\a}{a}\right)^{n+1}\right)\\
\le& Ma^{n+1}|x-y|^\a.
\end{split}
\end{align}
\end{proof}

Now we are ready to prove Proposition \ref{P:E^uCont}. For $x,y\in \G^u$, set $C=l_0$, $c=a$, $\mu_1=\l_1$, $\mu_2=\l_2$, $A_n=Df^n_{f^{-n}x}$, $B_n=Df^n_{f^{-n}y}$, $E^u(x)=E_{A_0}^u$, $E^u(y)=E_{B_0}^u$ and $\D=M|x-y|^\a$. Note that $\l_1\le b<a$. Then,  by applying Lemma \ref{L:E^nGap}, we have that if $|x-y|<\left(\frac12l_0^{-1}M^{-1}\right)^{\frac1\a}$ then
$$d(E^u(x),E^u(y))<8l_0^3e^{\l_1-\l_2-\d}(2l_0M)^{\frac{\l_1-\l_2-\d}{\log a-\l_1}}|x-y|^{\frac{\l_1-\l_2-\d}{\log a-\l_1}\a}.$$
Since $\left(\frac12l_0^{-1}M^{-1}\right)^{\frac1\a}$ does not depend on $x$ or $y$ and $d(E^u(x),E^u(y))\le 2$, we have that $ E^u$ is H\"older continuous on $\G^u$ with exponent $\b=\frac{\l_1-\l_2-\d}{\log a-\l_1}\a$.
\end{proof}

In the following, we investigate the continuity of $E^{s}$.
For given numbers $l_0>0$ and $\l_2<\l_1$. Let $\G^s$ be the collection of points $x\in\L$ satisfying that
\begin{itemize}
\item[(S1)] $X=E^u(x)\oplus E^s(x)$;
\item[(S2)] $\left|Df^n_{x}v\right|\ge l_0^{-1}e^{n\l_1}|v|$ for all $v\in E^u(x)$ and $n\ge 0$;
\item[(S3)] $\left\|Df^n_{x}|_{E^{s}(x)}\right\|\le l_0e^{n\l_2}$ for all $n\ge 0$;
\item[(S4)] $\max\{\|\pi^u(x)\|,\|\pi^{s}(x)\|\}\le l_0$.
\end{itemize}

\begin{prop}\label{P:E^sHolderCont}
 $E^s$ is H\"{o}lder continuous on $\G^s$ with exponent
$$\b=\frac{\l_1-\l_2}{\log a-\l_2}\a,$$
 where $a>\sup_{x\in\L}\{\|Df_x\|\}\max\left\{1,(Lip(f|_\L))^\a\right\}.$
\end{prop}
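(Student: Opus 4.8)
The plan is to mirror the strategy used for $E^u$ in Proposition \ref{P:E^uCont}, but now iterating \emph{forwards} instead of backwards, since the defining estimates (S1)--(S4) for $\G^s$ control the cocycle in forward time. The key structural lemma is a ``gap lemma'' analogous to Lemma \ref{L:E^nGap}: if $\{A_n\}$ and $\{B_n\}$ are two sequences of bounded operators that are exponentially close, $\|A_n-B_n\|\le \D c^n$, and each admits a splitting into a piece uniformly expanded at rate $\ge C^{-1}e^{n\mu_1}$ and a piece contracted at rate $\le Ce^{n\mu_2}$ (with $\mu_2<\mu_1$), then $d(E^s_{A_0},E^s_{B_0})$ is bounded by a power of $\D$. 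The only twist is that now we want to compare the \emph{stable} subspaces, so the roles are symmetric: a vector that gets expanded at rate close to $e^{n\mu_1}$ cannot lie in (or near) the stable subspace, and for large $n$ the stable subspace of $A_n$ is forced into the cone around the stable subspace of $B_n$ and vice versa. Carrying out this argument I would first establish, for $v\in E^s(y)$ a unit vector and $x,y\in\G^s$,
\begin{align*}
|\pi^u_x v| &= |\pi^u_x (Df^n_x)^{-1}Df^n_x v|\quad\text{(on the appropriate invariant pieces)}\\
&\le l_0^3 e^{n(\l_2-\l_1)} + l_0^2 e^{-n\l_1}\|Df^n_x - Df^n_y\|,
\end{align*}
exactly paralleling the display in the proof of Proposition \ref{P:E^uCont} but with forward iterates and with $E^u$, $E^s$ interchanged in the bookkeeping.

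Next I would supply the forward-time analogue of Lemma \ref{L:Df^nBackHolder}: for a $C^{1+\a}$ map $f$ on a Banach space with $\L$ compact invariant and $f|_\L$ Lipschitz (note: here we need $f|_\L$ Lipschitz, not $(f|_\L)^{-1}$, because we iterate forwards), one has for every $a>bL^\a$ with $b=\sup_{x\in\L}\|Df_x\|$ and $L=\max\{1,\mathrm{Lip}(f|_\L)\}$ a constant $M$ with
\[
\|Df^n_x - Df^n_y\| \le M a^n |x-y|^\a\qquad\text{for all }n\in\mb N,\ x,y\in\L.
\]
This is proved by the same telescoping induction: split $Df^{n+1}_x - Df^{n+1}_y = Df_{f^n x}(Df^n_x - Df^n_y) + (Df_{f^n x} - Df_{f^n y})Df^n_y$, bound the first term by $b\cdot Ma^n|x-y|^\a$ and the second by $L' b^n L^{n\a}|x-y|^\a$ using $|f^n x - f^n y|\le L^n|x-y|$, and choose $M$ so that the geometric tail closes.

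Finally I would assemble the pieces: apply the gap lemma with $C=l_0$, $c=a$, $\mu_1=\l_1$, $\mu_2=\l_2$, $A_n = Df^n_x$, $B_n = Df^n_y$, and $\D = M|x-y|^\a$, to get that for $|x-y|$ smaller than a fixed constant (depending only on the system constants),
\[
d(E^s(x),E^s(y)) \le (\text{const})\cdot |x-y|^{\frac{\l_1-\l_2}{\log a - \l_2}\a},
\]
and since $d(E^s(x),E^s(y))\le 2$ always, the bound extends to all $x,y\in\G^s$ with a larger constant, giving H\"older continuity with the stated exponent $\b = \frac{\l_1-\l_2}{\log a - \l_2}\a$. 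The main obstacle — and the one point requiring genuine care rather than copying — is getting the exponents right in the gap lemma when comparing \emph{stable} rather than unstable subspaces: in Lemma \ref{L:E^nGap} the expanding direction $e^{n\mu_1}$ dominates, whereas here the relevant decay is governed by the \emph{ratio} of the two rates measured against the forward growth bound $\log a$, so the denominator becomes $\log a - \mu_2$ rather than $\log a - \mu_1$. I would double-check this by tracking where the factor $e^{n\mu_2}$ versus $e^{n\mu_1}$ enters when estimating $\mathrm{dist}(A_n v, E^s_{A_0})$ for $v$ near $E^s_{B_n}$, and how the choice of $n$ balancing $\D c^n$ against $C^{-1}e^{n\mu_2}$ (not $e^{n\mu_1}$) feeds through.
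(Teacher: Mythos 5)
Your proposal is correct and follows essentially the same route as the paper: a forward-time H\"older bound $\|Df^n_x-Df^n_y\|\le Ma^n|x-y|^\a$ (using ${\rm Lip}(f|_\L)$ rather than the inverse), a projection estimate showing that a unit vector of $E^s(y)$ has $|\pi^u_x v|\lesssim e^{-n(\l_1-\l_2)}+e^{-n\l_1}\|Df^n_x-Df^n_y\|$, and a choice of $n$ balancing the perturbation $Ma^n|x-y|^\a$ against the stable rate $e^{n\l_2}$, which yields exactly the exponent $\frac{(\l_1-\l_2)\a}{\log a-\l_2}$. The only cosmetic difference is that the paper packages the middle step via the cone sets $R^n_x=\{v:\ |Df^n_xv|\le 2l_0e^{n\l_2}|v|\}$ (showing $E^s(y)\subset R^n_x$ and that $R^n_x$ lies within $e^{-n(\l_1-\l_2)}$ of $E^s(x)$) rather than through an abstract two-sequence gap lemma, but the estimates and the balancing are the same.
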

\begin{proof}
Let $L'$ be such that for all $x,y\in\L$
$$\|Df_x-Df_y\|\le L'|x-y|^\a.$$
Set $b=\sup_{x\in\L}\{\|Df_x\|\}$ and $L=max\{1,Lip(f|_\L)\}$. Given $a>bL^\a$, taking $M>\frac{\frac {L'}b}{1-\frac ba}$, by using the same argument as in (\ref{E:HolderN}), we have that for any $x,y\in\L$ and $n\ge 0$
\begin{equation}\label{E:SHolderN}
\|Df^{n}_x-Df^{n}_y\|\le Ma^{n}|x-y|^\a.
\end{equation}

For $x\in \G^s$, set
$$R^n_x=\{ v\in \mb X|\ |Df^n_xv|\le 2l_0e^{n\l_2}|v|\}.$$
For $v\in  X$, set $v_1=\pi^u(x)v$ and $v_2=\pi^s(x)v$, then
$$\max\{|v_1|,|v_2|\}\le l_0|v|.$$
 If $v\in R^n_x$, then
$$|Df^n_xv|\ge |Df^n_x v_1|-|Df^n_xv_2|\ge l_0^{-1}e^{n\l_1}|v_1|-l_0e^{n\l_2}|v_2|,$$
and thus
$$|v_1|\le l_0e^{-n\l_1}\left(|Df^n_x v|+l_0e^{n\l_2}|v_2|\right)\le (2+l_0)l_0^2e^{-n(\l_1-\l_2)}|v|.$$
Therefore,
\begin{equation}\label{E:E^sGap}
\di(v,E^s(x))\le (2+l_0)l_0^2e^{-n(\l_1-\l_2)}|v|.
\end{equation}
Note that $e^{\l_2}<e^{\l_1}\le b<a$. For any $x,y\in \G^s$ with $|x-y|<M^{-\frac1\a}$, there exists a unique integer $n\ge 0$ such that
$$e^{-(n+1)(\log a-\l_2)}\le M|x-y|^\a<e^{-n(\log a-\l_2)}.$$
If $w\in E^s(y)$, then
\begin{align*}
|Df^n_xw|\le& |Df^n_yw|+\|Df^n_x-Df^n_y\||w|\le l_0e^{n\l_2}|w|+a^nM|x-y|^\a|w|\\
\le& (l_0e^{n\l_2}+e^{n\l_2})|w|\le 2l_0e^{n\l_2}|w|,
\end{align*}
which implies that $w\in R^n_x$, and thus $E^s(y)\in R^n_x$. The same argument implies that $E^s(x)\in R^n_y$. By (\ref{E:E^sGap}) and the choice of $n$, we have
\begin{align*}
d(E^s(x),E^s(y))\le& 2(2+l_0)l_0^2e^{-n(\l_1-\l_2)}
\le 2(2+l_0)l_0^2e^{\l_1-\l_2}M^{\frac{\l_1-\l_2}{\log a-\l_2}} |x-y|^{\frac{\l_1-\l_2}{\log a-\l_2}\a}.
\end{align*}
Since $M^{-\frac1\a}$ does not depend on $x$ or $y$ and $d(E^s(x),E^s(y))\le 2$, we have the  $E^s$ is H\"older continuous on $\G^u$ with exponent $\b=\frac{\l_1-\l_2}{\log a-\l_2}\a$.

\end{proof}
In the proof of Proposition \ref{P:E^sHolderCont}, we also used   the ideas from \cite{Brin} and  Section 5.3 of \cite{BP}.

\subsubsection{Under the Main Setting}\label{S:UnderMainSetting}
In the following, we go back to the main setting of the paper: $f$ and $\L$ satisfy Conditions C1)-C3).
For given positive numbers $\d,\d_0,\l_0,l_0$ and $m\in\mb N$ with $\d,\d_0<<\l_0$. Let $\G^u_{\d,\d_0,\l_0,l_0,m}$ be the collection of points $x\in\L$ satisfying that
\begin{itemize}
\item[(Ua)] $dim E^u(x)=m$;
\item[(Ub)] $\left\|\left(D^nf_{f^kx}|_{E^u(f^kx)}\right)^{-1}\right\|\le l_0e^{-n\l_0+|k|\d}$ for all $n\ge 0$ and $k\in\mb Z$;
\item[(Uc)] $\left\|D^nf_{f^kx}|_{E^{cs}(f^kx)}\right\|\le l_0e^{n\d_0+|k|\d}$ for all $n\ge 0$ and $k\in\mb Z$;
\item[(Ud)] $\max\{\|\pi^u(f^k(x))\|,\|\pi^{cs}(f^k(x))\|\}\le l_0e^{|k|\d}$ for all $k\in\mb Z$.
\end{itemize}
Similarly, we define $\G^s_{\d,\d_0,\l_0,l_0,m}$ to be the collection of points $x\in\L$ satisfying that
\begin{itemize}
\item[(Sa)] $dim E^{cu}(x)=m$;
\item[(Sb)] $\left\|\left(D^nf_{f^kx}|_{E^{cu}(f^kx)}\right)^{-1}\right\|\le l_0e^{n\d_0+|k|\d}$ for all $n\ge 0$ and $k\in\mb Z$;
\item[(Sc)] $\left\|D^nf_{f^kx}|_{E^s(f^kx)}\right\|\le l_0e^{-n\l_0+|k|\d}$ for all $n\ge 0$ and $k\in\mb Z$;
\item[(Sd)] $\max\{\|\pi^{cu}(f^k(x))\|,\|\pi^{s}(f^k(x))\|\}\le l_0e^{|k|\d}$ for all $k\in\mb Z$.
\end{itemize}

In the following, Propositions \ref{P:G^uConti}, \ref{P:G^sConti} and \ref{P:HolderSplit} are derived from Propositions \ref{P:E^uCont} and \ref{P:E^sHolderCont} straightforwardly, so the proofs are omited.
\begin{prop}\label{P:G^uConti}
On $\G^u_{\d,\d_0,\l_0,l_0,m}$, $E^{u}$ and $E^{cs}$ vary uniformly continuously, and so do the projections $\pi^{u}$ and $\pi^{cs}$. Moreover, $E^{cs}$ is H\"older continuous on $\G^u_{\d,\d_0,\l_0,l_0,m}$.
\end{prop}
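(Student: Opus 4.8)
The plan is to exhibit $\G^u_{\d,\d_0,\l_0,l_0,m}$, after a routine relabelling of parameters, as a subset of the set $\G^u$ of Proposition \ref{P:E^uCont} and, simultaneously, as a subset of the set $\G^s$ of Proposition \ref{P:E^sHolderCont}, so that those two propositions apply directly. First, for the uniform continuity of $E^u$, I would verify that each $x\in\G^u_{\d,\d_0,\l_0,l_0,m}$ satisfies conditions (U1)--(U5) of Proposition \ref{P:E^uCont} with the choices $E^u_n(x):=E^u(f^{-n}x)$, $E^s_n(x):=E^{cs}(f^{-n}x)$, $\l_1:=\l_0-\d$, $\l_2:=\d_0+\d$, projection-growth rate $\d$, and the same constant $l_0$: indeed (U1) is the splitting $\mb H=E^u(x)\oplus E^{cs}(x)$; (U2) is the $Df$-invariance $Df_xE^u(x)=E^u(fx)$, $Df_xE^{cs}(x)\subset E^{cs}(fx)$ coming from the multiplicative ergodic theorem; (U3), (U4), (U5) are exactly (Ub), (Uc), (Ud) evaluated at $k=-n$ (the factor $e^{|k|\d}=e^{n\d}$ being absorbed into the adjusted rates $\l_1=\l_0-\d$, $\l_2=\d_0+\d$); and the admissibility $\d\ll\l_1-\l_2=\l_0-\d_0-2\d$ holds since $\d,\d_0\ll\l_0$. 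Proposition \ref{P:E^uCont} then gives the uniform continuity of $E^u$ on $\G^u_{\d,\d_0,\l_0,l_0,m}$ (and, were $(f|_\L)^{-1}$ in addition Lipschitz, its H\"older continuity, which the present statement does not require).

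Second, for $E^{cs}$, I would check that $\G^u_{\d,\d_0,\l_0,l_0,m}$ is contained in the set $\G^s$ of Proposition \ref{P:E^sHolderCont} with $E^u(x)$ in the role of the unstable subspace, $E^{cs}(x)$ in the role of the stable subspace, $\l_1:=\l_0$, $\l_2:=\d_0$, and constant $l_0$: conditions (S1)--(S4) are precisely the splitting in (Ua) together with (Ub), (Uc), (Ud) evaluated at $k=0$, and $\l_2=\d_0<\l_0=\l_1$. Since $f$ is $C^2$ one may take $\a=1$, so Proposition \ref{P:E^sHolderCont} yields that $E^{cs}$ is H\"older continuous on $\G^u_{\d,\d_0,\l_0,l_0,m}$ with exponent $\b=(\l_0-\d_0)/(\log a-\d_0)$ for any $a>\sup_{x\in\L}\|Df_x\|\cdot\max\{1,\mathrm{Lip}(f|_\L)\}$; in particular $E^{cs}$ varies uniformly continuously. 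This settles simultaneously the uniform continuity of $E^{cs}$ and the ``moreover'' clause, and, notably, needs no hypothesis on $(f|_\L)^{-1}$ since the $cs$-direction is detected by forward iteration only.

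Finally, the continuity of the projections $\pi^u$ and $\pi^{cs}$ is deduced from the continuity of the two complementary subspaces together with the uniform bound $\max\{\|\pi^u(x)\|,\|\pi^{cs}(x)\|\}\le l_0$ from (Ud): for a unit vector $v\in\mb H$ one writes $\pi^u_xv-\pi^u_yv=\pi^u_xv-\pi^u_y\pi^u_xv-\pi^u_y\pi^{cs}_xv$ and bounds the second term by $l_0\,d(E^u(x),E^u(y))$ and the third by $l_0\,d(E^{cs}(x),E^{cs}(y))$ (using that $\pi^u_y$ is close to the identity on $E^u(x)$, and small on $E^{cs}(x)$, once those gaps are small), with the symmetric estimate for $\pi^{cs}$; the uniform continuity of the subspaces then transfers to the projections. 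The whole argument is essentially bookkeeping, and the only point that genuinely requires care is the parameter translation in the two inclusions $\G^u_{\d,\d_0,\l_0,l_0,m}\subset\G^u$ and $\G^u_{\d,\d_0,\l_0,l_0,m}\subset\G^s$ — in particular verifying the invariance condition (U2) and checking that absorbing the $e^{|k|\d}$ growth factors into the rates $\l_1,\l_2$ keeps $\d$ within the admissible range $\d\ll\l_1-\l_2$.
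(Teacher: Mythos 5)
Your proposal is correct and follows exactly the route the paper intends: the paper omits the proof, stating only that Proposition \ref{P:G^uConti} is derived straightforwardly from Propositions \ref{P:E^uCont} and \ref{P:E^sHolderCont}, and your parameter translation (taking $E^u_n(x)=E^u(f^{-n}x)$, $E^s_n(x)=E^{cs}(f^{-n}x)$ with $\l_1=\l_0-\d$, $\l_2=\d_0+\d$ for the backward conditions, and $\l_1=\l_0$, $\l_2=\d_0$ at $k=0$ for the forward ones) is precisely that derivation. The added check of (U2) via the MET invariance and the standard bounded-projection argument for $\pi^u$, $\pi^{cs}$ fill in exactly what the paper treats as routine.
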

\begin{prop}\label{P:G^sConti}
On $\G^s_{\d,\d_0,\l_0,l_0,m}$, $E^{cu}$ and $E^{s}$ vary uniformly continuously, and so do the projections $\pi^{cu}$ and $\pi^{s}$. Moreover, $E^s$ is H\"older continuous on $\G^s_{\d,\d_0,\l_0,l_0,m}$.
\end{prop}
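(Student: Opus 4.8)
The plan is to deduce Proposition~\ref{P:G^sConti} from the two abstract results of Subsection~\ref{S:GeneralResults}: Proposition~\ref{P:E^uCont} will give the uniform continuity of $E^{cu}$, and Proposition~\ref{P:E^sHolderCont} the H\"older continuity of $E^s$. The idea is simply to realize $\G^s_{\d,\d_0,\l_0,l_0,m}$ as a subset of the abstract sets $\G^u$ and $\G^s$ for suitably chosen parameters, with $E^{cu}$ playing the role of the ``$E^u$'' of those propositions (it is the slowly contracting bundle) and $E^s$ playing the role of ``$E^s$''. Two preliminary observations: first, $\G^s_{\d,\d_0,\l_0,l_0,m}\subset\G$, the full measure set of Theorem~\ref{T:MET}, since conditions (Sa)--(Sd) presuppose the splitting $H=E^{cu}(x)\oplus E^s(x)$, and along orbits in $\G$ one has $Df_xE^{cu}(x)=E^{cu}(fx)$ and $Df_xE^s(x)\subset E^s(fx)$, again by Theorem~\ref{T:MET}; second, since $f$ is $C^2$ it is in particular $C^{1+1}$ on a neighborhood of the compact set $\L$, so $\a=1$ is admissible in the abstract propositions and $\mathrm{Lip}(f|_\L)<\infty$.

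\emph{H\"older continuity of $E^s$.} Evaluating (Sb), (Sc), (Sd) at $k=0$ shows that every $x\in\G^s_{\d,\d_0,\l_0,l_0,m}$ satisfies conditions (S1)--(S4) of the abstract $\G^s$ with ``$E^u$'' $=E^{cu}$, ``$E^s$'' $=E^s$, $\l_1=-\d_0$, $\l_2=-\l_0$ and the same $l_0$; the ordering $\l_2<\l_1$ required there is precisely $\d_0<\l_0$, which holds since $\d_0\ll\l_0$. Hence Proposition~\ref{P:E^sHolderCont} applies and yields that $E^s$ is H\"older continuous on $\G^s_{\d,\d_0,\l_0,l_0,m}$ with exponent
\[
\b=\frac{\l_0-\d_0}{\log a+\l_0},\qquad a>\sup_{x\in\L}\{\|Df_x\|\}\,\max\{1,\mathrm{Lip}(f|_\L)\}.
\]
In particular $E^s$ is uniformly continuous on $\G^s_{\d,\d_0,\l_0,l_0,m}$.

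\emph{Uniform continuity of $E^{cu}$.} Given $x\in\G^s_{\d,\d_0,\l_0,l_0,m}$, set $E^u_n(x)=E^{cu}(f^{-n}x)$ and $E^s_n(x)=E^s(f^{-n}x)$; the backward compatibility demanded by (U2) is immediate from the $Df$-invariance recalled above. Evaluating (Sb), (Sc), (Sd) at $k=-n$ gives (U3), (U4), (U5) with $\l_1=-(\d_0+\d)$, $\l_2=-(\l_0-\d)$, and with the rate parameter in (U5) equal to $\d$. Since $\d,\d_0\ll\l_0$ we have $\l_2<\l_1$ and $\d\ll\l_1-\l_2=\l_0-\d_0-2\d$, so the hypotheses of Proposition~\ref{P:E^uCont} hold, and its first conclusion---the one not requiring $(f|_\L)^{-1}$ to be Lipschitz---shows that $E^{cu}$ is uniformly continuous on $\G^s_{\d,\d_0,\l_0,l_0,m}$. (We neither claim nor need H\"older continuity of $E^{cu}$ under the present hypotheses.)

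\emph{Continuity of the projections, and the main obstacle.} From (Sd) at $k=0$ we have $\|\pi^{cu}(x)\|,\|\pi^s(x)\|\le l_0$ on $\G^s_{\d,\d_0,\l_0,l_0,m}$. Combining this uniform bound with the uniform continuity of $E^{cu}$ and $E^s$ just established, the identity $\pi^{cu}(y)-\pi^{cu}(x)=\pi^{cu}(y)\pi^s(x)-\pi^s(y)\pi^{cu}(x)$ together with the elementary estimate $\|\pi^{cu}(y)|_{E^s(x)}\|\le\|\pi^{cu}(y)\|\,d\big(E^s(x),E^s(y)\big)$ and its symmetric counterpart give
\[
\|\pi^{cu}(x)-\pi^{cu}(y)\|\le l_0^2\Big(d\big(E^s(x),E^s(y)\big)+d\big(E^{cu}(x),E^{cu}(y)\big)\Big),
\]
whence $\pi^{cu}$, and likewise $\pi^s=I-\pi^{cu}$, are uniformly continuous on $\G^s_{\d,\d_0,\l_0,l_0,m}$. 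I expect no genuine obstacle in carrying this out; the only points requiring care are (i) keeping the two iteration directions straight---forward at the base point for $E^s$, backward along the orbit for $E^{cu}$---and (ii) verifying that the budget $\d,\d_0\ll\l_0$ leaves room for all the strict inequalities ($\l_2<\l_1$ and $\d\ll\l_1-\l_2$) that feed the abstract propositions; the projection estimate itself is routine linear algebra.
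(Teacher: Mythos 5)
Your derivation is correct and follows exactly the route the paper intends: the paper omits the proof of Proposition~\ref{P:G^sConti}, asserting it follows straightforwardly from Propositions~\ref{P:E^uCont} and~\ref{P:E^sHolderCont}, and your parameter bookkeeping (taking $k=0$ in (Sb)--(Sd) with $\l_1=-\d_0$, $\l_2=-\l_0$ for the H\"older continuity of $E^s$, and $k=-n$ along the backward orbit with $\l_1=-(\d_0+\d)$, $\l_2=-(\l_0-\d)$ for the uniform continuity of $E^{cu}$) is precisely that reduction. The concluding estimate for the projections via the identity $\pi^{cu}(y)-\pi^{cu}(x)=\pi^{cu}(y)\pi^s(x)-\pi^s(y)\pi^{cu}(x)$ and the bound $\|\pi^\tau\|\le l_0$ is routine and consistent with what the paper leaves implicit.
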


\begin{prop}\label{P:G^cConti}
On $\G^u_{\d,\d_0,\l_0,l_0,m}\cap\G^s_{\d,\d_0,\l_0,l_0,m}$, $E^{c}$ vary uniformly continuously.
\end{prop}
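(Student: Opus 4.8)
The plan is to recover the center bundle $E^c$ from the two bundles whose continuity is already established in Propositions \ref{P:G^uConti} and \ref{P:G^sConti}, and then transfer continuity through an elementary perturbation estimate. Fix $x\in\G^u_{\d,\d_0,\l_0,l_0,m}\cap\G^s_{\d,\d_0,\l_0,l_0,m}$. From the splittings $H=E^u(x)\oplus E^c(x)\oplus E^s(x)$, $E^{cu}(x)=E^u(x)\oplus E^c(x)$, $E^{cs}(x)=E^c(x)\oplus E^s(x)$ one reads off
$$
E^c(x)=E^{cu}(x)\cap E^{cs}(x)=\pi^{cs}(x)\bigl(E^{cu}(x)\bigr),
$$
where $\pi^{cs}(x)$ denotes the projection onto $E^{cs}(x)$ along $E^u(x)$: for $v=v^u+v^c\in E^{cu}(x)$ with $v^u\in E^u(x)$ and $v^c\in E^c(x)$ one has $\pi^{cs}(x)v=v^c$, so $\pi^{cs}(x)|_{E^{cu}(x)}$ is precisely the projection of $E^{cu}(x)=E^u(x)\oplus E^c(x)$ onto its summand $E^c(x)$, with kernel $E^u(x)$. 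All of these spaces are finite dimensional (by the MET, $\dim E^u,\dim E^c<\infty$), and on the intersection the integers $\dim E^u$ and $\dim E^{cu}$ are prescribed by the defining conditions (Ua), (Sa), hence constant; therefore $\dim E^c=\dim E^{cu}-\dim E^u$ is constant there as well.

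Next I would run the perturbation estimate. By Proposition \ref{P:G^uConti} the projection $\pi^{cs}$ is uniformly continuous on $\G^u_{\d,\d_0,\l_0,l_0,m}$ and, by (Ud) with $k=0$, satisfies $\|\pi^{cs}\|\le l_0$ there; by Proposition \ref{P:G^sConti} the bundle $E^{cu}$ is uniformly continuous on $\G^s_{\d,\d_0,\l_0,l_0,m}$. Given $\e>0$, choose $\D>0$ so that whenever $x,y$ lie in the intersection with $|x-y|<\D$ one has $\|\pi^{cs}(x)-\pi^{cs}(y)\|<\e$ and $d\bigl(E^{cu}(x),E^{cu}(y)\bigr)<\e$. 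For a unit vector $v\in E^c(x)\subset E^{cu}(x)$, pick a unit $w\in E^{cu}(y)$ with $|w-v|\le\e$; then $\pi^{cs}(y)w\in E^c(y)$ and, since $\pi^{cs}(x)v=v$,
$$
|\pi^{cs}(y)w-v|\le\|\pi^{cs}(y)\|\,|w-v|+\|\pi^{cs}(y)-\pi^{cs}(x)\|\,|v|\le(l_0+1)\e,
$$
so $\di\bigl(v,E^c(y)\bigr)\le(l_0+1)\e$. As $v$ ranges over the unit sphere of $E^c(x)$ this is a one-sided gap bound; since $\dim E^c(x)=\dim E^c(y)$ is a fixed finite number, it upgrades (for $\e$ small) to $d\bigl(E^c(x),E^c(y)\bigr)\le C\e$ with $C$ depending only on $\dim E^c$. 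This yields the uniform continuity of $E^c$ on $\G^u_{\d,\d_0,\l_0,l_0,m}\cap\G^s_{\d,\d_0,\l_0,l_0,m}$.

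The one point that needs attention is uniform transversality: one must be sure that $\pi^{cs}(x)|_{E^{cu}(x)}$ is boundedly invertible onto its image, so that $\dim E^c$ cannot jump along the block and so that perturbations of $E^{cu}$ are not amplified uncontrollably by $\pi^{cs}$. This is exactly what the uniform bounds (Ud) and (Sd) supply, since they keep the angle between $E^u$ and $E^{cs}$, and between $E^{cu}$ and $E^s$, bounded away from zero along $\G^u_{\d,\d_0,\l_0,l_0,m}$ and $\G^s_{\d,\d_0,\l_0,l_0,m}$ respectively; with those bounds in hand the argument above is routine. If one prefers to avoid invoking the finite-dimensional upgrade of a one-sided gap estimate, one can instead bound the reverse direction directly using the symmetric identity $E^c(x)=\pi^{cu}(x)\bigl(E^{cs}(x)\bigr)$ together with the continuity of $\pi^{cu}$ and of $E^{cs}$ furnished by Propositions \ref{P:G^sConti} and \ref{P:G^uConti}.
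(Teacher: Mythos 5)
Your argument is correct and follows essentially the same route as the paper: both recover $E^c$ as the intersection $E^{cu}\cap E^{cs}$ and transfer the uniform continuity already established in Propositions \ref{P:G^uConti} and \ref{P:G^sConti} through a projection estimate (the paper bounds $|\pi^u_x v|+|\pi^s_x v|$ for a unit $v\in E^c(y)$, while you apply the uniformly bounded, uniformly continuous projection $\pi^{cs}$ to a nearby vector in $E^{cu}(y)$, which is the same mechanism). Your remark on symmetrizing the one-sided gap bound, either via equal finite dimensions or via the companion identity $E^c=\pi^{cu}(E^{cs})$, correctly fills in the step the paper leaves implicit.
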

\begin{proof}
Since $E^c(\cdot)=E^{uc}(\cdot)\cap E^{cs}(\cdot)$,
we have, for a unit vector $v\in E^c(y)$,
$$|v-\pi^{us}_xv|\le |\pi^s_xv|+|\pi^u_xv|,
$$
which tends to $0$ uniformly as $|y-x|\to 0$ uniformly by the uniform continuity of
$E^u$, $E^s$, $E^{uc}$ and $E^{cs}$.

\end{proof}

\begin{prop}\label{P:HolderSplit}
Let $f|_\L$ be uniformly hyperbolic, then $E^s(\cdot)$ is H\"{o}lder continuous on $\L$. If additionally assume that $(f|_\L)^{-1}$ is Lipchitz continuous, then $E^u(\cdot)$ is H\"oder continuous on $\L$.
\end{prop}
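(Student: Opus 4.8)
The plan is to deduce both statements from the abstract continuity results of Subsection~\ref{S:GeneralResults}, namely Propositions~\ref{P:E^uCont} and~\ref{P:E^sHolderCont}, by showing that under uniform hyperbolicity the whole attractor $\L$ plays, with \emph{uniform} constants, the role of the sets $\G^s$ and $\G^u$ occurring there.

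First I would collect the uniform data. Since $\L$ is compact, $f$ is $C^2$, and the splitting $\mb H=E^u_x\oplus E^{cs}_x$ depends continuously on $x$, there is $l_0\ge 1$ with $\|Df_x\|\le l_0$ and $\|\pi^u_x\|,\|\pi^{cs}_x\|\le l_0$ for all $x\in\L$; moreover $Df$ is Lipschitz on a neighbourhood of $\L$, so $f$ qualifies as a $C^{1+\a}$ map with $\a=1$, and $f|_\L$, being an injective continuous self-map of the compact set $\L$ with $f\L=\L$, is a homeomorphism, hence $\mathrm{Lip}(f|_\L)<\infty$. Writing $E^s$ for $E^{cs}$ throughout this section, iterating the uniform hyperbolicity inequalities $n$ times gives $|Df^n_x v|\ge e^{n\l_0}|v|$ for $v\in E^u(x)$ and $\|Df^n_x|_{E^s(x)}\|\le e^{-n\l_0}$, for every $x\in\L$ and $n\ge 0$. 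Finally, since $\dim E^u_x>0$ one has $\|Df_x\|\ge e^{\l_0}$, so any admissible constant $a$ below satisfies $a>e^{\l_0}$, which is what makes the H\"older exponents positive.

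For the first assertion I would check $\L=\G^s$ with the constants $l_0$ above, $\l_1=\l_0$ and $\l_2=-\l_0$: (S1) is the given splitting, (S2) and (S3) are the two iterated estimates, and (S4) is the uniform projection bound. Since $\l_2=-\l_0<\l_0=\l_1$ and $f$ is of class $C^{1+1}$, Proposition~\ref{P:E^sHolderCont}, applied with $\a=1$ and any $a>l_0\,\max\{1,\mathrm{Lip}(f|_\L)\}$, shows that $E^s=E^{cs}$ is H\"older continuous on $\L$ with exponent $\b=\dfrac{2\l_0}{\log a+\l_0}$. No assumption on $(f|_\L)^{-1}$ enters here.

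For the second assertion I would check $\L=\G^u$ with constants $l_0$, $\l_1=\l_0$, $\l_2=-\l_0$ and an arbitrarily small $\d>0$. Invertibility of $f|_\L$ lets one set, along the backward orbit $\{f^{-n}x\}_{n\ge0}$, $E^u_n(x)=E^u(f^{-n}x)$ and $E^s_n(x)=E^{cs}(f^{-n}x)$; this yields (U1) and (U2), while (U3) and (U4) are the iterated uniform hyperbolicity estimates read along backward orbits, and (U5) holds even with $\d=0$ by the uniform projection bound. As $\d<2\l_0=\l_1-\l_2$, Proposition~\ref{P:E^uCont}, under the extra hypothesis that $(f|_\L)^{-1}$ is Lipschitz continuous, then gives that $E^u$ is H\"older continuous on $\L$ with exponent $\b=\dfrac{2\l_0-\d}{\log a-\l_0}$ for any $a>\sup_{x\in\L}\|Df_x\|\cdot\max\{\mathrm{Lip}((f|_\L)^{-1}),1\}$. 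I expect no genuine obstacle; the only points needing care are the bookkeeping identifications $E^u_n(x)=E^u(f^{-n}x)$, $E^s_n(x)=E^{cs}(f^{-n}x)$ and the verification that $f$ legitimately is a $C^{1+\a}$ map with finite $\mathrm{Lip}$-constants, which ensures that the exponents supplied by Propositions~\ref{P:E^uCont} and~\ref{P:E^sHolderCont} are strictly positive.
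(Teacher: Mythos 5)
Your proposal is correct and follows exactly the route the paper intends: the paper omits the proof precisely because Proposition~\ref{P:HolderSplit} is meant to follow from Propositions~\ref{P:E^uCont} and~\ref{P:E^sHolderCont} once one observes that, under uniform hyperbolicity, $\L$ satisfies (S1)--(S4) and (U1)--(U5) with uniform constants ($\l_1=\l_0$, $\l_2=-\l_0$, arbitrarily small $\d$), which is what you verify. Your bookkeeping (taking $E^u_n(x)=E^u(f^{-n}x)$, $E^s_n(x)=E^{cs}(f^{-n}x)$, $\a=1$, and noting $a>e^{\l_0}$ so the exponents are positive) is accurate, so there is nothing to add.
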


\appendix
\setcounter{section}{0}
\renewcommand{\thesection}{\Alph{section}}


\section{Lyapunov Charts}\label{S:LyapunovCharts}
Lyapunov exponents are, by definition, asymptotic quantities. It simplifies the proofs
greatly if one works in coordinates in which these values are reflected
in a single iteration. In finite dimensions, Lyapunov metrics were
introduced to do so. These metrics were first used in \cite{P} and later
 in e.g. \cite{K}, \cite{LY}; see also the exposition in \cite{Y}. Such tools in Hilbert spaces was constructed in \cite{LY}, which is used in this paper. In \cite{LY}, the invariant measure is assumed ergodic, while the non-ergodic case can be derived by the ergodic decomposition in a straightforward way. For the sake of convenience, the results in the following will be stated for the non-ergodic case. Formally, all the constants in \cite{LY} will be replaced by $f$-invariant measurable functions.

\medskip

Let $\d_0:\G\to \mb R^+$ be an $f$-invariant measurable function such that $0< \d_0(x) < \frac{1}{100} \l_0(x)$, which denotes an accepted margin of error for the
Lyapunov exponents and will be fixed throughout. Let $\d:\G\to \mb R^+$ be another $f$-invariant measurable function, which is the measure of the nonlinearity in charts and variation of
chart sizes along orbits. We will require the value of these functions small enough depending on the purpose at hand, and
will specify conditions on $\d_0,\d$ each time when a chart system is used.

Let $\l(x)=\l_0(x)-2\d_0(x)$.  From \cite{LY}, one can define an inner product point-wisely.
Recall that the (original) inner product and norm on $\mb H$ are denoted by
$<\cdot,\cdot>$ and $|\cdot|$.

\begin{lemma}\label{L:InnerProduct}
For $\mu$-a.e. $x$, there is an inner product
$<\cdot,\cdot>_x'$ on ${\mb H}_x$
with induced norm $|\cdot|_x'$ such that
\begin{itemize}
\item[(i)] $|Df_xu|'_{fx}\ge e^{\l(x)}|u|'_x$ for all $u\in E^u(x)$;
\item[(ii)] $e^{-2\d_0(x)}|u|_x'\le |Df_xu|_{fx}'\le e^{2\d_0(x)}|u|'_x$ for all $u\in
E^c(x)$;
\item[(iii)] $|Df_xu|_{fx}'\le e^{-\l(x)}|u|_x'$ for all $u\in E^s(x)$;
\item[(iv)] Identifying ${\mb H}_x$ with $\mb H$, the function
 $x \mapsto <u,v>'_x$ is Borel for any fixed $u,v\in \mb H$;
\item[(v)] For all $p \in {\mb H}_x$,
$$\frac{\sqrt 3}{3}|p|\le |p|'_x\le K(x)|p|$$
for some Borel function $K$ with
$$\lim_{n\to\pm\infty}\frac1n \log K(f^nx)=0.$$
\end{itemize}
\end{lemma}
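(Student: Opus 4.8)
The plan is to run the classical Pesin construction of an adapted (Lyapunov) inner product: build it separately on the Oseledets subspaces $E^{u}(x),E^{c}(x),E^{s}(x)$ of Theorem \ref{T:MET}, then glue the three pieces orthogonally. I work on the full--measure $f$--invariant Borel set $\G$ of Theorem \ref{T:MET} and use that $\l_{0},\d_{0}$, hence $\l=\l_{0}-2\d_{0}>0$, are $f$--invariant. From the MET together with its accompanying tempered estimates (\cite{R},\cite{LL}; here $\dim E^{u}(x),\dim E^{c}(x)<\infty$ lets one pass from vectors to restricted operator norms) there are Borel tempered functions $C_{u},C_{c},C_{s}:\G\to[1,\infty)$ --- tempered meaning $\tfrac1n\log C_{\tau}(f^{n}x)\to0$ --- with, for $\mu$--a.e.\ $x$,
\[
\|Df^{-n}_{x}|_{E^{u}(x)}\|\le C_{u}(x)e^{-n(\l_{0}(x)-\d_{0}(x))},\qquad \|Df^{n}_{x}|_{E^{s}(x)}\|\le C_{s}(x)e^{-n(\l_{0}(x)-\d_{0}(x))}\quad(n\ge0),
\]
and $\|Df^{n}_{x}|_{E^{c}(x)}\|\le C_{c}(x)e^{\d_{0}(x)|n|}$ for all $n\in\mb Z$ (the backward iterates on $E^{u}$ and $E^{c}$ exist by part 2 of Theorem \ref{T:MET}). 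First I would set, for $u$ lying in $E^{u}(x)$, in $E^{s}(x)$, and in $E^{c}(x)$ respectively,
\[
(|u|'_{x})^{2}=\sum_{n\ge0}|Df^{-n}_{x}u|^{2}e^{2n\l(x)},\qquad (|u|'_{x})^{2}=\sum_{n\ge0}|Df^{n}_{x}u|^{2}e^{2n\l(x)},\qquad (|u|'_{x})^{2}=\sum_{n\in\mb Z}|Df^{n}_{x}u|^{2}e^{-4\d_{0}(x)|n|},
\]
each series converging since its general term is bounded by $C_{\tau}(x)^{2}e^{-2n\d_{0}(x)}|u|^{2}$ (resp.\ $C_{c}(x)^{2}e^{-2\d_{0}(x)|n|}|u|^{2}$); polarization turns each into a genuine inner product on the corresponding subspace. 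Finally I declare $E^{u}(x),E^{c}(x),E^{s}(x)$ mutually $\langle\cdot,\cdot\rangle'_{x}$--orthogonal, so $(|p|'_{x})^{2}=(|\pi^{u}_{x}p|'_{x})^{2}+(|\pi^{c}_{x}p|'_{x})^{2}+(|\pi^{s}_{x}p|'_{x})^{2}$.

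Properties (i)--(iii) then fall out of a single index shift, using the $f$--invariance of $\l$ and $\d_{0}$. For $u\in E^{u}(x)$,
\[
(|Df_{x}u|'_{fx})^{2}=\sum_{n\ge0}|Df^{-(n-1)}_{x}u|^{2}e^{2n\l(x)}=e^{2\l(x)}\sum_{m\ge-1}|Df^{-m}_{x}u|^{2}e^{2m\l(x)}\ge e^{2\l(x)}(|u|'_{x})^{2},
\]
which is (i); the same telescoping run forward on $E^{s}(x)$ gives $(|Df_{x}u|'_{fx})^{2}=e^{-2\l(x)}\sum_{m\ge1}|Df^{m}_{x}u|^{2}e^{2m\l(x)}\le e^{-2\l(x)}(|u|'_{x})^{2}$, which is (iii); and on $E^{c}(x)$, since $\bigl|\,|n|-|n-1|\,\bigr|\le1$ termwise, the shift yields $e^{-4\d_{0}(x)}(|u|'_{x})^{2}\le(|Df_{x}u|'_{fx})^{2}\le e^{4\d_{0}(x)}(|u|'_{x})^{2}$, i.e.\ (ii). Measurability (iv) is immediate: $x\mapsto\langle u,v\rangle'_{x}$ is a pointwise convergent series of Borel functions, the Borel measurability of the projections $\pi^{\tau}_{x}$ and of the derivative cocycle being part of Theorem \ref{T:MET}. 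For the lower bound in (v) I keep only the $n=0$ term of each series: $(|p|'_{x})^{2}\ge|\pi^{u}_{x}p|^{2}+|\pi^{c}_{x}p|^{2}+|\pi^{s}_{x}p|^{2}\ge\tfrac13|p|^{2}$ by $(a+b+c)^{2}\le3(a^{2}+b^{2}+c^{2})$, hence $|p|'_{x}\ge\tfrac{\sqrt3}{3}|p|$. For the upper bound, summing the geometric majorants gives $|u|'_{x}\le(1-e^{-2\d_{0}(x)})^{-1/2}C_{u}(x)|u|$ on $E^{u}(x)$, likewise on $E^{s}(x)$, and $|u|'_{x}\le\bigl(\tfrac{1+e^{-2\d_{0}(x)}}{1-e^{-2\d_{0}(x)}}\bigr)^{1/2}C_{c}(x)|u|$ on $E^{c}(x)$; combined with $|\pi^{\tau}_{x}p|\le\|\pi^{\tau}_{x}\|\,|p|$ this produces $|p|'_{x}\le K(x)|p|$ with $K(x)$ an explicit expression in the $C_{\tau}(x)$, the $\|\pi^{\tau}_{x}\|$ and the factor $(1-e^{-2\d_{0}(x)})^{-1/2}$.

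The one step that is not mere bookkeeping --- and the part I expect to be the real obstacle --- is verifying that this $K$ is tempered, i.e.\ $\tfrac1n\log K(f^{n}x)\to0$, which is what (v) requires. It suffices to know that each $C_{\tau}$ and each projection norm $\|\pi^{\tau}_{\cdot}\|$ grows subexponentially along orbits. For the projections this is exactly part 3 of Theorem \ref{T:MET}: the angles $\an(E^{u},E^{c}\oplus E^{s})$ and $\an(E^{u}\oplus E^{c},E^{s})$ decay subexponentially, and (using $\dim(E^{u}\oplus E^{c})<\infty$) $\|\pi^{\tau}_{x}\|$ is controlled by a constant times the reciprocal of these angles. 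For the $C_{\tau}$'s one invokes the tempered form of the infinite--dimensional MET (\cite{R},\cite{LL}), in which the Oseledets splitting is produced together with tempered one--step comparison constants; alternatively one runs the standard tempering lemma, bounding $C_{\tau}(f^{\pm1}x)/C_{\tau}(x)$ by $\|Df^{\pm1}\|$ and the one--step growth of $Df$ on the relevant subspace and applying Birkhoff's theorem. Since a finite product of tempered functions is tempered and $(1-e^{-2\d_{0}(\cdot)})^{-1/2}$ is constant along orbits ($\d_{0}$ being $f$--invariant), $K$ is tempered and (v) follows. This is, in essence, the Hilbert--space Lyapunov metric of \cite{LY}; I have only tuned the weights so that the constants $e^{\l(x)}$, $e^{\pm2\d_{0}(x)}$ and $\tfrac{\sqrt3}{3}$ come out exactly as stated.
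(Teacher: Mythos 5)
Your construction is correct and is essentially the proof the paper relies on: the paper states this lemma without proof, deferring to the Lyapunov metric of \cite{LY} (non-ergodic case handled by making the constants $f$-invariant functions), and that construction is exactly your weighted-series norms on $E^u,E^c,E^s$ glued orthogonally, with the same index-shift computations giving (i)--(iii) and the $n=0$ terms giving the $\frac{\sqrt3}{3}$ bound. The one ingredient you flag — temperedness of the comparison constants $C_\tau$ and of the projection norms — is precisely what the MET of \cite{R}, \cite{LL} (and Theorem \ref{T:LyapunovChart} from \cite{LY}) supplies, so invoking it there is consistent with the paper's own treatment.
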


With the point-wisely defined inner product, one can define a family of point-wise coordinates induced by maps
$\Phi_x$ for $\mu$-almost every $x$, where $\Phi_x$ is an affine map taking a
neighborhood of $0$ in $\mb H$ to a neighborhood of $x$ in $\mb H$:\\

Noting that, there exist a countable $f$-invariant measurable partition $\{\G_i\}_{i=1,2,\ldots}$ of $\G$, on each $\G_i$ the dimensions of $E^u$ and $E^c$ and the codimension of
$E^s$ are constant. For $x\in\G_i$, we fix orthogonal subspaces
$\tilde E_i^u, \tilde E_i^c$ and $\tilde E_i^s$ of $\mb H$  such that
$\dim\tilde E_i^u = \dim E^u(x), \ \dim \tilde E_i^c(x) = \dim E^c(x)$
and codim $\tilde E_i^s = $ codim $E^s(x)$, and define
$L_x : \mb H_x \to \mb H$ being such that

\medskip
(i) $L_x(E^\tau(x))= \tilde E_i^\tau, \ \tau=u,c,s$; and

(ii) $<L_x u, L_xv> \ = \ <u,v>_x'$ for all $u, v \in \mb H_x$.

\medskip
\noindent
Such a linear map exists and can be chosen to vary
measurably in $x$ (see e.g. \cite{C}).\\

For $r>0$, let $\tilde
B_i(0,r) =
\tilde B_i^u(0,r) \times \tilde B_i^c(0,r) \times \tilde B_i^s(0,r)$ where
$\tilde B_i^\tau(0,r)$ is the ball of radius $r$ centered at $0$ in
$\tilde E_i^\tau$. The coordinate patches $\{\Phi_x\}$ are then
given by
$$
\Phi_x : \tilde B_i(0, \d(x) l(x)^{-1}) \to \mb H \ , \  \Phi_x(u) =
{\rm Exp}_x(L_x^{-1}(u)),\ \text{ for } x\in\G_i,
$$
where ${\rm Exp}_x : {\mb H}_x \to \mb H$ is the exponential map
(the usual identification of the tangent space ${\mb H}_x$ at $x$
with $\{x\} + \mb H$), $\d(x)$ is the $f$-invariant measurable function at the beginning of this
subsection, and $l$ is a function to be determined.
Maps connecting charts along orbits are denoted by
$$
\tilde f_x = \Phi_{fx}^{-1} \circ f \circ \Phi_x \ .
$$
Since $\Phi_{fx}^{-1}$ is extendible to an affine map on all of $\mb H$,
we sometimes view $\tilde f_x$ as $\tilde f_x: \tilde B_i(0, \d l(x)^{-1}) \to \mb H$ for $x\in\G_i$.

The properties of $\Phi_x$ and $\tilde f_x$ are
summarized in Theorem \ref{T:LyapunovChart} below, which follows from Proposition 4 in \cite{LY}. We inherit the notations of \cite{LY} here:
$D(\tilde f_x)_0$ means the derivative of $\tilde f_x$ evaluated at
the point $0$ in the chart, and so on.
To control the nonlinearity
in  charts, we will need the following bound which follows from
the compactness of $\L$ and $C^{2}$-ness of $f$.

\begin{lemma}\label{L:C2Bound}
There exist $M_2>0$ and $r_0>0$ such that $\|D^2f_x\|<M_2$
for all $x \in \mb H$ with dist$(x, A)<r_0$.
\end{lemma}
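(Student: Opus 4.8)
This is an elementary compactness statement, and the only point requiring a little care is that the phase space $\mb H$ is not locally compact: one cannot simply assert that the continuous function $x\mapsto D^2f_x$ is bounded on the closed neighbourhood $\overline{B(\L,r_0)}$, since that set is not compact. So the plan is to use the compactness of $\L$ itself and then thicken slightly. (Here $A$ in the statement means $\L$.)

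First, since $f$ is $C^2$, the map $x\mapsto D^2f_x$ is continuous from $\mb H$ into the Banach space of bounded symmetric bilinear forms on $\mb H$; in particular, for each $x\in\L$ I would choose a radius $\rho_x>0$ such that $\|D^2f_y\|\le\|D^2f_x\|+1$ whenever $|y-x|<\rho_x$. Next I would cover the compact set $\L$ by the open balls $B(x,\tfrac12\rho_x)$, $x\in\L$, and extract a finite subcover indexed by points $x_1,\dots,x_N\in\L$. Then I would set $r_0:=\min_{1\le i\le N}\tfrac12\rho_{x_i}>0$ and $M_2:=1+\max_{1\le i\le N}\|D^2f_{x_i}\|$.

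To conclude, take any $x$ with $\di(x,\L)<r_0$ and pick $z\in\L$ with $|x-z|<r_0$; then $z\in B(x_i,\tfrac12\rho_{x_i})$ for some $i$, so $|x-x_i|\le|x-z|+|z-x_i|<r_0+\tfrac12\rho_{x_i}\le\rho_{x_i}$, and therefore $\|D^2f_x\|\le\|D^2f_{x_i}\|+1\le M_2$, which is the claim. There is no substantive obstacle; the only thing to be careful about is to avoid appealing to compactness of the $r_0$-neighbourhood of $\L$ and instead work with a finite subcover of $\L$.
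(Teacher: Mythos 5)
Your argument is correct and is exactly the standard compactness argument the paper has in mind: the paper states Lemma \ref{L:C2Bound} without a written proof, remarking only that it "follows from the compactness of $\L$ and $C^{2}$-ness of $f$," and your finite-subcover construction (which rightly avoids pretending the $r_0$-neighbourhood of $\L$ is compact) supplies precisely those details. The only cosmetic point is that your choice yields $\|D^2f_x\|\le M_2$ rather than the strict inequality in the statement, which is fixed by enlarging $M_2$ by any positive amount (or choosing $\rho_x$ so that $\|D^2f_y\|<\|D^2f_x\|+1$).
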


\begin{theorem}\label{T:LyapunovChart}
{\it Given a measurable $f$-invariant function $\d:\G\to\mb R^+$, there is a measurable function $l:\G\to [1,+\infty)$ with
$e^{-\d(x)}l(x)\le l(f(x))\le e^{\d(x)} l(x)$ such that the following hold at
$\mu$-a.e. $x$:
\begin{itemize}
\item[(a)] For all $y,y'\in B(0,\d(x) l(x)^{-1})$,
$$
l(x)^{-1} |y-y'|\le |\Phi_x(y)-\Phi_x(y')| \le \sqrt 3|y-y'|;
$$
\item[(b)] For $x\in\G_i$, $D(\tilde f_x)_0$ maps each $\tilde E_i^\tau, \ \tau=u,c,s$, into itself,
with
$$|D(\tilde f_x)_0u|\ge e^{\l(x)}|u|, \quad e^{-2\d_0(x)}|w|\le|D(\tilde f_x)_0w|\le e^{2\d_0(x)}|w|
$$
$${\rm and} \quad |D(\tilde f_x)_0v|\le e^{-\l(x)}|v|$$
for $u\in \tilde E_i^u$;
$w\in \tilde E_i^c$ and $v\in \tilde E_i^s$.
\item[(c)] The following hold on $B(0,\d(x) l(x)^{-1})$:

\noindent (i) Lip$(\tilde f_x-D(\tilde f_x)_0)<\d(x)$;

\noindent (ii) Lip$(D\tilde f_x)\le l(x)$.
\end{itemize}}
\end{theorem}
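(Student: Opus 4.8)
The plan is to follow the standard construction of Lyapunov (Pesin) charts, using the point-wise inner products produced in Lemma~\ref{L:InnerProduct} as the only real input; the sole feature beyond \cite{LY} is that $\mu$ need not be ergodic, so every constant in the ergodic construction is promoted to an $f$-invariant measurable function, which is handled by the ergodic decomposition. Concretely, I would first fix, on each piece $\G_i$ of the $f$-invariant measurable partition on which $\dim E^u$, $\dim E^c$ and $\mathrm{codim}\,E^s$ are constant, the model subspaces $\tilde E_i^u,\tilde E_i^c,\tilde E_i^s\subset\mb H$, and take $L_x:\mb H_x\to\mb H$ to be the linear isometry from $(\mb H_x,\langle\cdot,\cdot\rangle_x')$ onto $(\mb H,\langle\cdot,\cdot\rangle)$ that carries $E^\tau(x)$ onto $\tilde E_i^\tau$; measurability of $x\mapsto L_x$ is a measurable-selection statement (cf.\ \cite{C}). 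From Lemma~\ref{L:InnerProduct}(v) one reads off $\|L_x\|\le K(x)$ and $\|L_x^{-1}\|\le\sqrt3$. With $\Phi_x(u)=\mathrm{Exp}_x(L_x^{-1}u)=x+L_x^{-1}u$, item (a) is immediate for any $l$ with $l(x)\ge K(x)$, since $|\Phi_x(y)-\Phi_x(y')|=|L_x^{-1}(y-y')|$ lies between $\|L_x\|^{-1}|y-y'|$ and $\|L_x^{-1}\|\,|y-y'|$.

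Next, $\Phi_{fx}^{-1}$ is affine with derivative $L_{fx}$, so $D(\tilde f_x)_v=L_{fx}\circ Df_{\Phi_x(v)}\circ L_x^{-1}$; in particular $D(\tilde f_x)_0=L_{fx}\circ Df_x\circ L_x^{-1}$ preserves each $\tilde E_i^\tau$ because $Df_x$ respects the Oseledets splitting (item 1(b) of Theorem~\ref{T:MET}), and, since $L_x$ and $L_{fx}$ are isometries for the primed norms, the inequalities in (b) are precisely Lemma~\ref{L:InnerProduct}(i)--(iii) transported into the charts. For (c) I would use the uniform $C^2$ bound $M_2$ of Lemma~\ref{L:C2Bound} together with $|\Phi_x(v)-\Phi_x(v')|\le\sqrt3|v-v'|$ to estimate
\[
\|D(\tilde f_x)_v-D(\tilde f_x)_{v'}\|\le\|L_{fx}\|\,M_2\,|\Phi_x(v)-\Phi_x(v')|\,\|L_x^{-1}\|\le 3M_2K(fx)\,|v-v'|
\]
for all $v,v'\in B(0,\d(x)l(x)^{-1})$. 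Taking $v'=0$ gives $\mathrm{Lip}(\tilde f_x-D(\tilde f_x)_0)\le 3M_2K(fx)\,\d(x)l(x)^{-1}$, so (c)(i) holds once $l(x)>3M_2K(fx)$, and the same displayed bound gives (c)(ii) once $l(x)\ge 3M_2K(fx)$.

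It remains only to produce a single measurable $l:\G\to[1,\infty)$ that simultaneously dominates $K(x)$, a suitable multiple of $K(fx)$, and $1$, and that is $\d$-tempered, i.e.\ $e^{-\d(x)}l(x)\le l(fx)\le e^{\d(x)}l(x)$. Setting $\hat C(x)=\max\{K(x),\,4M_2K(fx),\,1\}$ and $l(x)=\sup_{n\in\mb Z}\hat C(f^nx)\,e^{-|n|\d(x)}$ does this: $\d$ is $f$-invariant and, by Lemma~\ref{L:InnerProduct}(v), $\frac1n\log K(f^nx)\to0$ $\mu$-a.e., hence $\frac1n\log\hat C(f^nx)\to0$ and the supremum is finite $\mu$-a.e.; measurability of $l$ and the two-sided tempered bound are the routine temperedness argument. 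Since $l(x)\ge\hat C(x)\ge K(x)$ and $l(x)\ge 4M_2K(fx)>3M_2K(fx)$, parts (a)--(c) all hold at $\mu$-a.e.\ $x$. The non-ergodic case follows by applying this on each ergodic component of $\mu$ and gluing the resulting $f$-invariant functions, as indicated before the statement. The main obstacle is thus purely a bookkeeping one — verifying finiteness and temperedness of $l$ from the sub-exponential growth of $K$, plus the measurable selection giving $L_x$; no new analytic difficulty arises over the finite-dimensional theory because $E^u\oplus E^c$ is finite-dimensional and $\|Df\|$ is bounded on $\L$.
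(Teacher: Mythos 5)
Your construction is correct and is essentially the approach the paper takes: the paper sets up $L_x$, $\Phi_x$ and $\tilde f_x$ exactly as you do (via Lemma \ref{L:InnerProduct}, the measurable choice of $L_x$, and Lemma \ref{L:C2Bound}) and then invokes Proposition 4 of \cite{LY}, whose proof is precisely this standard Pesin-chart verification with the constants promoted to $f$-invariant measurable functions. Your tempered function $l(x)=\sup_{n\in\mathbb{Z}}\hat C(f^nx)e^{-|n|\delta(x)}$, justified by the subexponential growth of $K$ in Lemma \ref{L:InnerProduct}(v), supplies exactly what is needed; just keep the paper's standing assumption $\delta(x)\ll r_0$ (or enlarge $\hat C$ accordingly) so that Lemma \ref{L:C2Bound} applies on the whole chart domain.
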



\section{Symbolic Dynamics and Thermodynamic Formalism}\label{S:AppSymDyn}
In this section, we collect some results of symbolic dynamics and thermodynamic formalism which are used in Appendix \ref{S:SymDyn} and \ref{SS:Equilibrium}.  These results are borrowed from \cite{Bowen} without proof and symbols used in \cite{Bowen} are also inherited. \\

Let $\{1,\ldots,n\}$ be the set of possible state of a physical system. A configuration is a point
$$\underline{x}=\{x_i\}_{i=-\infty}^\infty\in \prod_{\mb Z}\{1,\ldots,n\}=\Sigma_n.$$
Give the discrete topology on $\{1,\ldots,n\}$ and the product topology on $\Sigma_n$. Then,  $\Sigma_n$ is a compact metrizable space and the metric can be defined in the following way:
$$d_\b(\underline{x},\underline y)=\sum_{j\in\mb Z}\b^{|j|}(1-\chi_{\{0\}}(|x_j-y_j|)),$$
where $\b\in(0,1)$ and $\chi_{\{0\}}$ is the characteristic function of $\{0\}\subset \mb R$.
 For any continuous function $\phi:\Sigma_n\to \mb R$, define
$$var_k\phi=\sup\left\{|\phi(\underline{x})-\phi(\underline{y})\big|\ x_i=y_i\text{ for all }{i}\le k\right\}.$$
\begin{theorem}\label{T:AppGibMea}
Let $\phi:\Sigma_n\to \mb R$ satisfy that there exist $c>0$ and $\a\in(0,1)$ such that $var_k\phi\le c\a^k$ for all $k\in\mb N$. Then,  there is a unique $\mu\in \mc M_\sigma(\Sigma_n)$, for which there are constants $c_2\ge c_2>0$ and $P$ such that
$$c_1\le \frac{\mu\{\underline y|\ y_i=x_i \text{ for all }i\in[0,m]\}}{\exp{\left(-Pm+\sum_{k=0}^{m-1}\phi(\sigma^k\underline{x})\right)}}\le c_2$$
for all $\underline x\in \Sigma_n$ and $m\ge 0$.
\end{theorem}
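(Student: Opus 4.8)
The plan is to prove this by the classical Ruelle--Perron--Frobenius (transfer operator) method, following Bowen's treatment. First I would reduce to the one-sided shift: a $\sigma$-invariant Borel probability measure on $\Sigma_n=\prod_{\mathbb Z}\{1,\dots,n\}$ corresponds bijectively, by restriction and extension, to a $\sigma_+$-invariant measure on $\Sigma_n^+=\prod_{\mathbb N_0}\{1,\dots,n\}$, and the cylinders occurring in the Gibbs inequality only involve the coordinates $x_0,\dots,x_m$, so it suffices to work on $\Sigma_n^+$. Moreover, by the standard cohomology lemma one can replace $\phi$ by a function $\psi$ depending only on the coordinates $(x_0,x_1,\dots)$ which still has summable variation, with $\psi=\phi+u\circ\sigma-u$ for a continuous $u$; then $\sum_{k=0}^{m-1}\psi(\sigma^k\underline x)$ and $\sum_{k=0}^{m-1}\phi(\sigma^k\underline x)$ differ by the bounded quantity $u(\sigma^m\underline x)-u(\underline x)$, so the Gibbs property for $\psi$ gives that for $\phi$ after adjusting $c_1,c_2$.

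Next I would introduce the transfer operator $(\mathcal L g)(x)=\sum_{a=1}^n e^{\psi(ax)}g(ax)$ on $C(\Sigma_n^+)$, where $ax$ denotes the sequence obtained by prepending the symbol $a$. Applying the Schauder--Tychonoff fixed point theorem to the map $\mu\mapsto \mathcal L^*\mu/(\mathcal L^*\mu)(\mathbf 1)$ on the compact convex set of probability measures produces $\lambda>0$ and a probability measure $\nu$ with $\mathcal L^*\nu=\lambda\nu$. The core of the argument is to produce a strictly positive continuous eigenfunction $h$ with $\mathcal L h=\lambda h$: using the summable-variation hypothesis on $\psi$ — which yields the bounded-distortion estimate that $|\psi_m(y)-\psi_m(y')|$ is bounded by a uniform constant whenever $y,y'$ agree in their first coordinate, and tends to $0$ as the length of agreement grows — one shows that the functions $\lambda^{-N}\mathcal L^N\mathbf 1$ are uniformly bounded above and bounded below away from $0$ and have uniformly decaying variation; Arzel\`a--Ascoli together with a Ces\`aro-average/subsequence argument then produces $h$. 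After normalising $\int h\,d\nu=1$, set $\mu=h\nu$.

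Finally three things must be checked. First, $\mu$ is $\sigma$-invariant: this follows formally from the identity $\mathcal L\big((g\circ\sigma)\cdot f\big)=g\cdot\mathcal L f$ combined with $\mathcal L^*\nu=\lambda\nu$ and $\mathcal L h=\lambda h$. Second, $\mu$ has the Gibbs property: iterating $\mathcal L^*\nu=\lambda\nu$ gives $\nu([a_0\cdots a_m])=\lambda^{-(m+1)}(\mathcal L^{m+1}\chi_{[a_0\cdots a_m]})(x)=\lambda^{-(m+1)}e^{\psi_{m+1}(a_0\cdots a_m x)}$ for every $x$, and by bounded distortion $\psi_{m+1}(a_0\cdots a_m x)$ lies within a uniform constant of $\sum_{k=0}^{m-1}\phi(\sigma^k\underline x)$ for the point $\underline x$ whose first coordinates are $a_0,\dots,a_m$; since $h$ is bounded above and below, $\mu=h\nu$ inherits the Gibbs bounds with $P=\log\lambda$. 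Third, uniqueness: if $\mu'$ is any $\sigma$-invariant measure with Gibbs bounds and exponent $P'$, summing the Gibbs estimate over all cylinders of length $m+1$ forces $e^{P'm}\asymp\sum e^{\phi_m}\asymp e^{Pm}$, hence $P'=P$; then $\mu([a_0\cdots a_m])/\mu'([a_0\cdots a_m])$ is bounded above and below uniformly, so $\mu$ and $\mu'$ are equivalent with bounded Radon--Nikodym derivative, and since $\mu$ is ergodic (which follows either from the mixing property $\lambda^{-n}\mathcal L^n g\to(\int g\,d\nu)h$ of the transfer operator, or directly from the quasi-product/$\psi$-mixing estimate on cylinders implied by the Gibbs bounds), any invariant measure equivalent to $\mu$ equals $\mu$, so $\mu'=\mu$.

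I expect the main obstacle to be the construction of the eigenfunction $h$ — establishing the uniform two-sided bounds and the equicontinuity (uniformly decaying variation) of $\lambda^{-N}\mathcal L^N\mathbf 1$ — since this is precisely where the summable-variation hypothesis enters essentially and where the bounded-distortion bookkeeping is most delicate; everything else is then comparatively formal.
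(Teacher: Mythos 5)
Your proposal is correct and follows essentially the same route as the paper's source: the paper states Theorem \ref{T:AppGibMea} without proof, citing Bowen \cite{Bowen}, and your Ruelle--Perron--Frobenius argument (reduction to the one-sided shift via the cohomology lemma, eigenmeasure by Schauder--Tychonoff, eigenfunction via bounded distortion and Arzel\`a--Ascoli, Gibbs bounds with $P=\log\lambda$, uniqueness through ergodicity plus bounded Radon--Nikodym derivative) is precisely the classical proof given there. Only one cosmetic point: $\nu([a_0\cdots a_m])$ equals $\lambda^{-(m+1)}\int e^{\psi_{m+1}(a_0\cdots a_m x)}\,d\nu(x)$ rather than $\lambda^{-(m+1)}e^{\psi_{m+1}(a_0\cdots a_m x)}$ for every fixed $x$; it is only comparable to the latter up to uniform constants, which is exactly what your bounded-distortion step supplies, so the argument stands as written.
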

Here $\mc M_\sigma(\Sigma_n)$ is the collection of all $\s$-invariant probability measure on $\Sigma_n$, which is a compact convex metrizable space since $\Sigma_n$ is compact metrizable space. The measure $\mu$ in Theorem \ref{T:AppGibMea} is also written $\mu_\phi$ and called {\bf Gibbs measure} of $\phi$.

For an $n\times n$ matrix of $0$'s and $1$'s, A, set
$$\Sigma_A=\{\underline x\in \Sigma_n|\ A_{x_ix_{i+1}}=1\text{ for all }i\in\mb Z\}.$$
It is easy to see that $\Sigma_A$ is closed and $\s\Sigma_A=\Sigma_A$.
\begin{definition}\label{D:FA}
Let $\mc F_A$ be the collection of all continuous functions $\phi:\Sigma_A\to \mb R$ satisfying $var_k\phi\le b\a^k\text{ for all } k\ge 0$ for some  $b>0$ and $\a\in(0,1)$.
\end{definition}
\begin{theorem}\label{T:AppGibMea1}%
Suppose $(\Sigma_A,\s)$ is topologically mixing and $\phi \in \mc F_A$. Then,  there is a unique $\s$-invariant mxing Borel probability measure $\mu$ on $\Sigma_A$ satisfying that there exist $c_2\ge c_1>0$ and $P$ such that
$$c_1\le \frac{\mu\{\underline y|\ y_i=x_i \text{ for all }i\in[0,m]\}}{\exp{\left(-Pm+\sum_{k=0}^{m-1}\phi(\sigma^k\underline{x})\right)}}\le c_2$$
for all $\underline x\in \Sigma_A$ and $m\ge 0$.
\end{theorem}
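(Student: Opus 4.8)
\noindent The plan is to reproduce, for the topologically mixing subshift of finite type $(\Sigma_A,\s)$, the classical Ruelle--Perron--Frobenius construction of the Gibbs measure; the argument runs parallel to the full-shift case (Theorem \ref{T:AppGibMea}), with aperiodicity of $A$ playing the role of the trivial transitivity of the full shift. First I would pass to a one-sided shift. Since $\phi\in\mc F_A$ has exponentially decaying variations, the coboundary construction of \cite{Bowen} produces a continuous $u:\Sigma_A\to\mb R$ with $\sum_k var_k u<\infty$ such that $\psi:=\phi+u-u\circ\s$ depends only on the coordinates $x_0,x_1,x_2,\dots$, hence descends to a function with summable variations on the one-sided shift $\Sigma_A^{+}$, which is again topologically mixing. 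Telescoping gives $|S_m\psi-S_m\phi|=|u-u\circ\s^{m}|\le 2\|u\|_\infty$ along any orbit, so it is enough to prove the Gibbs bounds for $\psi$ and the corresponding one-sided measure and then lift back via the natural extension, at the cost of multiplying $c_1,c_2$ by $e^{\mp 2\|u\|_\infty}$.

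Next I would introduce the Ruelle transfer operator $(\mc L g)(x)=\sum_{\s y=x}e^{\psi(y)}g(y)$ on $C(\Sigma_A^{+})$ and on the functions of summable variation. Since $(\Sigma_A,\s)$ is topologically mixing, $A^{N}$ is a strictly positive matrix for some $N$, which is exactly the hypothesis under which the Ruelle--Perron--Frobenius theorem applies, yielding: (i) a Borel probability $\nu$ on $\Sigma_A^{+}$ and $\l>0$ with $\mc L^{*}\nu=\l\nu$, obtained from the Schauder--Tychonoff fixed point theorem applied to $\nu\mapsto\mc L^{*}\nu/(\mc L^{*}\nu)(\mathbf 1)$ on the weak-$*$ compact convex set of probabilities; (ii) a strictly positive H\"older eigenfunction $h$ with $\mc L h=\l h$ and $\int h\,d\nu=1$, produced as the uniform limit of $\l^{-n}\mc L^{n}\mathbf 1$ --- equicontinuity coming from the distortion bound $|S_n\psi(y)-S_n\psi(y')|\le D:=\sum_{k\ge0}var_k\psi$ for $y,y'$ in a common length-$n$ cylinder together with a standard Ascoli argument, and convergence of the whole sequence being forced by aperiodicity of $A$; and (iii) the asymptotics $\l^{-n}\mc L^{n}g\to h\int g\,d\nu$ uniformly for all $g\in C(\Sigma_A^{+})$. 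Setting $P=\log\l$ and $\mu^{+}:=h\,\nu$, property (iii) shows $\mu^{+}$ is $\s$-invariant with decaying correlations, hence mixing.

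For the Gibbs bounds, fix an admissible word $a_0\cdots a_m$ and a point $x\in[a_0\cdots a_m]\subset\Sigma_A^{+}$. Using $\mu^{+}=h\nu$ and $(\mc L^{*})^{m+1}\nu=\l^{m+1}\nu$,
\[
\mu^{+}[a_0\cdots a_m]=\int \chi_{[a_0\cdots a_m]}\,h\,d\nu=\l^{-(m+1)}\int \mc L^{m+1}\!\big(\chi_{[a_0\cdots a_m]}h\big)\,d\nu .
\]
The sum defining $\mc L^{m+1}(\chi_{[a_0\cdots a_m]}h)(z)$ runs over the single preimage branch prescribing the first $m+1$ symbols, so its value lies between $e^{-D}(\min h)\,e^{S_{m+1}\psi(x)}$ and $e^{D}(\max h)\,e^{S_{m+1}\psi(x)}$; integrating and using $\nu(\Sigma_A^{+})=1$ gives $C_1\le \mu^{+}[a_0\cdots a_m]\big/\exp\!\big(-P(m+1)+S_{m+1}\psi(x)\big)\le C_2$, with $C_1,C_2$ depending only on $D,\min h,\max h$. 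Replacing $\psi$ by $\phi$ through the coboundary and pulling back to the two-sided $\Sigma_A$ via the natural extension (which transports cylinder measures and Birkhoff sums) yields the stated inequality for a $\s$-invariant Borel probability $\mu$ on $\Sigma_A$, with new constants; and $\mu$ is mixing because the natural extension of the mixing system $(\Sigma_A^{+},\s,\mu^{+})$ is mixing.

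Finally, uniqueness: let $\mu'$ be any $\s$-invariant Borel probability satisfying a Gibbs inequality with constants $c_1',c_2'$ and exponent $P'$. Summing that inequality over all admissible words of length $m+1$ and using $\mu'(\Sigma_A)=\mu(\Sigma_A)=1$ forces $e^{(P-P')(m+1)}$ to stay bounded above and below as $m\to\infty$, whence $P'=P$. Then for every cylinder $C$ one has $\mu'(C)\le(c_2'/c_1)\,\mu(C)$, and the usual outer-measure extension gives $\mu'\ll\mu$; since $\mu$ is ergodic (being mixing) and $\mu'$ is $\s$-invariant, $d\mu'/d\mu$ is $\s$-invariant hence $\mu$-a.e.\ constant, so $\mu'=\mu$. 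I expect the real obstacle to be the Ruelle--Perron--Frobenius input of the second paragraph --- producing $h,\nu,\l$ and, above all, the uniform convergence $\l^{-n}\mc L^{n}g\to h\int g\,d\nu$, where topological mixing (aperiodicity of $A$) is indispensable and one must genuinely work, either via a Birkhoff-cone/Hilbert-metric contraction estimate or via delicate equicontinuity bounds. The remaining steps are routine bookkeeping built on the summable-variation hypothesis.
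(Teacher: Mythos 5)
The paper offers no proof of this theorem: Appendix B explicitly states that these results are quoted from \cite{Bowen} without proof, and your argument is exactly the classical Ruelle--Perron--Frobenius proof given in that reference (coboundary reduction to the one-sided mixing subshift, transfer operator $\mc L$ with eigendata $\lambda,h,\nu$, Gibbs bounds from the single-preimage-branch estimate, and uniqueness via $P'=P$, cylinder comparison, absolute continuity and ergodicity). So your proposal is correct and follows essentially the same route as the paper's intended source; the only point left implicit is that the lower Gibbs bound also requires $\nu$ to assign mass bounded below to the set of admissible continuations $\{z:\ A_{a_m z_0}=1\}$, which follows from $\mc L^{*}\nu=\lambda\nu$ together with aperiodicity of $A$ (each symbol cylinder has positive $\nu$-measure).
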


Denote $\mc C(X)$ the family of continuous functions on a compact metric space.
\begin{theorem}\label{T:VarPrin}
For any $\phi\in \mc C(\Sigma_A)$ and $\mu\in\mc M_\s(\Sigma_A)$, one has that
$$h_\mu(\s)+\int \phi d\mu\le P_\s(\phi),$$
where $h_\mu(\s)$ is the metric entropy of $\mu$ and $P_\s(\phi)$ is the topological pressure of $\phi$.
\end{theorem}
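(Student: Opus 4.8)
The plan is to establish the ``easy'' direction of the variational principle for topological pressure, exploiting the fact that the subshift $(\Sigma_A,\sigma)$ possesses a finite, \emph{clopen}, two-sided generating partition --- namely the time-zero cylinder partition $\alpha=\{[i]_0:1\le i\le n\}$, $[i]_0=\{\underline x\in\Sigma_A:x_0=i\}$. First I would record two consequences. Since $\alpha$ is a generating partition of the invertible system $(\Sigma_A,\sigma)$, the Kolmogorov--Sinai theorem gives $h_\mu(\sigma)=h_\mu(\sigma,\alpha)=\lim_{m\to\infty}\frac1m H_\mu(\alpha_0^{m-1})$, where $\alpha_0^{m-1}=\bigvee_{j=0}^{m-1}\sigma^{-j}\alpha$ is the partition into length-$m$ cylinders and $H_\mu(\beta)=-\sum_{B\in\beta}\mu(B)\log\mu(B)$. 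Since $\alpha$ is also a finite open cover and $\alpha_0^{m-1}$ is (after discarding empty cells) its own unique minimal subcover, the quantity $Z_m=\sum_{C\in\alpha_0^{m-1}}\sup_{x\in C}e^{S_m\phi(x)}$, with $S_m\phi=\sum_{j=0}^{m-1}\phi\circ\sigma^j$, is submultiplicative ($Z_{m+k}\le Z_mZ_k$), so $P(\sigma,\phi,\alpha):=\lim_{m\to\infty}\frac1m\log Z_m$ exists and satisfies $P(\sigma,\phi,\alpha)\le P_\sigma(\phi)$; continuity of $\phi$ on the compact space $\Sigma_A$ makes the suprema finite.

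Next I would carry out the standard Gibbs--Jensen estimate, cell by cell over $C\in\alpha_0^{m-1}$. Using $\sigma$-invariance of $\mu$ (so that $\int S_m\phi\,d\mu=m\int\phi\,d\mu$) together with the trivial bound $\int_C S_m\phi\,d\mu\le\mu(C)\sup_C S_m\phi$, one obtains
\[
\frac1m\Big(H_\mu(\alpha_0^{m-1})+\int S_m\phi\,d\mu\Big)
\le\frac1m\sum_{C:\,\mu(C)>0}\mu(C)\log\frac{\sup_C e^{S_m\phi}}{\mu(C)}
\le\frac1m\log Z_m,
\]
where the last inequality is the elementary concavity bound $\sum_i p_i\log(a_i/p_i)\le\log\sum_i a_i$ applied to the probability vector $(\mu(C))_{\mu(C)>0}$ and $a_C=\sup_C e^{S_m\phi}$ (cells of zero $\mu$-measure contribute nothing on the left and only enlarge $Z_m$ on the right). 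Letting $m\to\infty$ turns the left-hand side into $h_\mu(\sigma,\alpha)+\int\phi\,d\mu=h_\mu(\sigma)+\int\phi\,d\mu$ and the right-hand side into $P(\sigma,\phi,\alpha)\le P_\sigma(\phi)$, which is precisely the desired inequality.

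The step I expect to demand the most attention is the opening reduction to the single partition $\alpha$. In a general compact metric system one cannot feed a measurable partition directly into the open-cover definition of $P_\sigma$: one must approximate an arbitrary finite partition by one whose atoms have boundary of small measure, relate it to a fine open cover via a Lebesgue-number argument, and only then pass to the supremum over all finite partitions. In the symbolic setting this difficulty disappears because $\alpha$ is at once generating (hence it computes $h_\mu$ exactly) and clopen (hence admissible in the definition of $P_\sigma$), so the only analytic content is the Gibbs--Jensen estimate above. For the original exposition of this argument in the symbolic form used here I would refer to \cite{Bowen}.
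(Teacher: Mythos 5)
The paper does not prove this statement at all: Theorem \ref{T:VarPrin} sits in Appendix B among the results explicitly ``borrowed from \cite{Bowen} without proof,'' and your argument is precisely the standard proof of the easy half of the variational principle specialized to the subshift (the cylinder-partition Gibbs--Jensen estimate), so it is correct and consistent with the source the paper cites. The only step you state tersely, $P(\sigma,\phi,\alpha)\le P_\sigma(\phi)$, is indeed immediate in this setting because distinct length-$m$ cylinders are $(m,1)$-separated in the metric $d_\beta$ (they already differ in some coordinate $j\in[0,m-1]$), so picking a near-maximizing point of $S_m\phi$ in each nonempty cylinder bounds $Z_m$ by a $(m,1)$-separated-set sum, which is dominated by the separated-set definition of the topological pressure.
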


\begin{theorem}\label{T:GibUnique}
Suppose that $\phi\in \mc F_A$, $(\Sigma_A,\s)$ is topologically mixing and $\mu_\phi$ is the Gibbs measure of $\phi$. Then,  $\mu_\phi$ is the unique $\mu\in \mc M_\s(\Sigma_A)$ such that
$$h_\mu(\s)+\int \phi d\mu=P_\s(\phi).$$
\end{theorem}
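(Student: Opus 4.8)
\textbf{Proof strategy for Theorem~\ref{T:GibUnique}.} The plan is to run Bowen's thermodynamic-formalism argument, using only the combinatorial Gibbs inequalities of Theorem~\ref{T:AppGibMea1}, the variational inequality of Theorem~\ref{T:VarPrin}, the bounded-variation condition in Definition~\ref{D:FA}, and the fact (part of Theorem~\ref{T:AppGibMea1}) that the Gibbs measure $\mu_\phi$ is mixing, hence ergodic. Write $\mc A$ for the partition of $\Sigma_A$ into $1$-cylinders, $\mc A^{(n)}=\bigvee_{k=0}^{n-1}\sigma^{-k}\mc A$, and $S_n\phi=\sum_{k=0}^{n-1}\phi\circ\sigma^k$; note that $\mc A$ is a (two-sided) generator and that $S_n\phi$ oscillates by at most a constant on each atom of $\mc A^{(n)}$, since $\mathrm{var}_k\phi\le b\a^k$.

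First I would record two preliminaries. (a) The Gibbs constant $P$ of Theorem~\ref{T:AppGibMea1} equals the topological pressure $P_\s(\phi)$: the Gibbs bounds give $\sum_{C\in\mc A^{(n)}}\sup_{x\in C}e^{S_n\phi(x)}\asymp e^{Pn}\sum_C\mu_\phi(C)=e^{Pn}$, and since $(\Sigma_A,\s)$ is expansive with generator $\mc A$, one has $P_\s(\phi)=\lim_n\frac1n\log\sum_C\sup_C e^{S_n\phi}=P$. (b) $\mu_\phi$ is itself an equilibrium state: the Gibbs bounds give $H_{\mu_\phi}(\mc A^{(n)})=Pn-n\int\phi\,d\mu_\phi+O(1)$, so dividing by $n$ yields $h_{\mu_\phi}(\s)+\int\phi\,d\mu_\phi=P=P_\s(\phi)$.

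The heart of the matter is to show that \emph{every} equilibrium state $\nu$ is absolutely continuous with respect to $\mu_\phi$. For each $n$ let $d_n$ be the $\mc A^{(n)}$-measurable density $d_n|_C=\nu(C)/\mu_\phi(C)$ (well defined since $\mu_\phi(C)>0$ by the Gibbs bound); this is a nonnegative martingale with respect to $(\mc A^{(n)})_n$ on $(\Sigma_A,\mu_\phi)$. Its relative entropy satisfies
\begin{align*}
\int d_n\log d_n\,d\mu_\phi
&=\sum_{C\in\mc A^{(n)}}\nu(C)\log\frac{\nu(C)}{\mu_\phi(C)}
=-H_\nu(\mc A^{(n)})+\sum_{C}\nu(C)\bigl(-\log\mu_\phi(C)\bigr)\\
&=-H_\nu(\mc A^{(n)})+Pn-n\int\phi\,d\nu+O(1),
\end{align*}
using the Gibbs bounds and the bounded oscillation of $S_n\phi$ on atoms. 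Since $\mc A$ is a generator, $H_\nu(\mc A^{(n)})\ge n\,h_\nu(\s)$, so the right-hand side is $\le n\bigl(P_\s(\phi)-\int\phi\,d\nu-h_\nu(\s)\bigr)+O(1)=O(1)$ by the equilibrium condition. Thus $\sup_n\int d_n\log d_n\,d\mu_\phi<\infty$, i.e. $(d_n)$ is bounded in $L\log L$ and therefore uniformly integrable; the martingale converges in $L^1(\mu_\phi)$ to a limit of full mass $1$, forcing $\nu\ll\mu_\phi$ on the $\s$-algebra $\mc B^{+}$ generated by forward cylinders. Repeating the estimate on the symmetric blocks $\bigvee_{k=-n}^{n}\sigma^{-k}\mc A$, which by $\s$-invariance of $\nu$ and $\mu_\phi$ reduces after a shift to the bound just obtained, upgrades this to $\nu\ll\mu_\phi$ on the full Borel $\s$-algebra. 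Finally $d\nu/d\mu_\phi$ is $\s$-invariant, and since $\mu_\phi$ is ergodic it is $\mu_\phi$-a.e. constant, hence $\equiv1$; therefore $\nu=\mu_\phi$, which is the asserted uniqueness.

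The step I expect to be the main obstacle is establishing the uniform relative-entropy (i.e. $L\log L$) bound $\sup_n\int d_n\log d_n\,d\mu_\phi<\infty$ and then the somewhat delicate passage from absolute continuity on the one-sided tail $\s$-algebra $\mc B^{+}$ to absolute continuity on the two-sided Borel $\s$-algebra; once these are in hand, uniform integrability, martingale convergence, and the ergodicity argument are all soft, and the upstream input is nothing more than the Gibbs inequalities of Theorem~\ref{T:AppGibMea1} and the bounded variation of $\phi$.
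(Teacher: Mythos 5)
Your argument is correct, but note that the paper itself offers no proof of Theorem \ref{T:GibUnique}: Appendix B states it as a result borrowed from Bowen's lecture notes, so the only meaningful comparison is with Bowen's classical argument, and your route is genuinely different from it. Bowen deduces uniqueness from the elementary convexity inequality $\sum_i p_i(\log a_i-\log p_i)\le\log\sum_i a_i$ (equality iff $p_i\propto a_i$), together with the facts that distinct ergodic measures are mutually singular, that an invariant set can be approximated by unions of cylinders, and an ergodic-decomposition step reducing to ergodic equilibrium states; the Gibbs bounds of Theorem \ref{T:AppGibMea1} then turn an ergodic equilibrium state singular to $\mu_\phi$ into a contradiction. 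You instead bound the relative entropy $\int d_n\log d_n\,d\mu_\phi=-H_\nu(\mc A^{(n)})+Pn-n\int\phi\,d\nu+O(1)$ of the cylinder densities, using $H_\nu(\mc A^{(n)})\ge n h_\nu(\sigma)$ (subadditivity plus the generator property) and the uniformly bounded oscillation of $S_n\phi$ on $n$-cylinders coming from Definition \ref{D:FA}, to get an $L\log L$ bound, hence a uniformly integrable martingale, hence $\nu\ll\mu_\phi$; your handling of the two-sided $\sigma$-algebra via symmetric blocks shifted back to forward cylinders is exactly the right fix for the one-sided/ two-sided issue, and the final step ($\sigma$-invariant density, ergodicity of the mixing $\mu_\phi$, so density $\equiv 1$) is sound. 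What each approach buys: Bowen's is entirely elementary (no martingale theory or uniform integrability) but needs ergodic decomposition and the somewhat fiddly approximation of invariant sets by cylinders; yours bypasses both and yields the stronger structural statement that every equilibrium state is absolutely continuous with respect to $\mu_\phi$ with invariant density, at the cost of invoking martingale convergence and the identification $P=P_\sigma(\phi)$ (your preliminary (a)), which itself is Bowen's pressure-counting lemma for expansive systems and deserves the one line of justification you give it.
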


\begin{theorem}\label{T:VarPrin1}
Suppose that $T:X\to X$ is a continuous map on a compact metric space and $\phi\in \mc C(X)$. Then,  for any $\mu\in \mc M_T(X)$, one has that
$$h_\mu(T)+\int \phi d\mu\le P_T(\phi).$$
\end{theorem}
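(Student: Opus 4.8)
This is the ``easy half'' of the variational principle for topological pressure; in the paper it is simply quoted from \cite{Bowen}, so strictly speaking no proof is needed, but the standard argument runs as follows. The plan is to reduce the estimate to a single finite Borel partition, to allow oneself an additive error of $\log 2$ (which will be eliminated at the end by passing to $T^m$), and to compare the entropy/Birkhoff data with those of a carefully chosen open cover. Throughout write $S_n\phi=\sum_{i=0}^{n-1}\phi\circ T^i$, so that $\frac1n\int S_n\phi\,d\mu=\int\phi\,d\mu$ by $T$-invariance. Since $h_\mu(T)=\sup_\xi h_\mu(T,\xi)$ over finite Borel partitions $\xi$, it is enough to show $h_\mu(T,\xi)+\int\phi\,d\mu\le P_T(\phi)+\log 2$ for every such $\xi=\{A_1,\dots,A_k\}$: once this is known, applying it to the pair $(T^m,S_m\phi)$ and using the identities $h_\mu(T^m)=m\,h_\mu(T)$ and $P_{T^m}(S_m\phi)=m\,P_T(\phi)$ gives $h_\mu(T)+\int\phi\,d\mu\le P_T(\phi)+\frac{\log 2}{m}$, and letting $m\to\infty$ finishes.

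First I would, using regularity of $\mu$, pick compact sets $B_j\subset A_j$ with $\mu(A_j\setminus B_j)$ so small that, setting $B_0=X\setminus\bigcup_{j=1}^k B_j$ and $\eta=\{B_0,B_1,\dots,B_k\}$, one has $H_\mu(\xi\mid\eta)<\e$, hence $h_\mu(T,\xi)\le h_\mu(T,\eta)+\e$. Since $B_1,\dots,B_k$ are pairwise disjoint compact sets, I can choose open sets $C_j\supset B_j$ ($1\le j\le k$) with $C_i\cap B_j=\emptyset$ for $i\ne j$, $i,j\ge1$, and put $C_0=B_0$; then $\gamma=\{C_0,\dots,C_k\}$ is a finite open cover of $X$. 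Write $\eta_n=\bigvee_{i=0}^{n-1}T^{-i}\eta$ and $\gamma_n=\bigvee_{i=0}^{n-1}T^{-i}\gamma$. Two elementary facts drive the argument: each atom of $\eta_n$, indexed by a word $(s_0,\dots,s_{n-1})$, is contained in the member of $\gamma_n$ with the same word; and, because of the choice of the $C_j$, any single member of $\gamma_n$ meets at most $2^n$ atoms of $\eta_n$ (at each time a coordinate $C_{m_i}$ meets only $B_0$ and, when $m_i\ge1$, $B_{m_i}$).

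Now combine with the convexity inequality $\sum_i p_i(a_i-\log p_i)\le\log\sum_i e^{a_i}$ (valid for $p_i\ge0$, $\sum_i p_i=1$). Applying it with $p_A=\mu(A)$ and $a_A=\sup_{x\in A}S_n\phi(x)$ over the atoms of $\eta_n$, and using $\int_A S_n\phi\,d\mu\le\mu(A)a_A$, yields
\[
H_\mu(\eta_n)+\int S_n\phi\,d\mu\ \le\ \log\sum_{A\in\eta_n}\exp\!\big(\sup_{x\in A}S_n\phi(x)\big).
\]
Given any finite subcover $\mathcal V$ of $\gamma_n$, each atom $A$ of $\eta_n$ is covered by $\mathcal V$, so $\sup_A S_n\phi\le\sup_D S_n\phi$ for some $D\in\mathcal V$ meeting $A$; by the second fact above at most $2^n$ atoms can be assigned to the same $D$, so the right-hand side above is $\le 2^n\sum_{D\in\mathcal V}\exp(\sup_D S_n\phi)$. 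Taking the infimum over $\mathcal V$ bounds it by $2^n p_n(T,\phi,\gamma)$, where $p_n(T,\phi,\gamma)$ is the pressure partition function of $\gamma$ in the open-cover description of $P_T$ (equivalent to the separated-set description used in \cite{Bowen}). Dividing by $n$ and letting $n\to\infty$ gives $h_\mu(T,\eta)+\int\phi\,d\mu\le\log 2+P_T(\phi,\gamma)\le\log 2+P_T(\phi)$; combined with $h_\mu(T,\xi)\le h_\mu(T,\eta)+\e$ and $\e\downarrow0$ this is the claimed inequality with the extra $\log 2$, and the bootstrap by $T^m$ completes the proof.

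The step I expect to require the most care is the last comparison --- replacing the sum over all atoms of $\eta_n$ by a bounded multiple of the infimum over subcovers that enters the definition of $P_T$. One has to keep the ``$A\subset D$'' versus ``$A$ meets $D$'' bookkeeping straight so that the resulting multiplicative loss is only $2^n$ (harmless after $\frac1n\log$ and the passage to $T^m$) rather than something of size $|\gamma|^n$. Everything else --- the reduction $h_\mu(T)=\sup_\xi h_\mu(T,\xi)$, the approximation of $\xi$ by compact sets with $h_\mu(T,\xi)\le h_\mu(T,\eta)+H_\mu(\xi\mid\eta)$, the convexity inequality, and the scaling identities for $h_\mu$ and $P_T$ under $T\mapsto T^m$ --- is routine.
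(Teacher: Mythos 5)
Your overall skeleton (reduce to a finite partition, approximate by compact sets, the convexity inequality, the $\log 2$ loss, and its removal via $(T^m,S_m\phi)$) is the standard Walters/Misiurewicz argument, and the paper itself offers no proof (it cites Bowen), so the only question is whether your chain of estimates closes. It does not: the step you yourself flag as delicate contains a genuine gap. After bounding $H_\mu(\eta_n)+\int S_n\phi\,d\mu$ by $n\log 2+\log p_n(T,\phi,\gamma)$ with $p_n(T,\phi,\gamma)=\inf_{\mathcal V}\sum_{D\in\mathcal V}\exp\bigl(\sup_D S_n\phi\bigr)$, you assert that $\limsup_n\frac1n\log p_n(T,\phi,\gamma)\le P_T(\phi)$ because the open-cover description of pressure is ``equivalent'' to the separated-set one. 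For a \emph{fixed} open cover this is false: the equivalence requires either using $\inf_D S_n\phi$ inside the sum or letting the diameter of the cover tend to $0$; with $\sup$'s one only gets $\limsup_n\frac1n\log p_n\le P_T(\phi)+\mathrm{var}(\phi,\mathrm{diam}\,\gamma)$, and your cover has large diameter (comparable to that of the arbitrary partition $\xi$), so this error does not vanish. Concretely, take $X$ the disjoint union of the full $2$-shift $X_1$ (where $\phi=0$) and a fixed point $p$ with $\phi(p)=c>\log 2$, so $P_T(\phi)=c$; with $\xi=\{[0],[1]\cup\{p\}\}$, $B_1=[0]$, $B_2=[1]$, $B_0=\{p\}$ and the admissible choice $C_1=[0]\cup\{p\}$, $C_2=[1]\cup\{p\}$, every pure word of $\gamma_n$ is needed in any subcover and contains $p$, so $p_n\ge 2^n e^{nc}$ and $\frac1n\log p_n\ge\log 2+c>P_T(\phi)$. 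So the inequality you rely on is not a consequence of any standard characterization of $P_T$, and nothing in your construction of the $C_j$ rules out such behaviour.

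The standard repair avoids open covers altogether at this point: in the closure of each atom $A$ of $\eta_n$ pick $x_A$ with $S_n\phi(x_A)=\sup_{\bar A}S_n\phi$, fix $\delta<\tfrac12\min_{1\le i\ne j\le k}d(B_i,B_j)$, and extract greedily from $\{x_A\}$ an $(n,\delta)$-separated subset $E$; since a dynamical $\delta$-ball meets at most two of $\bar B_0,B_1,\dots,B_k$ at each time, each point of $E$ absorbs at most $2^n$ atoms, giving $\sum_A e^{\sup_{\bar A}S_n\phi}\le 2^n P_n(T,\phi,\delta)$ with no oscillation error, and $\limsup_n\frac1n\log P_n(T,\phi,\delta)\le P_T(\phi)$ for every fixed $\delta$. (Alternatively you can keep your cover argument, but then you must first restrict to partitions $\xi$ of small diameter — legitimate, since $h_\mu(T,\xi)\to h_\mu(T)$ as $\mathrm{diam}\,\xi\to0$ — choose the $C_j$ of correspondingly small diameter, replace $\sup_D$ by $\inf_D$ plus $n\,\mathrm{var}(\phi,\mathrm{diam}\,\gamma)$, and compare the $\inf$-version with spanning sets via a Lebesgue number.) With either repair, the rest of your argument, including the $\log 2$ bookkeeping and the passage to $T^m$, goes through as written.
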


\begin{theorem}\label{T:Conjugacy}
Suppose that $T_1:X_1\to X_1$ and $T_2:X_2\to X_2$ are continuous maps on compact metric spaces $X_{1}$ and $X_{2}$ respectively, and $\pi:X_1\to X_2$ is continuous and onto and also satisfies that $\pi\circ T_1=T_2\circ \pi$. Then,  for all $\phi\in \mc C(X_2)$, one has that
$$P_{T_2}(\phi)\le P_{T_1}(\phi\circ \pi).$$
 \end{theorem}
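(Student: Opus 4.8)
This is the standard monotonicity of topological pressure under a factor map, and the plan is to run the classical spanning-set argument, being careful to keep the two Bowen metrics apart and to account for $\pi$ not being injective. Recall that for a continuous map $T$ on a compact metric space $X$ and $\phi\in\mc C(X)$ one sets $d_n^T(x,y)=\max_{0\le i<n}d(T^ix,T^iy)$, $S_n\phi(y)=\sum_{i=0}^{n-1}\phi(T^iy)$, calls $E\subset X$ an $(n,\varepsilon)$-spanning set if every point of $X$ lies within $d_n^T$-distance $\varepsilon$ of $E$, and puts $Q_n(T,\phi,\varepsilon)=\inf\{\sum_{y\in E}e^{S_n\phi(y)}:E\ (n,\varepsilon)\text{-spanning}\}$, so that $P_T(\phi)=\lim_{\varepsilon\to0}\limsup_{n\to\infty}\tfrac1n\log Q_n(T,\phi,\varepsilon)$.

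First I would fix $\varepsilon>0$ and use the uniform continuity of $\pi$ on the compact space $X_1$ to produce $\delta=\delta(\varepsilon)>0$ with $d_{X_1}(x,x')<\delta\Rightarrow d_{X_2}(\pi x,\pi x')<\varepsilon$. The key observation, exploiting the intertwining relation $\pi\circ T_1=T_2\circ\pi$ (hence $\pi\circ T_1^i=T_2^i\circ\pi$ for all $i\ge0$), is that if $E\subset X_1$ is $(n,\delta)$-spanning for $T_1$, then $\pi(E)$ is $(n,\varepsilon)$-spanning for $T_2$: given $z\in X_2$, surjectivity of $\pi$ gives $x$ with $\pi x=z$, spanning gives $y\in E$ with $d_n^{T_1}(x,y)<\delta$, and then $d_{X_2}(T_2^iz,T_2^i\pi y)=d_{X_2}(\pi T_1^ix,\pi T_1^iy)<\varepsilon$ for all $0\le i<n$.

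Next I would compare the weights. For every $y\in X_1$ the intertwining relation gives $S_n(\phi\circ\pi)(y)=\sum_{i=0}^{n-1}\phi(\pi T_1^iy)=\sum_{i=0}^{n-1}\phi(T_2^i\pi y)=S_n\phi(\pi y)$. Choosing for each $w\in\pi(E)$ one preimage in $E$ and using that $e^{S_n\phi}>0$, one gets $\sum_{w\in\pi(E)}e^{S_n\phi(w)}\le\sum_{y\in E}e^{S_n\phi(\pi y)}=\sum_{y\in E}e^{S_n(\phi\circ\pi)(y)}$. Taking the infimum over $(n,\delta)$-spanning sets $E$ for $T_1$ yields $Q_n(T_2,\phi,\varepsilon)\le Q_n(T_1,\phi\circ\pi,\delta)$. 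Dividing by $n$, taking logarithms and $\limsup_{n\to\infty}$, this gives $\limsup_n\tfrac1n\log Q_n(T_2,\phi,\varepsilon)\le\limsup_n\tfrac1n\log Q_n(T_1,\phi\circ\pi,\delta)\le P_{T_1}(\phi\circ\pi)$; since this holds for every $\varepsilon>0$, letting $\varepsilon\to0$ gives $P_{T_2}(\phi)\le P_{T_1}(\phi\circ\pi)$.

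I do not expect a genuine obstacle here; the only points demanding care are that $\pi$ need not be injective, which forces the passage to a choice of preimages together with the positivity of $e^{S_n\phi}$, and the bookkeeping that keeps $d_n^{T_1}$ and $d_n^{T_2}$ distinct, linked only through uniform continuity of $\pi$ and the relation $\pi T_1^i=T_2^i\pi$. If one prefers, the same estimate can be obtained with $(n,\varepsilon)$-separated sets instead: an $(n,\varepsilon)$-separated set in $X_2$ lifts via a choice of preimages to an $(n,\delta)$-separated set in $X_1$ of the same cardinality and with identical weights, giving the analogous inequality for the separated-set quantities and hence the same conclusion.
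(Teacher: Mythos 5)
Your proof is correct; note that the paper itself gives no argument for this statement, since Appendix B is explicitly a collection of facts ``borrowed from \cite{Bowen} without proof,'' and your spanning-set argument is exactly the standard proof of monotonicity of pressure under a factor map found in that reference (and in Walters). The two delicate points — non-injectivity of $\pi$, which you handle via a choice of preimages together with positivity of the weights $e^{S_n\phi}$, and the passage between the Bowen metrics of $T_1$ and $T_2$ via uniform continuity of $\pi$ and the relation $\pi\circ T_1^i=T_2^i\circ\pi$ — are treated correctly, so nothing further is needed.
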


\begin{theorem}\label{T:VarPrin2}
Suppose that $T:X\to X$ is a continuous map on a compact metric space and $\phi \in \mc C(X)$. Then,
$$P_T(\phi)=\sup_{\mu\in \mc M_T(X)}\left(h_\mu(T)+\int \phi d\mu\right).$$
\end{theorem}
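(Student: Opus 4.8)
One of the two inequalities is already available: Theorem \ref{T:VarPrin1} gives $h_\mu(T)+\int\phi\,d\mu\le P_T(\phi)$ for every $\mu\in\mc M_T(X)$, hence $\sup_{\mu\in\mc M_T(X)}\bigl(h_\mu(T)+\int\phi\,d\mu\bigr)\le P_T(\phi)$. So the task is to prove the reverse inequality $P_T(\phi)\le\sup_\mu\bigl(h_\mu(T)+\int\phi\,d\mu\bigr)$, and I would run Misiurewicz's argument. Fix $\e>0$. For each $n\ge1$ choose an $(n,\e)$-separated set $E_n\subset X$ of maximal cardinality with respect to the Bowen metric $d_n(x,y)=\max_{0\le k<n}d(T^kx,T^ky)$; writing $S_n\phi=\sum_{k=0}^{n-1}\phi\circ T^k$ and $Z_n=\sum_{x\in E_n}e^{S_n\phi(x)}$, one has $\frac1n\log Z_n\to P_T(\phi,\e)$ along a subsequence. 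Form the probability measures $\s_n=Z_n^{-1}\sum_{x\in E_n}e^{S_n\phi(x)}\delta_x$ and their time averages $\mu_n=\frac1n\sum_{k=0}^{n-1}T^k_*\s_n$. By weak-$*$ compactness of the space of Borel probability measures on the compact metric space $X$, pass to a further subsequence along which $\mu_n\to\mu$; a routine telescoping estimate shows $\mu\in\mc M_T(X)$.

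The heart of the matter is a lower bound for $h_\mu(T)+\int\phi\,d\mu$ in terms of $Z_n$. Choose a finite Borel partition $\mc A=\{A_1,\dots,A_r\}$ of $X$ with $\mathrm{diam}(A_i)<\e$ and $\mu(\partial A_i)=0$ for all $i$ (possible for any compact metric space). Since $E_n$ is $(n,\e)$-separated, each atom of $\bigvee_{k=0}^{n-1}T^{-k}\mc A$ contains at most one point of $E_n$, so the elementary inequality $\sum_i p_i(a_i-\log p_i)\le\log\sum_i e^{a_i}$, applied with $p_x=e^{S_n\phi(x)}/Z_n$ and $a_x=S_n\phi(x)$, yields
$$H_{\s_n}\!\left(\textstyle\bigvee_{k=0}^{n-1}T^{-k}\mc A\right)+\int S_n\phi\,d\s_n\ \ge\ \log Z_n.$$
Then I would apply the standard blocking trick: fix $q\ge1$; for $0\le j<q$ regroup the index set $\{0,\dots,n-1\}$ along the arithmetic progression of step $q$ starting at $j$, use subadditivity of entropy under refinement of partitions, average over $j$ and over the $n$ iterates defining $\mu_n$, and use concavity of $\nu\mapsto H_\nu\!\left(\bigvee_{k=0}^{q-1}T^{-k}\mc A\right)$ to pass the average inside. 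Dividing by $n$, letting $n\to\infty$ along the chosen subsequence, and using $\mu(\partial A_i)=0$ (so that $H_{\mu_n}(\bigvee_{k=0}^{q-1}T^{-k}\mc A)\to H_\mu(\bigvee_{k=0}^{q-1}T^{-k}\mc A)$ and $\int\phi\,d\mu_n\to\int\phi\,d\mu$) gives
$$\frac1q H_\mu\!\left(\textstyle\bigvee_{k=0}^{q-1}T^{-k}\mc A\right)+\int\phi\,d\mu\ \ge\ P_T(\phi,\e).$$
Letting $q\to\infty$ yields $h_\mu(T,\mc A)+\int\phi\,d\mu\ge P_T(\phi,\e)$, hence $\sup_\mu\bigl(h_\mu(T)+\int\phi\,d\mu\bigr)\ge P_T(\phi,\e)$, and $\e\to0$ finishes the proof.

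The main obstacle is the bookkeeping in the blocking step: one must control the "leftover" indices near the two ends of each arithmetic progression (their number is $O(q)$, so after dividing by $n$ and sending $n\to\infty$ they contribute nothing, but this has to be made precise), keep the term $\int S_n\phi\,d\s_n$ synchronized with the entropy term through both averagings, and ensure that all weak-$*$ limits are taken against functions that are $\mu$-a.e.\ continuous — which is exactly why the partition is chosen with $\mu$-null boundaries. Everything else (invariance of $\mu$, the convexity inequality, subadditivity and concavity of entropy) is routine. Since the excerpt merely imports this statement from \cite{Bowen}, in the final write-up I would either cite the full proof there (or in Walters' book) or include the Misiurewicz argument above.
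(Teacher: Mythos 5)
Your outline is correct, but there is essentially nothing in the paper to compare it against: Theorem \ref{T:VarPrin2} sits in Appendix \ref{S:AppSymDyn}, which states explicitly that its results are borrowed from \cite{Bowen} without proof, so the paper's ``proof'' of this statement is a citation (the variational principle of Walters, reproduced in Bowen's notes). What you sketch is exactly the standard Misiurewicz argument from that literature: the easy inequality via Theorem \ref{T:VarPrin1}, separated sets and the weighted empirical measures $\s_n$ with their time averages $\mu_n$, a partition $\mc A$ with $\mu$-null boundaries and diameter $<\e$, the inequality $H_{\s_n}\bigl(\bigvee_{k=0}^{n-1}T^{-k}\mc A\bigr)+\int S_n\phi\,d\s_n\ge\log Z_n$, and the blocking/concavity step, so in substance you are reconstructing the proof the paper delegates to its reference. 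One small inaccuracy to fix in a final write-up: taking $E_n$ of maximal \emph{cardinality} does not by itself give $\frac1n\log Z_n\to P_T(\phi,\e)$, because the weights $e^{S_n\phi(x)}$ are not constant; the correct choice is an $(n,\e)$-separated set whose sum $\sum_{x\in E_n}e^{S_n\phi(x)}$ is within a fixed constant of its supremum over all $(n,\e)$-separated sets (alternatively, use that a maximal separated set is $(n,\e)$-spanning, run the estimate through the spanning-set quantity, and let $\e\to0$ at the end). With that adjustment the argument is complete, and simply citing \cite{Bowen} or Walters, as the paper does, would also be acceptable.
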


For $\phi\in \mc C(X)$ and a continuous map $T:X\to X$ on a compact metric space $X$, if $\mu\in \mc M_T(X)$ satisfies that $h_\mu(T)+\int \phi d \mu=P_T(\phi)$, then $\mu$ is called an {\bf equilibrium state} for $\phi$ with respect to $T$. \\

\section{Equilibrium States of Uniform Hyperbolic Systems in a Hilbert Space}\label{S:Markov}
In this appendix, we assume that Conditions C1)-C3) hold and $f$ is uniformly hyperbolic on $\Lambda$. We show that  for every H\"{o}lder continuous function  $\phi:\L\to \mb R$, there exists a unique equilibrium state $\mu_{\phi}\in \mc M_f(\L)$ with respect to $f$ when $f|_{\L}$ is topologically mixing. The proof of this result is based on the Markov partition. The proof is exactly same as Bowen's \cite{Bowen}. In the construction of Markov partition, only the invertibility of $f|_{\L}$ is used even though $f$ is not invertible in $\mathbb{H}$.   For the sake of  convenience for readers, we include the proofs with details, which is almost identical to Bowen's.

\subsection{Construction of a Markov Partition}

In this section, we construct Markov Partitions of $\L$ for $f$ based on the shadowing lemma and symbolic dynamics.

We first note from Lemma \ref{L:CanonicalCoor} that  for any small $\e\in(0,\e_0)$ there is a $\d>0$ such that $W^s_\e(x)\cap W^u_\e(y)$ consists of a single point $[x,y]$ whenever $x,y\in \L$ and $|x-y|<\d$. Then, we define the following:

We call a subset $R\subset \L$  a {\it rectangle} if its diameter is small and
$[x,y]\in R\text{ for any }x,y\in R.$
$R$ is said to be {\it proper} if $R$ is closed and $R=\overline{int(R)}$ where $int(R)$ denotes the interior of $R$ as a subset of $\L$. For $x\in R$, denote
\[W^s(x,R)=W^s_\e\cap R\text{ and }W^u(x,R)=W^u_\e(x)\cap R
\]
where $\e\in(0,\e_0)$ is small and fixed (see Lemma \ref{L:SUManifolds}) and the diameter of $R$ is small compared to $\e$. Let
\begin{align*}
\partial^sR&=\{x\in R:\ x\notin int(W^u(x,R))\}; \\
\partial^uR&=\{x\in R:\ x\notin int(W^s(x,R))\}.
\end{align*} Here the interiors $int(W^u(x,R))$ and $int(W^s(x,R))$ are subsets of $W^u_\e(x)\cap \L$ and $W^s_\e(x)\cap \L$ respectively.

\begin{lemma}\label{L:RBoundary}
Let $R$ be a closed rectangle. $\partial^sR$ and $\partial^uR$ are closed subsets of $\L$ and $\partial R=\partial^sR\cup \partial^uR$.
\end{lemma}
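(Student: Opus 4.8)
The plan is to deduce both assertions from the \emph{local product structure} of $\L$. By Lemma~\ref{L:CanonicalCoor} the bracket $[\cdot,\cdot]$ is defined and continuous on $\{(p,q)\in\L\times\L:|p-q|<\d\}$, and by Lemma~\ref{L:SUManifolds} the local discs $W^s_\e(\cdot)$ and $W^u_\e(\cdot)$ vary continuously. First I would record the algebraic identities $[[p,q],r]=[p,r]$ and $[p,[q,r]]=[p,r]$ (each holds because $[p,q]$ lies on $W^s_\e(p)$ and $[q,r]$ lies on $W^u_\e(r)$, so both sides are the unique point of the same intersection $W^s_\e(p)\cap W^u_\e(r)$), and their consequence $[[z,x],[x,z]]=z$ for $z$ near $x$. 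From these, for a rectangle $R$ of small diameter and any $x\in R$, the map $z\mapsto([x,z],[z,x])$ is a homeomorphism of $R$ onto $W^s(x,R)\times W^u(x,R)$ with continuous inverse $(a,b)\mapsto[b,a]$; and, more generally, $[\cdot,\cdot]$ identifies a suitable box neighbourhood of $x$ in $\L$ with a product of a neighbourhood of $x$ in $A:=W^s_\e(x)\cap\L$ and a neighbourhood of $x$ in $B:=W^u_\e(x)\cap\L$, under which, near $x$, the set $R$ corresponds exactly to the sub-product $W^s(x,R)\times W^u(x,R)$ (the nontrivial inclusion: if $v$ is close to $x$ with $[x,v]\in W^s(x,R)$ and $[v,x]\in W^u(x,R)$, then $v=[[v,x],[x,v]]\in R$ since $R$ is a rectangle).

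Granting this, the equality $\partial R=\partial^sR\cup\partial^uR$ is immediate. For $x\in R$ the product chart shows $x\in\mathrm{int}_\L R$ iff $x\in\mathrm{int}_A W^s(x,R)$ and $x\in\mathrm{int}_B W^u(x,R)$, i.e. iff $x\notin\partial^uR$ and $x\notin\partial^sR$; since $R$ is closed, $\partial R=R\setminus\mathrm{int}_\L R$, so $\partial R=\partial^sR\cup\partial^uR$. (This also reproves that $\partial R$ is closed, but not yet that $\partial^sR$ and $\partial^uR$ are individually closed.)

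For closedness of $\partial^sR$ (the argument for $\partial^uR$ being symmetric, with the roles of $W^s$ and $W^u$ interchanged), take $x_n\in\partial^sR$ with $x_n\to x$; since $R$ is closed, $x\in R$. For $n$ large the stable holonomy $v\mapsto[v,x]$ is a homeomorphism from a neighbourhood of $x_n$ in $W^u_\e(x_n)\cap\L$ onto a neighbourhood of $u_n:=[x_n,x]$ in $B$ (local inverse $b\mapsto[b,x_n]$, again via the identities above), and it carries $W^u(x_n,R)$ onto $W^u(x,R)$ near these points because $x_n\in R$ (so $[v,x]\in R$ for $v\in R$ and $[b,x_n]\in R$ for $b\in R$). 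Hence $x_n\notin\mathrm{int}\,W^u(x_n,R)$ forces $u_n\notin\mathrm{int}_B W^u(x,R)$, i.e. $u_n$ lies in the set $W^u(x,R)\setminus\mathrm{int}_B W^u(x,R)$, which is closed in $B$. By continuity of the bracket, $u_n=[x_n,x]\to[x,x]=x$, so $x$ belongs to that closed set, i.e. $x\notin\mathrm{int}_B W^u(x,R)$, which means $x\in\partial^sR$.

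The main obstacle is the local-product bookkeeping in the first paragraph: one must choose the box neighbourhoods consistently so that the canonical coordinates genuinely furnish a product chart in which $R$ appears as a rectangle-product, and one must use that local stable (resp. unstable) discs through nearby points coincide on a common sub-disc, so that, for instance, $[x,v]$ depends only on the unstable leaf through $v$. Once these facts — exactly as in Bowen's treatment — are in place, both statements of the lemma reduce to the short topological arguments above.
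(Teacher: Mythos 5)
Your proposal is correct and takes essentially the same route as the paper: the equality $\partial R=\partial^s R\cup\partial^u R$ rests on the identity $y=[[y,x],[x,y]]$ together with continuity of $[\cdot,\cdot]$, continuity of the local discs, and the rectangle property, exactly as in the paper's argument. The closedness step is only cosmetically repackaged --- the paper shows the complement of $\partial^u R$ in $R$ is relatively open by the same bracket argument, whereas you pass to the limit of a sequence by sliding along the stable holonomy onto the fixed slice $W^u_\e(x)\cap\L$ --- and the $\e$-versus-$2\e$ bookkeeping you flag is at the same level of informality as the paper's own use of $[[y,x],[x,y]]\in W^s_\e(y)\cap W^u_\e(y)$.
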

\begin{proof}
Let $x\in int(R)$. Then, there exists a  neighbourhood $U^u$ of $x$ in $W^u_\e(x)$ such that  $W^u(x,R)=R\cap U^u$. Hence, $x\in int(W^u(x,R))$. Similarly, for $x\in int(R)$, we have $x\in int(W^s(x,R))$. To show $\partial R=\partial^sR\cup \partial^uR$, it is enough to show $\partial R\subset \partial^sR\cup \partial^uR$.

Suppose not, i.e., $x\in\partial R -(\partial^sR\cup \partial^uR)$. Then  $x\in int(W^u(x,R))\cap int(W^s(x,R))$. For each $y\in \L$ near $x$, we have
\[[x,y]\in W^s_\e(x)\cap \L \text{ and } [y,x]\in W^u_\e(x)
\]
and both $[x,y]$ and $[y,x]$ are continuous in $y$. Hence, for each $y\in \L$ close enough to $x$, we have $[x,y],  [y,x]\in R$. Therefore,
$$y'=[[y,x],[x,y]]\in R\cap W^s_\e(y)\cap W^u_\e(y)=\{y\}.$$
So,  $x\in int(R)$, a contradiction.

 Next, we show that $\partial^sR$ and $\partial^uR$ are closed. Since $R$ is closed, it suffices to show that these sets are closed in $R$. Suppose $x\in R-\partial^u R$, then, by definition, $x\in int(W^s(x,R))$, which is a neighborhood of $x$ in $W^s_\e(x)\cap \L$. For any $y\in R$, if $|y-x|$ is small enough, then
$$[x,y]\in int(W^s(x,R)),$$
and $[x,y]$ is continuous on $y$. Thus, for any $y'\in W^s_\e(y)\cap \L$ with $|y'-y|$ small enough, $$[x,y']\in int(W^s(x,R))$$
which implies that
$$y'=[y,[x,y']]\in R.$$
So $y\in int(W^s(y,R))$, we have proved that $\partial^u R$ is closed. Similar argument will show that $\partial^s R$ is closed.

\end{proof}

\begin{definition}\label{D:Markov}
A Markov partition of $\L$ is a finite covering $\mc R=\{R_1,\ldots, R_m\}$ of $\L$ by proper rectangles with
\begin{itemize}
\item[(a)] $int(R_i)\cap int(R_j)=\emptyset$ if $j\neq i$;
\item[(b)] $fW^u(x,R_i)\supset W^u(fx,R_j)$ and $fW^s(x,R_i)\subset W^s(fx,R_j)$ when $x\in int(R_i)$ and $fx\in int(R_j)$.
\end{itemize}
\end{definition}

\begin{lemma}\label{L:MarkovExist}
 Assume that $f|_\L$ is uniformly hyperbolic and topologically mixing. Then,  $\L$ has Markov partitions $\mc R$ of arbitrarily small diameter.
\end{lemma}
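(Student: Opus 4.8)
\textbf{Proof proposal for Lemma \ref{L:MarkovExist}.}

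The plan is to follow Bowen's construction of Markov partitions, which rests entirely on the shadowing lemma (Lemma \ref{L:Shadow}), the expansiveness of $f|_\L$ (Lemma \ref{L:Expansive}), the existence of local canonical coordinates (Lemma \ref{L:CanonicalCoor}), and the topology of rectangles developed in Lemma \ref{L:RBoundary}; none of these steps uses invertibility of $f$ on all of $\mb H$ or local compactness of the phase space, only the fact that $f|_\L$ is a homeomorphism of the compact set $\L$, which we have. First I would fix $\e\in(0,\e_0)$ small (with $\e$ below the expansiveness constant of Lemma \ref{L:Expansive}), and choose $\b>0$ small enough that $B(\L,\b)\subset U$ and that $\b$-shadowing is unique by expansiveness. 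Then I would take $\a\in(0,\b)$ from the shadowing lemma so that every $\a$-pseudo-orbit in $\L$ is $\b$-shadowed by a (unique) point of $\L$. Pick a finite $\a/2$-dense set $\{p_1,\dots,p_N\}\subset\L$ (possible since $\L$ is compact) and form the transition matrix $A$ with $A_{ij}=1$ iff $|fp_i-p_j|<\a$. Every element of $\Sigma_A$ is then an $\a$-pseudo-orbit, so there is a well-defined shadowing map $\theta:\Sigma_A\to\L$, $\theta(\underline q)=$ the unique point $\b$-shadowing $\{p_{q_i}\}$; one checks $\theta$ is continuous, surjective (every orbit $\{f^ix\}$ is $\a/2$-shadowed by a choice of nearest $p_{q_i}$'s, hence is itself the unique shadower), and satisfies $\theta\circ\sigma=f\circ\theta$.

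Next I would establish the compatibility of $\theta$ with the bracket operation: if $\underline q,\underline q'\in\Sigma_A$ agree in all coordinates $i\le 0$, then $\theta(\underline q)$ and $\theta(\underline q')$ lie on a common local stable manifold, and symmetrically for agreement in coordinates $i\ge 0$ and local unstable manifolds; consequently $\theta([\underline q,\underline q'])=[\theta(\underline q),\theta(\underline q')]$ whenever the brackets are defined. This is exactly Bowen's argument and uses only Lemma \ref{L:SUManifolds}(ii) (exponential contraction along $W^s_\e$, $W^u_\e$) together with uniqueness of shadowing. Then, following Bowen, I would set $T_k=\theta(\{\underline q\in\Sigma_A: q_0=k\})$ for $k=1,\dots,N$; these are rectangles (closed under $[\cdot,\cdot]$ by the previous step), they cover $\L$, but they need not have disjoint interiors. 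The standard refinement step produces the Markov partition: for each pair $(j,k)$ with $T_j\cap T_k\neq\emptyset$ one forms the four sets
\[
T_{j,k}^{1}=\{x\in T_j: W^u(x,T_j)\cap T_k\neq\emptyset,\ W^s(x,T_j)\cap T_k\neq\emptyset\},
\]
and the three analogous sets obtained by replacing one or both ``$\neq\emptyset$'' by ``$=\emptyset$'', take their closures, and let $\mc R$ be the common refinement: $R=\{x: x\in T_{j,k}^{n(j,k)}\text{ whichever holds, for all }j,k\}$, discarding empties and keeping one proper rectangle $\overline{\mathrm{int}(R)}$ per nonempty interior. One verifies (a) disjoint interiors and (b) the Markov property using Lemma \ref{L:RBoundary} (which identifies $\partial R=\partial^sR\cup\partial^uR$) to control the boundaries, plus the fact that $f$ maps $W^u$-fibres onto $W^u$-fibres and $W^s$-fibres into $W^s$-fibres. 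Arbitrarily small diameter is achieved simply by taking the $p_i$'s $\a/2$-dense with $\a$ as small as desired, which forces $\mathrm{diam}(T_k)\le 2\b\to 0$; topological mixing of $f|_\L$ guarantees $(\Sigma_A,\sigma)$ can be taken topologically mixing (by passing to a suitable irreducible aperiodic sub-shift), which is what lets the thermodynamic formalism of Appendix \ref{S:AppSymDyn} apply later.

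The part I expect to require the most care is verifying that the $T_k$ and then the refined $R$'s are \emph{proper} rectangles, i.e.\ that $R=\overline{\mathrm{int}(R)}$, and that the boundary sets $\partial^s,\partial^u$ behave as in the finite-dimensional case. In Bowen's proof this is where local compactness is used implicitly (to see that a rectangle contains interior points once it contains a ``product neighbourhood'' in $W^s_\e\times W^u_\e$). Here one must instead argue directly from Lemma \ref{L:CanonicalCoor}: the map $(x,y)\mapsto[x,y]$ is a homeomorphism from $(W^s_\e(z)\cap\L)\times(W^u_\e(z)\cap\L)$ onto a neighbourhood of $z$ in $\L$, so ``interior in $\L$'' of a rectangle $R$ corresponds to ``product of relative interiors'' of its stable and unstable fibres, and the density/covering properties of $\theta$ supply fibres with nonempty relative interior. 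Once this local-product structure is in hand, the remaining verifications—that $\partial\mc R:=\bigcup_i\partial R_i$ is mapped into itself appropriately and that (b) of Definition \ref{D:Markov} holds—are the same symbol-pushing as in \cite{Bowen}, and I would carry them out exactly as there, which is why the excerpt defers the full details to this appendix.
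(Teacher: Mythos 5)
Your proposal follows essentially the same route as the paper's proof: Bowen's construction via the shadowing map $\t$ on the subshift determined by a finite dense net in $\L$, compatibility of $\t$ with the bracket $[\cdot,\cdot]$, the rectangles $T_s=\t(\{\underline q:q_0=p_s\})$, and the refinement through the sets $T^n_{j,k}$ into proper rectangles with disjoint interiors, with the Markov property verified fibrewise using only that $f|_\L$ is a homeomorphism of the compact set $\L$. The one (easily repaired) slip is quantitative: an $\a/2$-dense net alone does not make the nearest-point itinerary of an orbit an $\a$-pseudo-orbit, since one must also bound $|fp_{q_i}-f^{i+1}x|$; the paper handles this by choosing the net $\g$-dense with $\g<\a/2$ given by uniform continuity of $f$ on a neighborhood of $\L$, so that points within $\g$ have images within $\a/2$.
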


\begin{proof} Let $\b>0$ be sufficiently small (comparing to a fixed small $\e\in(0,\e_0]$ as in Lemma \ref{L:SUManifolds}, $\d$  as in Lemma \ref{L:CanonicalCoor}, and $\e$ as in Lemma \ref{L:Expansive}).  Choose $\a\in(0,\b)$ small as in Lemma \ref{L:Shadow} so that  every $\a$-pseudo-orbit in $\L$ is $\b$-shadowed in $\L$. Since $f$ is $C^2$ and $\L$ is compact, there is an open neighborhood $V$ of $\L$ in $\mb H$, on which $f$ is uniformly continuous. Thus,  there is $\g\in(0,\frac\a2)$ such that
\begin{equation}\label{E:UniControlOnV}
|fx-fy|<\frac\a2,\text{ for any }  x,y\in V \text{ with} |x-y|<\g.
\end{equation}
Let $P=\{p_1,\ldots,p_r\}$ be a finite $\g$-dense subset of $\L$, namely,  for each $x\in\L$ there is $p_s\in P$ such that $|x-p_s|<\g$. Let
$$\Sigma(P)=\left\{\underline q\in \prod_{-\infty}^\infty P:\ |fq_j-q_{j+1}|<\a \text{ for all } j\in \mb Z\right\}.$$
By using Lemma \ref{L:Shadow} and \ref{L:Expansive}, we have that for each $\underline q\in\Sigma(P)$ there exists a unique $\t(\underline q)\in\L$ which $\b$-shadows $\underline q$. Since $P$ is  a $\a$-dense subset of $\L$, for each $x\in\L$ there is a $\underline q$ with $x=\t(\underline q)$, and furthermore, one can make $|x-q_0|<\g$.

 Let $\underline q,\underline q'\in \Sigma(P)$ with $q_0=q'_0$. We define $\underline q^*=[\underline q,\underline q']\in \Sigma(P)$ by setting
$$q^*_j=\begin{cases}
q_j&\text{ for } j\ge 0\\
q'_j&\text{ for } j\le 0.
\end{cases}$$
Then,  $|f^j\t(\underline q)-f^j\t(\underline q^*)|\le 2\b$ for $j\ge 0$ and $|f^j\t(\underline q')-f^j\t(\underline q^*)|\le 2\b$ for $j\le 0$. Hence,
$$\t(\underline q^*)\in W^s_{2\b}(\t(\underline q))\cap W^u_{2\b}(\t(\underline q')),$$
which can also be written as
$$\t[\underline q,\underline q']=[\t(\underline q),\t(\underline q')].$$
Let
\[T_s=\{\t(\underline q)|\ \underline q\in \Sigma(P),\ q_0=p_s\}.
\]
For $x,y\in T_s$, write $x=\t(\underline q)$, $y=\t(\underline q')$ with $q_0=q'_0=p_s$. Then,
$$[x,y]=\t[\underline q,\underline q']\in T_s,$$
which implies that $T_s$ is a rectangle.

Next, we show that if $x=\t(\underline q)$ with $q_0=p_s$ and $q_1=p_t$, then
\begin{equation}\label{E:FInc}
fW^s(x,T_s)\subset W^s(fx,T_t),
\end{equation}
and
\begin{equation}\label{E:BInc}
fW^u(x,T_s)\supset W^u(fx,T_t).
\end{equation}

For $y\in W^s(x,T_s)$, suppose $y=\t(\underline q')$ with $q'_0=p_s$. Then,
$$y=[x,y]=\t[\underline q,\underline q']\text{ and }fy=\t(\sigma[\underline q,\underline q'])\in T_t,$$
where $\sigma$ is the shift operator and $\sigma[\underline q,\underline q']$ has $p_t$ in its zeroth position. Since $fy\in W^s_{\e}(fx)$,   $fy\in W^s(fx,T_t)$. So, (\ref{E:FInc}) holds.

(\ref{E:BInc}) can be proved in the same fashion. We consider $y\in W^u(fx,T_t)$ and assume $y=\t(\underline q')$ with $q'_0=p_t$. Then,
$$y=[y,fx]=\t[\underline q',\sigma(\underline q)]\text{ and }f^{-1}y=\t(\sigma^{-1}[\underline q',\sigma(\underline q)])\in T_s,$$
 as $\sigma^{-1}[\underline q',\sigma(\underline q)]$ has $p_s$ in its zeroth position. Since $f^{-1}y\in W^u_\e(x)$, $f^{-1}y\in W^u_\e(x,T_s)$. Hence, (\ref{E:BInc}) holds.

 The following lemma can be proved by using the exact same argument used in \cite{Bowen}.
\begin{lemma}\label{L:Closeness}
$\t:\Sigma(P)\to \L$ is continuous.
\end{lemma}
A immediate consequence of the above lemma is that each $T_s$ is closed.\\

Note that $\mc T=\{T_1,\ldots, T_r\}$ is a covering by rectangles. However the  $T_j$'s may not be proper or interior of them may not be  disjoint.

For each $x\in\L$, let
$$\mc T(x)=\{T_j\in\mc T|\ x\in T_j\}\text{ and } \mc T^*(x)=\{T_k\in\mc T|\ T_k\cap T_j\neq \emptyset\text{ for some } T_j\in \mc T(x)\}.$$
  Since $\mc T$ is a closed cover of $\L$ and $\partial T_j$'s are nowhere dense sets,   $Z=\L-\left(\cup_j\partial T_j\right)$ is an open dense subset of $\L$. Letting
$Z^*=\{x\in\L|\ W^s_{\frac13\e}(x)\cap \partial^sT_k=\emptyset \text{ and }W^u_{\frac13\e}(x)\cap \partial^uT_k=\emptyset \text{ for all }T_k\in\mc T^*(x)\},$ we have the following lemma.
\begin{lemma}\label{L:Z*Open}
$Z^*$ is an open and dense subset of $\L$.
\end{lemma}
\begin{proof}
We first prove  that $Z^*$ is open. For a given $x\in Z^*$, by definition, we have that
\[
x\in\left(\cap_{T_k\in\mc T(x)}int(T_k)-\cup_{T_k\notin\mc T(x)}T_k\right),
 \]which is open. For any $x,y\in \L$, if  $|y-x|$ is small enough, then $\mc T(y)=\mc T(x)$.
  Using Lemma \ref{L:RBoundary}, we have that $\cup_{T_k\in\mc T^*(x)}\partial^\tau T_k$ is closed in $\L$ for $\tau=u,s$. Since, for $\tau=u,s$, $W^\tau_\e(x)$ is also closed, and the space is normal, $W^\tau_\e(x)$ and $\cup_{T_k\in\mc T^*(x)}\partial^\tau T_k$ are separated with a positive distance. Then, by Lemma \ref{L:SUManifolds}, for $y\in \L$ if $|y-x|$ is small enough, then $W^\tau_\e(y)$ is close enough to $W^\tau_\e(x)$ so that  $W^\tau_\e(y)$ is disjoint to $\cup_{T_k\in\mc T^*(x)}\partial^\tau T_k$ for $\tau=u,s$. Therefore, $y\in Z^*$. Thus,  $Z^*$ is open.

 Define
 \begin{equation}\label{E:T_k^*}
 T_k^*=\left(\cup_{x\in \partial^s T_k}(W^s_{\frac13\e}(x)\cap \L)\right)\cup \left(\cup_{x\in \partial^u T_k}(W^u_{\frac13\e}(x)\cap \L)\right).
 \end{equation}
 Clearly, $Z^*\supset\L-\cup_{T_k\in \mc T}T_k^*$. So it is enough to  show that each $T_k^*$ is a nowhere dense set in $\L$.

 We first note that $T_k^*$ is closed. In fact,  for any $x\notin T_k^*$, by definition of $T_k^*$, one has that
 \[
 W^s_{\frac13\e}(x)\cap \partial^s T_k=\emptyset\text{ and } W^u_{\frac13\e}(x)\cap \partial^u T_k=\emptyset.
 \]
  Since $W^s_{\frac13\e}(x)\cap \L$, $\partial^s T_k$, $W^u_{\frac13\e}(x)\cap\L$, and $\partial^u T_k$ are all closed subsets $\L$, and $\L$ is a normal metric space, we have  for $\tau=u,s$, $W^\tau_{\frac13\e}(x)$ and $\partial^\tau T_k$ has a positive separation. By the continuous dependence of $W^\tau_{\e}(x)$ on $x\in \L$, we have that for any $y\in\L$ closed enough to $x$, $y\notin T_k^*$, which yields that $T_k^*$ is closed.

 In the following, we prove that $int(T_k^*)=\emptyset$. Suppose not, i.e., there is $x\in int(T_k^*)$. We will prove that $x\in int(T_k)$ which gives a contradiction. Suppose this is true. Since for $\tau=u,s$, $W^\tau(x,T_k)$ is a neighborhood of $x$ in $W^\tau_\e\cap \L$,  if there is a $y\in \partial^u T_k\cap W^u_{\frac13\e}(x)$, then for any $y'\in W^s_\e(y)\cap \L$ with $|y'-y|$ small enough
 \[[x,y']\in int(W^s(x,T_k))\text{ thus }y'=[y,[x,y']]\in W^s(y,T_k),
 \]
 which implies that $y\in int(W^s(y,T_k))$ which contradicts $y\in \partial^u T_k$. Thus,
  \[\partial^u T_k\cap W^u_{\frac13\e}(x)=\emptyset.
  \]
 The same argument gives
 \[\partial^s T_k\cap W^u_{\frac13\e}(x)=\emptyset.
 \]
 Hence, $x\notin T_k^*$, a contradiction.

  We now prove that $x\in int(T_k)$ provided $x\in int(T_k^*)$. Suppose that there exists  $y_1\in \partial^u T_k$ such that $x\in W^u_{\frac13\e}(y_1)$. Since $x\in int (T^*_k)$ and $y\in \partial^u T_k$, there is  $y_1\in (W^s_{\frac13\e}(y)\cap\L-T_k)$ such that $[x,y_1]\in int (T^*_k)$. We note that $\partial^u T_k \cap W^u_{\frac13\e}([x,y_1])=\emptyset$. If not, then there is a $y_1'\in \partial^u T_k \cap W^u_{\frac13\e}([x,y_1])=\emptyset$. Thus, $[x,y_1]=[y,y_1']\in T_k$.  Then $y_{1}=[y,[x,y_{1}]]\in T_{k}$, a contradiction. Therefore,  there is a $y_2\in \partial^s T_k$ such that $[x,y_1]\in W^s_\e(y_2)$, thus $x\in W^s_{\e}(y_2)$. Hence,
 $$x=[y_2,y_1]\in T_k.$$
 Since  $x$ is an arbitrary  point, we have that $int(T_k^*)\subset T_k$.  Hence, $x\in int(T_k)$.
 By switching the super index "u" and "s" above, using exactly same argument, we obtain  the same conclusion. The proof is complete.
    \end{proof}

For $T_j\cap T_k\neq \emptyset$, let
\begin{align*}
&T_{j,k}^1=\{x\in T_j:W^u(x,T_j)\cap T_k\neq \emptyset,\ W^s(x,T_j)\cap T_k\neq \emptyset\}=T_j\cap T_k,\\
&T_{j,k}^2=\{x\in T_j:W^u(x,T_j)\cap T_k\neq \emptyset,\ W^s(x,T_j)\cap T_k= \emptyset\},\\
&T_{j,k}^3=\{x\in T_j:W^u(x,T_j)\cap T_k= \emptyset,\ W^s(x,T_j)\cap T_k\neq \emptyset\},\\
&T_{j,k}^4=\{x\in T_j:W^u(x,T_j)\cap T_k= \emptyset,\ W^s(x,T_j)\cap T_k= \emptyset\}.
\end{align*}
  We note that for $x,y\in T_j$, $W^s([x,y],T_j)=W^s(x,T_j)$ and $W^u([x,y],T_j)=W^u(y,T_j)$. Thus,   each $T_{j,k}^n$ is a rectangle and each $x\in T_j\cap Z^*$ lies in $int(T_{j,k}^n)$ for some $n\in\{1,2,3,4\}$. By definition, $T_{j,k}^4$ is open in $\L$ and $T_{j,k}^1$ is close in $\L$.

By definition of $Z^{*}$, for each $x\in Z^*$ one can define a nonempty set
\[R(x)=\cap\left\{int(T_{j,k}^n)\big|\ x\in T_j, T_k\cap T_j\neq \emptyset \text{ and } x\in T_{j,k}^n\right\}.
\]
 For $x\in Z^*$, $R(x)$ is an open rectangle in $\L$. Let $y\in R(x)\cap Z^*$. Since $R(x)\subset T_s$ for any $T_s\in \mc T(x)$ and $R(x)\cap T_j=\emptyset$ for any $T_j\notin \mc T(x)$, we have that $\mc T(y)=\mc T(x)$. For $T_j\in \mc T(x)=\mc T(y)$ and $T_k\cap T_j\neq\emptyset$, $y\in T^n_{j,k}$  since $R(x)\subset T^n_{j,k}$. Hence, $R(y)=R(x)$. Suppose $R(x)\cap R(x')\neq\emptyset$ for some $x,x'\in Z^*$. Then, there is a $y\in R(x)\cap R(x')\cap Z^*$ since  $Z^*$ is open and dense. So,  $R(x)=R(y)=R(x')$. Since there are only finitely many $T_{j,k}^n$'s, there are only finitely many distinct $R(x)$'s. Let
\[\mc R=\left\{\overline{R(x)}|\ x\in Z^*\right\}=\{R_1,\ldots,R_m\}.
\]
Since for $x,x'\in Z^*$, either $R(x')=R(x)$ or $R(x)\cap R(x')=\emptyset$, $(\overline{R(x)}-R(x))\cap Z^*=\emptyset$. Since $Z^*$ is dense in $\L$, we have that $\overline{R(x)}-R(x)$ has no interior in $\L$. Using $R(x)\subset int(\overline{R(x)})$, we obtain
$$\overline{int(\overline{R(x)})}=\overline{R(x)}\ i.e.\ \overline{R(x)}\text{ is proper}.$$
For $R(x)\neq R(x')$ with $x,x'\in Z^{*}$,  let $R(x,x')=int\left(\overline{R(x)}\right)\cap int\left(\overline{R(x'})\right)$ which is an open subset of $\L$. Since $R(x)$ and $R(x')$ are open and dense in $int\left(\overline{R(x)}\right)$ and $int\left(\overline{R(x')}\right)$ respectively and $R(x)\cap R(x')=\emptyset$, we have that $R(x,x')\cap R(x)$ and $R(x,x')\cap R(x')$ are disjoint open and dense subsets of $R(x,x')$. This can not happen if $R(x,x')\neq \emptyset$, thus $$R(x,x')=int\left(\overline{R(x)}\right)\cap int\left(\overline{R(x'})\right)=\emptyset.$$

To prove that $\mc R$ is a Markov partition, we need to show condition (b) in Definition \ref{D:Markov} holds.

\noindent Next we prove the following technical lemma.
\begin{lemma}\label{L:R(fx)}
 For any $x,y\in Z^*\cap f^{-1}Z^*$, if $R(x)=R(y)$ and $y\in W^s_\e(x)$, then $R(fx)=R(fy)$.
 \end{lemma}
 \begin{proof}
  We first prove that $\mc T(fx)=\mc T(fy)$. Suppose $fx\in T_j$.  Let $fx=\t(\sigma\underline{q})$ with $q_1=p_j$ and $q_0=p_s$. Then,  $x=\t(\underline{q})\in T_s$. That $R(x)=R(y)$ implies $y\in T_{s}$. Then,  by (\ref{E:FInc}), we have that
\[
fy\in fW^s(x,T_s)\subset W^s(fx,T_j)
\]
which implies that $\mc T(fx)\subset \mc T(fy)$.
Note that $y\in W^{s}_{\e}(x)$ if and only if $x\in W^{s}_{\e}(y)$ here. Similarly,  $\mc T(fy)\subset \mc T(fx)$. Hence,  $\mc T(fx)=\mc T(fy)$.

Now suppose $fx,fy\in T_j$ and $T_k\cap T_j\neq\emptyset$. We prove that $fx, fy \in T_{j,k}^n$. Since $fy\in W^s_\e(fx)$, we have that $W^s(fy,T_j)=W^s(fx,T_j)$. Suppose
$$W^u(fy,T_j)\cap T_k=\emptyset \text{ while } W^u(fx,T_j)\cap T_k\neq \emptyset,$$
from which we will derive a contradiction.

Let $z\in \L$ such that $fz\in W^u(fx,T_j)\cap T_k$. Since $fx=\t(\sigma(\underline{q})), q_1=p_j, q_0=p_s$,  from (\ref{E:BInc}), we have that
\[
fz\in W^u(fx,T_j)\subset fW^u(x,T_s)\text{ or in another word } z\in W^u(x,T_s).
\]
Let $fz=\t(\sigma\underline{q}')$ with $q_1'=p_k$ and $q_0=p_t$. Then,  $z\in T_t$, and $fW^s(z,T_t)\subset W^s(fz,T_k)$ by (\ref{E:FInc}). Thus,  we have that $T_s\in \mc T(x)=\mc T(y)$ and $z\in T_t\cap T_s\neq\emptyset$. Since $x, y\in T^n_{s,t}$ and $z\in W^u(x,T_s)$, there exists a $z'\in W^u(y,T_s)\cap T_t$. Therefore,
\[
z''=[z,y]=[z,z']\in W^s(z,T_t)\cap W^u(y,T_s).\]
Since $fz,fy\in T_j$, we have
$$fz''=[fz,fy]\in W^s(fz,T_k)\cap W^u(fy,T_j),\text{ thus }W^{u}(fy,T_{j})\cap T_{k}\neq \emptyset,$$
which gives a contradiction. Therefore,  $R(fx)=R(fy)$. This completes the proof of the lemma.
\end{proof}

Let $Y^*=\cup_{T_k\in \mc T}T_k^*$, where $T_k^*$ is defined in (\ref{E:T_k^*}). In the proof of Lemma \ref{L:Z*Open}, we have shown that each $T_k^*$ is closed and nowhere dense. Thus,  $Y^*$ is closed and nowhere dense since there are only finitely many $T_k^*$'s. Furthermore, if $x\notin Y^*\cup f^{-1}Y^*$, then $x\in Z^*\cap f^{-1}Z^*$. We claim that the set $W^s(x,R(x))\cap (Z^*\cap f^{-1}Z^*)$ is open and dense in $W^s(x,\overline{R(x)}) \subset W^s_\e(x)\cap \L$. This claim is an immediate consequence of the  following lemma since $W^{s}(x,R(x))\cap (Z^*\cap f^{-1}Z^*)$ contains a subset
$$
 \left(W^{s}(x,R(x))-\cup_{T_{k}\subset \mc T^{*}(x)}T_{k}^{*}\right)\cap (f|_{\L})^{-1}\left(W^{s}(fx,R(fx))-\cup_{T_{k}\subset \mc T^{*}(fx)}T_{k}^{*}\right).$$

\begin{lemma}\label{L:T_k^*}
Given any $x\in Z^*$ and each $T_k^*$ with $T_{k}\in \mc T^{*}(x)$, the set $(W^\tau_{\frac13\e}(x)\cap \L)\cap T_k^*$ is closed and nowhere dense  in $W^\tau_{\frac13\e}(x)\cap \L$ for $\tau=u,s$.
\end{lemma}
\begin{proof}
Otherwise, suppose there is a $z\in int((W^s_{\frac13\e}(x)\cap \L)\cap T_k^*)$ (as a subset of $W^s_{\frac13\e}(x)\cap \L$) for some $T_k^*\in \mc T^{*}(x)$ (noting that $T_{k}^{*}$ is a closed subset of $\L$). By definition of $Z^*$ and $T_k^*$, we have that $(W^s_{\frac13\e}(x)\cap \L)\cap \partial^{s} T_{k}=\emptyset$, thus there is $y\in \partial^u T_k$ so that $z=[x,y]$. By Lemma \ref{L:SUManifolds}, for any $y_1\in W^s_\e(y)\cap \L$ with $|y_1-y|$ small enough, one has that
$$[x,y_1]\in int ((W^s_\e(x)\cap \L)\cap T_k^*) (\text{as a subset of } W^s_{\frac13\e}(x)\cap \L).$$
Also note that for any $z'\in int((W^s_{\frac13\e}(x)\cap \L)\cap T_k^*)$, there exists $y'\in \partial^u T_k$ so that $z'=[x,y']$, then we have
$$y_1=[y,y_1]=[y,[x,y_1]]=[y,y']\text{ for some }y'\in \partial^u T_k,$$
which implies that $y_1\in T_k$. Thus,  $y\in int(W^s(y,T_k))$ which contradicts to that $y\in \partial^{u}T_{k}$. \\
The same argument works for the part of $W^u$, and the proof is omitted here. This completes the proof of the lemma.
\end{proof}

 Since, for any $x\in R(x)$, $W^{s}(x,R(x))\cap (Z^*\cap f^{-1}Z^*)$ is open and dense as a subset of $W^s_\e(x)\cap \L$. From Lemma \ref{L:R(fx)}, we have that $R(fy)=R(fx)$ for any $y\in W^{s}(x,R(x))\cap Z^{*}\cap f^{-1}Z^{*}$. By the  continuity of $f|_{\L}$, we have that
$$fW^s(x,\overline{R(x)})\subset \overline{R(fx)}.$$
 So $fW^s(x,\overline{R(x)})\subset W^s(fx,\overline{R(fx)})$ as  $fW^s(x,\overline{R(x)})\subset W^s_\e(fx)$.\\

This completes the proof of half of condition (b) in Definition \ref{D:Markov}. The other half can be proved in the exactly same way by working on $\left(f|_{\L}\right)^{-1}$ and the proof is omitted. Note that, since $\L$ is compact and $f|_{\L}$ is homeomorphism, then $\left(f|_{\L}\right)^{-1}$ is also a homeomorphism. And, also noting that, in the above proof, we only used $W^s\cap \L$ and $W^u\cap \L$ on which $f$ is invertible. So one can apply the above argument to $\left(f|_{\L}\right)^{-1}$ with $(W^u\cap\L)_{\left(f|_{\L}\right)^{-1}}= (W^s\cap \L)_{f|_\L}$ and $(W^s\cap\L)_{\left(f|_{\L}\right)^{-1}}= (W^u\cap \L)_{f|_\L}$.
\end{proof}


\subsection{Symbolic Dynamics Induced by a Markov Partition}\label{S:SymDyn}
In this section, we will build the connection between $f|_\L$ and a symbolic dynamical system generated by a Markov partition.

Throughout this section, we fix a Markov partition of $\L$ with diameter $<\b$ for some small $\b>0$, and denote which by $\mc R=\{R_1,\ldots, R_m\}$. Then,  one can define the {\it transition matrix} $A=A(\mc R)$ by
$$A_{ij}=\begin{cases}
1 &\text{ if } int(R_i)\cap f^{-1} int (R_j)\neq \emptyset\\
0 &\text{ otherwise}.
\end{cases}
$$
\begin{lemma}\label{L:FBInc}
Suppose $x\in R_i$, $fx\in R_j$ and $A_{ij}=1$. Then,  $fW^s(x,R_i)\subset W^s(fx,R_j)$ and $fW^u(x,R_i)\supset W^u(fx,R_j)$.
\end{lemma}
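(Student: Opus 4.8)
The plan is to imitate the classical argument for Markov partitions of Axiom A systems (Bowen \cite{Bowen}), using only the facts that $f|_{\L}$ is a homeomorphism and that the local stable and unstable manifolds $W^s_\e$, $W^u_\e$ are well-behaved (Lemmas \ref{L:SUManifolds}, \ref{L:CanonicalCoor}) together with the Markov property recorded in Definition \ref{D:Markov}(b). The only thing to check beyond a direct translation is that the transition matrix $A$ defined via interiors genuinely governs the inclusions on the boundary rectangles too.

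\textbf{Key steps.} First I would observe that, since $A_{ij}=1$, the open set $\mathrm{int}(R_i)\cap f^{-1}\mathrm{int}(R_j)$ is nonempty, and by density of $\mathrm{int}(R_i)$ in $R_i$ (the $R_i$ are proper) and continuity of $f|_{\L}$ one can approximate the given point $x\in R_i$ by a sequence $x_n\in \mathrm{int}(R_i)$ with $fx_n\in\mathrm{int}(R_j)$. Second, apply Definition \ref{D:Markov}(b) at each $x_n$ to get $fW^s(x_n,R_i)\subset W^s(fx_n,R_j)$ and $fW^u(x_n,R_i)\supset W^u(fx_n,R_j)$. Third, pass to the limit: here one uses the continuity of the foliations $x\mapsto W^s_\e(x)$, $x\mapsto W^u_\e(x)$ in the $C^1$ topology from Lemma \ref{L:SUManifolds}(iii), together with the fact that $R_i$ is closed, to conclude $fW^s(x,R_i)\subset \overline{W^s(fx,R_j)}$; since $fW^s(x,R_i)\subset W^s_\e(fx)\cap\L$ and $W^s(fx,R_j)=W^s_\e(fx)\cap R_j$ is already closed in $W^s_\e(fx)\cap\L$, this gives the first inclusion. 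For the second inclusion one runs the same limiting argument for the inverse, noting $f|_{\L}$ is a homeomorphism, so that the roles of $W^s$ and $W^u$ are swapped and $\supset$ is preserved under the limit (a closed inclusion of closed embedded discs of matching dimension). Alternatively, one argues the $W^u$ half directly by taking $y\in W^u(fx,R_j)$, pulling it back along $f^{-1}|_{\L}$, and checking via the canonical-coordinate bracket $[\cdot,\cdot]$ and the Markov property at nearby interior points that $f^{-1}y\in W^u(x,R_i)$.

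\textbf{Main obstacle.} The delicate point is the passage to the limit in step three: a priori Definition \ref{D:Markov}(b) is only asserted for $x$ in the \emph{interior} of $R_i$ with $fx$ in the interior of $R_j$, and the inclusion $\supset$ for $W^u$ is not automatically stable under taking closures unless one knows the limit disc $W^u(fx,R_j)$ has the right dimension and meets $R_j$ in the expected set. I expect to handle this by exploiting that the $R_i$ are \emph{proper} rectangles (so $\mathrm{int}(R_i)$ is dense in $R_i$), that $W^u(x,R_i)$ and $W^u(fx,R_j)$ are compact embedded discs of equal dimension $=\dim E^u$, and that the bracket map $[\cdot,\cdot]$ is continuous (Lemma \ref{L:CanonicalCoor}); this lets one transfer the interior statement to the boundary by a straightforward but slightly careful continuity/compactness argument, exactly as in Bowen's treatment. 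Since the excerpt says this is ``exactly same as Bowen's,'' I would cite \cite{Bowen} for the routine verification and only spell out where invertibility of $f|_{\L}$ (rather than of $f$ on all of $\mathbb H$) is used, namely that the argument is carried out entirely within $W^s_\e\cap\L$ and $W^u_\e\cap\L$.
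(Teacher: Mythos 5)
There is a genuine gap, and it occurs already in your first step, before the limiting argument you flag as the ``main obstacle.'' You claim that density of $\mathrm{int}(R_i)$ in $R_i$ plus continuity of $f|_{\Lambda}$ lets you approximate the given $x\in R_i\cap f^{-1}R_j$ by points $x_n\in \mathrm{int}(R_i)$ with $fx_n\in \mathrm{int}(R_j)$. But $A_{ij}=1$ only says that the open set $\mathrm{int}(R_i)\cap f^{-1}\mathrm{int}(R_j)$ is nonempty \emph{somewhere} in $R_i$; it does not say this set accumulates at $x$, and in general it need not. For instance, $x$ may lie on $\partial^s R_i$ with $fx\in\partial^s R_j$ (recall $f(\partial^s\mc R)\subset\partial^s\mc R$ by Lemma \ref{L:RBInc}), where $\partial^s R_j$ is shared with a neighbouring rectangle $R_k$; the piece of $f(\mathrm{int}(R_i))$ near $fx$ can then lie entirely on the $R_k$ side, so that no $x_n$ near $x$ satisfies $fx_n\in\mathrm{int}(R_j)$, even though interior points elsewhere in $R_i$ make $A_{ij}=1$. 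So the approximating sequence on which your whole scheme rests need not exist. Even where it does, the passage to the limit in the unstable inclusion $fW^u(x_n,R_i)\supset W^u(fx_n,R_j)$ requires lower semicontinuity of the fibers $W^u(\cdot,R_j)$ at the boundary point $fx$ (for each $y\in W^u(fx,R_j)$ you must produce $y_n\in W^u(fx_n,R_j)$ with $y_n\to y$), and justifying that is essentially the same difficulty in disguise.

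The paper's proof avoids limits altogether by a bracket transfer inside the rectangles, and this is also the natural repair of your argument. Fix a single point $x'\in \mathrm{int}(R_i)\cap f^{-1}\mathrm{int}(R_j)$ (its existence is exactly $A_{ij}=1$; its location in $R_i$ is irrelevant). By the rectangle structure, $W^s(x,R_i)=\{[x,y]\mid y\in W^s(x',R_i)\}$, and since diameters are small compared with $\e$ one has $f[x,y]=[fx,fy]$ by uniqueness of the intersection point in Lemma \ref{L:CanonicalCoor}. Definition \ref{D:Markov}(b) applied at $x'$ gives $fy\in fW^s(x',R_i)\subset W^s(fx',R_j)\subset R_j$, and since $fx\in R_j$ and $R_j$ is a rectangle, $[fx,fy]\in W^s_\e(fx)\cap R_j=W^s(fx,R_j)$. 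This proves $fW^s(x,R_i)\subset W^s(fx,R_j)$ with no approximation, and the unstable inclusion follows by running the identical transfer for $(f|_{\Lambda})^{-1}$ (as in the remark at the end of the proof of Lemma \ref{L:MarkovExist}), which interchanges the roles of $W^s\cap\Lambda$ and $W^u\cap\Lambda$. If you want to keep the spirit of your write-up, replace the sequence $x_n$ and the limit by this single interior reference point and the bracket identity; as written, steps one and three do not go through.
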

\begin{proof}
If $int(R_i)\cap f^{-1}int(R_j)\neq\emptyset$, then there is an $x'\in (int(R_i)\cap f^{-1}int(R_j))\cap (Z^*\cap f^{-1}Z^*)$. For any $x\in R_i\cap f^{-1} R_j$, we have that $W^s(x,R_i)=\{[x,y]|\ y\in W^s(x',R_i)\}$. Then, by definition of Markov partition, we have that
\begin{align*}
fW^s(x,R_i)&=\{[fx,fy]|\ y\in W^s(x',R_i)\}\subset\{[fx,z]|\ z\in W^s(fx',R_j)\}= W^s(fx,R_j).
\end{align*}
The second part follows if one considers the map $(f|_\L)^{-1}$ (see the paragraph in the end of the proof of Lemma \ref{L:MarkovExist}).
\end{proof}

Define that
$$\partial^s\mc R=\cup_j\partial^s R_j\text{ and } \partial^u\mc R=\cup_j\partial^u R_j.$$
\begin{lemma}\label{L:RBInc}
$f(\partial^s \mc R)\subset \partial^s\mc R$ and $f^{-1}(\partial^u \mc R)\subset \partial^u\mc R$.
\end{lemma}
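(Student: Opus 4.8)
\textbf{Proof plan for Lemma \ref{L:RBInc}.}

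The plan is to prove only the first inclusion $f(\partial^s\mc R)\subset\partial^s\mc R$; the second inclusion $f^{-1}(\partial^u\mc R)\subset\partial^u\mc R$ then follows by applying the same argument to the homeomorphism $(f|_\L)^{-1}$, under which $(W^u\cap\L)$ and $(W^s\cap\L)$ exchange roles, exactly as in the end of the proof of Lemma \ref{L:MarkovExist}. So fix $x\in\partial^s\mc R$, say $x\in\partial^s R_i$, and let $R_j$ be a rectangle of $\mc R$ with $fx\in R_j$ (such $j$ exists since $\mc R$ covers $\L$); I must show $fx\in\partial^s R_j$, i.e. $fx\notin\mathrm{int}\big(W^u(fx,R_j)\big)$ as a subset of $W^u_\e(fx)\cap\L$.

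First I would recall what $x\in\partial^s R_i$ means: $x\notin\mathrm{int}\big(W^u(x,R_i)\big)$ inside $W^u_\e(x)\cap\L$, so there is a sequence $y_n\in W^u_\e(x)\cap\L$ with $y_n\to x$ and $y_n\notin R_i$. The key step is to transport this sequence forward. By the openness of $\mathrm{int}(R_i)$ (Lemma \ref{L:RBoundary}), points of $R_i$ near $x$ lie in $R_i\cap\mathrm{int}(R_\cdot)$ for the rectangles containing $x$; more importantly, I can reduce to the Markov case: pick $x'\in(\mathrm{int}(R_i)\cap f^{-1}\mathrm{int}(R_j))\cap(Z^*\cap f^{-1}Z^*)$, which exists because $A_{ij}=1$ whenever such an $x'$ can be found near $x$ — here I need to be slightly careful, because a priori $A_{ij}$ may be $0$. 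The clean way around this is to argue directly with the local product structure rather than through the transition matrix: since $fx\in R_j$, and $W^u(fx,R_j)$ by definition is a neighborhood of $fx$ in $W^u_\e(fx)\cap\L$ \emph{only if} $fx\in\mathrm{int}(R_j)$, the statement is vacuous unless $fx$ is not on $\partial^u R_j$; and if $fx\in\partial^u R_j$ we are also done since $\partial^u R_j$ meeting $W^u_\e(fx)$ forces $fx\notin\mathrm{int}(W^u(fx,R_j))$ trivially. So assume $fx\in\mathrm{int}^s$-type position, i.e. $fx\in\mathrm{int}(W^s(fx,R_j))$; then near $fx$ one genuinely has a product neighborhood and I can choose $x'\in\mathrm{int}(R_i)\cap f^{-1}\mathrm{int}(R_j)$ arbitrarily close to $x$ (using that $x$ is a limit of interior points of $R_i$ along $W^s(x,R_i)$, which is nondegenerate because $x\notin\partial^u R_i$ — if $x$ were also in $\partial^u R_i$ the lemma would again be immediate). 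With such an $x'$, $A_{ij}=1$, so Lemma \ref{L:FBInc} gives $fW^u(x',R_i)\supset W^u(fx',R_j)$, equivalently $W^u(fx',R_j)\subset fW^u(x',R_i)$, and hence $f^{-1}W^u(fx',R_j)\subset W^u(x',R_i)\subset R_i$.

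Now I push the bad sequence through. Replacing $x$ by a nearby $x'$ as above and using continuity of $[\cdot,\cdot]$ and of $f|_\L$, there are points $z_n\in W^u_\e(fx)\cap\L$, $z_n\to fx$, with $f^{-1}z_n$ defined, $f^{-1}z_n\in W^u_\e(x')\cap\L$, and $f^{-1}z_n\to x'$; by the inclusion just derived, if every such $z_n$ lay in $R_j$ then every $f^{-1}z_n$ would lie in $R_i$, contradicting the choice of the original sequence approximating a boundary point. More precisely: $x\in\partial^s R_i$ is preserved under moving to $x'$ only in the limit, so the honest formulation is to observe that $\partial^s R_i$ is closed (Lemma \ref{L:RBoundary}) and $f$-orbits of interior points stay interior by Lemma \ref{L:FBInc}: if $fx\in\mathrm{int}(R_j)$ were interior and $x\in\partial^s R_i$, then picking $x_n\to x$ with $x_n\in\mathrm{int}(R_i)\cap f^{-1}\mathrm{int}(R_j)$ (possible by openness as above) and $y_n\in W^u_\e(x_n)\cap\L$, $y_n\notin R_i$, $y_n\to x$, Lemma \ref{L:FBInc} yields $fy_n\in fW^u(x_n,R_i)\not\supset$ — wait, I want the reverse direction. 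Here is the hard part: the direction in Lemma \ref{L:FBInc} for unstable sets is $fW^u\supset W^u$, so forward iteration of an unstable set \emph{contains} the image unstable rectangle but could be strictly larger; this means a point outside $R_i$ on $W^u_\e(x)$ need not map outside $R_j$. The correct mechanism is therefore the contrapositive via $f^{-1}$: I should show instead that $fx\notin\mathrm{int}(W^u(fx,R_j))$ by showing that if it \emph{were} interior, then $f^{-1}$ applied to a product neighborhood of $fx$ in $R_j$ (using the stable inclusion $fW^s(x,R_i)\subset W^s(fx,R_j)$ together with $f^{-1}W^u(fx,R_j)\subset W^u(x,R_i)$, which \emph{is} a consequence of $fW^u(x,R_i)\supset W^u(fx,R_j)$) would force $x\in\mathrm{int}(W^u(x,R_i))$, i.e. $x\notin\partial^s R_i$ — a contradiction. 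This last implication is the technical core: it needs that $f^{-1}$ carries the $R_j$-rectangle product-neighborhood structure into the $R_i$-rectangle product-neighborhood structure, which is precisely the combination of both inclusions in the definition of a Markov partition plus Lemma \ref{L:FBInc}, together with continuity of $[\cdot,\cdot]$ to see that the $f^{-1}$-image of a full $W^s\times W^u$ neighborhood of $fx$ in $R_j$ contains a full $W^s\times W^u$ neighborhood of $x$ in $R_i$. I expect writing this product-neighborhood transport cleanly — handling the cases where $fx$ lies on $\partial^u R_j$ versus not, and the analogous degeneracies for $x$ — to be the only real obstacle; everything else is formal manipulation with the local product structure and continuity of $f|_\L$, $(f|_\L)^{-1}$ and $[\cdot,\cdot]$, as established in Lemmas \ref{L:SUManifolds}, \ref{L:CanonicalCoor}, \ref{L:RBoundary} and \ref{L:FBInc}.
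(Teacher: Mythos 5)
Your global plan (prove one inclusion and deduce the other by running the argument for $(f|_\L)^{-1}$, under which stable and unstable slices exchange roles) matches the paper, but the body of your argument has genuine gaps, two of which come from conflating the two boundaries. Recall $\partial^u R_j=\{z\in R_j:\ z\notin int(W^s(z,R_j))\}$, whereas the conclusion you need, $fx\in\partial^s R_j$, is $fx\notin int(W^u(fx,R_j))$. So the dismissal ``if $fx\in\partial^u R_j$ we are also done, since that forces $fx\notin int(W^u(fx,R_j))$'' is simply false, and ``if $x$ were also in $\partial^u R_i$ the lemma would again be immediate'' is unjustified: membership in the $u$-boundary is a statement about the stable slice and gives nothing about the unstable slice. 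These were exactly the degenerate cases your reduction was supposed to absorb. A second, structural problem: you set out to prove the stronger pointwise claim ``$fx\in\partial^s R_j$ for an \emph{arbitrary} $R_j$ containing $fx$'', and for that prescribed $j$ you need $A_{ij}=1$ in order to quote Lemma \ref{L:FBInc}; nothing guarantees points of $int(R_i)\cap f^{-1}int(R_j)$ near $x$ for that particular $j$ (you noticed the danger yourself), and your workaround ``$fx\in int(W^s(fx,R_j))$ gives a product neighborhood'' does not repair it, since a neighborhood of $fx$ in $\L$ contained in $R_j$ is exactly the statement $fx\in int(R_j)$, which you do not have. Consequently the inclusions $fW^s(x,R_i)\subset W^s(fx,R_j)$ and $f^{-1}W^u(fx,R_j)\subset W^u(x,R_i)$ that drive your final argument are never legitimately available for the pair $(i,j)$ you fixed. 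Finally, after correctly observing that pushing the unstable ``bad sequence'' forward fails because the unstable inclusion in Lemma \ref{L:FBInc} points the wrong way, you declare the pull-back of interiority (the ``product-neighborhood transport'') to be the technical core and leave it unproved; so the proof is not actually completed.

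For comparison, the paper's proof is short and avoids all three issues by arguing the contrapositive of the $\partial^u$ statement: assume $x\in R_i$ and $x\notin\partial^u\mc R$; since $\bigcup_j\bigl(int(R_i)\cap f^{-1}int(R_j)\bigr)$ is open and dense in $R_i$, one may choose $x_n\to x$ with $x_n\in int(R_i)\cap f^{-1}int(R_j)$ for \emph{some} $j$, and this choice automatically supplies both $A_{ij}=1$ and $fx\in R_j$. Then Lemma \ref{L:FBInc} gives $fW^s(x,R_i)\subset W^s(fx,R_j)$; since $W^s(x,R_i)$ is a neighborhood of $x$ in $W^s_\e(x)\cap\L$ (that is the hypothesis $x\notin\partial^u\mc R$) and $(f|_\L)^{-1}$ is continuous, $W^s(fx,R_j)$ is a neighborhood of $fx$ in $W^s_\e(fx)\cap\L$, i.e. $fx\notin\partial^u R_j$; the $\partial^s$ inclusion then follows by the $(f|_\L)^{-1}$ symmetry you also intended to invoke. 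The density-based selection of $j$ — rather than an arbitrary rectangle containing $fx$ — is precisely what furnishes the transition $A_{ij}=1$ missing from your version, and the one-line continuity argument replaces the unproved transport step. If you rewrite your proof in this contrapositive form and delete the case analysis based on $\partial^u$ versus $\partial^s$, the argument goes through.
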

\begin{proof}
We will show that for any $x\in R_i$, if $x\notin \partial^u\mc R$ then $fx\notin \partial^u \mc R$. Note that the set
$$\cup_j\left(int(R_i)\cap f^{-1}int(R_j)\right)$$ is open and dense in $R_i$. Then,  there are a $j$ and $x_n\in int(R_i)\cap f^{-1}int(R_j)$ with $\lim_{n\to\infty}x_n=x$. Therefore, $A_{ij}=1$, $x\in R_i$ and $fx\in R_j$. So, by Lemma \ref{L:FBInc}, $fW^s(x,R_i)\subset W^s(fx,R_j)$. Since $x\notin\partial^u \mc R$,  $W^s(x,R_i)$ is a neighborhood of $x$ in $W^s_\e(x)\cap \L$. So, by the continuity of $(f|_\L)^{-1}$, $W^s(fx,R_j)$ is a neighborhood of $fx$ in $W^s_\e(fx)\cap \L$, thus $fx\notin \partial^uR_j$. We have proved that $f^{-1}(\partial^u \mc R)\subset \partial^u\mc R$. The other part follows from the  similar argument which is omitted here.
\end{proof}

\begin{lemma}\label{L:ProRec}
For small $\d>0$, let $D^s\subset W^s_\d(x)\cap \L$ and $D^u\subset W^u_\d(x)\cap \L$. Then,  $[D^u,D^s]:=\{[y_{1},y_{2}]|\ y_{1}\in D^u \text{ and } y_{2}\in D^s\}$ is a proper rectangle if and only if $D^\tau=\overline{int(D^\tau)}$ as subsets of $W^\tau_\d(x)\cap \L$ for $\tau=u,s$.
\end{lemma}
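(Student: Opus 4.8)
\textbf{Proof proposal for Lemma~\ref{L:ProRec}.}

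The plan is to prove both implications directly from the local product structure on $\L$ supplied by Lemma~\ref{L:SUManifolds} and Lemma~\ref{L:CanonicalCoor}. Recall that the bracket map $[\cdot,\cdot]$ is a homeomorphism from a neighborhood of the diagonal in $\L\times\L$ onto its image; restricted to $D^u\times D^s$ with $D^u\subset W^u_\d(x)\cap\L$ and $D^s\subset W^s_\d(x)\cap\L$ (for $\d$ small), the map $(y_1,y_2)\mapsto[y_1,y_2]$ is a homeomorphism onto $R:=[D^u,D^s]$, with inverse $z\mapsto\bigl([z,x],[x,z]\bigr)$, i.e. the two ``coordinate projections'' $z\mapsto W^u(z,R)$-representative and $z\mapsto W^s(z,R)$-representative. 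The key point I would record first is that, because this bracket is a homeomorphism, it carries the product topology on $D^u\times D^s$ to the subspace topology on $R$ as a subset of $\L$, and hence carries interiors to interiors and closures to closures \emph{relative to $W^u_\d(x)\cap\L$, $W^s_\d(x)\cap\L$, and $R$ respectively}; more precisely, $\mathrm{int}_{R}([D^u,D^s]) = [\mathrm{int}_{W^u_\d(x)\cap\L}(D^u),\,\mathrm{int}_{W^s_\d(x)\cap\L}(D^s)]$ and likewise $\overline{R}^{\,\L}\cap R\text{-saturation}$ matches $[\overline{D^u},\overline{D^s}]$. This is the technical heart and I expect it to be the main obstacle: one must check carefully that ``interior of $R$ as a subset of $\L$'' agrees with ``interior of $R$ as computed through the product chart'', which uses that $R$ already contains a neighborhood-in-$\L$ of each of its non-boundary points, together with the continuity of $[\cdot,\cdot]$ in both variables from Lemma~\ref{L:CanonicalCoor}.

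Granting that, both directions are short. For the ``if'' direction, assume $D^\tau=\overline{\mathrm{int}(D^\tau)}$ (closures and interiors taken in $W^\tau_\d(x)\cap\L$) for $\tau=u,s$. First, $R$ is a rectangle: for $z,w\in R$ write $z=[z_1,z_2]$, $w=[w_1,w_2]$ with $z_1,w_1\in D^u$, $z_2,w_2\in D^s$; then $[z,w]=[z_1,w_2]\in[D^u,D^s]=R$ by the standard bracket identity $[[a,b],[c,d]]=[a,d]$, which holds here because all points lie in a single small chart. Second, $R$ is closed: $D^u\times D^s$ is closed in the (locally compact) product, and $[\cdot,\cdot]$ is a homeomorphism onto $R$, so $R$ is closed in $\L$ — here one uses that $[\cdot,\cdot]$ is a \emph{proper} map onto a neighborhood, equivalently that a limit in $\L$ of points of $R$ has bracket-coordinates converging in $D^u\times D^s$. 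Third, $R=\overline{\mathrm{int}_\L R}$: using the displayed identity for interiors through the chart, $\mathrm{int}_\L R\supset[\mathrm{int}D^u,\mathrm{int}D^s]$, whose closure is $[\overline{\mathrm{int}D^u},\overline{\mathrm{int}D^s}]=[D^u,D^s]=R$; the reverse inclusion $\overline{\mathrm{int}_\L R}\subset R$ is immediate since $R$ is closed. Hence $R$ is a proper rectangle.

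For the ``only if'' direction, suppose $R=[D^u,D^s]$ is a proper rectangle, so $R$ is closed and $R=\overline{\mathrm{int}_\L R}$. Fix $x$ (we may take $x\in D^u\cap D^s$ after shrinking, or argue at a general point $z\in R$); then $W^u(z,R)$ is a translate of $D^u$ under the bracket and $W^s(z,R)$ a translate of $D^s$. Pulling back through the homeomorphism $[\cdot,\cdot]$: closedness of $R$ forces $D^u$ and $D^s$ to be closed in $W^u_\d(x)\cap\L$, $W^s_\d(x)\cap\L$ respectively (a factor of a closed product-saturated set is closed, using that the other factor is nonempty). And from $R=\overline{\mathrm{int}_\L R}$ together with the interior identity, $[D^u,D^s]=\overline{[\mathrm{int}D^u,\mathrm{int}D^s]}=[\overline{\mathrm{int}D^u},\overline{\mathrm{int}D^s}]$; injectivity of the bracket on the chart then yields $D^u=\overline{\mathrm{int}D^u}$ and $D^s=\overline{\mathrm{int}D^s}$. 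This completes the proof; throughout, the only inputs beyond point-set topology are the local product structure and joint continuity of $[\cdot,\cdot]$ from Lemmas~\ref{L:SUManifolds} and \ref{L:CanonicalCoor}, plus the fact (from C3) and Lemma~\ref{L:SUManifolds}) that $W^u_\d(x)\subset\L$ so that $D^u$ genuinely lives inside $\L$.
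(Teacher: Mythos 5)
Your overall framework is the same as the paper's: everything reduces to the two identities $\overline{[\,\mathrm{int}(D^u),\mathrm{int}(D^s)\,]}=[\,\overline{\mathrm{int}(D^u)},\overline{\mathrm{int}(D^s)}\,]$ (which follows from continuity of $[\cdot,\cdot]$ and compactness, as you indicate) and $\mathrm{int}_\L\bigl([D^u,D^s]\bigr)=[\,\mathrm{int}(D^u),\mathrm{int}(D^s)\,]$, after which both implications are formal manipulations plus injectivity of the bracket coordinates $z\mapsto([z,x],[x,z])$. The problem is that the second identity — which you yourself call the technical heart, and which is essentially the entire content of the paper's proof — is never actually proved in your proposal, and the one-line justification you offer for it is not valid. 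A homeomorphism from $D^u\times D^s$ onto $R=[D^u,D^s]$ only transports topology \emph{intrinsic} to those two sets; the interiors in the identity are taken relative to the larger ambient spaces $W^u_\d(x)\cap\L$, $W^s_\d(x)\cap\L$ and $\L$, and such relative interiors are not preserved by a homeomorphism between subsets. Indeed, the statement that the bracket image of a relatively open product set is relatively open in $\L$ is exactly the local product structure of $\L$ near $x$, i.e.\ the assertion that every point of $\L$ near a point of $R$ decomposes through the bracket coordinates — which is what has to be established, not invoked (your phrase ``uses that $R$ already contains a neighborhood-in-$\L$ of each of its non-boundary points'' is circular, since that is precisely the $\supseteq$ inclusion to be shown). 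Also, $\mathrm{int}_R([D^u,D^s])$ in your key display should read $\mathrm{int}_\L$.

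To close the gap you need the two direct arguments the paper gives, both resting on Lemma \ref{L:CanonicalCoor} and the uniqueness of the bracket point. For ``$\subseteq$'': if $y\in\mathrm{int}_\L R$ with $\L$-neighborhood $V\subset R$, then for every $y''\in W^s_\d(x)\cap\L$ close enough to $[x,y]$ one has $[y,y'']\in W^s_\d(y)\cap V\subset R$, hence $y''=[x,[y,y'']]\in D^s$, so $[x,y]\in\mathrm{int}(D^s)$, and symmetrically $[y,x]\in\mathrm{int}(D^u)$. For ``$\supseteq$'': if $y=[y_1,y_2]$ with $y_1\in\mathrm{int}(D^u)$, $y_2\in\mathrm{int}(D^s)$, then any $y'\in\L$ with $|y'-y|$ small satisfies $[y',y_1]\in\mathrm{int}(D^u)$ and $[y_2,y']\in\mathrm{int}(D^s)$ by continuity of the bracket, and $y'=[[y',y_1],[y_2,y']]\in[D^u,D^s]$ by uniqueness, so $y\in\mathrm{int}_\L R$. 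With these two inclusions supplied, your ``if'' and ``only if'' derivations (including the compactness argument for closedness and the injectivity step) go through and coincide with the paper's proof.
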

\begin{proof}
It is easy to see that $[D^u,D^s]$ is a rectangle, and by Lemma  \ref{L:CanonicalCoor} and the compactness of $\L$, we have that
$$\overline{[int (D^u), int( D^s)]}=[\overline{int (D^u)}, \overline{int (D^s)}].$$
Then,  this lemma follows from the following fact that
$$int([D^u,D^s])=[int(D^u),int(D^s)].$$
Now we prove the above identity.

Let $y\in int([D^u,D^s])$. Since $int([D^u,D^s])$ contains an open neighbourhood of $y$ as a subset in $\L$, which is denoted by $V$. Then,  $W^\tau_\d(y)\cap V$ is an open neighbourhood of $y$ as a subset in $W^\tau_\d(y)\cap \L$, for $\tau=u,s$. For any $y'\in W^s_\d(y)\cap V$, since $y'\in [D^u,D^s]$, and also note that
$$y'=[[y',x],[x,y']] \text{ with }[y',x]=[y,x]\in W^u_\d(x) \text{ and }[x,y']\in W^s_\d(x) \text{ being uniquely defined},$$
then $[x,y']\in D^s$ is uniquely defined for any $y'\in W^s_\d(y)\cap V$. By Lemma  \ref{L:CanonicalCoor}, for any $y''\in W^s_\d(x)$ with $|y''-[x,y]|$ small enough, $[y,y'']\in W^s_\d(y)\cap V$. Then,  $y''=[x,[y,y'']]\in D^s$ which implies that $[x,y]\in int(D^s)$. The same argument shows that $[y,x]\in int(D^u)$. So $y\in [int(D^u),int(D^s)]$. Hence, $int([D^u,D^s])\subset[int(D^u),int(D^s)]$.

For the other part, let $y=[y_1,y_2]$ with $y_1\in int(D^u)$ and $y_2\in int(D^s)$. Then, by Lemma  \ref{L:CanonicalCoor}, for $y'\in \L$ with $|y'-y|$ small enough, one has that
$$[y',y_1]\in int(D^u)\text{ and }[y_2,y']\in int(D^s),$$
which implies that $y'\in[D^u,D^s]$. Thus,  $y\in int([D^u,D^s])$. Hence,  $int([D^u,D^s])\supset[int(D^u),int(D^s)]$. The proof is complete.
\end{proof}

\begin{definition}\label{D:USubRec}
Let $R,R'$ be two rectangles. $R'$ is called a {\it u-subrectangle} of $R$ if the following are satisfied:
\begin{itemize}
\item[(a)] $R'\neq\emptyset$, $R'\subset R$ and $R'$ is proper;
\item[(b)] $W^u(x,R')=W^u(x,R)$ for all $x\in R'$.
\end{itemize}
\end{definition}

\begin{lemma}\label{L:USubRec}
Let $S$ be a u-subrectangle of $R_i$ and $A_{ij}=1$. Then,  $f(S)\cap R_j$ is a u-subrectangle of $R_j$.
\end{lemma}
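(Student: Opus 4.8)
The plan is to verify the three defining conditions of a u-subrectangle from Definition~\ref{D:USubRec} for $f(S)\cap R_j$, using the Markov property (Lemma~\ref{L:FBInc}), the fact that $f|_\L$ is a homeomorphism, and the slice-characterization of proper rectangles (Lemma~\ref{L:ProRec}). A standing observation used throughout: since the Markov partition has small diameter, for $u,v\in S$ one has $f[u,v]=[fu,fv]$. Indeed $[u,v]\in W^s_\e(u)\cap W^u_\e(v)$, and by the contraction estimates of Lemma~\ref{L:SUManifolds}(ii) (backward along unstable leaves, forward along stable leaves) together with $\mathrm{diam}(S)\ll\e$, the point $f[u,v]$ lies in $W^s_\e(fu)\cap W^u_\e(fv)$, which by Lemma~\ref{L:CanonicalCoor} is the single point $[fu,fv]$. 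Consequently $f(S)$ is again a rectangle; and since $f|_\L$ is a homeomorphism it commutes with $\overline{\,\cdot\,}$ and $\mathrm{int}_{\L}$, so properness of $S$ gives $f(S)=\overline{\mathrm{int}_{\L}f(S)}$, i.e.\ $f(S)$ is a proper rectangle. The same identity $f[u,v]=[fu,fv]$ shows $f(S)\cap R_j$ is a rectangle; it is closed by compactness of $S,R_j$ and continuity of $f$, and is contained in $R_j$ trivially.

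\emph{Nonemptiness.} Since $A_{ij}=1$, pick $z\in\mathrm{int}(R_i)\cap f^{-1}\mathrm{int}(R_j)$ and any $x\in S$. As $S$ is a u-subrectangle, $[z,x]\in W^u_\e(x)\cap R_i=W^u(x,R_i)=W^u(x,S)\subset S$, while $[z,x]\in W^s_\e(z)\cap R_i=W^s(z,R_i)$; hence by Lemma~\ref{L:FBInc}, $f[z,x]\in fW^s(z,R_i)\subset W^s(fz,R_j)\subset R_j$. So $p:=f[z,x]\in f(S)\cap R_j\neq\emptyset$; set $\bar p:=[z,x]\in S$, so that $p=f\bar p$ and $p$ will serve as base point.

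\emph{Condition (b).} Let $y\in f(S)\cap R_j$, $y=fu$ with $u\in S\subset R_i$ and $fu\in R_j$. By Lemma~\ref{L:FBInc}, $fW^u(u,R_i)\supset W^u(fu,R_j)=W^u(y,R_j)$, and $W^u(u,R_i)=W^u(u,S)\subset S$, so $W^u(y,R_j)\subset f(S)$; together with $W^u(y,R_j)\subset R_j$ this gives $W^u(y,R_j)\subset W^u(y,f(S)\cap R_j)$, and the reverse inclusion is obvious. Hence $W^u(y,f(S)\cap R_j)=W^u(y,R_j)$ for every such $y$, which is exactly condition (b) of Definition~\ref{D:USubRec}; in particular the unstable slice of $f(S)\cap R_j$ at $p$ is $W^u(p,R_j)$.

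\emph{Properness} is the only step that is not purely formal. Using the base point $p=f\bar p$, one computes $W^s(p,f(S)\cap R_j)=W^s(p,f(S))\cap W^s(p,R_j)$, then $W^s(p,f(S))=fW^s(\bar p,S)$ (for ``$\subset$'': if $fw\in W^s_\e(f\bar p)\cap f(S)$ then backward contraction on stable leaves and small diameter force $w\in W^s_\e(\bar p)$, hence $w\in W^s(\bar p,S)$; ``$\supset$'' is Lemma~\ref{L:FBInc}), and finally $fW^s(\bar p,S)\subset W^s(f\bar p,R_j)=W^s(p,R_j)$ by Lemma~\ref{L:FBInc}; therefore the stable slice of $f(S)\cap R_j$ at $p$ equals $W^s(p,f(S))=fW^s(\bar p,S)$. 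Now $R_j$ is proper, so by Lemma~\ref{L:ProRec} $W^u(p,R_j)=\overline{\mathrm{int}(W^u(p,R_j))}$ in $W^u_\e(p)\cap\L$; and $f(S)$ is proper, so again by Lemma~\ref{L:ProRec} $W^s(p,f(S))=\overline{\mathrm{int}(W^s(p,f(S)))}$ in $W^s_\e(p)\cap\L$. Since $f(S)\cap R_j=[\,W^u(p,R_j),\,W^s(p,f(S))\,]$, the ``if'' direction of Lemma~\ref{L:ProRec} shows $f(S)\cap R_j$ is a proper rectangle. Combined with nonemptiness, $f(S)\cap R_j\subset R_j$, and condition (b), this proves $f(S)\cap R_j$ is a u-subrectangle of $R_j$. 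I expect the main obstacle to be exactly this last paragraph: the bookkeeping needed to identify the stable slice as $fW^s(\bar p,S)$ and to transport properness through $f$ and through the intersection with $R_j$; once those slice identities are in place, Lemma~\ref{L:ProRec} closes the argument mechanically.
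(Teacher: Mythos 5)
Your argument is correct and, at its core, is the paper's argument: nonemptiness and condition (b) of Definition~\ref{D:USubRec} come from Lemma~\ref{L:FBInc} exactly as in the text, and properness is obtained by writing $f(S)\cap R_j$ as a product of the unstable slice of $R_j$ with the $f$-image of a stable slice of $S$ and invoking Lemma~\ref{L:ProRec} twice. The one real divergence is how you certify that the stable factor is the closure of its interior: the paper sets $D=W^s(x,R_i)\cap S$ for $x\in R_i\cap f^{-1}R_j$, gets $D=\overline{\mathrm{int}(D)}$ from $S=[W^u(x,R_i),D]$ via Lemma~\ref{L:ProRec}, and transports this through $f|_{W^s_{\epsilon}(x)\cap\Lambda}$, which is a homeomorphism onto a neighborhood of $fx$ inside $W^s_{\epsilon}(fx)\cap\Lambda$; you instead use that $f|_\Lambda$ is a homeomorphism of $\Lambda$ to conclude that $f(S)$ is itself a proper rectangle and then slice it at your base point $p$. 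Your variant works, but it quietly requires $f(S)$ to be a rectangle at all, i.e.\ $\mathrm{diam}\,f(S)$ must stay below the scale $\delta$ of Lemma~\ref{L:CanonicalCoor} so that the bracket is defined on $f(S)\times f(S)$ (your ``standing observation'' already uses this); since $\mathrm{diam}\,f(S)\le \sup_\Lambda\|Df\|\cdot\mathrm{diam}(S)$ and the Markov partition can be taken of arbitrarily small diameter (Lemma~\ref{L:MarkovExist}), this is harmless, but it should be stated explicitly, whereas the paper's bookkeeping never leaves the fixed rectangles $R_i,R_j$ and so avoids the issue.
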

\begin{proof}
For any $x\in R_i\cap f^{-1}R_j$, set $D=W^s(x,R_i)\cap S$. Since $S$ is a u-subrectangle, we have that
$$S=\cup_{y\in D}W^u(y,R_i)=[W^u(x,R_i),D].$$
Since $S$ is proper and nonempty, by Lemma \ref{L:ProRec}, we have that $D\neq \emptyset$ and $D=\overline{int(D)}$. Therefore, we have that
$$f(S)\cap R_j=\cup_{y\in D}(fW^u(y,R_i)\cap R_j).$$
By Lemma \ref{L:FBInc}, we have that for all $y\in D$, $f(y)\in R_j$ and $fW^u(y,R_i)\cap R_j=W^u(fy,R_j)$. Then,
\begin{equation}\label{E:ExpFS}
f(S)\cap R_j=\cup_{y'\in f(D)}W^u(y',R_j)=[W^u(fx,R_j),f(D)].
\end{equation}
Noting that $R_j=[W^u(fx,R_j),W^s(fx,R_j)]$ is a proper rectangle, then, by Lemma \ref{L:ProRec}, we have that
\begin{equation}\label{E:ProperB1}
W^u(fx,R_j)=\overline{int(W^u(fx,R_j))}
 \end{equation}
 as a subset of $W^u_\e(fx)\cap \L$. As $f|_{W^s_\e(x)\cap \L}$ is a hemeomorphism onto a neighborhood of $fx$ in $W^s_\e(fx)\cap \L$, we have that
\begin{equation}\label{E:ProperB2}
f(D)=\overline{int(f(D))} \text{ since } D=\overline{int(D)},
\end{equation}
which together with (\ref{E:ProperB1}) imply that $f(S)\cap R_j$ is proper by Lemma \ref{L:ProRec}.

Obviously, $f(S)\cap R_j\neq \emptyset$ since $f(D)\neq \emptyset$. For any $y''\in f(S)\cap R_j$, by (\ref{E:ExpFS}), there exists $y'\in f(D)$ such that $y''\in W^u(y',R_j)$, then
$$W^u(y'',R_j)=W^u(y',R_j)\subset f(S)\cap R_j \text{ thus } W^{u}(y'',f(S)\cap R_{j})=W^{u}(y'',R_{j}).$$
Now we have proved that $f(S)\cap R_j$ is a u-subrectangle of $R_j$.
\end{proof}

To build the connection between symbolic dynamics and $f|_\L$,  for each $\underline a\in\Sigma_A$, consider the set
$$\Pi(\underline{a})=\cap_{j\in\mb Z}f^{-j}R_{a_j},$$
which will be shown consisting only one point thus induces a map from $\Sigma_A$ to $\L$.

\begin{lemma}\label{L:SymDynByMP}
For each $\underline{a}\in \Sigma_A$, $\Pi(\underline{a})$ contains  a single point which is denoted by $\pi(\underline{a})$. The map
$\pi:\Sigma_A\to \L \text{ is a continuous surjection satisfying  that } \pi\circ \sigma=f\circ \pi.$
Furthermore, $\pi$ is one-to-one over the set $Y=\L-\left(\cup_{i\in \mb Z}f^j(\partial^s\mc R\cup \partial^u\mc R)\right)$.
\end{lemma}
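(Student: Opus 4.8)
\textbf{Proof proposal for Lemma \ref{L:SymDynByMP}.}

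The plan is to follow Bowen's classical argument for Axiom A systems \cite{Bowen}, adapting each step to the present infinite-dimensional setting, where the only extra care needed is that $W^s_\e$ has finite codimension while $W^u_\e$ has finite dimension, and that $f$ itself is not invertible — only $f|_\L$ is. First I would show that $\Pi(\underline a)$ is nonempty and consists of a single point. Nonemptiness follows from compactness: the sets $K_N=\cap_{|j|\le N}f^{-j}R_{a_j}$ are nested, closed, and nonempty (by the definition of the transition matrix $A_{ij}$ and by repeated application of the Markov property, Lemma \ref{L:FBInc} — one checks inductively that $f^{-j}R_{a_j}$ intersected with the previous stage always contains a proper u-subrectangle, hence is nonempty; this is exactly the content of Lemma \ref{L:USubRec} pushed forward along the orbit). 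Since $\Lambda$ is compact, $\Pi(\underline a)=\cap_N K_N\ne\emptyset$. For uniqueness, suppose $x,y\in\Pi(\underline a)$. Then for every $j\ge 0$ we have $f^j x,f^j y\in R_{a_j}$, so $|f^j x-f^j y|\le \operatorname{diam}(R_{a_j})<\b$, and similarly $|f^{-j}x-f^{-j}y|<\b$ for $j\ge 0$ (using invertibility of $f|_\Lambda$ on $\Lambda$). By the expansiveness statement Lemma \ref{L:Expansive}, if $\b$ is smaller than the expansiveness constant $\e$ there, this forces $x=y$. This defines $\pi(\underline a)=\pi(\Pi(\underline a))$.

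Next I would verify the equivariance and continuity. Equivariance $\pi\circ\sigma=f\circ\pi$ is immediate from the definition: $\Pi(\sigma\underline a)=\cap_j f^{-j}R_{a_{j+1}}=\cap_j f^{-(j-1)}R_{a_j}=f\bigl(\cap_j f^{-j}R_{a_j}\bigr)=f(\Pi(\underline a))$, where the middle equality uses that $f|_\Lambda$ is a bijection of $\Lambda$. For continuity: given $\underline a^{(n)}\to\underline a$ in $\Sigma_A$, for any fixed $N$ we have $a^{(n)}_j=a_j$ for $|j|\le N$ once $n$ is large, hence $\pi(\underline a^{(n)})\in K_N(\underline a)$; since $\operatorname{diam}K_N\to 0$ as $N\to\infty$ (again by expansiveness, exactly as in the uniqueness argument: points staying in $R_{a_j}$ for $|j|\le N$ are within a distance tending to $0$), this gives $\pi(\underline a^{(n)})\to\pi(\underline a)$. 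Surjectivity: given $x\in\Lambda$, pick for each $j$ an index $a_j$ with $f^j x\in R_{a_j}$; one must arrange the choices so that $A_{a_j a_{j+1}}=1$. This is where I would use that the interiors partition $\Lambda$ up to the nowhere-dense set $\cup_j f^j(\partial^s\mc R\cup\partial^u\mc R)$ — for $x$ outside this set the choice of $a_j$ is forced (take $f^j x\in\operatorname{int}R_{a_j}$), and then $f^j x\in\operatorname{int}R_{a_j}\cap f^{-1}\operatorname{int}R_{a_{j+1}}\ne\emptyset$ gives $A_{a_j a_{j+1}}=1$, so $x=\pi(\underline a)$; the set of such $x$ is dense, and since $\pi$ has compact domain and closed image (continuous image of compact), $\pi(\Sigma_A)=\Lambda$.

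Finally, injectivity over $Y=\Lambda-\cup_{j}f^j(\partial^s\mc R\cup\partial^u\mc R)$. Note $f(\partial^s\mc R)\subset\partial^s\mc R$ and $f^{-1}(\partial^u\mc R)\subset\partial^u\mc R$ by Lemma \ref{L:RBInc}, so $Y$ is $f|_\Lambda$-invariant and the union defining its complement is over a single orbit of each boundary piece. Suppose $\pi(\underline a)=\pi(\underline b)=x\in Y$. For each $j$, $f^j x\in R_{a_j}\cap R_{b_j}$; since $f^j x\notin\partial^s\mc R\cup\partial^u\mc R$, the point $f^j x$ lies in $\operatorname{int}(W^s(f^jx,R_{a_j}))$ and $\operatorname{int}(W^u(f^jx,R_{a_j}))$, i.e. $f^j x\in\operatorname{int}R_{a_j}$, and likewise $f^jx\in\operatorname{int}R_{b_j}$; but the interiors are pairwise disjoint (Definition \ref{D:Markov}(a)), so $a_j=b_j$. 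Hence $\underline a=\underline b$. I expect the main obstacle to be the careful bookkeeping in the nonemptiness of $\Pi(\underline a)$ — one genuinely needs the u-subrectangle machinery of Lemmas \ref{L:FBInc}--\ref{L:USubRec} to guarantee that pushing the Markov property forward never produces an empty set — together with making sure every invertibility statement is applied only to $f|_\Lambda$ and never to $f$ on $\mathbb H$; this is precisely why Bowen's proof transfers verbatim, as remarked at the start of Appendix \ref{S:Markov}.
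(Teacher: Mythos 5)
Your proposal is correct and follows the paper's own proof essentially step for step: nonemptiness of $\Pi(\underline{a})$ via the $u$-subrectangle machinery (Lemmas \ref{L:FBInc}, \ref{L:USubRec}) and compactness, uniqueness via expansiveness (Lemma \ref{L:Expansive}), the same equivariance computation, surjectivity from forced itineraries on the dense set $Y$ together with closedness of the compact image, and injectivity over $Y$ from disjoint interiors. The only cosmetic difference is that you phrase continuity through $\mathrm{diam}\,K_N(\underline{a})\to 0$, whereas the paper runs the same expansiveness-plus-compactness mechanism as a contradiction argument with subsequences.
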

\begin{proof}
For any word $a_1a_2\cdots a_n$ with $A_{a_ja_{j+1}}=1, j=1,\ldots,n-1$, note that
$$\cap_{j=1}^nf^{n-j}R_{a_j}=R_{a_n}\cap f\left(\cap_{j=1}^{n-1}f^{n-1-j}R_{a_j}\right).$$
By Lemma \ref{L:USubRec}, we have that $R_{a_2}\cap f(R_{a_1})$ is a u-subrectangle of $R_{a_2}$. Then, by induction, we have that
$\cap_{j=1}^nf^{n-j}R_{a_j}$ is a u-subrectangle of $R_{a_n}$. Consider the set
$$K_n(\underline{a}):=\cap_{j=-n}^nf^{-j}R_{a_j}.$$
Noting that
$$K_n(\underline{a})=f^{-n}\left(\cap_{j=-n}^nf^{n-j}R_{a_j}\right),$$
and since $(f|_{\L})^{-1}$ is continuous, one has that
$$K_n(\underline{a}):=\cap_{j=-n}^nf^{-j}R_{a_j}=\overline{int(K_n(\underline{a}))}\neq\emptyset.$$
By definition, it is obvious that $K_n(\underline{a})\supset K_{n+1}(\underline{a})\supset\cdots$, thus
$$K(\underline{a}):=\cap_{j=-\infty}^\infty f^{-j}R_{a_j}=\cap_{n=1}^\infty K_n(\underline{a})\neq \emptyset.$$
Suppose $x,y\in  K(\underline{a})$, then $f^jx,f^j y\in R_{a_j}$ for all $j\in \mb Z$, and thus $x=y$ by Lemma \ref{L:Expansive}.
Noting that
$$K(\sigma\underline{a})=\cap_{j\in\mb Z}f^{-j}R_{a_{j+1}}=f\left(\cap_{j\in\mb Z}f^{-j}R_{a_j}\right)=fK(\underline{a}),$$
then we have that
$$\pi\circ \sigma=f\circ \pi.$$
Now we show that $\pi$ is continuous. Otherwise there is a $\g>0$ such that for every $N\in \mb N$ there are $\underline{a}_N, \underline{a}'_N\in \Sigma_A$ with $a_{N,j}=a_{N,j}'$ for all $j\in[-N,N]$ and $|\pi\underline{a}_N-\pi\underline{a}_N'|>\g$. Note that
$$|f^j(\pi\underline{a}_N)-f^j(\pi\underline{a}_N')|<\b\ (\text{the diameter of the partition}) \text{ for all } j\in[-N,N],$$
since $f^j(\pi\underline{a}_N),f^j(\pi\underline{a}_N')\in R_{a_{N,j}}$ for all $j\in[-N,N]$. By taking a subsequence, we may assume that $\pi\underline{a}_N\to x$ and $\pi\underline{a}_N'\to x'$ as $N\to \infty$. Then,
$$|f^jx-f^jx'|\le \b \text{ for all } j\in\mb Z\text{ and } |x-x'|\ge \g,$$
which contradicts Lemma \ref{L:Expansive}. Thus,  $\pi$ is continuous.

Given $x\in Y$ and suppose $f^jx\in R_{a_j}$ for all $j\in \mb Z$. Since $x\in Y$, for all $j\in \mb Z$, we have that $f^jx\in int(R_{a_j})$,  and then $A_{a_ja_{j+1}}=1$. Thus,  $x=\pi(\underline{a})$ with $\underline{a}=\{a_j\}_{j\in\mb Z}\in \Sigma_A$. So $Y\subset \pi(\Sigma_A)$. Suppose $x=\pi(\underline{b})$, then $f^jx\in R_{b_j}$ and thus $b_j=a_j$ since $f^jx\neq \partial^u\mc R\cup\partial^s\mc R$. So $\pi$ is one-to-one over $Y$. Since $Y$ is a dense set of $\L$ and $\pi(\Sigma_A)$ is a closed (thus compact) subset of $\L$ by the continuity of $\pi$, one has that $\pi(\Sigma_A)=\L$. The proof is complete.
\end{proof}

\begin{lemma}\label{L:ShiftMixing}
If $f|_\L$ is topologically mixing so is $\sigma:\Sigma_A\to \Sigma_A$.
\end{lemma}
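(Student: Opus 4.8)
The plan is to reduce topological mixing of $\sigma|_{\Sigma_A}$ to primitivity (aperiodicity) of the transition matrix $A$ and then to extract this primitivity from the topological mixing of $f|_\L$ through the semiconjugacy $\pi:\Sigma_A\to\L$ of Lemma \ref{L:SymDynByMP}. I would first recall the standard symbolic fact: $\sigma|_{\Sigma_A}$ is topologically mixing if and only if there is an $N$ with $(A^n)_{ij}>0$ for all states $i,j$ and all $n\ge N$. (This can either be quoted from standard symbolic dynamics or sketched: cylinder sets form a basis; given two nonempty cylinders prescribing admissible words on coordinate blocks $[-p,q]$ and $[-r,s]$, then for every $n\ge N+q+r$ the positivity of $(A^{(n-r)-q})_{a_q,b_{-r}}$ lets one splice an admissible word of the required length between the two prescribed blocks and then extend the result to a bi-infinite admissible sequence, since $A^N>0$ forces every symbol to have predecessors and successors; this produces a point of the first cylinder whose $n$-th shift lies in the second.) Thus it suffices to show that $A$ is primitive.

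Next I would fix a pair of states $i,j$. The interiors $\mathrm{int}(R_i)$ and $\mathrm{int}(R_j)$ are nonempty open subsets of $\L$, so topological mixing of $f|_\L$ yields $N_{ij}$ with $f^n(\mathrm{int}(R_i))\cap\mathrm{int}(R_j)\ne\emptyset$ for all $n\ge N_{ij}$. Using that $f|_\L$ is a homeomorphism, $\mathrm{int}(R_i)\cap f^{-n}(\mathrm{int}(R_j))=f^{-n}\big(f^n(\mathrm{int}(R_i))\cap\mathrm{int}(R_j)\big)$ is then nonempty and open, and since $Y=\L-\bigcup_{k\in\mb Z}f^k(\partial^s\mc R\cup\partial^u\mc R)$ is dense in $\L$ (see below), we may pick $x\in Y\cap\mathrm{int}(R_i)\cap f^{-n}(\mathrm{int}(R_j))$. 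By Lemma \ref{L:SymDynByMP}, the unique $\underline a\in\Sigma_A$ with $\pi(\underline a)=x$ satisfies $f^kx\in\mathrm{int}(R_{a_k})$ for all $k$; in particular $a_0=i$ (since $x\in\mathrm{int}(R_i)$ and the interiors are disjoint by Definition \ref{D:Markov}(a)) and $a_n=j$, while $a_0a_1\cdots a_n$ is an admissible word. Hence $(A^n)_{ij}\ge 1$. Setting $N=\max_{i,j}N_{ij}$ then gives that $A^n$ has all entries positive for every $n\ge N$, which is the required primitivity.

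The one input I would be careful about is the density of $Y$ in $\L$: each $R_i$ is proper, so $\partial R_i=R_i-\mathrm{int}(R_i)$ is closed with empty interior, and by Lemma \ref{L:RBoundary} $\partial^s\mc R\cup\partial^u\mc R=\bigcup_i\partial R_i$ is closed and nowhere dense; since $f|_\L$ is a homeomorphism of the compact (hence Baire) space $\L$, every $f^k(\partial^s\mc R\cup\partial^u\mc R)$ is closed and nowhere dense, so $Y$, being the complement of a countable union of such sets, is residual and in particular dense. Granting this, the argument is entirely routine; I do not foresee a substantial obstacle, the statement being the Hilbert-space transcription of the corresponding lemma of Bowen for Axiom A systems, and the only point requiring a little care is precisely that a point of $Y$ realizes an admissible word whose rectangles are entered through their interiors, which rests on $\partial R=\partial^s R\cup\partial^u R$ and on disjointness of the interiors in the Markov partition.
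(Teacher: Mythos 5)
Your proof is correct, but it takes a genuinely different route from the paper. You reduce topological mixing of $\sigma|_{\Sigma_A}$ to primitivity of the transition matrix ($(A^n)_{ij}>0$ for all $i,j$ and all large $n$), and you obtain primitivity by picking, for each pair $(i,j)$ and each large $n$, a point $x\in Y\cap int(R_i)\cap f^{-n}(int(R_j))$ — using mixing of $f|_\L$ for the nonempty open sets $int(R_i)$, $f^{-n}(int(R_j))$, and a Baire-category argument for the density of $Y$ — whose itinerary through interiors of rectangles yields an admissible word from $i$ to $j$ of length $n$; the splicing argument for cylinders (together with the existence of predecessors/successors forced by $A^N>0$) then gives mixing of the shift. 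The paper instead argues directly on cylinders: a nonempty open $U\subset\Sigma_A$ contains a cylinder $U'$ determined by a word $\underline a$ on $[-N,N]$, and the paper forms the open set $U''=\bigcap_{j=-N}^{N}f^{-j}int(R_{a_j})\subset\L$, whose nonemptiness comes from the $u$-subrectangle argument in the proof of Lemma \ref{L:SymDynByMP} (so no appeal to density of $Y$ or to points with interior itineraries is needed), shows $\pi^{-1}(U'')\subset U'$, and then transfers $f^nU''\cap V''\neq\emptyset$ through $\pi^{-1}$ using $\pi\circ\sigma=f\circ\pi$; no transition-matrix combinatorics appears. Your version requires the standard SFT fact ``primitive $\Rightarrow$ mixing'' (which you sketch adequately) and the care you correctly identify about interiors being disjoint and $\partial R=\partial^sR\cup\partial^uR$, and in exchange it yields the slightly stronger, more quantitative conclusion that $A$ is primitive with a uniform $N$; the paper's version is shorter given the machinery already established in Lemma \ref{L:SymDynByMP}. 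Both arguments are sound.
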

\begin{proof}
Let $U,V$ be two nonempty open subsets of $\Sigma_A$. Then,  for some $\underline{a}, \underline{b}\in\Sigma_A$ and $N\in \mb N$
\begin{align*}
&U\supset U'=\{\underline{x}\in \Sigma_A|\ x_j=a_j\text{ for all } j\in [-N,N]\},\\
&V\supset V'=\{\underline{x}\in \Sigma_A|\ x_j=b_j\text{ for all } j\in [-N,N]\}.
\end{align*}
Therefore, by recalling the definition of $K_N$ in the proof of Lemma \ref{L:SymDynByMP}, one has that
\begin{align*}
&U'':=int(K_N(\underline{a}))=\cap_{j=-N}^Nf^{-j}int(R_{a_j})\neq\emptyset,\\
&V'':=int(K_N(\underline{b}))=\cap_{j=-N}^Nf^{-j}int(R_{b_j})\neq\emptyset.
\end{align*}
Also note that for any $\underline{x}\in\Sigma_A$ such that $x=\pi(\underline{x})\in U''$, one has $f^jx\in R_{x_j}$ for all $j\in \mb Z$ and $f^jx\in int(R_{a_j})$ for all $j\in[-N,N]$, which implies that $x_j=a_j$ for all $j\in[-N,N]$. Thus,  $\pi^{-1}(U'')\subset U'$. Similarly, we have that $\pi^{-1}(V'')\subset V'$. Since $f|_\L$ is topologically mixing, there exists $N_0>0$ such that $f^nU''\cap V''\neq \emptyset$ for all $n>N_0$. Then,
$$\s^nU\cap V\supset\pi^{-1}(f^nU'')\cap \pi^{-1}(V'')=\pi^{-1}(f^nU''\cap V'')\neq\emptyset\text{ for all }n>N_0.$$
The proof is complete.
\end{proof}

\subsection{Equilibrium State of an H\"older Continuous Function}\label{SS:Equilibrium}
The main aim of this section is to prove the following Proposition.
\begin{prop}\label{P:GibState}
Assume that $f|_{\L}$ is topologically mixing. For every H\"{o}lder continuous function  $\phi:\L\to \mb R$, there exists a unique equilibrium state $\mu_{\phi}\in \mc M_f(\L)$ with respect to $f$.
\end{prop}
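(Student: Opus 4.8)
The statement is the classical "unique equilibrium state for a H\"older potential over a topologically mixing hyperbolic attractor", and the strategy is to transport the problem to the symbolic side via the Markov partition constructed in Lemma~\ref{L:MarkovExist} and the coding map $\pi:\Sigma_A\to\L$ of Lemma~\ref{L:SymDynByMP}, and then invoke the thermodynamic formalism for subshifts of finite type collected in Appendix~\ref{S:AppSymDyn}. First I would fix a Markov partition $\mc R=\{R_1,\dots,R_m\}$ of sufficiently small diameter, form the transition matrix $A=A(\mc R)$, and recall from Lemma~\ref{L:ShiftMixing} that $(\Sigma_A,\sigma)$ is topologically mixing because $f|_\L$ is. The composite $\psi=\phi\circ\pi:\Sigma_A\to\mb R$ is the natural candidate potential on the symbol space.

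The key technical point is that $\psi$ lies in the class $\mc F_A$ of Definition~\ref{D:FA}, i.e.\ $\mathrm{var}_k\psi\le b\alpha^k$ for some $b>0$, $\alpha\in(0,1)$. This follows by combining the H\"older continuity of $\phi$ with the exponential contraction/expansion along local stable and unstable manifolds: if $\underline a,\underline a'\in\Sigma_A$ agree on $[-k,k]$, then $f^j\pi(\underline a)$ and $f^j\pi(\underline a')$ lie in the same rectangle $R_{a_j}$ for $|j|\le k$, and by the hyperbolicity estimates of Lemma~\ref{L:SUManifolds} (applied forwards on the stable part and backwards on the unstable part, after writing the difference using local canonical coordinates as in Lemma~\ref{L:CanonicalCoor}) one gets $|\pi(\underline a)-\pi(\underline a')|\le C e^{-\lambda k}$ for some $\lambda>0$; H\"older continuity of $\phi$ with exponent, say, $\theta$, then gives $\mathrm{var}_k\psi\le C'e^{-\lambda\theta k}$, which is the required geometric bound with $\alpha=e^{-\lambda\theta}$. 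With $\psi\in\mc F_A$ in hand, Theorem~\ref{T:GibUnique} produces a unique $\nu\in\mc M_\sigma(\Sigma_A)$ with $h_\nu(\sigma)+\int\psi\,d\nu=P_\sigma(\psi)$, and I set $\mu_\phi=\pi_*\nu\in\mc M_f(\L)$.

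It remains to check (i) that $\mu_\phi$ is an equilibrium state for $\phi$ over $f|_\L$, and (ii) that it is the only one. For (i): $\pi\circ\sigma=f\circ\pi$ so $\mu_\phi$ is $f$-invariant; one has $\int\phi\,d\mu_\phi=\int\psi\,d\nu$, and $P_f(\phi)=P_\sigma(\psi)$ by combining Theorem~\ref{T:Conjugacy} (which gives $P_f(\phi)\le P_\sigma(\phi\circ\pi)=P_\sigma(\psi)$) with the opposite inequality, which holds because $\pi$ is one-to-one over the residual set $Y$ of Lemma~\ref{L:SymDynByMP} so that entropy is not lost under the factor map, i.e.\ $h_\nu(\sigma)=h_{\mu_\phi}(f)$ for any $\sigma$-invariant $\nu$ projecting to $\mu_\phi$ (this uses that the fibers of $\pi$ over $\pi^{-1}(Y)$ are singletons and a standard Abramov--Rokhlin / Ledrappier--Walters argument, together with Theorem~\ref{T:VarPrin1} and Theorem~\ref{T:VarPrin2} to identify $P_\sigma(\psi)$ and $P_f(\phi)$ as suprema). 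Hence $h_{\mu_\phi}(f)+\int\phi\,d\mu_\phi=h_\nu(\sigma)+\int\psi\,d\nu=P_\sigma(\psi)=P_f(\phi)$. For (ii): if $\mu$ is any equilibrium state for $\phi$, lift it to a $\sigma$-invariant measure $\tilde\mu$ on $\Sigma_A$ with $\pi_*\tilde\mu=\mu$ (possible since $\pi$ is a continuous surjection of compact metric spaces); then $h_{\tilde\mu}(\sigma)+\int\psi\,d\tilde\mu\ge h_\mu(f)+\int\phi\,d\mu=P_f(\phi)=P_\sigma(\psi)$, and by Theorem~\ref{T:VarPrin} this forces equality, so $\tilde\mu$ is an equilibrium state for $\psi$ and hence $\tilde\mu=\nu$ by uniqueness on the symbolic side (Theorem~\ref{T:GibUnique}); therefore $\mu=\pi_*\tilde\mu=\pi_*\nu=\mu_\phi$.

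\textbf{Main obstacle.} The delicate step is the coding/entropy bookkeeping across the factor map $\pi$: showing $\psi\in\mc F_A$ (the H\"older-to-geometric-variation estimate, which needs the canonical-coordinate representation and the stable/unstable contraction rates of Lemma~\ref{L:SUManifolds} in the infinite-dimensional setting), and showing that no entropy or pressure is lost under $\pi$ because it is injective off a meager, measure-theoretically negligible set. Both are entirely parallel to Bowen's finite-dimensional treatment in \cite{Bowen}; the only genuinely infinite-dimensional input is that all the relevant dynamics takes place on the finite-dimensional local unstable discs and on the local stable sets inside $\L$, where $f|_\L$ is a homeomorphism, so the arguments go through verbatim. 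The remaining pieces (existence and mixing of the Markov partition, properties of $\pi$) are already established in Lemmas~\ref{L:MarkovExist}--\ref{L:ShiftMixing}.
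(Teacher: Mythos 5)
Your overall route is exactly the paper's: code $f|_\L$ by the Markov partition of Lemma \ref{L:MarkovExist}, check that $\phi^*=\phi\circ\pi$ lies in $\mc F_A$ (your variation estimate is the content of the paper's Lemma \ref{L:QExpansive}), obtain the unique symbolic equilibrium/Gibbs measure from Theorems \ref{T:AppGibMea1} and \ref{T:GibUnique}, push it forward, and prove uniqueness by lifting an arbitrary equilibrium state through $\pi$ (Lemma \ref{L:PreImaOfPi}) and using symbolic uniqueness. The uniqueness half of your argument is complete and matches the paper.

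The gap is in the existence half, at the step where you need $h_{\mu_\phi}(f)\ge h_\nu(\s)$ (the factor map only gives the opposite inequality, so without this you cannot conclude $h_{\mu_\phi}(f)+\int\phi\,d\mu_\phi=P_f(\phi)$). You justify it by saying that $\pi$ is injective over the residual set $Y$ of Lemma \ref{L:SymDynByMP} and that the exceptional set is ``meager, measure-theoretically negligible,'' but meagerness does not imply $\nu$-nullity, and nothing in your argument shows that $\nu$ gives zero mass to $\pi^{-1}(\partial^s\mc R\cup\partial^u\mc R)$. This is precisely the point the paper proves: setting $D_s=\pi^{-1}(\partial^s\mc R)$ and $D_u=\pi^{-1}(\partial^u\mc R)$, one uses $\s D_s\subset D_s$ and $\s^{-1}D_u\subset D_u$ (Lemma \ref{L:RBInc}), the $\s$-invariance and ergodicity (indeed mixing) of the Gibbs measure $\mu_{\phi^*}$, and the Gibbs lower bound of Theorem \ref{T:AppGibMea1} -- which forces every nonempty open subset of $\Sigma_A$ to have positive measure -- to conclude $\mu_{\phi^*}(D_s)=\mu_{\phi^*}(D_u)=0$; only then is $\pi$ a measure-theoretic isomorphism on a full-measure set, giving $h_{\mu_\phi}(f)=h_{\mu_{\phi^*}}(\s)$. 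Note that this argument uses the Gibbs characterization of $\nu$, not merely the fact that it is an equilibrium state, so you must invoke Theorem \ref{T:AppGibMea1} (or the identification of the equilibrium state with the Gibbs measure) at this point. Your alternative suggestion, a Ledrappier--Walters type bound based on fibers of $\pi$, would require knowing that $\pi$ is (uniformly) finite-to-one on all of $\Sigma_A$, which neither you nor the paper establishes; injectivity only over $Y$ does not feed into that formula without the same null-boundary argument.
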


To prove Proposition \ref{P:GibState}, we need the following lemma.
\begin{lemma}\label{L:QExpansive}
There are $\d>0$ and $\a\in(0,1)$ such that if $x,y\in \L$ satisfying $|f^kx-f^ky|<\d$ for all $k\in[-N,N]$ then $|x-y|<\a^N$.
\end{lemma}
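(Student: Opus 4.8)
The statement to prove is Lemma~\ref{L:QExpansive}: there are $\d>0$ and $\a\in(0,1)$ such that if $x,y\in\L$ satisfy $|f^kx-f^ky|<\d$ for all $k\in[-N,N]$, then $|x-y|<\a^N$.

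\textbf{Overall approach.} The plan is to combine the local canonical coordinates (Lemma~\ref{L:CanonicalCoor}) with the contraction estimates on local stable and unstable manifolds (Lemma~\ref{L:SUManifolds}(ii)) and the expansivity constant (Lemma~\ref{L:Expansive}). The idea is the classical one for hyperbolic systems: if two orbit segments stay within a small distance $\d$ of one another for the symmetric time window $[-N,N]$, then by canonical coordinates the point $y$ lies in $W^s_\e(x)\cap W^u_\e(x)$ up to exponentially small error, and then stable contraction going forward plus unstable contraction going backward forces $|x-y|$ to be exponentially small in $N$.

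\textbf{Key steps, in order.} First I would fix $\l\in(0,\l_0)$ and the associated $\e_0$ from Lemma~\ref{L:SUManifolds}; pick $\e\in(0,\e_0]$ small, and then choose $\d>0$ small enough (depending on $\e$) so that Lemma~\ref{L:CanonicalCoor} applies, i.e.\ $W^s_\e(u)\cap W^u_\e(v)=\{[u,v]\}$ whenever $|u-v|<\d$, and also $\d$ smaller than the expansivity constant of Lemma~\ref{L:Expansive}. Second, assuming $|f^kx-f^ky|<\d$ for all $k\in[-N,N]$, set $z=[x,y]=W^s_\e(x)\cap W^u_\e(y)$, which is a well-defined point of $\L$. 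Since $z\in W^s_\e(x)$, Lemma~\ref{L:SUManifolds}(ii) gives $|f^kz-f^kx|\le e^{-k\l}|z-x|\le e^{-k\l}\cdot 2\e$ for $k\ge 0$; since $z\in W^u_\e(y)$, it gives $|f^{-k}z-f^{-k}y|\le e^{-k\l}|z-y|\le e^{-k\l}\cdot 2\e$ for $k\ge 0$. Third, I estimate $|x-y|$ by routing through $z$: for $0\le k\le N$,
\[
|f^kx-f^ky|\le |f^kx-f^kz|+|f^kz-f^ky|\le 2\e e^{-k\l}+|f^kz-f^ky|,
\]
and symmetrically for $-N\le k\le 0$ one controls $|f^kz-f^kx|$. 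The point is that the pair $(x,z)$ stays within $O(\e)$ of each other on $[0,N]$ (forward, by stable contraction from $z\in W^s(x)$ and the hypothesis on $(x,y)$), hence on all of $[0,N]$ by a standard bootstrapping of the canonical-coordinate estimate, and similarly $(z,y)$ stays close on $[-N,0]$. Fourth, to get the quantitative bound: $z\in W^s_\e(x)$ means by definition $|f^kz-f^kx|\le\e$ for all $k\ge0$, but we only control $|f^kx-f^ky|<\d$ for $k\le N$, so I instead observe that $z$ and $y$ both have $f^k(\cdot)$ staying within $O(\d)$ of $f^kx$ for $k\in[0,N]$; then $|f^kz-f^ky|\le$ (constant)$\cdot\d$ for $k\in[0,N]$, and since $f^kz\to$ orbit of $z$, applying the contraction of $W^u$ backward from time $N$: $|z-y|=|f^{-N}(f^Nz)-f^{-N}(f^Ny)|\le e^{-N\l}|f^Nz-f^Ny|\le e^{-N\l}\cdot C\d$. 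Likewise, pushing forward from time $-N$: $|z-x|=|f^{N}(f^{-N}z)-f^{N}(f^{-N}x)|$; here I use that $f^{-N}z\in W^s$-type relation --- actually, more cleanly, $z\in W^s_\e(x)$ so $|z-x|\le 2\e$ but I want exponential decay, which I obtain by noting $|f^{-N}z-f^{-N}x|\le C\d$ (both near $f^{-N}y$-orbit in the backward window) and $z-x=f^N(f^{-N}z)-f^N(f^{-N}x)$ lies along the stable direction so is \emph{not} expanded --- here I should instead exploit that $z\in W^u_\e(y)$ gives backward contraction and $z\in W^s_\e(x)$ gives forward contraction, and that both $x$ and $y$ are $\d$-close to $z$'s orbit on the full window $[-N,N]$, yielding $|x-z|\le Ce^{-N\l}$ from the stable estimate run backward over $[-N,0]$ combined with $|f^{-N}x-f^{-N}z|\le C\d$. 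Collecting, $|x-y|\le|x-z|+|z-y|\le 2C\d e^{-N\l}$. Finally, set $\a=e^{-\l/2}$ (say) and shrink $\d$ so $2C\d\le 1$; for $N$ large enough $2C\d e^{-N\l}\le e^{-N\l}\le \a^N$, and for the finitely many small $N$ one shrinks $\d$ further if necessary, or simply notes $|x-y|\le 2C\d<\a^N$ directly once $2C\d<\a$, adjusting. This gives the claim.

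\textbf{Main obstacle.} The delicate point is making the "both orbits stay $O(\d)$-close to the orbit of $z$ on the \emph{entire} window $[-N,N]$" argument rigorous rather than circular: $z$ is defined via the $\e$-local manifolds, so a priori $|f^kz-f^kx|\le\e$ only for $k\ge0$ and $|f^kz-f^ky|\le\e$ only for $k\le0$; one must use the hypothesis $|f^kx-f^ky|<\d$ on the matching half-windows together with the contraction of Lemma~\ref{L:SUManifolds}(ii) and the triangle inequality to propagate smallness to the other half-window, being careful that the constants produced (from $\|[\cdot,\cdot]\|$ continuity in Lemma~\ref{L:CanonicalCoor} and from the local-manifold geometry) are uniform over $\L$ --- this uses compactness of $\L$ and the uniform-continuity statements in Lemmas~\ref{L:SUManifolds} and \ref{L:CanonicalCoor}. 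Everything else is routine bookkeeping with geometric series.
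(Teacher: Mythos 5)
Your argument is correct and is essentially the paper's own proof: both pass through the canonical-coordinate point $z=[x,y]$ of Lemma \ref{L:CanonicalCoor}, apply the forward stable / backward unstable contraction of Lemma \ref{L:SUManifolds}(ii) across the window $[-N,N]$, and bound the endpoint quantities uniformly (via uniform continuity of $[\cdot,\cdot]$ and compactness of $\L$) to get $|x-y|\le C e^{-N\l}$ and hence the claim with $\a=e^{-\l}$; the paper merely records the same estimates in the reversed form $|[f^kx,f^ky]-f^kx|\ge e^{-k\l}|x-[x,y]|$. The propagation/equivariance point you flag as delicate is exactly what the paper's ``straightforward to obtain'' step glosses over, so your extra care there is a refinement of, not a departure from, the same route.
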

\begin{proof}
For a given $\l\in (0,\l_0)$ fix an $\e\in(0,\e_0)$ as in Lemma \ref{L:SUManifolds} and then let $\d$ be smaller than the ``$\d$''  in Lemma \ref{L:CanonicalCoor} for such $\e$. Suppose $|f^kx-f^ky|<\d$ for all $k\in[-N,N]$, then it is straightforward to obtain the following estimates by applying Lemma \ref{L:SUManifolds}:
\begin{align*}
&\left|[f^kx,f^ky]-f^kx\right|\ge e^{-k\l}|x-[x,y]| \text{ for all } k\in [-N,0],\\
&\left|[f^kx,f^ky]-f^ky\right|\ge e^{k\l}|y-[x,y]|\text{ for all } k\in[0,N].
\end{align*}
Therefore, $$|x-y|\le |x-[x,y]|+|y-[x,y]|\le e^{-N\l}\left(\left|[f^{-N}x,f^{-N}y]-f^{-N}x\right|+\left|[f^Nx,f^Ny]-f^Ny\right|\right).$$
Since $[\cdot,\cdot]$ is uniformly continuous by Lemma \ref{L:CanonicalCoor} and the compactness of $\L$, there exists $\d>0$ such that for $x,y\in \L$ with $|x-y|<\d$, the inequalities $|y-[x,y]|,|x-[x,y]|<\frac12$ always hold. For such $\d$, taking $\a=e^{-\l}$ is enough for the lemma.
\end{proof}

\noindent{\bf Proof of Proposition \ref{P:GibState}.}  Choose a Markov partition $\mc R$ for $\L$ of diameter $<\d$ as in Lemma \ref{L:QExpansive}. Then,  a transition matrix $A$ for $\mc R$ and $\pi:\Sigma_A\to \L$ as in subsection \ref{S:SymDyn} are well-defined. Set $\phi^*=\phi\circ \pi$. If $\underline{x},\underline{y}\in \Sigma_A$ satisfying $x_k=y_k$ for $k\in[-N,N]$, then one has
$$f^k\pi(\underline{x}),f^k\pi(\underline{y})\in R_{x_k}=R_{y_k}\text{ for all } k\in[-N,N].$$
Thus, by Lemma \ref{L:QExpansive}, we have that
$$|\pi(\underline{x})-\pi(\underline{y})|<\a^N, |\phi^*(\underline{x})-\phi^*(\underline{y})|\le a(\a^\t)^N\text{ and }\phi^*\in \mc F_A,$$
for some $a>0$ and $\t\in (0,1]$ which depend on $\phi$ only. For the definition of $\mc F_A$ we refer to Definition \ref{D:FA} in Appendix A.\\

Since $f|_\L$ is topologically mixing, by Lemma \ref{L:ShiftMixing}, so is $\s|_{\Sigma_A}$. Applying Theorem \ref{T:AppGibMea1}, there is a Gibbs measure $\mu_{\phi^*}$. Let $D_s=\pi^{-1}(\partial^s\mc R)$ and $D_u=\pi^{-1}(\partial^u\mc R)$. Note that $D_s$ and $D_u$ are closed subsets of $\Sigma_A$ and each of them is not equal to $\Sigma_A$. By Lemma \ref{L:RBInc}, we also have
$$\s D_s\subset D_s \text{ and }\s^{-1}D_u\subset D_u.$$
Since $\mu_{\phi^*}$ is $\s$-invariant, one has that $\mu_{\phi^*}(\s^n D_s)=\mu_{\phi^*}(D_s)$. Thus, also noting that $\s^{n+1}D_s\subset \s^n D_s$,
$$\mu_{\phi^*}(D_s)=\lim_{n\to\infty}\mu_{\phi^*}(\s^nD_s)=\mu_{\phi^*}\left(\cap_{n\ge 0}\s^nD_s\right).$$
Note that $\cap_{n\ge 0}\s^nD_s$ is a $\s$-invariant  closed subset of $\Sigma_A$. Since $\mu_{\phi^*}$ is mixing thus ergodic, and $\Sigma_A-\cap_{n\ge 0}\s^nD_s$, a nonempty open subset of $\Sigma_A$ has positive $\mu_{\phi^*}$-measure (this follows from the definition of Gibbs measure for which we refer to the paragraph after Theorem \ref{T:AppGibMea}). Thus,  one has that $\mu^*(D_s)=0$. The same argument shows that $\mu^*(D_u)=0$.

Set $$\mu_\phi=\pi^*\mu_{\phi^*} \text{ i.e. }\mu_\phi(E)=\mu_{\phi^*}(\pi^{-1}E)\text{ for all Borel set }E\subset\L.$$
By Lemma \ref{L:SymDynByMP}, we know that $\pi$ is one-to-one on the $\mu_{\phi^*}$-full measure set $\Sigma_A-\cup_{n\in\mb Z}\s^n(D_s\cup D_u)$. Then,  $\mu_\phi$ is $f$-invariant and the two automorphisms $\s:(\Sigma_A,\mu_{\phi^*})\circlearrowleft$ and $f:(\L,\mu)\circlearrowleft$ are conjugate. Therefore,  $$h_{\mu_\phi}(f)=h_{\mu_{\phi^*}}(\s)\text{ and }h_{\mu_\phi}(f)+\int_\L \phi d\mu_\phi=h_{\mu_{\phi^*}}(\s)+\int_{\Sigma_A} \phi^* d\mu_{\phi^*}.$$
By Theorem \ref{T:GibUnique} and \ref{T:Conjugacy}, we have that
$$h_{\mu_\phi}(f)+\int_\L \phi d\mu_\phi=h_{\mu_{\phi^*}}(\s)+\int_{\Sigma_A} \phi^* d\mu_{\phi^*}=P_\s(\phi^*)\ge P_f(\phi),$$
which together with Theorem \ref{T:VarPrin1} imply that $P_\s(\phi^*)=P_f(\phi)$. Thus,  $\mu_\phi$ is an equilibrium state of $\phi$.\\

To prove the uniqueness, we need the following lemma which is a well known fact. So we omit the proof for which we refer to Lemma 4.3 in \cite{Bowen}.
\begin{lemma}\label{L:PreImaOfPi}
For any $\mu\in \mc M_f(\L)$, there is a $\nu\in \mc M_\s(\Sigma_A)$ with $\pi^*\nu=\mu$.
\end{lemma}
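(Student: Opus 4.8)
The plan is the standard lifting argument for factor maps: first produce \emph{some} (not necessarily $\sigma$-invariant) Borel probability measure $\lambda$ on $\Sigma_A$ with $\pi^*\lambda=\mu$, and then average $\lambda$ along the orbit of $\sigma$ to repair the lack of invariance, exploiting weak-$*$ compactness of the space of probability measures on the compact metric space $\Sigma_A$.

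For the first step, I would use that $\pi:\Sigma_A\to\L$ is a continuous surjection of compact metric spaces (Lemma \ref{L:SymDynByMP}). Then $x\mapsto\pi^{-1}(x)$ is a multifunction with nonempty compact (hence closed) values into the Polish space $\Sigma_A$, and it is measurable since $\pi$ is continuous; the Kuratowski--Ryll-Nardzewski measurable selection theorem therefore yields a Borel map $s:\L\to\Sigma_A$ with $\pi\circ s=\mathrm{id}_\L$. Putting $\lambda=s_*\mu$ gives a Borel probability measure on $\Sigma_A$ with $\pi^*\lambda=(\pi\circ s)_*\mu=\mu$. (Alternatively, such a $\lambda$ can be obtained from the M.~Riesz positive extension theorem: the functional $g\circ\pi\mapsto\int_\L g\,d\mu$ is well defined on the subspace $\{g\circ\pi:g\in C(\L)\}\subset C(\Sigma_A)$ because $\pi$ is onto, it is positive and dominated by the sup-norm, so it extends to a positive functional on $C(\Sigma_A)$, i.e.\ a measure $\lambda$ with $\pi^*\lambda=\mu$.)

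For the second step, set $\nu_n=\frac1n\sum_{k=0}^{n-1}\sigma^k_*\lambda$ for $n\ge1$. Since $\pi\circ\sigma=f\circ\pi$, one has $\pi^*(\sigma^k_*\lambda)=f^k_*(\pi^*\lambda)=f^k_*\mu=\mu$ by the $f$-invariance of $\mu$, so $\pi^*\nu_n=\mu$ for every $n$. Compactness of $\Sigma_A$ gives a weak-$*$ convergent subsequence $\nu_{n_i}\to\nu$, and the usual Krylov--Bogolyubov estimate shows $\nu\in\mc M_\sigma(\Sigma_A)$: for $\varphi\in C(\Sigma_A)$, $\int\varphi\circ\sigma\,d\nu_{n_i}-\int\varphi\,d\nu_{n_i}=\frac1{n_i}\big(\int\varphi\circ\sigma^{n_i}\,d\lambda-\int\varphi\,d\lambda\big)\to0$. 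Finally, continuity of $\pi$ makes $\mu'\mapsto\pi^*\mu'$ weak-$*$ continuous on probability measures, so $\pi^*\nu=\lim_i\pi^*\nu_{n_i}=\mu$, which is exactly the assertion.

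The only point that is not completely routine is the surjectivity of $\pi^*$ on probability measures used in the first step; the rest is the textbook Krylov--Bogolyubov averaging. Even that point is classical, which is why Bowen and the present authors merely cite it.
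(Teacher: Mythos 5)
Your proof is correct, and it is essentially the standard argument: the paper itself omits the proof, citing Lemma 4.3 of Bowen, whose proof is exactly this lift-then-average scheme (a non-invariant lift of $\mu$ obtained by extending the positive functional $g\circ\pi\mapsto\int_\L g\,d\mu$, followed by Krylov--Bogolyubov averaging and a weak-$*$ limit, using $\pi\circ\s=f\circ\pi$ and continuity of $\pi$). Your measurable-selection construction of the initial lift is an equally valid variant of that first step, so nothing further is needed.
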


Let $\mu\in \mc M_f(\L)$ be an equilibrium state of $\phi$ and choose one $\nu\in \mc M_\s(\phi^*)$ satisfying $\pi^*\nu=\mu$. Then, by definition of metric entropy, one has $h_\nu(\s)\ge h_\mu(f)$. Therefore,
$$h_\nu(\s)+\int_{\Sigma_A}\phi^* d\nu\ge h_\mu(f)+\int_\L \phi d\mu=P_f(\phi)=P_\s(\phi^*).$$
So $\nu$ is an equilibrium state of $\phi^*$, hence $\nu=\mu_{\phi^*}$ by Theorem \ref{T:GibUnique}. So one has that
$$\mu=\pi^*\mu_{\phi^*}=\mu_\phi.$$
The proof of Proposition \ref{P:GibState} is completed. \\

\section{Ruelle Inequality and Entropy Formula.}\label{S:AppRueEntr}
Ruelle's inequality builds the connection between the metric entropy and the positive Lyapunov exponents. The following result can be derived from \cite{T} straightforwardly. For results in finite dimensions, we refer to \cite{R78}.
\begin{theorem}\label{T:RuelleIne}
Let $f:X\to X$ be a $C^1$ map on a Banach space, $\L$ be an $f$-invariant compact subset of $X$, then for any $\mu\in \mc M_f(\L)$, one has that
$$h_\mu(f)\le \int_\L \sum \l_i^+m_i d\mu,$$
where $\l_i$ are Lyapunov exponents, $m_i$ is the multiplicity of $\l_i$ and $\l_i^+=\max\{\l_i,0\}$.
\end{theorem}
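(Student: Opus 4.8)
The plan is to adapt Ruelle's classical argument for the finite–dimensional inequality \cite{R78}, using Thieullen's \cite{T} version of the multiplicative ergodic theorem (together with the hypotheses in force, which in the situations where this appendix is applied include C2)(ii)) to make the volume–counting step effectively finite–dimensional. If $\int_\L\sum\l_i^+m_i\,d\mu=+\infty$ there is nothing to prove, so assume the integral is finite. Since both sides of the asserted inequality are affine in $\mu$ and behave well under ergodic decomposition, it suffices to treat the case where $\mu$ is ergodic; then $\Lambda^+:=\sum\l_i^+(x)m_i(x)$ is $\mu$-a.e.\ equal to a constant $\Lambda^+$, and by \cite{T} the (finitely many) positive Lyapunov exponents, the finite–dimensional unstable subspace $E^u_x$, and the relevant growth estimates are available $\mu$-a.e.

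For the entropy side, recall that for every $\d>0$ one has $h_\mu(f)=\sup\{h_\mu(f,\mathcal P):\mathrm{diam}\,\mathcal P<\d,\ \mu(\partial\mathcal P)=0\}$ (refining any partition only increases $h_\mu(f,\mathcal P)$), and that for any such $\mathcal P$ the usual telescoping together with $f$-invariance of $\mu$ gives $h_\mu(f,\mathcal P)\le H_\mu(f^{-1}\mathcal P\mid\mathcal P)\le\int_\L\log c_{\mathcal P}(x)\,d\mu(x)$, where $c_{\mathcal P}(x)$ is the number of atoms of $\mathcal P$ meeting $f(P_x)$ and $P_x$ is the atom containing $x$. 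Applying this to $f^N$ in place of $f$, one obtains, for a suitably chosen sequence of fine partitions $\mathcal P_N$,
\[
h_\mu(f)\le \frac1N\int_\L\log c^{(N)}_{\mathcal P_N}(x)\,d\mu(x)+\frac1N,
\]
where $c^{(N)}_{\mathcal P_N}(x)$ counts the atoms of $\mathcal P_N$ meeting $f^N(P_x)$.

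Now comes the geometry. Choose $\mathcal P_N$ so that each atom lies in a ball of radius $\d_N$ and contains a ball of radius $c\,\d_N$ (a Besicovitch/Lebesgue–number construction on the compact metric space $\L$), with $\d_N$ small relative to the $C^1$-modulus of continuity of $Df^N$ on a bounded neighborhood of $\L$; then $f^N(P_x)$ lies within Hausdorff distance $o(\d_N)$ of $f^N(x)+Df^N_x(B(0,\d_N))$, so $c^{(N)}_{\mathcal P_N}(x)$ is bounded by a fixed constant times the number of $\d_N$-balls needed to cover $Df^N_x(B(0,\d_N))$. By \cite{T} — which is exactly where the exponential decay of the Kuratowski measure of noncompactness of $Df^N_x$ enters — for $\mu$-a.e.\ $x$ and all large $N$ there is a subspace $H^+_x\supset E^u_x$ of dimension $p(x)$ bounded $\mu$-a.e., on a complement of which $Df^N_x$ is a contraction; hence $Df^N_x(B(0,\d_N))$ is contained in the sum of a $p(x)$-dimensional ellipsoid of $p(x)$-volume at most $e^{N(\Lambda^++\varepsilon)}\,\mathrm{vol}_{p(x)}(B(0,\d_N))$ and a ball of radius less than $\d_N$. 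Covering this set yields $c^{(N)}_{\mathcal P_N}(x)\le C\,3^{p(x)}e^{N(\Lambda^++\varepsilon)}$ with $C$ independent of $N$. Substituting, $h_\mu(f)\le\frac1N\bigl(\log C+\int\log 3^{p(x)}d\mu+1\bigr)+\Lambda^++\varepsilon$; letting $N\to\infty$ and then $\varepsilon\to0$ gives $h_\mu(f)\le\Lambda^+=\int_\L\sum\l_i^+m_i\,d\mu$, and undoing the ergodic decomposition finishes the proof.

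The main obstacle is the geometric step: in infinite dimensions the number of $\d$-balls covering the image $Df^N_x(B(0,\d))$ is \emph{not} controlled by a $\dim$-power of the operator norm, and one genuinely needs \cite{T} to peel off a uniformly finite–dimensional ``neutral–unstable block'' that carries all the volume growth while the infinite–dimensional remainder is crushed into a single ball; one also needs the splitting $H^+_x$ and the contraction factor to hold on one measurable, essentially $f$-invariant scale, with enough uniformity that $\frac1N\log(\text{volume growth of }Df^N_x)\to\sum\l_i^+m_i$ via Kingman's subadditive ergodic theorem. All of these ingredients are exactly what \cite{T} supplies, which is why the result follows ``straightforwardly'' once that machinery is invoked.
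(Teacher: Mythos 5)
You should first note what the paper itself offers here: there is no proof of Theorem \ref{T:RuelleIne} in the text at all --- the appendix simply asserts that the result ``can be derived from \cite{T} straightforwardly,'' so your attempt has to stand on its own. Its overall plan is the classical Ruelle scheme from \cite{R78}: reduce to ergodic $\mu$, bound $h_\mu(f)$ by $\frac1N H_\mu(f^{-N}\mathcal P\mid\mathcal P)$, bound that conditional entropy by the logarithm of the number of atoms meeting $f^N(P_x)$, and estimate this count by covering the linearized image, with Thieullen's multiplicative ergodic theorem and the decay of the Kuratowski measure supplying the finite-dimensional ``expanding block plus contraction'' structure. The entropy-theoretic half of this is fine, and you correctly identified that the geometric step is where infinite dimensionality bites.

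The genuine gap is precisely in that counting step, and as written it fails. In the finite-dimensional proof, the bound ``number of atoms of $\mathcal P_N$ meeting $f^N(P_x)$ is at most a constant times the $\delta_N$-covering number of $Df^N_x(B(0,\delta_N))$'' comes from comparing Lebesgue volumes in the ambient manifold: atoms have volume $\ge c\,\delta_N^{\dim M}$ while the $\delta_N$-neighborhood of the ellipsoid has controlled volume. In a Banach space there is no ambient volume, and your substitute --- atoms that ``contain a ball of radius $c\,\delta_N$'' --- cannot mean ambient balls (a bounded neighborhood of $\Lambda$ cannot even be partitioned into finitely many sets of diameter $\le\delta_N$, and $\Lambda$ has empty interior), so it can only mean relative balls $B(x_i,c\,\delta_N)\cap\Lambda$, say from a Voronoi construction on a $\delta_N$-separated set. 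But then the multiplicity you need --- how many such atoms can meet one $\delta_N$-ball --- is a local packing number of $\Lambda$ at scale $\delta_N$, and a general compact subset of a Hilbert or Banach space has no uniform doubling or Besicovitch constant: this multiplicity can blow up as $\delta_N\to 0$, and since $\delta_N$ must shrink with $N$ at an uncontrolled rate (dictated by the modulus of continuity of $Df^N$), the resulting error $\frac1N\log(\text{multiplicity})$ does not vanish. A second uniformity problem is hidden in the phrase ``for $\mu$-a.e.\ $x$ and all large $N$ there is a subspace $H^+_x$ on a complement of which $Df^N_x$ is a contraction'': this only holds after an Egorov-type restriction to a set of measure $1-\epsilon$, and the classical way of absorbing the bad set uses the global bound $c^{(N)}(x)\le C\|Df^N\|^{\dim M}$, which has no infinite-dimensional analogue. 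These are exactly the obstructions that make the Banach-space Ruelle inequality more than a verbatim port of \cite{R78}; the arguments that do work in this setting (in particular the machinery of \cite{T} that the paper invokes) are structured to avoid the fixed-partition volume comparison altogether, controlling covering numbers of $f^N(B(x,\delta)\cap\Lambda)$ by iterating one-step covering estimates along orbits on uniformity sets. So your proposal is incomplete at its crux: the step bounding $c^{(N)}_{\mathcal P_N}(x)$ by a constant times a covering number would not survive scrutiny without substantial additional work.
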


The following result is due to \cite{LSh}.

\begin{theorem}\label{T:EntropyFormula}
Let $f:X\to X$ be a $C^2$ map on a Hilbert space and $\L$ be an $f$-invariant compact subset of $X$, for any $\mu\in \mc M_f(\L)$,  if $\mu$ is an SRB measure, then
$$h_\mu(f)= \int_\L \sum \l_i^+m_i d\mu,$$
where $\l_i$ are Lyapunov exponents, $m_i$ is the multiplicity of $\l_i$ and $\l_i^+=\max\{\l_i,0\}$.
\end{theorem}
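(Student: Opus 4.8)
The plan is to prove the two inequalities separately. The upper bound $h_\mu(f)\le\int_\L\sum\l_i^+m_i\,d\mu$ is exactly Ruelle's inequality (Theorem \ref{T:RuelleIne}), valid for any $C^1$ map and any invariant measure, so the entire content is the reverse inequality, and this is where the SRB hypothesis is used. Put $J^u(x)=|\det(Df_x|_{E^u_x})|$, defined $\mu$-a.e.\ via the Multiplicative Ergodic Theorem (Theorem \ref{T:MET}); by the Birkhoff ergodic theorem, exactly as in (\ref{E:PositiveLyaExp}), one has $\int_\L\log J^u\,d\mu=\int_\L\sum\l_i^+(x)m_i(x)\,d\mu(x)$, so it suffices to prove $h_\mu(f)\ge\int_\L\log J^u\,d\mu$. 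First I would set up the geometry on the compact attractor, where $f|_\L$ is a homeomorphism: fix a hyperbolic block $\G_{n,m,l_0}$ of positive measure (reducing to ergodic components if necessary), the Lyapunov charts of Appendix \ref{S:LyapunovCharts}, and the local unstable manifolds $W^u_{loc}(x)$ of Corollary \ref{C:LocalInvariantManifolds} and Lemma \ref{L:UnstableManifoldsExtension}, which are finite-dimensional $C^{1+{\rm Lip}}$ discs with uniformly controlled geometry along orbits. The distortion input needed later is that, along unstable discs, $\log J^u$ is Lipschitz with uniform constant and the products $\prod_{k\ge 1}J^u(y_{-k})/J^u(x_{-k})$ converge uniformly and are log-bounded; in the uniformly partially hyperbolic case this is Lemma \ref{L:DeterminateLip} and Corollary \ref{C:DetUniform}, and for a general SRB measure $\mu$ it follows from the chart-based Jacobian estimates of Section \ref{S:JacobianEstimates} (Lemmas \ref{L:Jacob1}, \ref{L:Jacob2}).

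The heart of the argument is to construct a measurable partition $\mathcal{Q}$ of $\L$ subordinate to the unstable manifolds in the sense of Definition \ref{D:Subordinate} which is \emph{increasing}, i.e.\ $f^{-1}\mathcal{Q}\ge\mathcal{Q}$, with $H_\mu(f^{-1}\mathcal{Q}\mid\mathcal{Q})<\infty$. This is the Hilbert-space analogue of the Ledrappier--Young increasing-partition construction; since unstable leaves are finite-dimensional and $f|_\L$ is invertible on the compact set $\L$, it can be carried out with the ``box'' recipe already used in Section \ref{SS:SRBExist} and Section \ref{S:ExiErgodicAttractor}: cover $\L$ by finitely many sets $V_{x,\varepsilon}=\bigcup_{y\in\Sigma_{x,\varepsilon}}W^u_\rho(y)$ with $\mu(\partial V)=0$, partition each box into its unstable plaques, refine by countably many iterates to obtain a measurable partition $\mathcal{P}_0$ whose atoms lie in single unstable plaques, and finally take $\mathcal{Q}=\bigvee_{k\ge 0}f^{-k}\mathcal{P}_0$, truncated so that atoms remain open neighbourhoods of their points inside $W^u$. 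Measurability and subordination come from the continuity of the unstable discs (Remark \ref{R:ContinuousInC2}, Lemma \ref{L:UnstableManifoldsExtension}); $H_\mu(f^{-1}\mathcal{Q}\mid\mathcal{Q})<\infty$ follows because each atom $f^{-1}\mathcal{Q}(x)$ meets only boundedly many atoms of $\mathcal{Q}$, by the uniform plaque geometry.

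Then I would run the standard Ledrappier--Strelcyn computation. Because $\mathcal{Q}$ is increasing and subordinate to $W^u$, the general entropy lower bound gives $h_\mu(f)\ge H_\mu(f^{-1}\mathcal{Q}\mid\mathcal{Q})$. On the other hand, since $\mu$ is SRB its conditional measures $\mu^{\mathcal{Q}}_x$ on $\mathcal{Q}(x)\subset W^u(x)$ are absolutely continuous with respect to ${\rm Leb}_x$; writing out $H_\mu(f^{-1}\mathcal{Q}\mid\mathcal{Q})$ as an integral of $-\log$ of the relevant Radon--Nikodym derivative, the change-of-variables formula for $f$ restricted to the finite-dimensional unstable leaves, together with the uniform distortion control recalled above, identifies that derivative with a bounded multiple of $J^u$ and yields $H_\mu(f^{-1}\mathcal{Q}\mid\mathcal{Q})=\int_\L\log J^u\,d\mu$. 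Combining, $\int_\L\sum\l_i^+m_i\,d\mu=\int_\L\log J^u\,d\mu=H_\mu(f^{-1}\mathcal{Q}\mid\mathcal{Q})\le h_\mu(f)\le\int_\L\sum\l_i^+m_i\,d\mu$ by Ruelle, so equality holds throughout and in particular in the theorem.

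I expect the main obstacle to be the construction and measurability of the increasing partition $\mathcal{Q}$ in a phase space that is not locally compact: the raw foliation $\{W^u(x)\}_{x\in\L}$ is in general non-measurable, so one must control, uniformly along orbits and inside the Lyapunov charts, the size and $C^1$-geometry of the unstable plaques, and then verify that the saturated ``box'' partition really is a measurable partition with finite conditional entropy, paying attention to the fact that $\overline{\G_{n,m,l_0}}$ is not $f$-invariant. The secondary technical point is the transformation rule for the conditional measures of $\mu$ under $f$ along unstable leaves and the uniform log-boundedness of their densities; this is precisely what the distortion estimates (Lemma \ref{L:DeterminateLip}, Corollary \ref{C:DetUniform}, and the chart Jacobian estimates of Section \ref{S:JacobianEstimates}) are for. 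Once these two points are in place, the remaining Pesin-formula bookkeeping transfers essentially verbatim from the finite-dimensional case, since everything takes place on finite-dimensional unstable leaves over the compact attractor.
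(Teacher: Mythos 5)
The paper does not actually prove Theorem \ref{T:EntropyFormula}: it is imported from Li and Shu \cite{LSh}, with only the inequality $h_\mu(f)\le\int\sum\l_i^+m_i\,d\mu$ (Ruelle's inequality, Theorem \ref{T:RuelleIne}) stated separately. So there is no in-paper argument to compare with; your plan is precisely the route that \cite{LSh} carries out in the Hilbert-space setting --- Ruelle's inequality for the upper bound and, for the lower bound, a Ledrappier--Strelcyn argument built on an increasing measurable partition subordinate to the finite-dimensional unstable manifolds, absolute continuity of the conditional measures, and distortion control of $J^u$ along unstable leaves. In that sense you are reproducing the cited proof rather than proposing an alternative, and the points you flag as obstacles (measurability of the partition, finiteness of $H_\mu(f^{-1}\mathcal{Q}\mid\mathcal{Q})$, the bound $h_\mu(f)\ge H_\mu(f^{-1}\mathcal{Q}\mid\mathcal{Q})$ for measurable rather than finite partitions, and the identification of the conditional densities) are exactly the content of that reference; in your write-up they are asserted, not carried out.

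One concrete error in the sketch: the partition $\mathcal{Q}=\bigvee_{k\ge 0}f^{-k}\mathcal{P}_0$ is the wrong object. With this choice $f^{-1}\mathcal{Q}=\bigvee_{k\ge 1}f^{-k}\mathcal{P}_0\le\mathcal{Q}$, so the partition is decreasing rather than increasing, and its atom at $x$, namely $\bigcap_{k\ge 0}f^{-k}\bigl(\mathcal{P}_0(f^kx)\bigr)$, is obtained by pulling unstable plaques back along the expanding direction, hence has diameter in $W^u(x)$ shrinking like $e^{-k\l}$ and degenerates to the point $x$; it cannot contain an open $W^u$-neighbourhood of $x$, and no truncation repairs this. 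The correct construction uses forward images, $\mathcal{Q}=\bigvee_{k\ge 0}f^{k}\mathcal{P}_0$, whose atom $\bigcap_{k\ge 0}f^{k}\bigl(\mathcal{P}_0(f^{-k}x)\bigr)$ is an increasing-in-size family of pieces of $W^u(x)$, and a Borel--Cantelli argument (using the tempered plaque sizes and the uniform expansion along $W^u$ in the Lyapunov charts) shows that for $\mu$-a.e.\ $x$ it contains an open neighbourhood of $x$ in $W^u(x)$; this choice also gives $f^{-1}\mathcal{Q}=\mathcal{Q}\vee f^{-1}\mathcal{P}_0\ge\mathcal{Q}$, as required for $h_\mu(f,\mathcal{Q})=H_\mu(f^{-1}\mathcal{Q}\mid\mathcal{Q})$. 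With that correction, your outline is a faithful plan for the known proof, but completing it amounts to redoing \cite{LSh}.
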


 \addcontentsline{toc}{section}{References}

\bibliographystyle{plain}

 \end{document}